\newcommand{\Wsp}[0]{\mathcal{W}^{\mathfrak{sp}}_{\infty}}
\newcommand{\Wso}[0]{\mathcal{W}^{\mathfrak{so}_2}_{\infty}}
\newcommand{\vir}[0]{\text{Vir}^c}
\newcommand{\aff}{V^k(\mathfrak{sl}_2)}
\newcommand{\nlcalg}{\cL^{\fr{so}_2}_{\infty}}
\newcommand{\Winf}{\mathcal{W}_{\infty}}
\newcommand{\Wev}{\mathcal{W}^{\text{ev}}_{\infty}}
\numberwithin{equation}{section}
\newtheorem{lemma}{Lemma}[section]
\newtheorem{remark}{Remark}[section]
\newtheorem{proposition}{Proposition}[section]
\newtheorem{theorem}{Theorem}[section]
\newtheorem{example}{Example}[section]
\newtheorem{corollary}{Corollary}[section]
\newtheorem{conjecture}{Conjecture}[section]
\newcommand{\B}[1]{\textbf{#1}}
\newcommand{\T}[1]{\text{#1}}
\newcommand{\fr}[1]{\mathfrak{#1}}
\def\cO{{\cal O}}
\def\cA{{\cal A}}
\def\ra{\rightarrow}
\def\cA{{\mathcal A}}
\def\cC{{\mathcal C}}
\def\cD{{\mathcal D}}
\def\cE{{\mathcal E}}
\def\cF{{\mathcal F}}
\def\cH{{\mathcal H}}
\def\cI{{\mathcal I}}
\def\cL{{\mathcal L}}
\def\cN{{\mathcal N}}
\def\cO{{\mathcal O}}
\def\cS{{\mathcal S}}
\def\cV{{\mathcal V}}
\def\cW{{\mathcal W}}
\def\ga{{\mathfrak a}}
\def\gb{{\mathfrak b}}
\def\gg{{\mathfrak g}}
\def\gl{{\mathfrak l}}
\def\go{{\mathfrak o}}
\def\gp{{\mathfrak p}}
\def\gs{{\mathfrak s}}
\def\gt{{\mathfrak t}}
\title{New universal vertex algebras as glueings of the basic ones}
\author{Thomas Creutzig}
\address{Department Mathematik, FAU Erlangen}
\email{creutzigt@math.fau.de}
\thanks{T. Creutzig is supported by DFG project Projektnummer 551865932.}
\author{Vladimir Kovalchuk}
\address{Department Mathematik, FAU Erlangen}
\email{v.dexter97@gmail.com}
\thanks{V. Kovalchuk is supported by the Alexander von Humboldt Foundation.}
\author{Andrew R. Linshaw}
\address{Department of Mathematics, University of Denver}
\email{andrew.linshaw@du.edu}
\thanks{A. Linshaw is supported by NSF Grant DMS-2401382 and Simons Foundation Grant MPS-TSM-00007694.}
\begin{document}
	\maketitle

	\pagestyle{plain}
	
	\noindent {ABSTRACT. There are three universal $2$-parameter vertex algebras $\cW_{\infty}$, $\cW^{\text{ev}}_{\infty}$, and $\cW^{\gs\gp}_{\infty}$ which are freely generated of types $\cW(2,3,4,\dots)$, $\cW(2,4,6,\dots)$, and $\cW(1^3, 2, 3^3, 4,\dots)$, respectively. They serve as classifying objects for vertex algebras with these generating types satisfying mild hypotheses. Their $1$-parameter quotients are expected to be the building blocks of all $\cW$-algebras of classical Lie types. Furthermore, such $\cW$-algebras are expected to be organized into families that are governed by new universal $2$-parameter vertex algebras, which are themselves glueings, i.e., extensions of tensor products, of copies of $\cW_{\infty}$ in type $A$ (together with a Heisenberg algebra), and copies of $\cW^{\text{ev}}_{\infty}$ and $\cW^{\gs\gp}_{\infty}$ in types $B$, $C$, and $D$. We denote these universal objects by $\cW^{X,S,M}_{\infty}$, where $X$ denotes the Lie type (either $A$, $C$, or $BD$ since types $B$ and $D$ can be treated uniformly), and $S$, $M$ are sets of positive integers that determine certain families of partitions. More precisely, for a partition $P = (n_0^{m_0}, n_1^{m_1},\dots, n_{t}^{m_t})$ of $N = \sum_{i=0}^t n_i m_i$ consisting of $m_i$ parts of size $n_i$, where $n_0> n_1 > \cdots > n_t \geq 2$, $M = \{m_0,\dots, m_t\}$ is the set of {\it multiplicities}, and $S = \{d_1,\dots, d_t\}$ is the set of {\it height differences} $d_{i+1} = n_i - n_{i+1}$. In all cases, $\cW^{X,S,M}_{\infty}$ should admit several families of $1$-parameter quotients indexed by $\mathbb{N} \times \mathbb{N}$ which arise as $\cW$-algebras or (orbifolds of) affine cosets of $\cW$-algebras, as well as some families of diagonal cosets indexed by $\mathbb{N}$, which we call GKO cosets. After introducing this general conjectural picture, we will construct the first nontrivial example $\cW^{\fr{so}_2}_{\infty}:=\cW^{BD, \emptyset, \{2\}}_{\infty}$, which is a glueing of two copies of $\cW^{\text{ev}}_{\infty}$. It is freely generated of type $\cW(1,2^3,3,4^3,\dots)$ and its $1$-parameter quotients include $\cW^{\ell}(\gs\gp_{4n}, f_{2n, 2n})$ and $\cW^{\ell}(\gs\go_{2(2n+1)}, f_{2n+1, 2n+1})$, which are of rectangular type. This result has several applications including the existence of embeddings of hook-type $\cW$-algebras inside more complicated $\cW$-algebras, and the strong rationality of $\cW_{\frac{4m+2n-1}{4m}-h^{\vee}}(\go\gs\gp_{1|4m}, f_{2m,2m})$ for $m\geq 2n-1$ and $m,2n-1$ coprime. Most importantly, it is a proof of concept that such glueings of the basic universal objects can be constructed.}
	
	\section{Introduction} $\cW$-algebras are an important class of vertex algebras that have been studied in both the mathematics and physics literature for nearly 40 years. Associated to a Lie (super)algebra $\mathfrak{g}$ and a nilpotent element $f$ in the even part of $\mathfrak{g}$, is the $\cW$-algebra $\cW^k(\mathfrak{g}, f)$ at level $k\in \mathbb{C}$. They are defined via the generalized Drinfeld-Sokolov reduction \cite{KRW}, and are a common generalization of affine vertex algebras and the Virasoro algebra. When $f$ is a principal nilpotent, $\cW^k(\mathfrak{g},f)$ is called a principal $\cW$-algebra and is denoted by $\cW^k(\mathfrak{g})$. They appear in many settings including integrable systems \cite{B89,BM,GD,DS}, conformal field theory to higher spin gravity duality \cite{GG}, Alday-Gaiotto-Tachikawa correspondence \cite{AGT,SV,BFN}, and the quantum geometric Langlands program \cite{Fre07,Gai16,AFO,CG,FG,AF}.

Principal $\cW$-algebras satisfy Feigin-Frenkel duality, which is the isomorphism $\cW^k(\gg) \cong \cW^{k'}(^L \gg)$. Here $^L\gg$ is the Langlands dual Lie algebra, $h^{\vee}$,  $^L h^{\vee}$ are the dual Coxeter numbers of $\gg$, $^L\gg$, and $(k+h^{\vee})(k' + ^Lh^{\vee}) = r$ where $r$ is the lacing number of $\gg$ \cite{FF}. For $\gg$ simply-laced, there is another duality called the {\it coset realization} which was proven \cite{ACL}. For generic $\ell$, we have a vertex algebra isomorphism 
	\begin{equation} \label{eq:cosetrealization} \cW^{\ell}(\gg)\cong  \text{Com}(V^{k+1}(\gg),V^k(\gg)\otimes L_1(\gg)),\qquad \ell +h^{\vee}=\frac{k+h^{\vee}}{(k+1)+ h^{\vee}},\end{equation} which descends to an isomorphism of simple vertex algebras $\cW_{\ell}(\gg)\cong \text{Com}(L_{k+1}(\gg), L_k(\gg)\otimes L_1(\gg))$ for all admissible levels $k$. This generalizes the Goddard-Kent-Olive (GKO) construction of the Virasoro algebra \cite{GKO} (the case $\gg = \gs\gl_2$), so we call the cosets on the right hand side of \eqref{eq:cosetrealization} {\it GKO cosets}. When $k$ is an admissible level for $\widehat{\gg}$, $\ell$ is a nondegenerate admissible level, so $\cW_{\ell}(\gg)$ is strongly rational (i.e., lisse and rational) \cite{Ar1, Ar2}. In type $A$, note that the rank $n$ $bc$-system $\cE(n)$ is a conformal extension of $\cH \otimes L_1(\gs\gl_n)$ where $\cH$ is a Heisenberg algebra, and that $\text{Com}(V^{k+1}(\gs\gl_n),V^k(\gs\gl_n)\otimes L_1(\gs\gl_n))$ can be replaced with
\begin{equation} \label{GKOalternate} \text{Com}(V^{k+1}(\gg\gl_n),V^k(\gs\gl_n)\otimes \cE(n)).\end{equation} Therefore we often consider diagonal cosets of the form \eqref{GKOalternate} where one of the factors on the right is a free fermion algebra, $\beta\gamma$-system, or a tensor product of these algebras, to be appropriate analogues of GKO cosets.

The {\it Gaiotto-Rap\v{c}\'ak triality conjecture} is a common generalization of both Feigin-Frenkel duality and the coset realization of principal $\cW$-algebras in type $A$. In \cite{GR}, Gaiotto and Rap\v{c}\'ak introduced a family of vertex algebras $Y_{N_1, N_2, N_3}[\psi]$ called $Y$-algebras, which are indexed by three integers $N_1, N_2, N_3\geq 0$ and a complex parameter $\psi$. They are associated to interfaces of twisted $\cN=4$ supersymmetric gauge theories. These interfaces satisfy a permutation symmetry which suggests a triality of isomorphisms of $Y$-algebras. In the case when one of the labels is zero, these $Y$-algebras arise as affine cosets of certain $\cW$-(super)algebras known as {\it hook-type}. The conjecture when one label is zero was proved by the first and third authors in \cite{CL4} by explicitly realizing the $Y$-algebras as $1$-parameter quotients of the universal $2$-parameter vertex algebra $\cW_{\infty}$ of type $\cW(2,3,\dots)$. This is a classifying object for vertex algebras of type $\cW(2,3,\dots, N)$ for some $N$ satisfying some mild hypothesis which was known to physicists since the early 1990s, and was constructed rigorously by the third author in \cite{Lin}.

In \cite{GR}, Gaiotto and Rap\v{c}\'ak also introduced the orthosymplectic $Y$-algebras and conjectured a similar triality of isomorphisms which includes as special cases Feigin-Frenkel duality, as well as the coset realization \eqref{eq:cosetrealization} for type $D$, and a different coset realization for principal $\cW$-algebras of types $B$ and $C$ as well as $\go\gs\gp_{1|2n}$. This conjecture was proven by the first and third authors in \cite{CL4} by realizing these algebras explicitly as simple $1$-parameter quotients of the universal $2$-parameter vertex algebra $\cW^{\text{ev}}_{\infty}$ of type $\cW(2,4,6,\dots)$. This algebra was also known to physicists \cite{CGKV}, and was constructed by Kanade and the third author in \cite{KL}.

Recently, it has been conjectured that the $Y$-algebras are the building blocks for {\it all} $\cW$-algebras in type $A$ in the sense that any such $\cW$-algebra is an extension of a tensor product of finitely many $Y$-algebras; see \cite[Conjecture B]{CFLN}. This is based on \cite[Conjecture A]{CFLN}, which says that the quantum Drinfeld-Solokov reduction can be carried out in stages; see \cite{GJ} for a similar conjecture in the setting of finite $\cW$-algebras. In \cite{CFLN}, this conjecture was proven at the level of graded characters and was also verified by computer for all $\cW$-algebras in type $A$ of rank at most $4$. In \cite{FFFN}, a more complete picture connecting all the $\cW$-algebras of $\gs\gl_4$ via partial and inverse reduction was given, and it was shown that the total and iterated reduction functors are isomorphic on the Kazhdan-Lusztig category.

In \cite{CKoL2}, it was observed that the orthosymplectic $Y$-algebras are some, but not all, of the building blocks needed for $\cW$-algebras of types $B$, $C$, and $D$. The nilpotents $f_{2n+1,2n+1} \in \gs\gp_{2(2n+1)}$ and $f_{2n, 2n} \in \gs\go_{4n}$ are {\it indecomposable} in a certain sense, and the corresponding $\cW$-algebras, which have generating type $\cW(1^3, 2, 3^2,\dots)$ are not naturally extensions of two orthosymplectic $Y$-algebras. However, there does exist a new universal $2$-parameter vertex algebra $\cW^{\gs\gp}_{\infty}$ of type $\cW(1^3, 2, 3^3, 4,\dots)$ whose $1$-parameter quotients, together with the $1$-parameter quotients of $\cW^{\text{ev}}_{\infty}$, should be all the building blocks for $\cW$-algebras of types $B,C,D$. A precise conjecture was given as \cite[Conjecture 3.2]{FKN}.

\subsection{New universal vertex algebras as glueings}
In order to motivate our results, we will present a conjectural picture of how $\cW$-algebras of classical Lie types can be organized into families that are governed by new universal $2$-parameter vertex algebras that are themselves glueings of finitely many copies of the basic ones, namely, $\cW_{\infty}$ in type $A$ (together with a Heisenberg algebra), and $\cW^{\text{ev}}_{\infty}$ and $\cW^{\gs\gp}_{\infty}$ in types $B$, $C$, and $D$. The examples corresponding to rectangular nilpotents in type $A$ were constructed recently by Gaiotto, Rap\v{c}\'ak and Zhou in \cite{GRZ}, although their uniqueness as $2$-parameter vertex algebras, which is essential for our applications, was not proven. In general, the $1$-parameter quotients of these $2$-parameter vertex algebras include $\cW$-algebras, affine cosets of certain $\cW$-(super)algebras, and certain diagonal cosets that are analogues of the GKO cosets \eqref{GKOalternate}. In type $A$, this picture was partially described \cite{CFLN}, but the GKO cosets were not given. One can also conjecture similar universal $2$-parameter vertex superalgebras that govern $\cW$-superalgebras of classical types, but for brevity we omit them in this paper.

For $t \geq 0$, consider a set of positive integers $n_0,n_1,\dots, n_t$ with $n_0 > n_1 >  \cdots  > n_t \geq 2$, and consider the partition $P = (n_0^{m_0}, n_1^{m_1},\dots, n_{t}^{m_t})$ of $N = \sum_{i=0}^t n_i m_i$ consisting of $m_i$ parts of size $n_i$. Let $f_P \in \gs\gl_N$ be the nilpotent corresponding to $P$. The strong generating type of $\cW^k(\gs\gl_N, f_P)$ is given by \cite[Proposition 2.1]{CFLN}, and it depends only on the height differences $d_{i+1} = n_i - n_{i+1}$ for $i=0,1,\dots, t-1$ in the following sense. If we fix the smallest height $n_t$, all others $n_0,\dots, n_{t-1}$ are determined from the height differences, and the generating type of  $\cW^k(\gs\gl_N, f_P)$ up to any fixed conformal weight is independent of $n_t$ for $n_t$ sufficiently large.

Note that the affine subalgebra of $\cW^k(\gs\gl_N, f_P)$ is $\cH(t) \otimes \big(\bigotimes_{i=0}^t V^{k_i}(\gs\gl_{m_i})\big)$ where $\cH(t)$ is the rank $t$ Heisenberg algebra, and the levels $k_i$ are all related linearly to $k$. The generating fields are also organized into modules for the affine subalgebra of $\cW^k(\gs\gl_N, f_P)$, and this module structure also depends only on the height differences for $n_t$ sufficiently large. Accordingly, one expects that there is a universal $2$-parameter vertex algebra admitting all the above $\cW$-algebras as $1$-parameter quotients; see Conjecture \ref{universal:typeA}. It depends only on the sets $S = \{d_1, \dots, d_t\}$ and $M = \{m_0,\dots, m_t\}$ of height differences and multiplicities, respectively, so we denote it by $\cW^{A,S,M}_{\infty}$. In the case $S = \emptyset$ and $M = \{1\}$, $\cW^{A,S,M}_{\infty}$ is exactly the algebra $\cW_{\infty}$ constructed in \cite{Lin}. In fact, there are $2$ families of $1$-parameter vertex algebras, each indexed by $\mathbb{N} \times \mathbb{N}$, which we also expect to arise as $1$-parameter quotients of $\cW^{A,S,M}_{\infty}$. These generalize the family $\cW^k(\gs\gl_N, f_P)$ and arise as affine cosets of $\cW$-(super)algebras, and are given by \eqref{Asecond} and \eqref{Athird}. All these vertex algebras should be obtained from $\cW^{A,S,M}_{\infty}$ by taking the quotient by an ideal whose weight zero component is an ideal in the ring of parameters. These ideals correspond to curves in the parameter space called {\it truncation curves} which were described explicitly in the case of $\cW_{\infty}$ in \cite{CL3}. In addition, there are two families of GKO cosets, each indexed by $\mathbb{N}$, which we define to be diagonal affine cosets of $\cW$-algebras for smaller partitions tensored with free field algebras. These are given by \eqref{AGKO}. They have the property that the affine subalgebra $V^{k_t}(\gs\gl_{m_t})$ corresponding to the smallest rectangular block $n_t^{m_t}$ in $P$, has constant level $n$ which is either a positive or negative integer. In particular, the truncation curve corresponds to the ideal $(k_t -n)$ in these examples.

Our picture of unifying $2$-parameter vertex algebras for $\cW$-algebras of type $A$ is closely connected the work of Brundan and Kleshchev \cite{BK} in which all finite $\cW$-algebras of type $A$ are realized as truncations of shifted Yangians \cite{BK}. It was first observed by Ragoucy and Sorba \cite{RS} that the finite $\cW$-algebra $\cW_{\text{fin}}(\gs\gl_{mn}, f_{n^m})$ corresponding to the rectangular partition with $m$ parts of size $n$, is isomorphic to the level $n$ truncation of the Yangian $Y_m:=Y(\gg\gl_m)$. In \cite{BK}, Brundan and Kleshchev introduced the shifted Yangians $Y_m(\sigma)$ and showed that for a partition $P$ of $N$ with $m$ parts whose height differences are determined by the shift matrix $\sigma$, the finite $\cW$-algebra $\cW_{\text{fin}}(\gs\gl_N, f_P)$ is a truncation of $Y_m(\sigma)$. 

In the rectangular case $f_P$ for $P = (n^m)$, a generalization to the affine setting was proven by Kodera and Ueda in \cite{KU}: they constructed a map (after a suitable completion) from the affine Yangian $Y^{\text{aff}}_m: = Y(\hat{\gg\gl}_m)$ to the mode algebra of the corresponding rectangular $\cW$-algebra $\cW^k(\gs\gl_{nm}, f_{n^m})$, for all $n$. In \cite{GRZ}, Gaiotto, Rap\v{c}\'ak and Zhou related the mode algebra of the $2$-parameter vertex algebra $\cW^{(K)}_{\infty}$, which in our notation is $\cW^{A,\emptyset, \{m\}}_{\infty}$ with $m = K$, with the affine Yangian $Y(\hat{\gg\gl}_K)$. They also related the Zhu algebra $A(\cW^{(K)}_{\infty})$ to the finite Yangian $Y_K$.

%Furthermore, it was shown in \cite{GRZ} that the $2$-parameter vertex algebra $\cW^{(K)}_{\infty}$, which in our notation is $\cW^{A,\emptyset, \{m\}}_{\infty}$ with $m = K$, has the following properties:
%\begin{enumerate}
%\item The mode algebra $U(\cW^{(K)}_{\infty})$ after suitable completion and localization, is isomorphic to $Y(\hat{\gg\gl}_K)$ \cite[Theorem 17]{GRZ},
%\item The Zhu algebra $A(\cW^{(K)}_{\infty})$ is isomorphic to $Y_K$ %\cite[Theorem 7]{GRZ}.
%\end{enumerate} 
%This implies that all $1$-parameter quotients of $\cW^{(K)}_{\infty}$ have the property that their mode algebras are quotients of $Y(\hat{\gg\gl}_K)$ (after completion), and their Zhu algebras are quotients of $Y_K$. 

We propose the following generalization.
\begin{conjecture} \label{universal:typeAYangian}  Let $P = (n_0^{m_0}, n_1^{m_1},\dots, n_{t}^{m_t})$ be the partition of $N = \sum_{i=0}^t n_i m_i$ as above, where $n_0 > n_1 > \cdots > n_t \geq 2$, $d_{i+1} = n_i - n_{i+1}$ for $i = 0,\dots, t-1$. Let $S = \{d_1,\dots, d_t\}$ and $M = \{m_0,\dots, m_t\}$ as above, and let $m = \sum_{i=1}^t m_i$ be the number of parts of $P$, and let $\sigma$ be the corresponding shift matrix. There is a suitable notion of shifted affine Yangian $Y^{\text{aff}}_m(\sigma)$ with the following properties:
\begin{enumerate}
\item The mode algebra $U(\cW^{A,S,M}_{\infty})$ after suitable completion and localization, is isomorphic to $Y^{\text{aff}}_m(\sigma)$.
\item The Zhu algebra $A(\cW^{A,S,M}_{\infty})$ is isomorphic to $Y_{m}(\sigma)$.
\end{enumerate}
\end{conjecture}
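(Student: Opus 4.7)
The plan is to first formulate a precise definition of the shifted affine Yangian $Y^{\text{aff}}_m(\sigma)$, and then match it with a suitable completion of the mode algebra of $\cW^{A,S,M}_{\infty}$. A natural candidate for $Y^{\text{aff}}_m(\sigma)$ combines the shifted Drinfeld presentation of $Y_m(\sigma)$ from Brundan-Kleshchev with the loop-type Cartan generators of the Kodera-Ueda affine Yangian $Y(\hat{\gg\gl}_m)$, the shift matrix $\sigma$ entering through the degree offsets in the defining power series. I would establish the basic structural properties of this algebra, notably a PBW theorem and a triangular decomposition, so that its homomorphisms can be detected on generators. Axiomatizing this object correctly is the principal foundational task.

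Next, I would construct a homomorphism
\[
\Phi \colon Y^{\text{aff}}_m(\sigma) \longrightarrow \widehat{U}(\cW^{A,S,M}_{\infty})_{\text{loc}}
\]
generalizing the Gaiotto-Rap\v{c}\'ak-Zhou map in the rectangular case $S = \emptyset$, $M = \{m\}$. The strategy is to exploit the affine subalgebra $\cH(t) \otimes \bigotimes_{i=0}^t V^{k_i}(\gs\gl_{m_i})$: its action organizes the strong generating fields of $\cW^{A,S,M}_{\infty}$ into isotypic components, and the Cartan generators $D_i(u)$ of $Y^{\text{aff}}_m(\sigma)$ should be sent to generating series built from the modes of these affine currents together with their higher-spin counterparts. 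The shift $\sigma$ manifests as the nontrivial distribution of conformal weights of the lowest weight vectors across isotypic components, dictated by the height differences $d_i \in S$. The raising and lowering generators $E_i^{(r)}, F_i^{(r)}$ should come from modes of the low weight fields connecting neighboring blocks, as in the rectangular model.

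To prove that $\Phi$ is an isomorphism, I would argue by compatibility with the families of truncation curves for $\cW^{A,S,M}_{\infty}$. For each admissible smallest height $n_t$, the corresponding $1$-parameter quotient $\cW^k(\gs\gl_N, f_P)$ of $\cW^{A,S,M}_{\infty}$ should match, on the Yangian side, a level-$n_t$ truncation $Y^{\text{aff}}_m(\sigma)^{n_t}$, extending the Brundan-Kleshchev and Ragoucy-Sorba truncations to the affine setting. Running $n_t$ over all sufficiently large values and using the fact that any nonzero element of $Y^{\text{aff}}_m(\sigma)$ survives in some such truncation yields injectivity, while surjectivity follows from the strong generation of $\cW^{A,S,M}_{\infty}$ by the fields in the image of $\Phi$. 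Part (2) is then extracted from part (1) by passing to the Zhu algebra: on the vertex algebra side, this projects onto the algebra of zero modes modulo the relations imposed by positive weight elements, and on the Yangian side the analogous collapse eliminates the loop direction and recovers $Y_m(\sigma)$, exactly as in the unshifted case of \cite{GRZ}.

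The principal obstacle is defining $Y^{\text{aff}}_m(\sigma)$ correctly and proving its PBW theorem; without these, the isomorphism statement is not even well-posed. A secondary technical difficulty is identifying the correct completion and localization: the raw universal enveloping will not be isomorphic to the Yangian, and one must invert precisely the right expressions in the zero modes of the Heisenberg subalgebra $\cH(t)$, in analogy with the rectangular case. I would calibrate the putative definition by explicit low conformal weight computations, similar to the character checks carried out in \cite{CFLN}, before attempting the general argument.
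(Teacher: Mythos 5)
The statement you are addressing is Conjecture \ref{universal:typeAYangian}, and the paper gives no proof of it: it is stated as an open conjecture, explicitly predicated on Conjecture \ref{universal:typeA} (the existence and uniqueness of $\cW^{A,S,M}_{\infty}$ itself, which the paper also does not prove except in the unrelated type-$BD$ special case $\cW^{\gs\go_2}_{\infty}$) and on the existence of ``a suitable notion of shifted affine Yangian,'' which the paper likewise does not supply. So there is no proof in the paper to compare yours against; what you have written is a research program, and it should be assessed as such.

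As a program it is sensible and consistent with the evidence the authors themselves cite (Brundan--Kleshchev for the finite shifted case, Ragoucy--Sorba and Kodera--Ueda for rectangular truncations, Gaiotto--Rap\v{c}\'ak--Zhou for the affine rectangular case $S=\emptyset$, $M=\{m\}$), but it is not a proof, and you correctly identify that every load-bearing step is open. Concretely: (i) the object $Y^{\text{aff}}_m(\sigma)$ has no accepted definition, and your proposed hybrid of the Brundan--Kleshchev shifted Drinfeld presentation with the Kodera--Ueda loop Cartans would need its own PBW theorem, which in the affine setting is a serious theorem even unshifted; (ii) the target $\cW^{A,S,M}_{\infty}$ is itself conjectural, so the homomorphism $\Phi$ has neither a well-defined source nor a well-defined target; (iii) your injectivity argument assumes that the $\cW$-algebras $\cW^k(\gs\gl_N,f_P)$ are truncations of $Y^{\text{aff}}_m(\sigma)$ in a sense compatible with letting $n_t\to\infty$, which is exactly the affine analogue of Brundan--Kleshchev that is only known for rectangular $P$; and (iv) the passage to Zhu algebras in part (2) is asserted rather than argued --- even in the rectangular case of \cite{GRZ} the identification of $A(\cW^{(K)}_\infty)$ with the finite Yangian requires work and is not a formal ``collapse of the loop direction.'' None of this makes your outline wrong, but as written it proves nothing that the paper does not already assert heuristically, and it should not be presented as a proof of the conjecture.
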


Together with Conjecture \ref{universal:typeA}, this would imply that there are surjective maps (after completion) from $Y^{\text{aff}}_m(\sigma)$ to the mode algebras of the vertex algebras \eqref{Afirst}, \eqref{Asecond}, \eqref{Athird}, and \eqref{AGKO}. In particular, the GKO coset $\text{Com}(V^{k'+m_t}(\gg\gl_s), \cW^k(\gs\gl_{N_{-2,s}}, f_{S, M,-2,s}) \otimes \cE(m_t s))$ given by \eqref{AGKO} contains the simple affine subVOA $L_s(\gs\gl_{m_t})$, so $Y^{\text{aff}}_m(\sigma)$ would then admit a nontrivial quotient containing the (completed) mode algebra of $L_s(\gs\gl_{m_t})$. Similarly, $Y_m(\sigma)$ would then admit an interesting quotient containing the (finite-dimensional) Zhu algebra $A(L_s(\gs\gl_{m_t}))$.

In orthosymplectic types, we conjecture the existence of similar $2$-parameter universal vertex algebras $\cW^{C,S,M}_{\infty}$ and $\cW^{BD,S,M}_{\infty}$, which also depend on sets $S$ and $M$ with some restrictions on the multiplicities $m_i \in M$ to account for the fact that nilpotents in type $C$ correspond to partitions whose odd parts have even multiplicities, and partitions in types $B$ and $D$ correspond to partitions whose even parts have even multiplicities. These are described in Conjectures \ref{universal:typeC} and Conjecture \ref{universal:typeBD}. Each of these will admit $8$ $\mathbb{N}\times \mathbb{N}$ families of $1$-parameter quotients which are either $\cW$-algebras or (orbifolds of) affine cosets of $\cW$-algebras; these are given by \eqref{Cfirst}, \eqref{Csecond} and \eqref{Cthird} for $\cW^{C,S,M}_{\infty}$, and by \eqref{BDfirst}, \eqref{BDsecond} and \eqref{BDthird} for $\cW^{BD,S,M}_{\infty}$. Similarly, there are $4$ families of GKO cosets for both $\cW^{C,S,M}_{\infty}$ and $\cW^{BD,S,M}_{\infty}$ which are each indexed by $\mathbb{N}$; see \eqref{CGKO} and \eqref{BDGKO}. As in type $A$, the GKO cosets have the property that the level of the affine subVOA corresponding to the lowest rectangular block is a constant in $\frac{1}{2} \mathbb{Z}$ for $\cW^{C,S,M}_{\infty}$, and in $\mathbb{Z}$ for $\cW^{BD,S,M}_{\infty}$. 

Generalizations of the work of Brundan and Kleschchev \cite{BK} to types $B$, $C$, and $D$ have been studied in special cases in \cite{Brown,DSKV}, and the general picture appeared in a recent work of Lu, Peng, Tappeiner, Topley and Wang \cite{LPTTW}. In this paper, the shifted twisted Yangians were introduced, and for all even nilpotents in types $B$ and $C$ (and conjecturally in type $D$), the corresponding finite $\cW$-algebras were shown to be truncations of shifted twisted Yangians. For rectangular nilpotents of type $D$, there is an affine analogue which was proven by Ueda \cite{U21}: he constructed a map (after completion) from the twisted affine Yangian $Y(\hat{\gs\gp}_{2m})$ to the mode algebra of $\cW^k(\gs\go_{4nm}, f_{(2n)^{2m}})$. These $\cW$-algebras are expected to be $1$-parameter quotients of $\cW^{C,\emptyset, \{2m\}}_{\infty}$, and by analogy with \cite[Theorem 17]{GRZ}, one expects that after completion and localization, $Y(\hat{\gs\gp}_{2m})$ is isomorphic to the mode algebra of $\cW^{C,\emptyset, \{2m\}}_{\infty}$. Similar results for twisted affine Yangians in type $D$ corresponding to rectangular nilpotents also appear in \cite{HU25,UtypeD25}. More generally, one expects that after completion and localization, the mode algebras of $\cW^{C,S,M}_{\infty}$ and $\cW^{BD,S,M}_{\infty}$ will admit isomorphisms of suitable shifted twisted affine Yangians in orthosymplectic types, and each $1$-parameter quotient of $\cW^{C,S,M}_{\infty}$ and $\cW^{BD,S,M}_{\infty}$ will give rise to an ideal in the shifted twisted affine Yangian. As in type $A$, we expect the shifted twisted Yangians in \cite{LPTTW} to arise as Zhu algebras of the vertex algebras $\cW^{C,S,M}_{\infty}$ and $\cW^{BD,S,M}_{\infty}$. Finally, $\cW^{C,S,M}_{\infty}$ and $\cW^{BD,S,M}_{\infty}$ are expected to admit the GKO cosets \begin{equation*} \begin{split} & \text{Com}(V^{k'+m_t}(\gs\gp_{2s}), \cW^k(\go\gs\gp_{N_{-1,0}|2s}, f^{2BD}_{S,M,-1,0}) \otimes \cF(2m_t s)),
\\ & \text{Com}(V^{k' + m_t}(\gs\go_s), \cW^k(\go\gs\gp_{s|N_{-1,0}}, f^{2C}_{S,M,-1,0}) \otimes \cF(m_t s))^{\mathbb{Z}_2},\end{split} \end{equation*} respectively, as $1$-parameter quotients; see \eqref{CGKO} and \eqref{BDGKO}. These contain simple affine subVOAs $L_s(\gs\gp_{2m_t})$, and $L_s(\gs\go_{m_t})$, respectively. This suggests that the corresponding shifted twisted Yangians admit interesting quotients containing the finite-dimensional Zhu algebras $A(L_s(\gs\gp_{2m_t}))$ and $A(L_s(\gs\go_{m_t}))$, respectively.

%In the case corresponding to $\cW^{C,\emptyset,\{2m\}}_{\infty}$, the map of the corresponding twisted affine Yangian to the mode algebra of rectangular $\cW$-algebras of type $D$ (after completion) has been constructed by Ueda in \cite{U21}. The general picture for shifted twisted Yangians and finite $\cW$-algebras for even nilpotents in orthosymplectic types has appeared very recently in work of Lu, Peng, Tappeiner, Topley and Wang \cite{LPTTW}. 

In addition to providing insight into the structure of $\cW$-algebras, constructing the universal objects $\cW^{X,S,M}_{\infty}$ for $X = A,C,BD$, has at least three applications to vertex algebras which have been discussed for the basic examples $\cW_{\infty}$, $\cW^{\text{ev}}_{\infty}$, and $\cW^{\gs\gp}_{\infty}$ in our previous work.
\begin{enumerate}
\item Under some conditions on $S$ and $M$ (and especially for the super analogues of $\cW^{X,S,M}_{\infty}$ which are unifying algebra for $\cW$-superalgebras), we expect that there will be isomorphisms between certain $1$-parameter quotients of these objects, which are analogues of the Gaiotto-Rap\v{c}\'ak trialities. In type $A$, some conjectural examples are given by \cite[Conjectures 3.6 and 3.8]{CFLN}. %We expect such dualities of $1$-parameter vertex algebras to be quite rare, at the moment we lack a complete picture of when they should occur. 
\item The property that $\cW^{X,S,M}_{\infty}$ is a glueing of the basic $2$-parameter vertex algebras $\cW_{\infty}$, $\cW^{\text{ev}}_{\infty}$, and $\cW^{\gs\gp}_{\infty}$ yields many $1$-parameter embeddings of $\cW$-algebras inside larger $\cW$-algebras, which are expected to induce conformal embeddings of simple quotients at special levels. The problem of discovering and classifying such conformal embeddings and determining when the larger $\cW$-algebra collapses to the smaller one, vastly generalizes the program of classifying conformal embeddings of affine vertex algebra inside $\cW$-algebras; see for example \cite{AKMPP17,AKMPP18,AMP23,AEM24,AACLMPP25}. \item Distinct $1$-parameter quotients of a universal $2$-parameter vertex algebra are in bijection with their truncation curves, and we expect that at intersection points of these curves, we have nontrivial isomorphisms between the simple quotients. The GKO cosets play a special role because they often have many quotients which are strongly rational; see Examples \ref{rectA}, \ref{rectC}, and \ref{rectBD} for the rectagular cases, as well as Example \ref{Asecondexample} for a non-rectangular case in type $A$. Pointwise coincidences between GKO cosets and other algebras sometimes allow rationality results to be discovered and proven, both for $\cW$-algebras at non-admissible levels and for $\cW$-superalgebras; see for example \cite{CKoL2,CFKLN}. For this application, it is essential that $\cW^{X,S,M}_{\infty}$ has exactly $2$ free parameters, but even when this has been proven, there are still substantial challenges to obtaining the desired isomorphisms from intersection points; see the discussion in Subsection \ref{subsection:voaring}.
\end{enumerate}

\subsection{Main result} In this paper, we will construct the first nontrivial example of such a universal $2$-parameter vertex algebra which is a glueing of basic ones. This will be $\cW^{BD, \emptyset, \{2\}}_{\infty}$, which is a companion to the algebra $\cW^{C, \emptyset, \{2\}}_{\infty} = \cW^{\gs\gp}_{\infty}$ constructed in \cite{CKoL2}. Since its affine subalgebra is of type $\gs\go_2$, we will denote $\cW^{BD, \emptyset, \{2\}}_{\infty}$ by $\cW^{\gs\go_2}_{\infty}$ throughout the paper. Our first main result, which is a paraphrasing of Theorems \ref{thm:induction} and \ref{one-parameter quotients theorem}, and Corollary \ref{Wso freely generated}, is the following.
\begin{theorem} There exists a unique $2$-parameter vertex algebra $\cW^{\gs\go_2}_{\infty}$ with the following features:
\begin{enumerate}
\item It is defined over a finite localization of the polynomial ring $\mathbb{C}[c,k]$ and is freely generated of type
\begin{equation} \label{so2starting.intro} \cW(1, 2^3, 3, 4^3, 5, 6^3,\dots),\end{equation} and is weakly generated by the fields in weights $1$ and $2$.
\item The field $H$ in weight $1$ generates a Heisenberg algebra of level $k$, i.e. $H(z) H(w) \sim k (z-w)^{-2}$.
\item The three fields in each even weight $2,4,6,\dots$ have Heisenberg charges $0,\pm 2$.
\item The field in weight $2$ with Heisenberg charge $0$ is a conformal vector with central charge $c$.
\item The fields in each odd weight $3,5,\dots$ have Heisenberg charge $0$.
\end{enumerate}

Moreover, $\cW^{\gs\go_2}_{\infty}$ serves as a classifying object for vertex algebras with these properties; any vertex algebra with a strong generating set of type \eqref{so2starting.intro} (not necessarily minimal) satisfying the above conditions, is a quotient of $\cW^{\gs\go_2}_{\infty}$.
\end{theorem}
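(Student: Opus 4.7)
The proof has two main components: a uniqueness statement establishing that there is at most one $2$-parameter vertex algebra over a localization of $\mathbb{C}[c,k]$ with the listed generators and OPE structure, and an existence statement realizing it concretely. The plan is to establish uniqueness first by induction on conformal weight, working in the general setting of a vertex algebra $\cA$ with a generating set satisfying properties (1)--(5), and then to produce existence through a glueing construction of two copies of $\Wev$ along a Heisenberg algebra.

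For uniqueness, my approach is the method used for $\Winf$ in \cite{Lin}, $\Wev$ in \cite{KL}, and $\cW^{\gs\gp}_{\infty}$ in \cite{CKoL2}. Fix a generating set $\{H, W^{0}_{n}, W^{\pm}_{2n}\}$ where $W^{0}_n$ has weight $n$ and Heisenberg charge $0$, and $W^{\pm}_{2n}$ have weight $2n$ and charge $\pm 2$. By general VOA considerations, all OPEs among these generators must take the form of linear combinations of normally-ordered monomials in the generators (and their derivatives) of matching total weight and charge. The unknowns in each OPE are finitely many structure constants in the ring of parameters. Impose the Jacobi identities $(AB)C - A(BC) - (\text{commutator term}) = 0$ for all triples of generators, order by total weight. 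At each weight, previously determined structure constants enter linearly, and the new constants are constrained by a linear system over the base ring. I then show that after a finite localization, the system admits a unique solution once $c$ and $k$ are specified, the extra freedom being absorbed into rescaling the primary generators and adjusting normalizations. This proves that the OPE algebra is determined by $(c,k)$, and hence any $\cA$ as in the hypothesis is a quotient of a candidate universal object, which must be unique.

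For existence, the strategy is to realize $\Wso$ inside $\Wev \otimes \Wev \otimes \cH$, appropriately extended by Heisenberg lattice-type modules that couple the two $\Wev$ factors with opposite charges. The two copies of $\Wev$ supply the weight $2,4,6,\dots$ primaries of charge $0$ (combinations of the two stress tensors and higher even-spin primaries) as well as, after twisting by the Heisenberg, the charge $\pm 2$ primaries in each even weight. The odd-weight generators, which must have charge $0$, arise from cross terms that mix both $\Wev$ factors symmetrically. The strong generation of type $\cW(1, 2^3, 3, 4^3, 5, 6^3, \ldots)$ is checked by comparing characters: the graded character of this glueing at generic $(c,k)$ must agree with the PBW character of a free algebra on the stated generators, which then forces free generation. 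Verifying all axioms reduces to a computation of OPEs in the glued algebra, using the extension data, which makes property (2)--(5) automatic.

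The main obstacle, and the reason the argument is nontrivial, is ensuring compatibility of the Jacobi relations at every weight: a priori the recursion could become overdetermined and force $c$ or $k$ to lie on a specific curve, killing the 2-parameter nature. The existence half of the argument (via the glueing) rescues this: because we exhibit an explicit vertex algebra genuinely depending on two free parameters whose OPEs satisfy the uniquely determined form from the recursion, no obstruction can appear. Concretely, one shows that the inductive determination of structure constants proceeds without collapse up to any fixed weight by checking it against the glued realization, and then appeals to the fact that vanishing of a Jacobiator is a polynomial condition that holds identically on a Zariski dense subset of parameter space, hence everywhere. The classifying property in the final sentence of the theorem follows formally: any $\cA$ with a (not necessarily minimal) strong generating set of the stated type admits a VOA map from $\Wso$ by matching the generators in weights $1$ and $2$ and invoking the uniqueness of OPEs; surjectivity is given since $\cA$ is weakly generated by these low-weight fields in the same way $\Wso$ is.
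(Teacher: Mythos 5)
Your uniqueness argument (inductive determination of the OPE structure constants by imposing Jacobi identities weight by weight, after a finite localization of $\mathbb{C}[c,k]$) is essentially the paper's Steps 1--3 in Section \ref{sect:main}, and you correctly identify the real difficulty: the recursion only imposes a subset of the Jacobi identities, so one must separately rule out the possibility that the remaining identities force a collapse onto a curve in the $(c,k)$-plane, i.e.\ one must prove free generation.

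The gap is in how you close that loop. You propose to certify free generation by exhibiting $\Wso$ as an extension of $\Wev \otimes \Wev \otimes \cH$ by ``Heisenberg lattice-type modules,'' and then comparing characters. But that glueing is precisely the hard object whose existence is at stake: to build it you would need to specify a $2$-parameter family of $\Wev\otimes\Wev$-modules carrying the charge $\pm 2$ generators, prove that the corresponding intertwining operators close into a vertex algebra over the whole parameter space, and compute the graded character of the resulting extension --- none of which is easier than the original problem, and the paper explicitly notes that the analogous glueing constructions of Gaiotto--Rap\v{c}\'ak--Zhou do not come with a uniqueness (or freeness) proof. Moreover, the decomposition into two commuting copies of $\Wev$ turns out to require adjoining a quadratic irrationality $\omega$ to $\mathbb{C}[c,k]$ and is derived in the paper only \emph{after} $\Wso$ has been constructed (Theorem \ref{completion:twocopies}); it is an output, not an input. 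What the paper actually uses to certify free generation is the family of $\cW$-algebras $\cC^{\psi}_{CC}(0,m)=\cW^{\psi-2m-2}(\gs\gp_{4m},f_{2m,2m})$ and $\cC^{\psi}_{BD}(0,m)=\cW^{\psi-4m}(\gs\go_{4m+2},f_{2m+1,2m+1})$: these exist by Drinfeld--Sokolov reduction, are freely generated by the Kac--Wakimoto structure theorem (Theorem \ref{thm:kacwakimoto}), are shown to be weakly generated by their weight $1$ and $2$ fields (Theorem \ref{weakgeneration:Walgebras}), and have the free graded character up to weight $2m$ for every $m$; since a nontrivial Jacobiator would produce a relation in some fixed weight for all but finitely many points of every truncation curve, this forces all Jacobi identities to hold identically (Corollary \ref{Wso freely generated}). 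To repair your argument you should replace the unproven glueing with these concrete $\cW$-algebra quotients, or else supply an actual construction of the extension you invoke.
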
 
There are $8$ infinite families of $1$-parameter quotients of $\cW^{\gs\go_2}_{\infty}$ which are either $\cW$-algebras of a type we call {\it $\gs\go_2$-rectangular}, or (orbifolds of) affine cosets of $\cW$-algebras of a type we call {\it $\gs\go_2$-rectangular with a tail}. They are denoted by $\cC^{\psi}_{XY}(n,m)$ where $X = B,C$ and $Y = B,C,D,O$, and $n,m \in \mathbb{N}$, and are just the specializations of \eqref{BDfirst}, \eqref{BDsecond}, and \eqref{BDthird}. The construction of $\cW^{\gs\go_2}_{\infty}$ is similar to the constructions of $\cW_{\infty}$, $\cW^{\text{ev}}_{\infty}$, and $\cW^{\gs\gp}_{\infty}$ in \cite{Lin,KL,CKoL2}. Unlike  $\cW_{\infty}$, $\cW^{\text{ev}}_{\infty}$, the $1$-parameter vertex algebras $\cC^{\psi}_{XY}(n,m)$ are all distinct, and there are no triality isomorphisms among them. In addition to $\cC^{\psi}_{XY}(n,m)$, we also have the GKO cosets obtained by specializing \eqref{BDGKORect} to the case $m=2$. We denote them by $\cC^{\ell}(n)$, where $\ell$ is a complex parameter related to the central charge, $n \in \mathbb{Z}$ is fixed, and $\cC^{\ell}(n)$ contains a Heisenberg algebra with level $n$ normalized so that the fields in even weights have Heisenberg charges $0, \pm 2$.

%They are defined as follows:
% \begin{equation} \begin{split} \label{intro:diagcoset}
%\cC^{\ell}\left(n \right) & = \T{Com}(V^{\ell}(\fr{so}_n), V^{\ell-2}(\fr{so}_n)\otimes \cF(2n))^{\mathbb{Z}_2}, \ n \in \mathbb{Z}_{\geq 1},
%\\ \cC^{\ell}(-2n) & = \text{Com}(V^{\ell}(\fr{sp}_{2n}), V^{\ell+1}(\fr{sp}_{2n}) \otimes \cS(2n)),\ n \in \mathbb{Z}_{\geq 1},
%\\ \cC^{\ell}(-2n+1) & = \text{Com}(V^{\ell}(\fr{osp}_{1|2n}), V^{\ell+1}(\go\gs\gp_{1|2n}) \otimes \cS(2n)\otimes \cF(2))^{\mathbb{Z}_2}, \ n \in \mathbb{Z}_{\geq 1}.
%\end{split} \end{equation}

Our next main result is that $\cW^{\fr{so}_2}_{\infty}$ is a glueing of two copies of $\cW^{\text{ev}}_{\infty}$; see Theorem \ref{completion:twocopies}. The parameters of one copy are completely determined by the parameters of the other copy. Each of the $1$-parameter quotients $\cC^{\psi}_{XY}(n,m)$ and $\cC^{\ell}(n)$ is a glueing of two orthosymplectic $Y$-algebras which we write down explicitly; see Theorem \ref{pairsofcurves}. In fact, this yields $1$-parameter embeddings of hook-type $\cW$-algebras inside larger $\cW$-algebras; see Corollary \ref{pairsofcurvesrefined}.

Next, we show that for $n \in \mathbb{N}$ and $\ell-2$ an admissible level for $\gs\go_{2n}$, the simple quotient $\cC_{\ell}(2n)$ of 
$\cC^{\ell}(2n):= \T{Com}(V^{\ell}(\fr{so}_{2n}), V^{\ell-2}(\fr{so}_{2n})\otimes \cF(4n))^{\mathbb{Z}_2}$, is strongly rational; see Lemma \ref{rationalfamily}. It follows that $\cW^{\fr{so}_2}_{\infty}$ has many quotients which are strongly rational; see Corollary \ref{cor:rationalityatell}. Using intersections of truncation curves, we will use this to prove the rationality of the $\cW$-superalgebras $\cW_{\frac{4m+2n-1}{4m}-h^{\vee}}(\go\gs\gp_{1|4m}, f_{2m,2m})$ for $m\geq 2n-1$ and $m,2n-1$ coprime. At the moment, such results are otherwise inaccessible, and this is a prototype for one of the applications of universal families that we have in mind.

%Although we are not able to represnt

\subsection{Organization} In Section \ref{sec:VOA} we briefly recall free field algebras, affine vertex algebras, and $\cW$-algebras following the notation in the papers \cite{CL3,CL4} of the first and third authors. In Section \ref{sect:generalgluings}, we present the conjectural description of the universal objects $\cW^{X,S,M}_{\infty}$ for $X = A, C, BD$, as well as the explicit description of their $1$-parameter quotients which are the analogues of Gaiotto-Rap\v{c}\'ak $Y$-algebras and GKO cosets. In Section \ref{sect:YtypeSO} we specialize to the case of $\cW^{BD, \emptyset, \{2\}}_{\infty} = \cW^{\gs\go_2}_{\infty}$, and we introduce the $\gs\go_2$-rectangular $\cW$-algebras as well as those with a tail, and define the $8$ families vertex algebras which have strong generating type \eqref{so2starting.intro}. In Section \ref{sect:Diagonal}, we introduce the $4$ families of GKO cosets that also have this strong generating type. In Section \ref{sect:main}, we construct the universal $2$-parameter vertex algebra $\cW^{\gs\go_2}_{\infty}$ of this type, which is our main result. In Section \ref{sect:1paramquot}, we prove that the $Y$-algebras of type $C$ and the diagonal cosets indeed are $1$-parameter quotients of $\cW^{\gs\gp}_{\infty}$. In Section \ref{sect:recon}, we prove a reconstruction theorem (Theorem \ref{thm:reconstruction}) that says that the full OPE algebra of an $\gs\go_2$-rectangular $\cW$-algebra with a tail, which is an extension of an affine vertex algebra and $\cW^{\gs\go_2}_{\infty}$-quotient, is uniquely and constructively determined by the conformal weight and parity of the extension fields and the zero mode action of the Lie algebra on the extension fields. In Section \ref{sect:twocopies}, we show that $\cW^{\gs\go_2}_{\infty}$ is a conformal extension of a tensor product of two copies of $\cW^{\text{ev}}_{\infty}$, and moreover, each of the $1$-parameter quotients of $\cW^{\gs\go_2}_{\infty}$ discussed earlier is itself a conformal extension of two orthosymplectic $Y$-algebras. In Section \ref{sect:rationality}, we will prove our rationality results for $\cW$-superalgebras.

	\section{Vertex algebras} \label{sec:VOA} We will assume that the reader is familiar with vertex algebras, and we use the same notation as the paper \cite{CL3} of the first and third authors. We will make use of the following well-known identities which hold in any vertex algebra $\cA$.
	\begin{equation}\label{conformal identity}
		(\partial a)_{(r)}b=-ra_{(r-1)}b,\quad r\in\mathbb{Z}.
	\end{equation}
	\begin{equation}\label{skew-symmetry}
		a_{(r)}b=(-1)^{|a||b|+r+1}b_{(r)}a + \sum_{i = 1}^{\infty}\frac{(-1)^{|a||b|+r+i+1}}{i!}\partial^i \left(b_{(r+i)}a\right),\quad r\in\mathbb{Z}.
	\end{equation}
	\begin{equation}\label{quasi-associativity}
		:a (:bc:):\ =\ :(:ab:)c: +\sum_{i=0}^{\infty}\frac{1}{(i+1)!}\left(:\!\partial^{i+1}(a) (b_{(i)}c)\!:+(-1)^{|a||b|}:\!\partial^{i+1}(b) (a_{(i)}c)\!:\right).
	\end{equation}
	\begin{equation}\label{quasi-derivation}
		a_{(r)}:bc:\ =\ :(a_{(r)}b)c:+(-1)^{|a||b|}: b(a_{(r)}c):+\sum_{i=1}^r\binom{r}{i}(a_{(r-i)}b)_{(i-1)}c, \quad r\geq 0.
	\end{equation}
	\begin{equation}\label{Jacobi}
		a_{(r)}(b_{(s)}c) = (-1)^{|a||b|}b_{(s)}(a_{(r)}c) + \sum_{i=0}^r \binom{r}{i}(a_{(i)}b) _{(r+s-i)}c, \quad r,s\geq 0.
	\end{equation}
	Identities \eqref{Jacobi} are known as {\it Jacobi identities}, and we often denote them using the shorthand $J_{r,s}(a,b,c)$. We use $J(a,b,c)$ to denote the set of all Jacobi identities $\{J_{r,s}(a,b,c)|r,s\geq 0\}$.

	\subsection{Free field algebras}
	A {\it free field algebra} is a vertex superalgebra $\cV$ with weight grading 
	\[\cV = \bigoplus_{d\in \frac{1}{2}\mathbb{Z}_{\geq 0}}\cV[d],\quad \cV[0] = \mathbb{C}\B{1},\] 
	with strong generators $\{X^i | i\in I\}$ satisfying OPE relations
	\[X^i(z)X^j(w)\sim a_{i,j} \B{1}(z-w)^{-\Delta(X^i)-\Delta(X^j)}, \quad a_{i,j}\in\mathbb{C}, \quad a_{i,j} =0 \T{ if }\Delta(X^i)+\Delta(X^j)\not\in\mathbb{Z}.\]
	Note $\cV$ is not assumed to have a conformal structure. In \cite{CL3}, the first and third authors introduced the following families of free field algebras.
	\begin{enumerate} 
		\item Even algebras of orthogonal type $\cO_{\text{ev}}(n,k)$ for $n\geq 1 $ and even $k \geq 2$. When $k =2$, $\cO_{\text{ev}}(n,2)$ is just the rank $n$ Heisenberg algebra $\cH(n)$.
		\item Odd algebras of orthogonal type $\cO_{\text{odd}}(n,k)$ for $n\geq 1 $ and odd $k \geq 1$. When $k = 1$, $\cO_{\text{odd}}(n,1)$ is just the rank $n$ free fermion algebra $\cF(n)$.
		\item Even algebras of symplectic type $\cS_{\text{ev}}(n,k)$ for $n\geq 1$ and odd $k \geq 1$. When  $k = 1$, $\cS_{\text{odd}}(n,1)$ is just the rank $n$ $\beta\gamma$-system $\cS(n)$.
		\item Odd algebras of symplectic type $\cS_{\text{odd}}(n,k)$ for $n\geq 1$ and even $k \geq 2$. When  $k = 2$, $\cS_{\text{ev}}(n,2)$ is just the rank $n$ symplectic fermion algebra $\cA(n)$.
	\end{enumerate}
	We refer the reader to \cite{CL3} for the construction and key properties of these algebras.

	\subsection{Affine vertex superalgebras}
	Let $\gg$ be a simple, finite-dimensional Lie superalgebra with normalized Killing form $( \cdot | \cdot )$.
	Let $\{q^{\alpha} | \alpha \in S\}$ be a basis of $\gg$ which is homogeneous with respect to parity.
	Define the corresponding structure constants $\{f^{\alpha,\beta}_{\gamma} | \alpha,\beta,\gamma \in S\}$ by 
	\[[q^{\alpha},q^{\beta}] = \sum_{\gamma \in S} f^{\alpha,\beta}_{\gamma} q^{\gamma}.\]
	The affine vertex algebra $V^k(\gg)$ of $\gg$ at level $k$ is strongly generated by the fields $\{X^{\alpha} | \alpha \in S \}$, satisfying 
	\begin{equation} \label{OPE:affine} X^{\alpha}(z)X^{\beta}(w) \sim k (q^{\alpha}|q^{\beta})\B{1}(z-w)^{-2} + \sum_{\gamma \in S}f^{\alpha,\beta}_{\gamma}X^{\gamma}(w)(z-w)^{-1}.\end{equation}
	We define $X_{\alpha}$ to be the field corresponding to $q_{\alpha}$ where $\{q_{\alpha} | \alpha \in S\}$ is the dual basis of $\gg$ with respect to $(\cdot|\cdot)$.
	The Sugawara conformal vector $L^{\gg}$ with central charge $c^{\fr{g}}$ is given by
	\begin{equation}\label{sugwara}
		L^{\gg}=\frac{1}{2(k+h^{\vee})}\sum_{\alpha \in S}(-1)^{|\alpha|} :\!X_{\alpha}X^{\alpha}\!:, \quad c^{\fr{g}}= \frac{k\ \T{sdim}\gg}{k+h^{\vee}}.
	\end{equation}
	Fields $X^{\alpha}(z)$ and $X_{\alpha}(z)$ are primary with respect to $L^{\gg}$ and have conformal weight of 1.

	\subsection{$\cW$-superalgebras}\label{sect:W}
	Let $\gg$ be a simple, finite-dimensional Lie (super)algebra, and let $f$ be a nilpotent element in the even part of $\gg$. The $\cW$-(super)algebra $\cW^k(\gg,f)$ at level $k$ was defined by Kac, Roan, and Wakimoto \cite{KRW}, generalizing the case when $\gg$ is a Lie algebra and $f$ is a principal nilpotent given earlier by Feigin and Frenkel \cite{FF}. We complete $f$ to an $\fr{sl}_2$-triple $ \{f,h,e\}$ satisfying
	\[[h,e] =2 e, \quad [h,f] = -2f, \quad [e,f] =h.\]
	Then $x= \frac{1}{2}h$ induces a $\frac{1}{2}\mathbb{Z}$-grading on $\gg$,
	\begin{equation}\label{decomposition of g over sl2}
		\fr{g} = \bigoplus_{j\in \frac{1}{2}\mathbb{Z}}\fr{g}_j, \quad \gg_j = \{ a \in \gg | [x,a] = j a\}.
	\end{equation}
	Fix a basis $S = \bigcup S_k$ for $\gg$, where $S_k$ is a basis of $\gg_k$, and write $S_+= \bigcup\{ S_k | k <0\}$ and $S_-= \bigcup\{S_k | k>0\}$ for bases of the respective subspaces 
	$$\gg_+ =\bigoplus_{j\in \frac{1}{2}\mathbb{Z}_{>0}}\fr{g}_j ,\quad  \gg_- =\bigoplus_{j\in \frac{1}{2}\mathbb{Z}_{<0}}\fr{g}_j.$$
Let $F(\gg_+)$ be the algebra of charged fermions associated to the vector superspace $\gg_+ \oplus \gg_+^*$. It is strongly generated by fields $\{\varphi_{\alpha}, \varphi^{\alpha} | \alpha \in S_+\}$, where $\varphi_{\alpha}, \varphi^{\alpha}$ have opposite parity to $q^{\alpha}$. They satisfy
	\[\varphi_{\alpha}(z)\varphi^{\beta}(w) \sim \delta_{\alpha,\beta}(z-w)^{-1},\quad\varphi_{\alpha}(z)\varphi_{\beta}(w)\sim 0 \sim \varphi^{\alpha}(z)\varphi^{\beta}(w).\]
	We give $F(\gg_+)$ the conformal vector and associated central charge
		\begin{equation}\label{charge Fermions}
		L^{\T{ch}}=\sum_{\alpha \in S_+} (1-m_{\alpha}):\partial \varphi^{\alpha}\varphi_{\alpha}\!:-m_{\alpha}:\!\varphi^{\alpha}\partial \varphi_{\alpha}:,\quad c^{\T{ch}}= - \sum_{\alpha \in S_+} (-1)^{|\alpha|}(12m_{\alpha}^2-12m_{\alpha}+2).
	\end{equation}
	The fields $\varphi_{\alpha}$ and $\varphi^{\alpha}$ are primary with respect to $L^{\T{ch}}$ and have conformal weights $1-m_{\alpha}$ and $m_{\alpha}$, respectively. Since $f \in \gg_{-1}$, it endows $\gg_{\frac{1}{2}}$ with a skew-symmetric form
	\begin{equation}\label{bilinear form}
		\langle a,b\rangle = (f|[a,b]).
	\end{equation}
	Denote by $F(\gg_{\frac{1}{2}})$ the algebra of neutral fermions associated to $\gg_{\frac{1}{2}}$. It has strong generators $\{\Phi_{\alpha}| \alpha \in S_{\frac{1}{2}}\}$, where $\Phi_{\alpha}$ has the same parity as $q^{\alpha}$, satisfying
	\[\Phi_{\alpha}(z)\Phi_{\beta}(w) \sim \langle q^{\alpha},q^{\beta} \rangle (z-w)^{-1}.\]
	We give $F(\gg_{\frac{1}{2}})$ the conformal vector $L^{\T{ne}}$ with central charge $c^{\T{ne}}$, where
	\[ L^{\T{ne}}=\frac{1}{2}\sum_{\alpha \in S_{\frac{1}{2}}} :\!\partial \Phi^{\alpha}\Phi_{\alpha}\!:,\quad c^{\T{ne}}=-\frac{1}{2}\T{sdim}\gg_{\frac{1}{2}}.\]
	Here $\Phi^{\alpha}$ is dual to $\Phi_{\alpha}$ with respect to the bilinear form (\ref{bilinear form}), and $\Phi^{\alpha}(z), \Phi_{\alpha}(z)$ are primary of conformal weight of $\frac{1}{2}$ with respect to $L^{\T{ne}}$.
	
	As in \cite{KRW}, define $C^k(\gg,f)= V^k(\gg)\otimes F(\gg_+)\otimes F(\gg_{\frac{1}{2}})$. 
	It admits a $\mathbb{Z}$-grading by charge
	\[C^k(\gg,f)=\bigoplus_{j \in\mathbb{Z}} C_j, \]
	by giving $\varphi_{\alpha}$ charge $-1$, $\varphi^{\alpha}$ charge 1, and all others charge $0$.
	There is an odd field $d = d_{\T{st}} + d_{\T{tw}}$ of charge $-1$, where
	\begin{equation}\label{Qfield}
		\begin{split}
			d_{\T{st}} &= \sum_{\alpha\in S_+} (-1)^{|\alpha|}:\!X^{\alpha}\varphi^{\alpha}\!: - \frac{1}{2}\sum_{\alpha,\beta,\gamma \in S_+} (-1)^{|\alpha||\gamma|}f^{\alpha,\beta}_{\gamma}:\!\varphi_{\gamma}\varphi^{\alpha}\varphi^{\beta}\!:,\\
			d_{\T{tw}} &=\sum_{\alpha\in S_+}(f|q^{\alpha})\varphi^{\alpha} + \sum_{\alpha \in S_{\frac{1}{2}}}:\!\varphi^{\alpha}\Phi_{\alpha}\!:.
		\end{split}
	\end{equation}
	It satisfies $d(z)d(w) \sim  0$, so $(C^k(\gg,f), d_{(0)})$ has the structure of $\mathbb{Z}$-graded homology complex, and one defines
	$$\cW^k(\gg,f): =H(C^k(\gg,f),d_{(0)}).$$
	Its conformal vector is represented by $L = L^{\gg}+\partial x + L^{\T{ch}} + L^{\T{ne}}$, which has central charge 
	\begin{equation}\label{central charge W algebra}
		c(\gg,f,k) = \frac{k\ \T{sdim}(\gg)}{k+h^{\vee}}-12k(x|x) - \sum_{\alpha \in S_+} (-1)^{|\alpha|}(12m_{\alpha}^2-12m_{\alpha}+2)-\frac{1}{2}\T{sdim}(\gg_{\frac{1}{2}}).
	\end{equation}
	Here $m_{\alpha} = j$ if $\alpha \in S_j$.

Denote by $\gg^f$ the centralizer of $f$ in $\gg$, and let $\ga = \gg^f \cap \gg_0$. Define fields
	\begin{equation}\label{XtoJ} J^{\alpha}= X^{\alpha} + \sum_{\beta,\gamma\in S_+}(-1)^{|\gamma|}f^{\alpha,\beta}_{\gamma}:\!\varphi_{\gamma}\varphi^{\beta}\!: + \frac{(-1)^{|\alpha}}{2}\sum_{\beta,\gamma \in S_+}f^{\beta,\alpha}_{\gamma}:\!\Phi_{\beta}\Phi^{\gamma}\!:.\end{equation}
	Then $\{J^{\alpha}|q^{\alpha} \in \fr{a}\}$ close under OPE, and generate an affine vertex algebra of type $\fr{a}$. We have
	\begin{theorem}[\cite{KW}, Theorem 2.1]
		\begin{equation}
			J^{\alpha}(z)J^{\beta}(w)\sim (k(q^{\alpha}|q^{\beta})+k^{\Gamma}(q^{\alpha},q^{\beta}))(z-w)^{-2} + f^{\alpha,\beta}_{\gamma} J^{\gamma}(w)(z-w)^{-1},
		\end{equation}
		where
		\[k^{\Gamma}(q^{\alpha},q^{\beta})=\frac{1}{2}\left(\kappa_{\gg}(q^{\alpha},q^{\beta})-\kappa_{\gg_0}(q^{\alpha},q^{\beta})-\kappa_{\frac{1}{2}}(q^{\alpha},q^{\beta})\right),\]
		with $\kappa_{\frac{1}{2}}$ the supertrace of $\gg_0$ on $\gg_{\frac{1}{2}}$.
	\end{theorem}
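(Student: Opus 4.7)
The plan is to prove the theorem by a direct OPE computation, following the strategy of Kac--Wakimoto. First I would verify that the fields $J^{\alpha}$ for $q^{\alpha} \in \mathfrak{a} = \mathfrak{g}^f \cap \mathfrak{g}_0$ are $d_{(0)}$-closed and hence descend to well-defined fields in $\mathcal{W}^k(\mathfrak{g},f)$. This is the content of the ``tic-tac-toe'' construction: one checks that the naive choice $X^{\alpha}$ fails to be closed, and then computes $d_{(0)} X^{\alpha}$ using \eqref{Qfield}, \eqref{quasi-derivation}, and \eqref{Jacobi}; the first ghost correction $\sum (-1)^{|\gamma|} f^{\alpha,\beta}_{\gamma}:\varphi_{\gamma}\varphi^{\beta}:$ cancels the contribution from $d_{\text{st}}$ using $[x,q^{\alpha}]=0$ (since $q^{\alpha}\in\mathfrak{g}_0$), while the second ghost correction involving the neutral fermions $\Phi$ cancels the contribution from $d_{\text{tw}}$ using $[f,q^{\alpha}] = 0$ (since $q^{\alpha}\in\mathfrak{g}^f$).

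Next I would compute the OPE $J^{\alpha}(z)J^{\beta}(w)$ by expanding $J^{\alpha} = X^{\alpha} + \Psi^{\alpha} + \Xi^{\alpha}$, where $\Psi^{\alpha}$ and $\Xi^{\alpha}$ are the charged-fermion and neutral-fermion corrections respectively, and then collecting the nine pairwise contractions. The leading piece $X^{\alpha}(z)X^{\beta}(w)$ produces $k(q^{\alpha}|q^{\beta})\mathbf{1}(z-w)^{-2} + \sum f^{\alpha,\beta}_{\gamma} X^{\gamma}(w)(z-w)^{-1}$ directly from \eqref{OPE:affine}. The cross terms $X^{\alpha}\cdot \Psi^{\beta}$ and $\Psi^{\alpha}\cdot X^{\beta}$ are purely first-order and, after using the Jacobi identity for structure constants, yield exactly the ghost corrections needed to replace $X^{\gamma}$ by $J^{\gamma}$ in the first-order pole; an analogous statement holds for the cross terms with $\Xi$. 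The cross terms $\Psi\cdot \Xi$ and $\Xi\cdot \Psi$ vanish since the charged and neutral fermions have no mutual contractions.

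The remaining contractions $\Psi^{\alpha}(z)\Psi^{\beta}(w)$ and $\Xi^{\alpha}(z)\Xi^{\beta}(w)$ contribute both second-order and first-order poles. For the second-order poles, two Wick contractions between pairs of $\varphi,\varphi^{*}$ produce, after tracking signs, the combination $\frac{1}{2}\bigl(\kappa_{\mathfrak{g}}(q^{\alpha},q^{\beta}) - \kappa_{\mathfrak{g}_0}(q^{\alpha},q^{\beta})\bigr)\mathbf{1}(z-w)^{-2}$ for $\Psi\cdot\Psi$ (the difference of Killing forms reflects that the $\varphi$ ghosts are indexed only by $S_+$, not by all of $S\setminus S_0$); while $\Xi\cdot\Xi$ produces $-\frac{1}{2}\kappa_{\frac{1}{2}}(q^{\alpha},q^{\beta})\mathbf{1}(z-w)^{-2}$ via a similar double contraction on the neutral fermions, using the bilinear form \eqref{bilinear form}. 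The first-order poles from $\Psi\cdot\Psi$ and $\Xi\cdot\Xi$ combine with the $X\cdot\Psi$ and $X\cdot\Xi$ contributions (again via the Jacobi identity $J_{0,0}$ for structure constants) to give precisely $\sum f^{\alpha,\beta}_{\gamma}J^{\gamma}$ rather than $\sum f^{\alpha,\beta}_{\gamma}X^{\gamma}$.

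The main obstacle is not any single identity but the bookkeeping: the parities of basis elements in the Lie superalgebra $\mathfrak{g}$ propagate through every Wick contraction, and one must track the signs in \eqref{skew-symmetry}--\eqref{Jacobi} carefully to see that the three second-order poles assemble into the symmetric form $k^{\Gamma}$ as stated, and that the three first-order pole corrections rearrange into the $J^{\gamma}$ combination. The restriction to $q^{\alpha},q^{\beta}\in\mathfrak{a}$ is essential throughout: it guarantees that the structure constants $f^{\alpha,\beta}_{\gamma}$ appearing in the output are only nonzero for $q^{\gamma}\in\mathfrak{a}$, so the resulting OPE closes on the fields $\{J^{\alpha}\}$ and defines an affine vertex subalgebra of type $\mathfrak{a}$ at the shifted level $k(\cdot|\cdot) + k^{\Gamma}(\cdot,\cdot)$.
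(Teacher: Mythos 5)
The paper offers no proof of this statement: it is quoted verbatim from Kac--Wakimoto \cite{KW}, Theorem 2.1, so there is no in-paper argument to compare against. Your sketch is the standard direct Wick-calculus proof from that reference, and its overall architecture is right: the double contractions of the charged-ghost bilinears give $\mathrm{str}_{\gg_+}(\mathrm{ad}\,q^{\alpha}\,\mathrm{ad}\,q^{\beta})=\tfrac{1}{2}\bigl(\kappa_{\gg}(q^{\alpha},q^{\beta})-\kappa_{\gg_0}(q^{\alpha},q^{\beta})\bigr)$ (using that $\mathrm{ad}\,q^{\alpha}$ preserves the grading for $q^{\alpha}\in\gg_0$, so the $\gg_+$ and $\gg_-$ supertraces agree), the neutral-fermion double contraction gives $-\tfrac{1}{2}\kappa_{\frac{1}{2}}$, and the single contractions reassemble, via the Jacobi identity for structure constants, into $f^{\alpha,\beta}_{\gamma}J^{\gamma}$.

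One step as you describe it would fail, though it is repairable. You assert that the cross terms $X^{\alpha}\cdot\Psi^{\beta}$ and $\Psi^{\alpha}\cdot X^{\beta}$ are ``purely first-order'' and supply the ghost corrections converting $X^{\gamma}$ into $J^{\gamma}$. These cross terms are identically zero: $X^{\alpha}$ lives in the factor $V^k(\gg)$ of $C^k(\gg,f)=V^k(\gg)\otimes F(\gg_+)\otimes F(\gg_{\frac{1}{2}})$, while $\Psi^{\beta}$ and $\Xi^{\beta}$ live in the other two factors, so there are no mutual contractions (for the same reason you correctly dismiss $\Psi\cdot\Xi$). The entire first-order ghost correction must come from the \emph{single} contractions inside $\Psi^{\alpha}(z)\Psi^{\beta}(w)$ and $\Xi^{\alpha}(z)\Xi^{\beta}(w)$, which produce bilinears $:\!\varphi_{\gamma}\varphi^{\delta}\!:$ and $:\!\Phi_{\gamma}\Phi^{\delta}\!:$ whose coefficients, by the quadratic Jacobi identity $\sum_{\delta}\bigl(f^{\alpha,\delta}_{\gamma}f^{\beta,\epsilon}_{\delta}\mp f^{\beta,\delta}_{\gamma}f^{\alpha,\epsilon}_{\delta}\bigr)=\sum_{\delta}f^{\alpha,\beta}_{\delta}f^{\delta,\epsilon}_{\gamma}$, equal $f^{\alpha,\beta}_{\gamma}$ times the corresponding corrections in $J^{\gamma}$. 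You do list these contributions a sentence later, so the needed terms are accounted for, but the mechanism is misattributed; in a written-out proof the vanishing of the cross terms and the Jacobi rearrangement of the single contractions is exactly the bookkeeping that has to be done. A smaller remark: the displayed OPE is a computation in the complex $C^k(\gg,f)$ and does not logically require the $d_{(0)}$-closedness of $J^{\alpha}$; closedness (which uses both $q^{\alpha}\in\gg_0$ and $q^{\alpha}\in\gg^f$) is only needed for the separate assertion that these fields descend to $\cW^k(\gg,f)$.
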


\begin{theorem} \cite[Theorem 4.1]{KW} \label{thm:kacwakimoto}
Let $\gg$ be a simple finite-dimensional Lie superalgebra with an invariant bilinear
form $( \ \ |  \ \ )$, and let $x, f$ be a pair of even elements of $\gg$ such that ${\rm ad}\ x$ is diagonalizable with
eigenvalues in $\frac{1}{2} \mathbb{Z}$, $[x,f] = -f$, and all eigenvalues of ${\rm ad}\ x$ on $\gg^f$ are non-positive, so
$\gg^f = \bigoplus_{j\leq 0} \gg^f_j$. Then
 \begin{enumerate}
\item For each $q^\alpha \in \gg^f_{-j}$ with $j\geq 0$, there exists a  $d_{(0)}$-closed field $K^\alpha$ of weight
$1 + j$, with respect to $L$.
\item The homology classes of the fields $K^\alpha$, where $\{q^\alpha\}$ runs over a basis of $\gg^f$, freely generate $\cW^k(\gg, f)$.
\item $H_0(C(\gg, f, k), d_0) = \cW^k(\gg, f)$ and $H_j(C(\gg, f, k), d_0) = 0$ if $j \neq 0$.
\end{enumerate}
\end{theorem}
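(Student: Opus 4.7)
The natural approach is via a spectral sequence on the BRST complex $(C^k(\gg,f), d_{(0)})$, built from a good filtration that isolates the ``leading'' piece of $d_{(0)}$. First I would introduce a bigrading on $C^k(\gg,f)$ that refines the charge grading: to each generator $X^\alpha$, $\varphi^\alpha$, $\varphi_\alpha$, $\Phi_\alpha$ assign an auxiliary weight compatible with the conformal weight induced by $L = L^\gg + \partial x + L^{\text{ch}} + L^{\text{ne}}$, chosen so that the summand of $d_{\text{tw}}$ arising from the term $\sum_{\alpha \in S_+}(f|q^\alpha)\varphi^\alpha$ is the lowest-order piece $d^{(0)}$. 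The associated graded differential then acts purely as multiplication by $f$ in an appropriate sense, turning the $E_0$-page into a Koszul-type complex.

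Next I would compute the cohomology of this Koszul complex. The key observation is that on the associated graded, the complex factors as a tensor product of (i) the contractible Koszul complex pairing the generators $X^\alpha$ for $q^\alpha \in [\gg,f]^\perp$-complement with the corresponding $\varphi^\alpha$, and (ii) a residual complex involving only the data that survives to $\gg^f$ and its ghosts. Using the $\mathfrak{sl}_2$-decomposition \eqref{decomposition of g over sl2}, the map $\text{ad}\,f: \gg_j \to \gg_{j-1}$ is injective for $j > 0$, so the Koszul cohomology vanishes in nonzero homological degree and in degree zero is isomorphic to $S(\gg^f) \otimes (\text{neutral/ghost remnants})$. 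The filtration is bounded on each fixed conformal weight, so the spectral sequence converges, giving statement (3) of the theorem and the correct size for $H_0$.

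To obtain statements (1) and (2), I would lift each basis element $q^\alpha \in \gg^f_{-j}$ to a $d_{(0)}$-closed field $K^\alpha$. The naive candidate is $X^\alpha$ (or rather the tic-tac-toe companion of the form \eqref{XtoJ}); the obstruction to its closedness lies in a lower filtration step, and by the spectral sequence collapse proven above, the obstruction is a coboundary. Iterating, one constructs $K^\alpha = X^\alpha + (\text{normally-ordered corrections in } \varphi_*, \varphi^*, \Phi_*)$ of conformal weight $1+j$. The hypothesis that $\text{ad}\,x$ acts on $\gg^f$ with non-positive eigenvalues guarantees that each correction step has the correct conformal weight and that no obstruction forces $K^\alpha$ out of the expected weight subspace. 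Strong freeness in (2) then follows by comparing graded characters: the Kac-Wakimoto basis on the associated graded matches the PBW-type spanning set built from normally ordered monomials in the $K^\alpha$ and their derivatives.

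The main obstacle is the iterative construction of the $K^\alpha$, because although the spectral sequence guarantees existence, explicit control requires delicate bookkeeping via the Jacobi identities \eqref{Jacobi} and quasi-associativity \eqref{quasi-associativity} to close up the OPE corrections order by order in the filtration. Every other step (defining the filtration, identifying the $E_0$-differential, Koszul vanishing, convergence) is essentially formal once the correct bigrading is in place; the technical heart is verifying that the obstructions all live in the contractible part of the Koszul complex, which is precisely where the non-positivity of the $\text{ad}\,x$-eigenvalues on $\gg^f$ enters.
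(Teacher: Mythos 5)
This statement is not proved in the paper: it is quoted verbatim as \cite[Theorem 4.1]{KW} and used as a black box, so there is no in-paper argument to compare against. Your outline reproduces, in essentially correct form, the standard proof from that reference (and its refinement in the language of nonlinear Lie conformal algebras): filter the complex so that the leading term of $d_{(0)}$ is the Koszul differential built from $\sum_{\alpha\in S_+}(f|q^\alpha)\varphi^\alpha$, use the injectivity of $\mathrm{ad}\, f\colon \gg_j\to\gg_{j-1}$ for $j>0$ (which here follows directly from the hypothesis $\gg^f\subseteq\bigoplus_{j\le 0}\gg_j$, without needing to complete $f$ to an $\gs\gl_2$-triple) to kill the higher homology, and then build the $K^\alpha$ by the tic-tac-toe correction procedure, with free generation obtained from the character count on the associated graded. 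I see no gap in the outline; the only cosmetic point is that the theorem's hypotheses are slightly more general than the $\gs\gl_2$-triple setting you invoke, but the injectivity you need is exactly what the stated eigenvalue condition provides.
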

One can also consider the reduction of a $V^k(\gg)$-module $M$. The homology of the complex $$H(M \otimes F(\gg_+) \otimes F(\gg_{\frac{1}{2}}), d_0),$$ is a $\cW^k(\gg, f)$-module that we denote by $H_{f}(M)$.  In this notation, $\cW^k(\gg,f) = H_f(V^k(\gg))$. 

\subsection{Large level limits of $\cW$-algebras}
Suppose that $\gg$ is a simple Lie (super)algebra and $( \  |  \ )$ is nondegenerate. Let  $x, f$, and 
$\gg^f$ be as in Theorem \ref{thm:kacwakimoto}. In \cite{CL3}, a certain large level limit $\cW^{\T{free}}(\gg,f) = \lim_{k\rightarrow \infty} \cW^k(\gg,f)$ was defined. It is a simple vertex algebra with the following properties.

	\begin{theorem}[\cite{CL3}, Theorem 3.5 and Corollary 3.4]\label{thm:largelevellimit} 
		$\cW^{\T{free}}(\gg,f)$ is a free field algebra with strong generators $\{X^{\alpha}| q^{\alpha} \in \gg^f\}$ and OPEs
		\begin{equation}\label{freeField}
			X^{\alpha}(z)X^{\beta}(w) \sim (z-w)^{-2k}\delta_{j,k}B_k(q^{\alpha},q^{\beta})
		\end{equation}
		for $q^{\alpha} \in \gg^{f}_{-k}$ and $q^{\beta} \in \gg^{f}_{-j}$, where
		\[B_k:\gg_{-k}^{f}\times \gg^f_{-k} \to \mathbb{C}, \quad B_k(a,b):=(-1)^{2k}((\T{ad} (f))^{2k}b|a). \]
		Moreover, $\cW^{\infty}(\gg,f)$ decomposes as a tensor product of the standard free field algebras. 
		Let $X_k=\T{Span}\{X^{\alpha} | q^{\alpha} \in \gg^f_{-k}\}$.
		Then
		\begin{itemize}
			\item If $2k$ is even and $(\cdot|\cdot)$ is symmetric, $X_k$ generates an even algebra of orthogonal type.
			\item If $2k$ is odd and $(\cdot|\cdot)$ is symmetric, $X_k$ generates an odd algebra of orthogonal type.
			\item If $2k$ is odd and $(\cdot|\cdot)$ is skew-symmetric, $X_k$ generates an even algebra of symplectic type.
			\item If $2k$ is even and $(\cdot|\cdot)$ is skew-symmetric, $X_k$ generates an odd algebra of symplectic type.
		\end{itemize}
	\end{theorem}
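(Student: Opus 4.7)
The plan is to realize $\cW^{\text{free}}(\gg,f)$ as the leading-order term in an expansion of $\cW^k(\gg,f)$ as $k\to\infty$, and then identify the surviving OPE coefficients. Concretely, I would work with a version of $\cW^k(\gg,f)$ over $\mathbb{C}[\psi,\psi^{-1}]$ where $\psi = k+h^\vee$, rescale each Kac--Wakimoto generator $K^\alpha$ (for $q^\alpha \in \gg^f_{-j}$) by the appropriate power of $\psi$ so that the top-pole OPE coefficient has a finite, nonzero limit, and then quotient by the ideal of elements that vanish as $\psi^{-1}\to 0$. The fact that this construction yields a well-defined vertex algebra structure is the substance of the definition of the large-level limit; by construction the rescaled $K^\alpha$ descend to strong generators $X^\alpha$ of $\cW^{\text{free}}(\gg,f)$.

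The central computation is the OPE $K^\alpha(z)\,K^\beta(w)$ for $q^\alpha\in\gg^f_{-j}$, $q^\beta\in\gg^f_{-k}$. Using Theorem \ref{thm:kacwakimoto}, I would fix representative cocycles $K^\alpha = X^\alpha + (\text{corrections})$, where the corrections are built from $X^\gamma$ with $q^\gamma\notin\gg^f$ together with $\varphi_{\gamma},\varphi^{\gamma},\Phi_{\gamma}$, and are forced by the requirement that $d_{(0)}K^\alpha=0$. Sorting the resulting fields by $\psi$-weight (affine currents $X^\alpha$ carry weight one, ghosts carry weight zero, and the $\partial x$ and $d_{\mathrm{tw}}$ contributions provide the shifts needed to move a generator by one unit of $\gg$-grading), one checks that only the top-pole term of the OPE survives rescaling, that it vanishes unless $j=k$, and that its coefficient is precisely $B_k(q^\alpha,q^\beta)=(-1)^{2k}\bigl((\mathrm{ad}\, f)^{2k} q^\beta \,|\, q^\alpha\bigr)$. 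The operator $(\mathrm{ad}\, f)^{2k}$ arises because exactly $2k$ successive contractions of the $X$-parts of $K^\alpha$ and $K^\beta$ with $d_{\mathrm{tw}}$ (each bringing down a factor of $f$ via the $(f|q^\alpha)\varphi^\alpha$ term) are needed to pair fields of $\gg$-grading $-k$ with the scalar pairing $(\,\cdot\,|\,\cdot\,)$ on $\gg$.

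Once \eqref{freeField} is established, the decomposition into standard free field algebras is a symmetry-type analysis. The subspace $X_k$ has OPE contained in $X_k\otimes X_k\to\mathbb{C}\cdot\mathbf{1}$, so it generates a free field algebra determined solely by the bilinear form $B_k$ on $\gg^f_{-k}$ and the parity of its generators. The skew-symmetry identity \eqref{skew-symmetry} at pole order $2k+2$ forces $B_k(a,b) = (-1)^{|a||b|+2k}B_k(b,a)$, and compatibility with the invariance of $(\,\cdot\,|\,\cdot\,)$ under $\mathrm{ad}\, f$ confirms that this is the correct symmetry type. Matching the four sign cases against the classification of $\cO_{\mathrm{ev}},\cO_{\mathrm{odd}},\cS_{\mathrm{ev}},\cS_{\mathrm{odd}}$ recalled in Section \ref{sec:VOA} gives the four bulleted alternatives. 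The main obstacle is the second step: tracking the $\psi$-expansion of $(K^\alpha)_{(r)}K^\beta$ rigorously enough to identify the surviving top-weight coefficient with $B_k$, since the ghost and neutral-fermion pieces of $K^\alpha$ are $\psi$-independent but enter nontrivially through $d_{\mathrm{tw}}$. I would organize this via an induction on the $\mathrm{ad}\, x$-grading, using Wakimoto-style free field realizations to replace $V^k(\gg)$ by its bosonic limit and reducing the computation to a purely combinatorial identity involving iterated applications of $\mathrm{ad}\, f$ to elements of $\gg^f$.
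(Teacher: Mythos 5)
A preliminary remark: the present paper does not prove this statement at all --- it is quoted from \cite{CL3} (Theorem 3.5 and Corollary 3.4) --- so there is no in-paper proof to compare against; I can only measure your proposal against the argument in the cited source. At the level of strategy you have reconstructed that argument correctly: realize $\cW^{\T{free}}(\gg,f)$ as the limit of a deformable family after rescaling the Kac--Wakimoto generators by powers of $\psi=k+h^\vee$, show by degree counting in $\psi$ that only the constant term of the top-order pole survives, and then sort the surviving factors into the four standard free field algebras using the symmetry of the limiting bilinear form together with the parity of the generators. Your last paragraph (skew-symmetry \eqref{skew-symmetry} applied at pole order $2k+2$, which is the correct order for two fields of weight $1+k$ despite the exponent $-2k$ printed in \eqref{freeField}) is fine, as is the degree-counting heuristic: the affine--affine contraction contributes exactly one power of $k$ to the top pole while every lower-order pole has a positive-weight field as coefficient and is suppressed by the rescaling.

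The genuine gap is the step you yourself flag as the main obstacle: the identification of the surviving top-pole coefficient with $B_k(q^\alpha,q^\beta)$. The generators $K^\alpha$ of Theorem \ref{thm:kacwakimoto} are only specified up to $d_{(0)}$-exact terms, so "counting contractions with $d_{\T{tw}}$" is not yet a computation --- one must first fix explicit representatives and control how the ghost and neutral-fermion tails (which carry no power of $\psi$) combine with the single affine contraction to produce the order-$(2k+2)$ pole. Your proposed fallback (Wakimoto realization plus induction on the $\T{ad}\,x$-grading) is not developed enough to assess and is not how \cite{CL3} proceeds; there the limiting form is recognized as the canonical nondegenerate pairing on the lowest-weight spaces of the $\gs\gl_2$-decomposition of $\gg$ (the Slodowy-slice pairing), verified by a direct leading-order computation with chosen representatives. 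A concrete warning sign that the bookkeeping has not been done: as printed, $B_k(a,b)=(-1)^{2k}((\T{ad}(f))^{2k}b\,|\,a)$ pairs $\gg_{-3k}$ with $\gg_{-k}$ and hence vanishes identically for $k>0$ by $\T{ad}\,x$-weight invariance of $(\,\cdot\,|\,\cdot\,)$; the operator must act in the direction of $e$, and your heuristic of "each contraction bringing down a factor of $f$" points toward the vanishing formula rather than the correct one. Finally, you should also justify that the limit is \emph{freely} strongly generated by the $X^\alpha$ (this follows from Theorem \ref{thm:kacwakimoto}(2) persisting under the flat degeneration, but it is not a consequence of the OPE computation alone).
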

	This theorem is useful for deducing the strong generating type of cosets of $\cW$-algebras by affine subalgebras. In the limit, the coset becomes an orbifold of a free field algebra whose structure can be analyzed using classical invariant theory. A strong generating for this orbifold then gives rise to a strong generating set for the coset at generic levels.

	\subsection{Vertex algebras over commutative rings} \label{subsection:voaring} 
	When working with vertex algebras over a finitely generated $\mathbb{C}$-algebra $R$, we will use the notation and setup of \cite[Section 3]{Lin}. 
	Let $\cV$ be a vertex algebra over $R$ with conformal weight grading $\cV = \bigoplus_{d \in \frac{1}{2} \mathbb{Z}_{\geq 0}} \cV[d]$ where $\cV[0] \cong R$.
All ideals $\cI \subseteq \cV$ will be graded, that is, $\cI = \bigoplus_{d \in \frac{1}{2} \mathbb{Z}_{\geq 0}} \cI[d]$ where $\cI[d] = \cI \cap \cV[d]$. We call $\cV$ {\it simple} if there are no proper graded ideals $\cI$ such that $\cI[0] = \{0\}$. This coincides with the usual notion of simplicity in the case that $R$ is a field. If $I\subseteq R$ is an ideal, $I$ is a subset of $\cV[0] \cong R$, and we denote by $I \cdot \cV$ the vertex algebra ideal generated by $I$. Then $\cV^I = \cV / (I \cdot \cV)$ is a vertex algebra over $R/I$. Even if $\cV$ is simple as a vertex algebra over $R$, $\cV^I$ need not be simple as a vertex algebra over $R/I$.

If $R$ is the coordinate ring of a variety $X$, and $\cV$ is a simple vertex algebra over $R$, suppose that $I\subseteq R$ is an ideal such that $\cV^I$ is {\it not} simple, i.e., $\cV^I$ has a maximal proper graded ideal $\mathcal{I}$ with $\mathcal{I}[0] = \{0\}$. Then the quotient 
$$\cV_{I} = \cV^I / \cI$$ is a simple vertex algebra over $R/I$. Letting $Y\subseteq X$ be the closed subvariety corresponding to $I$, we can regard $\cV_I$ as a simple vertex algebra defined over $Y$. For ideals $I_1, I_2$ with this property, we have the corresponding simple vertex algebras  $\cV_{I_1} = \mathcal{V}^{I_1} / \mathcal{I}_1$ and $\mathcal{V}_{I_2} = \mathcal{V}^{I_2} / \mathcal{I}_2$ over $R/I_1$ and $R / I_2$, respectively. Let $Y_1, Y_2 \subseteq X$ be the closed subvarieties corresponding to $I_1, I_2$, and let $p \in Y_1 \cap Y_2$. Let $\mathcal{V}^p_{I_1}$ and $ \mathcal{V}^p_{I_2}$ be the vertex algebras over $\mathbb{C}$ obtained by evaluating at $p$. As above, $\mathcal{V}^p_{I_1}$ and $ \mathcal{V}^p_{I_2}$ need not be simple, and we denote their simple quotients by $\mathcal{V}_{I_1,p}$ and $\mathcal{V}_{I_2,p}$. Then $p$ corresponds to a maximal ideal $M_p \subseteq R$ containing both $I_1$ and $I_2$, so we have isomorphisms
\begin{equation}\label{tautological}  \mathcal{V}_{I_1,p} \cong \mathcal{V}_M \cong \mathcal{V}_{I_2,p}.\end{equation}

Our main example $\cW^{\fr{so}_2}_{\infty}$ will be defined over a localization $R = D^{-1} \mathbb{C}[c,k]$ of the polynomial ring $\mathbb{C}[c,k]$, where $D$ is finitely generated multiplicatively closed set; see Theorem \ref{thm:induction}. We will identity certain $1$-parameter vertex algebras $\cC^{\psi}_{XY}(n,m)$ for $n,m \in \mathbb{N}$, and $\cC^{\ell}(n)$ for $n \in \mathbb{Z}$, with quotients of $\cW^{\gs\go_2}_{\infty}$ along prime ideals $I \subseteq R$, after a suitable localization. All the vertex algebras $\cC^{\psi}_{XY}(n,m)$ will be defined either as $\cW$-algebras $\cW^{\psi}_{XY}(n,m)$, affine cosets of such $\cW$-algebras, or $\mathbb{Z}_2$-orbifolds of such $\cW$-algebras or their cosets. Similarly, $\cC^{\ell}(n)$ is either an affine coset of an affine vertex algebra tensored with a free field algebra, or the $\mathbb{Z}_2$-orbifold of such a coset. The defining equation for $I$ is given by the formula for the central charge $c$ as a rational function of the level $k$ of the Heisenberg subalgebra.

We always regard $\cC^{\psi}_{XY}(n,m)$ and $\cC^{\ell}(n)$ as $1$-parameter vertex algebras, where $\psi$ and $\ell$ are viewed as {\it formal} parameters. They are defined over the localization of $R/I$ obtained by inverting all denominators of structure constants of $\cW^{\gs\go_2}_{\infty}$ after replacing $c$ and $k$ with the corresponding functions of $\psi$ (respectively $\ell$). At a given point $\psi_0 \in \mathbb{C}$, the specialization $$\cC^{\psi_0}_{XY}(n,m) := \cC^{\psi}_{XY}(n,m)  / (\psi-\psi_0) \cC^{\psi}_{XY}(n,m)$$ makes sense as long as $\psi$ is not in the (finite) set of poles of denominators of structure constants. However, $\cC^{\psi_0}_{XY}(n,m) $ can be a proper subalgebra of the \lq\lq honest" coset (or orbifold) obtained by first specializing $\cW^{\psi}_{XY}(n,m)$ to $\psi = \psi_0$, and then taking its coset (or orbifold), even though for generic values of $\psi$ these agree. The same can happen for $\cC^{\ell_0}(n) := \cC^{\ell}(n) / (\ell - \ell_0) \cC^{\ell}(n)$ of $\cC^{\ell}(n)$ at a particular level $\ell_0 \in \mathbb{C}$.  The issue is that $\cW^{\gs\go_2}_{\infty}$ is weakly generated by the fields in weight at most $2$, so the same holds for $\cC^{\psi_0}_{XY}(n,m)$ or $\cC^{\ell_0}(n)$, and this weak generation property of the honest coset (or orbifold) can fail at special values $\psi_0$ or $\ell_0$. There is no algorithm for determining the set of points where weak generation fails, and these sets need not be finite. So even though we have isomorphisms \eqref{tautological} between simple quotients of any of the above $1$-parameter quotients of $\cW^{\fr{so}_2}_{\infty}$ at intersection points on their truncation curves, it is a difficult problem to show that at such intersection points, both vertex algebras coincide with the honest cosets (or orbifolds).

We will show that in the cases 
$$\cC^{\psi}_{CC}(0,m) = \cW^{\psi-2m-1}(\gs\gp_{4m}, f_{2m, 2m}),\qquad \cC^{\psi}_{BD}(0,m) = \cW^{\psi-4m}(\gs\go_{4m+2}, f_{2m+1, 2m+1}),$$ there are only finitely many values of $\psi$ where the weak generation property can fail, and we describe them explicitly; see Theorem \ref{weakgeneration:Walgebras}. In Section \ref{sect:rationality}, using a different approach that involves fusion categories for rational $\cW$-algebras of type $D$, we will prove that for $n$ a positive integer and $\ell-2$ an admissible level for $\gs\go_{2n}$, the simple quotient $\cC_{\ell}(2n)$ is strongly rational and the weak generation property holds for all but finitely many values of $\ell$. In Section \ref{sect:rationality}, we also determine the set of levels where $\cC^{\psi}_{CO}(0,m)$ has the weak generation property. Combining this with the rationality of $\cC_{\ell}(2n)$, and using the isomorphisms \eqref{tautological} at intersection points of truncation curves, we prove our rationality result, Theorem \ref{Rationality:COcase}.

\section{New universal vertex algebras as glueings} \label{sect:generalgluings}
In this section, we will give a conjectural picture of how $\cW$-algebras of classical Lie types can be organized into families that are governed by new universal $2$-parameter vertex algebras that are themselves glueings of finitely many copies of the basic ones, namely, $\cW_{\infty}$ in type $A$, and $\cW^{\text{ev}}_{\infty}$ and $\cW^{\gs\gp}_{\infty}$ in types $B$, $C$, and $D$. The $1$-parameter quotients of these $2$-parameter vertex algebras include $\cW$-algebras, affine cosets of certain $\cW$-(super)algebras, and certain diagonal cosets that are analogues of the GKO cosets \eqref{GKOalternate}.

\subsection{$\cW$-algebras in type $A$} For sets of positive integers $S = \{d_1, \dots, d_t\}$ and $M = \{m_0,\dots, m_t\}$ of sizes $t$ and $t+1$ respectively, we obtain a partition $P_{S,M} = (n_0^{m_0}, n_1^{m_1},\dots, n_{t}^{m_t})$ of $N = \sum_{i=0}^t n_i m_i$ consisting of $m_i$ parts of size $n_i$, where $n_t = 2$ and $n_i - n_{i+1} = d_{i+1}$ for $i=0,1,\dots, t-1$. This notation is also meaningful when $t = 0$ and $S = \emptyset$; in this case, $M = \{m\}$ for some $m >0$, and $P = (2^m)$.

For $r,s \geq 0$, we denote by $P_{S,M,r,s}$ the partition of $N_{r,s} = N + s + r \sum_{i=0}^t m_i$ given by $$((n_0+r)^{m_0}, (n_1+r)^{m_1},\dots, (n_{t}+r)^{m_t}, 1^s),$$ which has $m_i$ parts of size $n_i + r$, and $s$ parts of size $1$, so that $P_{S,M,0,0} = P_{S,M}$. Let $f_{S,M,r,s}\in \gs\gl_{N_{r,s}}$ be the corresponding nilpotent element. The strong generating type of 
\begin{equation} \label{Afirst} \cW^k(\gs\gl_{N_{r,0}}, f_{P_{S,M,r,0}})\end{equation} is given by \cite[Proposition 2.1]{CFLN}; up to any fixed conformal weight, it is independent of $r$ for $r$ sufficiently large. In addition, it is easy to see that for all $r$, the Lie algebra $\gs\gl_{N_{r,0}}^{\natural} \cong \gt \oplus (\bigoplus_{i=0}^t \gs\gl_{m_i})$, where $\gt$ is an abelian Lie algebra of dimension $t$. Moreover, the action of $\gs\gl_{N_{r,0}}^{\natural}$ on the generating fields of higher weight, is independent of $r$ for $r$ sufficiently large. This motivates the following conjecture:

\begin{conjecture} \label{universal:typeA} Let $S = \{d_1,\dots, d_t\}$, $M = \{m_0,\dots, m_t\}$, and $n_i$ satisfying $n_i - n_{i+1} = d_{i+1}$ and $n_t = 2$, be as above.There exists a unique $2$-parameter vertex algebra $\cW^{A,S,M}_{\infty}$ with the following properties:
\begin{enumerate}
\item $\cW^{A,S,M}_{\infty}$ is defined over a localization of the polynomial ring in two variables obtained by inverting finitely many polynomials.
\item $\cW^{A,S,M}_{\infty}$ is freely generated of the appropriate type determined by $S$ and $M$.
\item $\cW^{A,S,M}_{\infty}$ admits all the $\cW$-algebras \eqref{Afirst} as $1$-parameter quotients.
\item $\cW^{A,S,M}_{\infty}$ is weakly generated by the fields in weight at most $3$.
\item $\cW^{A,S,M}_{\infty}$ is an extension of $\cH(|M| - 1)$ tensored with $|M|$ commuting copies of $\cW_{\infty}$, where $|M| = \sum_{i=0}^t m_i$.
\item $\cW^{A,S,M}_{\infty}$ serves as a classifying object for vertex algebras satisfying (1)-(4); any vertex algebra with a strong generating set of this type (not necessarily minimal) satisfying (1)-(4), is a quotient of $\cW^{A,S,M}_{\infty}$. 
\end{enumerate}
\end{conjecture}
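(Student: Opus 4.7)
The plan is to adapt the template used to construct $\cW_{\infty}$, $\cW^{\text{ev}}_{\infty}$, $\cW^{\gs\gp}_{\infty}$ in \cite{Lin,KL,CKoL2}, and $\cW^{\gs\go_2}_{\infty}$ in the present paper. The guiding principle is: first read off the correct module structure of the generators over the affine subalgebra $\cH(|M|-1) \otimes \bigl(\bigotimes_{i=0}^t V^{k_i}(\gs\gl_{m_i})\bigr)$ from the classical (large-level) limit of \eqref{Afirst}, then construct $\cW^{A,S,M}_{\infty}$ by deforming to a vertex algebra over a localization $R = D^{-1}\mathbb{C}[c,k]$ whose structure constants are forced by Jacobi identities, and finally verify that the expected $\cW$-algebras are indeed $1$-parameter quotients.

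The first step is to apply Theorem \ref{thm:largelevellimit} to the family $\cW^k(\gs\gl_{N_{r,0}}, f_{P_{S,M,r,0}})$, getting a free field limit whose generators organize as a tensor product of standard free field algebras. Using classical invariant theory for $GL_{m_0} \times \cdots \times GL_{m_t}$ acting on these free fields, one reads off both the generating type (which by \cite[Proposition 2.1]{CFLN} is independent of $r$ for $r$ large) and the decomposition of the higher-weight generators as modules over the affine subalgebra. The second step is the abstract construction: starting from $\cH(|M|-1) \otimes \bigl(\bigotimes_{i=0}^t V^{k_i}(\gs\gl_{m_i})\bigr)$ with levels $k_i$ linearly related to a global parameter $k$, freely adjoin primary generators in the identified module structure and introduce undetermined structure constants for all OPEs of the weight-$\leq 3$ fields. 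Impose all Jacobi identities $J_{r,s}(a,b,c)$ among these fields; each becomes a polynomial equation in the structure constants. Proceeding by induction on conformal weight, show that the resulting system admits a unique solution in a suitable localization of $\mathbb{C}[c,k]$, inverting only the finitely many polynomials that appear in denominators. The weak generation property (item (4)) is essential here: it guarantees that once the OPEs among weight-$\leq 3$ generators are fixed, all higher OPEs are forced by repeated applications of \eqref{quasi-associativity} and \eqref{Jacobi}.

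The third step is to exhibit the $1$-parameter quotients. For each family in \eqref{Afirst}, \eqref{Asecond}, \eqref{Athird}, verify that the target satisfies the defining axioms of $\cW^{A,S,M}_{\infty}$; the classifying property (established in step two as a byproduct) then produces a homomorphism $\cW^{A,S,M}_{\infty} \twoheadrightarrow \cW^k(\gs\gl_{N_{r,0}}, f_{P_{S,M,r,0}})$, and surjectivity follows by comparing graded characters in the large-level limit. The intersection of the kernel ideals over all such maps must then be zero, which forces $\cW^{A,S,M}_{\infty}$ to be freely generated as claimed and pins down the truncation curves. The glueing statement (item (5)) is obtained by verifying that in the limit where the $|M|$ affine factors decouple, the OPEs of the non-affine generators reduce to those of $|M|$ independent copies of $\cW_{\infty}$, and that this decoupling can be lifted to an extension structure, analogous to the proof in Section \ref{sect:twocopies} that $\cW^{\gs\go_2}_{\infty}$ is an extension of two copies of $\cW^{\text{ev}}_{\infty}$.

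The main obstacle is the combinatorial and algebraic explosion of Jacobi identities. For $|M|=1$ and $S=\emptyset$ the problem reduces to $\cW_{\infty}$, but as $|M|$ and $|S|$ grow, the number of weight-$\leq 3$ generators, and therefore the number of independent Jacobi consistency equations, grows rapidly, and one must check that the polynomial system in the unknowns has exactly a two-dimensional solution family over the generic point of $\mathrm{Spec}\,\mathbb{C}[c,k]$. In the cases handled so far, symmetry and low-rank coincidences cut down the computation drastically; for general $S,M$ a more structural input will almost certainly be needed, and the most natural candidate is the conjectural identification in Conjecture \ref{universal:typeAYangian} of the mode algebra of $\cW^{A,S,M}_{\infty}$ with a shifted affine Yangian $Y^{\text{aff}}_m(\sigma)$. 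A proof along these lines would likely have to proceed by first constructing the Yangian side and its PBW-type basis, then using the surjections from completions of $Y^{\text{aff}}_m(\sigma)$ onto the mode algebras of \eqref{Afirst} to obtain the universal vertex algebra as the \lq\lq generic'' such target, thereby bypassing a direct case-by-case verification of Jacobi identities.
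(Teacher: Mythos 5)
The statement you are trying to prove is Conjecture \ref{universal:typeA}; the paper does not prove it and offers no argument beyond the motivating observations preceding it (stability of the generating type and of the $\gs\gl^{\natural}_{N_{r,0}}$-module structure for large $r$). There is therefore no proof in the paper to compare yours against, and your text should be judged as a proposed strategy for an open problem. As such, it is a faithful transcription of the template the authors actually execute for the single case $\cW^{BD,\emptyset,\{2\}}_{\infty}$ in Sections \ref{sect:main}--\ref{sect:twocopies}: read off the module structure from the large-level limit via Theorem \ref{thm:largelevellimit} and classical invariant theory, set up undetermined structure constants, impose Jacobi identities by induction on weight, deduce free generation by comparing characters with known quotients that have the weak generation property, and exhibit the glueing structure afterwards. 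So the plan is the right one; it is just not a proof.

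The genuine gap is the one you yourself flag in the last paragraph, and it is not a minor obstacle but the entire content of the conjecture. In the base case the paper must verify, by explicit computation (Proposition \ref{prop:base case}), that the Jacobi system among low-weight generators has a solution depending on exactly two free parameters after fixing scalings, and must then prove by hand the recursions of Proposition \ref{prop:structure constants} and Lemmas \ref{lemma:recursions0}--\ref{rproduct} that propagate this to all weights. Nothing in your step two explains why, for general $S$ and $M$, the polynomial system in the weight-$\leq 3$ structure constants has a solution at all, nor why its solution variety is two-dimensional rather than empty or higher-dimensional; asserting that "the resulting system admits a unique solution in a suitable localization" is precisely the statement to be proved. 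Likewise, item (5) does not follow from a "decoupling limit" argument alone: in the paper the analogous statement (Theorem \ref{completion:twocopies}) requires finding the commuting Virasoro and weight-four fields explicitly inside a quadratic extension of the coefficient ring, and verifying the $\cW^{\text{ev}}_{\infty}$ OPEs via coincidences with rational quotients. Your closing suggestion that one should instead construct the shifted affine Yangian $Y^{\text{aff}}_m(\sigma)$ first and derive the vertex algebra from it is a sensible alternative route, but it replaces one open problem with another (Conjecture \ref{universal:typeAYangian} is also unproven). In short: correct roadmap, no proof, and the missing piece is exactly the inductive solvability and uniqueness of the Jacobi system that the paper only carries out for one example.
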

 In the case $S = \emptyset$ and $M = \{1\}$, $\cW^{A,S,M}_{\infty}$ is exactly the algebra $\cW_{\infty}$ constructed in \cite{Lin}. The algebras $\cW^k(\gs\gl_{N_{r,0}}, f_{P_{S,M,r,0}})$ are the analogues of the $1$-parameter quotients $\cC^{\psi}(n,0) = \cW^{\psi -n}(\gs\gl_n)$ of $\cW_{\infty}$, defined in \cite{CL3}. We expect that in all cases except $\cW_{\infty}$, $\cW^{A,S,M}_{\infty}$ is in fact generated by the fields in weights $1$ and $2$.

We expect $\cW^{A,S,M}_{\infty}$ to admit several other families of $1$-parameter quotients which we now describe. First, $\cW^k(\gs\gl_{N_{r,s}}, f_{P_{S,M,r,s}})$ has an affine subVOA $V^{k'}(\gg\gl_s)$ for some shifted level $k'$. By passing to the large level limit and using standard results of classical invariant theory from \cite{W} as in \cite{CL3,CL4}, it is easy to check that
\begin{equation} \label{Asecond} \text{Com}(V^{k'}(\gg\gl_s), \cW^k(\gs\gl_{N_{r,s}}, f_{P_{S,M,r,s}})),\end{equation} has the same generating type as $\cW^{A,S,M}_{\infty}$, and should therefore be a $1$-parameter quotient of $\cW^{A,S,M}_{\infty}$. Note that the case $s = 0$ of \eqref{Asecond} is just the family \eqref{Afirst}. For $s >0$, \eqref{Asecond} are analogous to the vertex algebras $\cC^{\psi}(n,m)$ for $n\geq 2$ and $m >0$ given in \cite{CL3}.

Next, consider the nilpotent element $f_{P_{S,M,r|s}} \in \gs\gl_{N_{r,0}|s}$ which is just the sum of $f_{P_{S,M,r,0}} \in \gs\gl_{N_{r,0}} \subseteq \gs\gl_{N_{r,0}|s}$ and the zero nilpotent in $\gs\gl_s \subseteq \gs\gl_{N_{r,0}|s}$. Then 
$\cW^k(\gs\gl_{N_{r,0|s}}, f_{P_{S,M,r|s}})$ contains $V^{k''}(\gg\gl_s)$ for some shifted level $k''$, and the coset 
\begin{equation} \label{Athird} \text{Com}(V^{k''}(\gg\gl_s), \cW^k(\gs\gl_{N_{r,0|s}}, f_{P_{S,M,r|s}})),\end{equation} has the same generating type as $\cW^{A,S,M}_{\infty}$, so it should also be a $1$-parameter quotient of $\cW^{A,S,M}_{\infty}$. As above, for $s >0$, these are the analogues of $\cD^{\psi}(n,m)$ for $n \geq 2$ and $m >0$ defined in \cite{CL3}.

Next, we replace $n_i$ with $n_i -1$ in the partition $P_{S,M}$, which is equivalent to taking $r = -1$ above. In particular, since $n_t=2$, this replaces $n_t$ with $1$. Then
$$\text{Com}(V^{k'}(\gg\gl_s), \cW^k(\gs\gl_{N_{-1,s}}, f_{P_{S,M,-1,s}})),\qquad \text{Com}(V^{k''}(\gg\gl_s), \cW^k(\gs\gl_{N_{-1,0|s}}, f_{P_{S,M,-1|s}}))$$ also have the same generating type as $\cW^{A,S,M}_{\infty}$. These are the analogues of the algebras $\cC^{\psi}(1,n)$ and $\cD^{\psi}(1,n)$ defined in \cite{CL3}, which are affine cosets of affine vertex (super)algebras.

Finally, we replace $n_i$ with $n_i -2$ in $P_{S,M}$, which is equivalent to taking $r = -2$. Since $n_t = 2$, this has the effect of removing $n_t$ since it is now zero, i.e., we have replaced $P_{S,M}$ with the smaller partition 
$((n_0-2)^{m_0}, (n_1-2)^{m_1},\dots, (n_{t-1}-2)^{m_{t-1}})$. We use the notation $f_{S,M,-2,s}$ to denote this nilpotent in $\gs\gl_{N_{-2,s}}$ where $N_{-2,s} = N + s -2 \sum_{i=0}^t m_i$, as above. Consider the tensor products 
$$\cW^k(\gs\gl_{N_{-2,s}}, f_{S, M,-2,s}) \otimes \cS(m_t s),\qquad \cW^k(\gs\gl_{N_{-2,s}}, f_{S, M,-2,s}) \otimes \cE(m_t s),$$ where $\cS(m_t s)$ and $\cE(m_t s)$ are the $\beta\gamma$-system and $bc$-system of rank $m_t s$, respectively. Note that
\begin{enumerate}
\item $\cW^k(\gs\gl_{N_{-2,s}}, f_{S, M,-2,s})$ has an affine subVOA $V^{k'}(\gs\gl_s)$ for some $k'$,
\item $\cS(m_t s)$ has an action of $V^{-m_t}(\gg\gl_s) \otimes V^{-s}(\gs\gl_{m_t})$,
\item $\cE(m_t s)$ has an action of $V^{m_t}(\gg\gl_s) \otimes V^{s}(\gs\gl_{m_t})$.
\end{enumerate}

We therefore have diagonal embeddings
$$V^{k'-m_t}(\gg\gl_s) \hookrightarrow \cW^k(\gs\gl_{N_{-2,s}}, f_{S, M,-2,s}) \otimes \cS(m_t s),\qquad V^{k'+m_t}(\gg\gl_s) \hookrightarrow \cW^k(\gs\gl_{N_{-2,s}}, f_{S, M,-2,s}) \otimes \cE(m_t s).$$
The following affine cosets are easily seen to have the generating type of $\cW^{A,S,M}_{\infty}$:
\begin{equation} \label{AGKO} \begin{split} & 
\text{Com}(V^{k'-m_t}(\gg\gl_s), \cW^k(\gs\gl_{N_{-2,s}}, f_{S, M,-2,s}) \otimes \cS(m_t s)),
\\ & \text{Com}(V^{k'+m_t}(\gg\gl_s), \cW^k(\gs\gl_{N_{-2,s}}, f_{S, M,-2,s}) \otimes \cE(m_t s)).
\end{split}
\end{equation}
These are the analogues of the GKO cosets $\cC^{\psi}(0,m)$ and $\cD^{\psi}(0,m)$ defined in \cite{CL3}. Unlike the case of $\cW_{\infty}$, we do {\it not} generally expect there to exist triality isomorphisms relating the algebras \eqref{AGKO} to the other algebras \eqref{Afirst}, \eqref{Asecond}, \eqref{Athird}.

\begin{example} \label{rectA} Consider $S = \emptyset$ and $M = \{m\}$, so $P_{\emptyset, \{m\}} = (2^m)$ and $N = 2m$. We say that $\cW^{A,\emptyset,\{m\}}_{\infty}$ is of {\it rectangular type}, and it has generating type
$$\cW(1^{m^2-1}, 2^{m^2},3^{m^2}, 4^{m^2},\dots).$$ Here the affine subVOA is of type $\gs\gl_m$, and the $m^2$ fields in each weight $d \geq 2$ transform under $\gs\gl_m$ as the sum of the trivial and adjoint modules.  These were constructed in \cite{GRZ}, although their uniqueness as $2$-parameter vertex algebras was not proven. In this case, the partition obtained by replacing $r$ with $-2$ is empty, so the GKO cosets \eqref{AGKO} in this example are
$$\text{Com}(V^{k-m}(\gg\gl_{s}), V^k(\gs\gl_{s}) \otimes \cS(ms)), \quad \text{Com}(V^{k + m}(\gg\gl_{s}), V^k(\gs\gl_{s}) \otimes \cE(ms)).$$ Note that for all levels $k$, the affine subVOA of the first coset is the image of $V^{-s}(\gs\gl_{m})$ in $\cS(ms)$, which is just $V^{-s}(\gs\gl_m)$ when $s > m$. Similarly the affine subVOA of the second coset is the image of $V^s(\gs\gl_m)$ in $\cE(ms)$, which is isomorphic to $L_s(\gs\gl_m)$. 

When $k$ is admissible for $\gs\gl_s$, we have an embedding of simple vertex algebras $L_{k + m}(\gg\gl_{s}) \hookrightarrow L_k(\gs\gl_{s}) \otimes \cE(ms)$, and the coset 
\begin{equation} \label{typeArectsimple} \text{Com}(L_{k + m}(\gg\gl_{s}), L_k(\gs\gl_{s}) \otimes \cE(ms)),\end{equation} is an extension of the tensor product of the vertex algebras 
$\text{Com}(L_{k + i}(\gg\gl_{s}), L_{k+i-1}(\gs\gl_{s}) \otimes \cE(s))$ for $i = 1,\dots,m$, and a rank $m-1$ lattice vertex algebra, and hence is strongly rational. We expect that \eqref{typeArectsimple} is the simple quotient of $\cW^{A,\emptyset,\{m\}}_{\infty}$ at all these points, so that $\cW^{A,\emptyset,\{m\}}_{\infty}$ has many quotients which are strongly rational. \end{example}

\begin{example} \label{Asecondexample} Consider $S = \{2\}$ and $M = \{1,3\}$, so $P_{\{2\}, \{1,3\}} = (4, 2^3)$ and $N = 10$. The generating type of $\cW^{A,\{2\}, \{1,3\}}_{\infty}$ is 
$$\cW(1^9, 2^{16},3^{16},4^{16},\dots),$$ where the affine subVOA is of type $\gg\gl_3$, and the $16$ fields in each weight $d \geq 2$ transform under $\gs\gl_3$ as the sum of the trivial, standard, dual standard, and adjoint modules. The partition obtained by replacing $r$ with $-2$ is just $(2)$, and the corresponding nilpotent $f_{(2)} \subseteq \gs\gl_{2+s}$ is just the minimal nilpotent $f_{\text{min}}$. Therefore the GKO cosets \eqref{AGKO} in this example are
$$ \text{Com}(V^{k'-3}(\gg\gl_{s}), \cW^k(\gs\gl_{2+s}, f_{\text{min}}) \otimes \cS(3s)), \quad \text{Com}(V^{k'+3}(\gg\gl_{s}), \cW^k(\gs\gl_{2+s}, f_{\text{min}}) \otimes \cE(3s)), \quad k' = k+1.$$
When $k$ is admissible for $\gs\gl_s$, we have an embedding of simple vertex algebras $L_{k+4}(\gs\gl_{s})\hookrightarrow \cW_k(\gs\gl_{2+s}, f_{\text{min}}) \otimes \cE(3s)$, and the coset
\begin{equation} \label{typeAexample2} \text{Com}(L_{k+4}(\gg\gl_{s}), \cW_k(\gs\gl_{2+s}, f_{\text{min}}) \otimes \cE(3s)),\end{equation} is an extension the tensor product of a rank $3$ lattice vertex algebra and the following vertex algebras
\begin{enumerate}
\item $\text{Com}(L_{k+1}(\gg\gl_{s}), \cW_k(\gs\gl_{2+s}, f_{\text{min}}))$,
\item $\text{Com}(L_{k+2}(\gg\gl_{s}),  L_{k+1}(\gs\gl_s) \otimes \cE(s))$,
\item $\text{Com}(L_{k+3}(\gg\gl_{s}), L_{k+2}(\gs\gl_s) \otimes \cE(s))$,
\item $\text{Com}(L_{k+4}(\gg\gl_{s}), L_{k+3}(\gs\gl_s) \otimes \cE(s))$.
\end{enumerate}
Suppose that $m\in \mathbb{N}_{\geq 2}$, $m+s$ is odd, and we fix $k = -(s+2) + \frac{m+s+2}{2}$. The second third, and fourth cosets above strongly rational \cite{ACL}, and the first one corresponds to an intersection point on the truncation curves for $\cC^{\psi}(2,s)$ and $\cC^{\phi}(m,0)$ at the point $\psi = \frac{m+s+2}{2}$ and $\phi = \frac{m+s}{m+s+2}$, in the notation of \cite{CL3}. This suggests that 
$$\text{Com}(L_{k+1}(\gg\gl_{s}), \cW_k(\gs\gl_{2+s}, f_{\text{min}})) \cong \cW_{-m+\frac{m+s}{m+s+2}}(\gs\gl_m),$$ which is strongly rational. This isomorphism will be proven in \cite{CLN} as a consequence of a more general theorem, and it implies that \eqref{typeAexample2} is strongly rational for these values of $k$. Finally, we expect these to be strongly rational quotients of $\cW^{A,\{2\}, \{1,3\}}_{\infty}$.
\end{example}

\subsection{$\cW$-algebras in type $B$, $C$, and $D$} Recall that nilpotents correspond to partitions that satisfy the following conditions:
\begin{enumerate}
\item In type $C$, a partition $(n_0^{m_0},  n_1^{m_1},\dots, n_t^{m_t})$ must satisfy $m_i$ even whenever $n_i$ is odd.
\item In type $B$ and $D$, a partition $(n_0^{m_0},  n_1^{m_1},\dots, n_t^{m_t})$ must satisfy $m_i$ even whenever $n_i$ is even.
\end{enumerate}
(In type $D$, there are two conjugacy classes of nilpotents that correspond to very even partitions, i.e., those where all $n_i$ are even, but we need not distinguish them because the corresponding $\cW$-algebras are isomorphic \cite{FKN}).
We will use the same notation $S$ and $M$ as above. For type $C$, we consider partitions $P_{S,M} = (n_0^{m_0}, n_1^{m_1},\dots, n_{t}^{m_t})$ of $N = \sum_{i=0}^t n_i m_i$ consisting of $m_i$ parts of size $n_i$, where $n_i - n_{i+1} = d_{i+1}$ for $i=0,1,\dots, t-1$, and $m_i$ is even whenever $n_i$ is odd. We will take $n_t$ to be either $2$ or $3$, and if at least one of the $m_i$ is odd, only one of these possibilities is allowed. We will use the notation $P^{2C}_{S,M}$ or $P^{3C}_{S,M}$ in the case respectively that $n_t = 2$ or $3$. If all $m_i$ are even, then both $n_t = 2$ and $n_t = 3$, are allowed, and we still use this notation because we need to distinguish these two partitions.

For $r,s \geq 0$, we denote by 
$P^{2C}_{S,M,r,s}$ and $P^{3C}_{S,M,r,s}$ the partitions of \begin{equation} \label{NRSTypeC} N_{r,s} = N + 2s + 2r \sum_{i=0}^t m_i,\end{equation} given by $((n_0+2r)^{m_0}, (n_1+2r)^{m_1},\dots, (n_{t}+2r)^{m_t}, 1^{2s})$, which has $m_i$ parts of size $n_i + 2r$, and $2s$ parts of size $1$. Note that $P^{2C}_{S,M,r,s}$ and $P^{3C}_{S,M,r,s}$ satisfy our restrictions whenever $P^{2C}_{S,M}$ and $P^{3C}_{S,M}$ do, and $P^{iC}_{S,M,0,0} = P_{S,M}$ for $i = 2,3$. Let $f^{2C}_{S,M,r,s}\in \gs\gp_{N_{r,s}}$ and $f^{3C}_{S,M,r,s}\in \gs\gp_{N_{r,s}}$ denote the corresponding nilpotents.

Similarly, in types $B$ and $D$, we have the partition $P_{S,M} = (n_0^{m_0}, n_1^{m_1},\dots, n_{t}^{m_t})$ of $N = \sum_{i=0}^t n_i m_i$ consisting of $m_i$ parts of size $n_i$, where $n_i - n_{i+1} = d_{i+1}$ for $i=0,1,\dots, t-1$, and $m_i$ is even whenever $n_i$ is even. Again, we will take $n_t$ to be either $2$ or $3$, and we use the notation $P^{2BD}_{S,M}$ and $P^{3BD}_{S,M}$ to distinguish these cases. For $r,s \geq 0$, we denote by 
$P^{2BD}_{S,M,r,s}$ and $P^{3BD}_{S,M,r,s}$ the partitions of \begin{equation} \label{NRSTypeBD} N_{r,s} = N + s + 2r \sum_{i=0}^t m_i,\end{equation} given by $((n_0+2r)^{m_0}, (n_1+2r)^{m_1},\dots, (n_{t}+2r)^{m_t}, 1^s)$, which has $m_i$ parts of size $n_i + 2r$, and $s$ parts of size $1$. Note that we are using the same notation $N_{r,s}$ in both \eqref{NRSTypeC} and \eqref{NRSTypeBD}, but it will always be clear from context which expression we are using. As above, $P^{2BD}_{S,M,r,s}$ and $P^{3BD}_{S,M,r,s}$ satisfy our restrictions whenever $P^{2BD}_{S,M}$ and $P^{3BD}_{S,M}$ do, and $P^{iBD}_{S,M,0,0} = P_{S,M}$ for $i = 2,3$. Let $f^{2BD}_{S,M,r,s}\in \gs\go_{N_{r,s}}$ and $f^{3BD}_{S,M,r,s}\in \gs\go_{N_{r,s}}$ denote the corresponding nilpotents.

Clearly if $f^{3C}_{S,M,r,s}\in \gs\gp_{N_{r,s}}$ is a well defined nilpotent, so is $f^{2BD}_{S,M,r,s} \in \gs\go_{N_{r,s}}$. Similarly, if $f^{3BD}_{S,M,r,s}\in \gs\go_{N_{r,s}}$ is well defined, so is $f^{2C}_{S,M,r,s} \in \gs\gp_{N_{r,s}}$. It is straightforward to check that the generating types of 
\begin{equation} \label{Cfirst} \cW^k(\gs\gp_{N_{r,0}}, f^{3C}_{S,M,r,0}),\qquad \cW^k(\gs\go_{N_{r,0}}, f^{2BD}_{S,M,r,0}),\end{equation} are the same up an arbitrary fixed conformal weight when $r$ is sufficiently large. In addition, for $r$ sufficiently large, the Lie algebras $\gs\gp_{N_{r,0}}^{\natural}$ with respect to $f^{3C}_{S,M,r,0}$ and $\gs\go_{N_{r,0}}^{\natural}$ with respect to $f^{2BD}_{S,M,r,0}$ are the same. We have 
$$\gs\gp_{N_{r,0}}^{\natural} \cong \bigoplus_{i=0}^t \gg_i,$$ where $\gg_i = \gs\gp_{m_i}$ when $n_i$ is odd (so that $m_i$ is even), and $\gg_i \cong \gs\go_{m_i}$ when $n_i$ is even. Likewise,
$$\gs\go_{N_{r,0}}^{\natural} \cong \bigoplus_{i=0}^t \gg_i,$$ where $\gg_i = \gs\gp_{m_i}$ when $n_i$ is even (so that $m_i$ is even), and $\gg_i \cong \gs\go_{m_i}$ when $n_i$ is odd. Moreover, the action of $\gs\gp_{N_{r,0}}^{\natural}$ and $\gs\go_{N_{r,0}}^{\natural}$ on the higher weight generating fields is independent of $r$. This motivates the following conjecture.
\begin{conjecture}  \label{universal:typeC} Let $S = \{d_1,\dots, d_t\}$ and $M = \{m_0,\dots, m_t\}$ be sets of positive integers such that $n_i - n_{i+1} = d_{i+1}$ for $i=0,1,\dots, t-1$, and $m_i$ is even whenever $n_i$ is odd, and $n_t = 3$. Then there exists a unique $2$-parameter vertex algebra $\cW^{C,S,M}_{\infty}$ with the following properties:
\begin{enumerate}
\item $\cW^{C,S,M}_{\infty}$ is defined over a localization of the polynomial ring in two variables obtained by inverting finitely many polynomials.
\item $\cW^{C,S,M}_{\infty}$ is freely generated of the appropriate type determined by $S$ and $M$.
\item $\cW^{C,S,M}_{\infty}$ admits all the vertex algebras \eqref{Cfirst} as $1$-parameter quotients.
\item $\cW^{C,S,M}_{\infty}$ is weakly generated by the fields in weight at most $3$. %(and at most $2$ except in the case of $\cW^{\text{ev}}_{\infty}$ and $\cW^{\gs\gp}_{\infty}$).
\item $\cW^{C,S,M}_{\infty}$ is an extension of the tensor product of $m_i$ copies of $\cW^{\text{ev}}_{\infty}$ for each even $n_i$, and $\frac{1}{2} m_i$ copies of $\cW^{\gs\gp}_{\infty}$ for each odd $n_i$.
\item $\cW^{C,S,M}_{\infty}$ serves as a classifying object for vertex algebras satisfying (1)-(4); any vertex algebra with a strong generating set of this type (not necessarily minimal) satisfying (1)-(4), is a quotient of $\cW^{C,S,M}_{\infty}$. 
\end{enumerate}
\end{conjecture}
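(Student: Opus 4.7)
The plan is to construct $\cW^{C,S,M}_{\infty}$ by induction on conformal weight, mirroring the template developed for $\cW_{\infty}$ in \cite{Lin}, for $\Wev$ in \cite{KL}, for $\cW^{\gs\gp}_{\infty}$ in \cite{CKoL2}, and for $\cW^{\gs\go_2}_{\infty}$ in Theorem \ref{thm:induction} of this paper. First I would identify the target strong generating set by organizing the generators into modules for the affine subalgebra $\bigotimes_{i=0}^t V^{k_i}(\gg_i)$, where $\gg_i = \gs\go_{m_i}$ if $n_i$ is even and $\gg_i = \gs\gp_{m_i}$ if $n_i$ is odd. The module structure is read off from the large-level limit of $\cW^k(\gs\gp_{N_{r,0}}, f^{3C}_{S,M,r,0})$ via Theorem \ref{thm:largelevellimit}, by applying the first and second fundamental theorems of classical invariant theory as in \cite{CL3}. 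For $r$ sufficiently large this decomposition stabilizes, yielding the canonical strong generating type that defines the ansatz for $\cW^{C,S,M}_{\infty}$.

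The core step is to write every singular OPE $a_{(r)}b$ among the proposed generators as an $R$-linear combination of normally ordered monomials, where $R = D^{-1}\mathbb{C}[c,k]$ for a multiplicative set $D$ that will be enlarged finitely often during the induction. The Jacobi identities $J_{r,s}(a,b,c)$ from \eqref{Jacobi} yield a system of polynomial equations in these structure constants, to be solved weight by weight: at each weight $N$ the claim is that the newly relevant identities either determine the new structure constants uniquely in terms of $c$, $k$, and data from lower weights (possibly after inverting a nonzero polynomial in $\mathbb{C}[c,k]$), or are automatically satisfied. Uniqueness of $\cW^{C,S,M}_{\infty}$ follows once this is established, and free generation as in (2) is then verified by comparing graded characters with the large-level limit, whose free generation is guaranteed by Theorem \ref{thm:largelevellimit}.

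The main obstacle will be to rule out the possibility that the Jacobi identities force a nontrivial polynomial relation between $c$ and $k$, which would collapse the moduli space below dimension two. The strategy is to use the $\cW$-algebras \eqref{Cfirst}, together with the additional $1$-parameter families of type \eqref{Asecond} and \eqref{Athird} (suitably adapted to type $C$), as a $2$-parameter test family: as $r \in \mathbb{Z}_{\geq 0}$ and the auxiliary parameters vary, the associated truncation curves trace a Zariski-dense subset of the $(c,k)$-plane. Since each of these $\cW$-algebras satisfies the OPE ansatz modulo its ideal of null vectors, and is weakly generated in weight at most $3$ for $r$ sufficiently large (the analogue of the weak-generation statement to be proven in Theorem \ref{weakgeneration:Walgebras}), any polynomial relation produced by Jacobi identities would vanish on a Zariski-dense set and hence vanish identically. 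Properties (3) and (4) then follow from the construction, and the classifying property (6) is tautological from the universality of the ansatz.

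The remaining delicate point is the explicit glueing structure in part (5). Following the approach of Section \ref{sect:twocopies}, one would decompose the conformal vector into commuting Virasoros indexed by the rectangular blocks of $P_{S,M}$, and then show, using a reconstruction argument in the spirit of Theorem \ref{thm:reconstruction}, that each block generates a copy of the appropriate basic universal algebra, namely $\Wev$ for each even $n_i$ and $\cW^{\gs\gp}_{\infty}$ for each odd $n_i$, with the remaining higher-weight generators serving as extension fields governed by the $V_*$-module structure. The combinatorial complexity of both the inductive solution of the Jacobi identities and the extraction of these commuting subalgebras grows rapidly with $|S|+|M|$, which is why this paper carries out in full only the simplest genuinely new case $\cW^{\gs\go_2}_{\infty} = \cW^{BD,\emptyset,\{2\}}_{\infty}$, where $|S|+|M| = 1$ and only a single glueing of two copies of $\Wev$ needs to be controlled.
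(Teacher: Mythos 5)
The statement you are addressing is stated in the paper as a \emph{conjecture}, and the paper offers no proof of it in this generality; the authors only carry out the construction in full for the single instance $\cW^{BD,\emptyset,\{2\}}_{\infty}=\cW^{\gs\go_2}_{\infty}$ (Sections \ref{sect:main}--\ref{sect:twocopies}), the case $\cW^{C,\emptyset,\{2\}}_{\infty}=\cW^{\gs\gp}_{\infty}$ having been done in \cite{CKoL2}. Your plan faithfully mirrors the template the authors actually use in that one case --- identify the generating type from the large level limit via Theorem \ref{thm:largelevellimit} and classical invariant theory, solve the Jacobi identities weight by weight over a localization of $\mathbb{C}[c,k]$, use a Zariski-dense family of known quotients to rule out a forced relation between $c$ and $k$, and extract the glueing structure by splitting the conformal vector --- so as a roadmap it is the intended one. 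But it is a roadmap, not a proof, and the places where it is silent are exactly the places where the work lies.

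Concretely: (i) the induction cannot start without an explicit base computation analogous to Proposition \ref{prop:base case}, in which \emph{all} OPEs up to a critical total weight are solved and shown to be consistent without imposing a relation on $(c,k)$; for general $S,M$ the number of generators and of low-weight Jacobi identities grows rapidly, and there is no a priori argument that this base case closes --- in the paper it is a heavy computer calculation even for a single weight-$1$ field and three fields per even weight. (ii) Your Zariski-density argument presupposes that the test families \eqref{Cfirst}, \eqref{Csecond}, \eqref{Cthird} satisfy the OPE ansatz \emph{and} the weak generation property in weight at most $3$; the latter is the analogue of Theorem \ref{weakgeneration:Walgebras}, which the paper proves only for the rectangular $\gs\go_2$ cases by an argument that leans on the explicit structure constants of Proposition \ref{prop:structure constants} --- constants you do not yet have in the general setting, so the argument is circular as stated. (iii) For part (5), exhibiting $\sum_i m_i$ (resp.\ $\tfrac{1}{2}m_i$) commuting copies of $\Wev$ (resp.\ $\cW^{\gs\gp}_{\infty}$) requires an explicit splitting of $L$ into commuting Virasoro fields inside the non-affine part of the algebra; in the paper this splitting \eqref{vir:splitting} already forces a quadratic field extension of $\mathbb{C}[c,k]$ for just two summands, and nothing in your proposal controls the analogous (higher-degree, possibly non-Galois) extension needed for more blocks, nor shows that the resulting weight-$4$ fields close on copies of the basic algebras. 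None of these is a wrong step, but each is an open problem rather than a routine verification, which is why the statement remains a conjecture in the paper.
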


Similarly, the generating types of 
\begin{equation} \label{BDfirst} \cW^k(\gs\gp_{N_{r,0}}, f^{2C}_{S,M,r,0}),\qquad \cW^k(\gs\go_{N_{r,0}}, f^{3BD}_{S,M,r,0}),\end{equation} are the same up an arbitrary fixed conformal weight when $r$ is sufficiently large, but they are different from the generating type of \eqref{Cfirst}. As above, the Lie algebras $\gs\gp_{N_{r,0}}^{\natural}$ and $\gs\go_{N_{r,0}}^{\natural}$ are the same, and the action of $\gs\gp_{N_{r,0}}^{\natural}$ and $\gs\go_{N_{r,0}}^{\natural}$ on the higher weight generating fields is independent of $r$ for $r$ sufficiently large. 
\begin{conjecture}   \label{universal:typeBD}  Let $S = \{d_1,\dots, d_t\}$ and $M = \{m_0,\dots, m_t\}$ be sets of positive integers such that $n_i - n_{i+1} = d_{i+1}$ for $i=0,1,\dots, t-1$, and $m_i$ is even whenever $n_i$ is even, and $n_t = 3$. There exists a unique $2$-parameter vertex algebra $\cW^{BD,S,M}_{\infty}$ with the following properties:
\begin{enumerate}
\item $\cW^{BD,S,M}_{\infty}$ is defined over a localization of the polynomial ring in two variables obtained by inverting finitely many polynomials.
\item $\cW^{BD,S,M}_{\infty}$ is freely generated of the appropriate type determined by $S$ and $M$.
\item $\cW^{BD,S,M}_{\infty}$ admits all the $\cW$-algebras \eqref{BDfirst}, as $1$-parameter quotients.
\item $\cW^{BD,S,M}_{\infty}$ is weakly generated by the fields in weight at most $4$. % (and at most $2$ except in the case of $\cW^{\text{ev}}_{\infty}$).
\item $\cW^{BD,S,M}_{\infty}$ is an extension of the tensor product of $m_i$ copies of $\cW^{\text{ev}}_{\infty}$ for each odd $n_i$, and $\frac{1}{2} m_i$ copies of $\cW^{\gs\gp}_{\infty}$ for each even $n_i$.
\item $\cW^{BD,S,M}_{\infty}$ serves as a classifying object for vertex algebras satisfying (1)-(4); any vertex algebra with a strong generating set of this type (not necessarily minimal) satisfying (1)-(4), is a quotient of $\cW^{BD,S,M}_{\infty}$. 
\end{enumerate}
\end{conjecture}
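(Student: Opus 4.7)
The plan is to adapt the inductive OPE-construction used in \cite{Lin,KL,CKoL2} and in the main result of this paper for $\cW^{\gs\go_2}_{\infty}$, which corresponds to the smallest non-trivial case $S=\emptyset$, $M=\{2\}$. The strategy has five stages: identify the strong generating type from a large-level-limit calculation, write down ansatz OPEs among the low-weight generators with coefficients in a finitely generated ring $R$, impose all Jacobi identities to cut out a 2-parameter subvariety of $\operatorname{Spec}(R)$, use weak generation to propagate OPEs to all higher-weight generators, and verify that the $\cW$-algebras in \eqref{BDfirst} together with the glueing structure of item (5) provide the truncation curves showing the parameter space has dimension exactly $2$.

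First I would determine the strong generating type. Applying Theorem \ref{thm:largelevellimit} to $\cW^k(\gs\go_{N_{r,0}}, f^{3BD}_{S,M,r,0})$ for $r$ large, and taking the coset with respect to the affine subalgebra $\gs\go_{N_{r,0}}^\natural = \bigoplus_i \gg_i$ (with $\gg_i \cong \gs\go_{m_i}$ for $n_i$ odd and $\gg_i \cong \gs\gp_{m_i}$ for $n_i$ even), classical invariant theory produces a canonical finite minimal strong generating set for the coset orbifold, hence for the generic fibers of the universal object. The type is manifestly independent of $r$ for $r$ sufficiently large, so it depends only on $S$ and $M$. Next I would parametrize the OPEs of all generators up to weight $4$ by undetermined structure constants in $R = D^{-1}\mathbb{C}[\alpha, \beta]$ (for a multiplicatively closed $D$ to be determined from denominators), imposing all Jacobi identities $J_{r,s}(a,b,c)$ among these generators. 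This is the direct generalization of the computation done in Section \ref{sect:main} for $\cW^{\gs\go_2}_{\infty}$.

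Next, I would invoke weak generation: since every higher weight generator lies in the vertex algebra generated by weight $\leq 4$ fields, its OPEs with all other generators are determined inductively via \eqref{quasi-derivation} and \eqref{Jacobi} from OPEs among low-weight fields. Verifying that this inductive procedure terminates consistently (i.e.\ that the resulting OPEs among higher-weight generators satisfy all their own Jacobi identities) is again the direct generalization of Theorem \ref{thm:induction}, and reduces to a finite list of identities at each conformal weight. Existence of the $1$-parameter quotients \eqref{BDfirst} is then verified by exhibiting, for each pair of integers $r$ and admissible $k$, a surjective VOA-homomorphism onto $\cW^k(\gs\gp_{N_{r,0}}, f^{2C}_{S,M,r,0})$ and onto $\cW^k(\gs\go_{N_{r,0}}, f^{3BD}_{S,M,r,0})$ by matching structure constants on the conjectured truncation curve; the glueing structure (5) then follows by branching the affine subalgebra along each rectangular block $n_i^{m_i}$ and identifying the commuting factor with the corresponding basic universal object.

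The main obstacle will be Stage 2, where the sheer number of low-weight generators (growing with $|M|$ and the size of $S$) leads to very large systems of polynomial Jacobi constraints, and it is non-trivial to show that the solution variety has dimension exactly $2$, rather than being forced lower by hidden relations at some higher weight. For $\cW^{\gs\go_2}_{\infty}$ this is handled by exhibiting an explicit $2$-parameter family of inequivalent quotients, providing a lower bound on the dimension of the parameter space that matches the upper bound from the Jacobi analysis. In the general case, one will need to exhibit sufficiently many inequivalent quotients from among the families \eqref{BDfirst}, \eqref{BDsecond}, \eqref{BDthird} whose truncation curves sweep out a $2$-dimensional subvariety of the parameter space, so that no Jacobi identity at any higher weight can force an additional constraint. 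The classifying property (6) then follows tautologically: any VOA satisfying (1)--(4) with the prescribed generating type yields a solution to the same Jacobi system in weights $\leq 4$, hence a specialization of $R$, and weak generation ensures this specialization extends to a surjection from $\cW^{BD,S,M}_{\infty}$.
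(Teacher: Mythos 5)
The statement you are asked to prove is stated in the paper as a \emph{conjecture}, and the paper contains no proof of it in this generality: the authors only carry out the program for the single smallest glueing case $S=\emptyset$, $M=\{2\}$, i.e.\ $\cW^{BD,\emptyset,\{2\}}_{\infty}=\cW^{\gs\go_2}_{\infty}$, and even that occupies Sections \ref{sect:main}--\ref{sect:1paramquot}. Your proposal is essentially a faithful high-level outline of that program transplanted to the general case, so as a \emph{strategy} it matches the authors' intent; but as a proof it has genuine gaps precisely at the points where the paper has to do hard, case-specific work. First, the claim that imposing Jacobi identities on the weight-$\leq 4$ ansatz cuts out a $2$-dimensional solution variety is not something one can assert in advance: in the $\gs\go_2$ case this is the content of the base computation (Proposition \ref{prop:base case}), which requires explicitly solving the Jacobi system up to total weight $9$, making very particular choices of modified generators and scalings \eqref{eq:scale1} so that the recursion coefficients in Proposition \ref{prop:structure constants} become rational constants, and there is no a priori argument that the analogous system for general $(S,M)$ does not collapse to fewer parameters or leave extra ones. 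Second, your inductive propagation step silently assumes the existence of a distinguished raising field (the analogue of $W^4$ with $W^4_{(1)}\tilde W^{n,\mu}=\tilde W^{n+2,\mu}$) whose Jacobi identities drive Lemmas \ref{lemma:recursions0}--\ref{rproduct}; identifying such a field, and proving that the finitely many Jacobi identities you impose suffice to determine \emph{all} higher OPEs consistently, is the substance of Theorem \ref{thm:induction} and does not reduce to ``a finite list of identities at each conformal weight'' without that structure.

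Third, free generation (your termination/consistency claim) is not obtained in the paper from the Jacobi analysis at all: it is deduced indirectly by exhibiting $1$-parameter quotients --- the rectangular $\cW$-algebras $\cC^{\psi}_{BD}(0,m)$ and $\cC^{\psi}_{CC}(0,m)$ --- and proving the nontrivial weak-generation statement Theorem \ref{weakgeneration:Walgebras} for them, so that their characters pin down the character of the universal object (Corollary \ref{Wso freely generated}). Your plan to ``verify truncation curves by matching structure constants'' presupposes exactly this weak-generation property for the $\cW$-algebras in \eqref{BDfirst}, which fails in the large-level limit and must be established by a separate argument at special points. Finally, item (5) of the conjecture --- that the object is a glueing of the basic universal algebras --- is proved in the paper for $\cW^{\gs\go_2}_{\infty}$ only via an explicit splitting of the Virasoro field into commuting pieces over a quadratic extension of the base ring (Theorem \ref{completion:twocopies}), together with delicate number-theoretic nonvanishing arguments (Lemma \ref{rationalellcaseofc2n}); ``branching the affine subalgebra along each rectangular block'' does not by itself produce the commuting copies. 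In short, your outline identifies the right skeleton, but each of its load-bearing steps is an open problem for general $(S,M)$, which is why the statement remains a conjecture.
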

In the case $S = \emptyset$ and $M = \{1\}$, $\cW^{BD,\emptyset,\{1\}}_{\infty}$ is exactly the algebra $\cW^{\text{ev}}_{\infty}$ constructed in \cite{Lin}. In the case $S = \emptyset$ and $M = \{2\}$, $\cW^{C,\emptyset,\{2\}}_{\infty}$ is exactly the algebra $\cW^{\gs\gp}_{\infty}$ constructed in \cite{CKoL2}. We expect that $\cW^{C,S,M}_{\infty}$ and $\cW^{BD,S,M}_{\infty}$ are weakly generated by the fields in weights $1$ and $2$ except in the cases $\cW^{\gs\gp}_{\infty}$ and $\cW^{\text{ev}}_{\infty}$, where we also need weight $3$ and weight $4$ fields, respectively.
Note that \eqref{Cfirst} and \eqref{BDfirst} are the analogue of the $1$-parameter quotients $\cC^{\psi}_{1D}(0,m)$ and $\cC^{\psi}_{2C}(0,m)$ of $\cW^{\text{ev}}_{\infty}$ defined in \cite{CL4}, for $m\geq 1$.

\subsection{Families of VOAs with generating type of $\cW^{C,S,M}_{\infty}$} Here we write down more vertex algebras which we expect to arise as $1$-parameter quotients of $\cW^{C,S,M}_{\infty}$. We use the same notation $ f^{3C}_{S,M,r,s}$ to denote this nilpotent in the subalgebra $\gs\gp_{N_{r,s}} \subseteq \go\gs\gp_{1|N_{r,s}}$. Similarly, for $s = 0$, we use the notation $f^{3C}_{S,M,r,0}$ to denote this nilpotent inside the subalgebra $\gs\gp_{N_{r,0}} \subseteq \go\gs\gp_{s|N_{r,0}}$ for $s >1$. We use the notation $f^{2BD}_{S,M,r,0}$ to denote this nilpotent in $\gs\go_{N_{r,0}} \subseteq \go\gs\gp_{N_{r,0}|2s}$. 
As usual, by passing to the large level limit and using classical invariant theory, it can be checked that following vertex algebras all have the generating type of $\cW^{C,S,M}_{\infty}$, and should therefore arise as $1$-parameter quotients:
\begin{equation} \label{Csecond} \begin{split}
& \text{Com}(V^{k'}(\gs\gp_{2s}), \cW^k(\gs\gp_{N_{r,s}}, f^{3C}_{S,M,r,s})),
\\ & \text{Com}(V^{k'}(\gs\gp_{2s}), \cW^k(\go\gs\gp_{N_{r,0}|2s}, f^{2BD}_{S,M,r,0})),
\\ & \text{Com}(V^{k'}(\gs\go_s), \cW^k(\gs\go_{N_{r,s}}, f^{2BD}_{S,M,r,s}))^{\mathbb{Z}_2},
\\ & \text{Com}(V^{k'}(\gs\go_s), \cW^k(\go\gs\gp_{s|N_{r,0}}, f^{3C}_{S,M,r,0}))^{\mathbb{Z}_2},
\\ & \text{Com}(V^{k'}(\go\gs\gp_{1|2s}), \cW^k(\go\gs\gp_{1|N_{r,s}}, f^{3C}_{S,M,r,s}))^{\mathbb{Z}_2},
\\ & \text{Com}(V^{k'}(\go\gs\gp_{1|2s}), \cW^k(\go\gs\gp_{N_{r,0} + 1|2s}, f^{2BD}_{S,M,r,0}))^{\mathbb{Z}_2}.
\end{split} \end{equation}
Note that we recover \eqref{Cfirst} by taking $s = 0$ in the first family, and $s= 0$ in the second family in \eqref{Csecond}. These are analogues of the $1$-parameter quotients $\cC^{\psi}_{iX}(n,m)$ of $\cW^{\text{ev}}_{\infty}$ defined in \cite{CL4}, for $m\geq 1$ and $n\geq 1$. As in \cite{CL4}, we regard \eqref{Csecond} as a list of $8$ families, since the cases $s$ even or odd in the third and fourth families should be distinguished. 

We also have families obtained by setting $r = -1$ in the case of $ f^{3C}_{S,M,r,s}$. This replaces $n_i$ with $n_i -2$, so since $n_t = 3$, it is replaced with $1$. These are the following:
\begin{equation} \label{Cthird} \begin{split}
& \text{Com}(V^{k'}(\gs\gp_{2s}), \cW^k(\gs\gp_{N_{-1,s}}, f^{3C}_{S,M,-1,s})),
\\ & \text{Com}(V^{k'}(\go\gs\gp_{1|2s}), \cW^k(\go\gs\gp_{1|N_{-1,s}}, f^{3C}_{S,M,-1,s}))^{\mathbb{Z}_2},
\\ & \text{Com}(V^{k'}(\gs\go_s), \cW^k(\go\gs\gp_{s|N_{-1,0}}, f^{3C}_{S,M,-1,0}))^{\mathbb{Z}_2}.
\end{split} \end{equation} These are the analogues of the $1$-parameter quotients $\cC^{\psi}_{1B}(n,0)$, $\cC^{\psi}_{1D}(n,0)$, $\cC^{\psi}_{1C}(n,0)$, and $\cC^{\psi}_{1O}(n,0)$ of $\cW^{\text{ev}}_{\infty}$, which are ($\mathbb{Z}_2$-orbifolds of) affine cosets of affine vertex (super)algebras. We regard \eqref{Cthird} as a list of $4$ families, since the cases $s$ even or odd in the third family should be distinguished. 

Finally, we have families obtained by setting $r = -1$ in the case  $f^{2BD}_{S,M,r,s}$.
This replaces $n_i$ with $n_i -2$, so in particular $n_t$ is replaced with $0$. This has the effect of removing $n_t$, so we have replaced the partition $P^{2BD}_{S,M}$ with the partition 
$((n_0-2)^{m_0}, (n_1-2)^{m_1},\dots, (n_{t-1}-2)^{m_{t-1}})$. Note that $m_t$ must be even, and that $\cW^{C,S,M}_{\infty}$ has an affine subVOA of type $\gs\gp_{m_t}$. We use the notation $f^{2BD}_{S,M,-1,s}$ to denote the corresponding nilpotent in $\gs\go_{N_{-1,s}}$, and we use the notation $ f^{2BD}_{S,M,-1,0}$ to denote this nilpotent in $\gs\go_{N_{-1,0}}$ regarded as a Lie subalgebra of $\go\gs\gp_{N_{-1,0}|2s}$ or $\go\gs\gp_{1+N_{-1,0}|2s}$. Observe that 
\begin{enumerate}
\item $\cW^k(\gs\go_{N_{-1,s}}, f^{2BD}_{S,M,-1,s})$, $\cW^k(\go\gs\gp_{N_{-1,0}|2s}, f^{2BD}_{S,M,-1,0})$, and $\cW^k(\go\gs\gp_{1+N_{-1,0}|2s}, f^{2BD}_{S,M,-1,0})$ have affine subVOAs of type $\gs\go_s$, $\gs\gp_{2s}$, and $\go\gs\gp_{1|2s}$ for some shifted levels $k'$,
\item The $\beta\gamma$-system $\cS(\frac{1}{2} m_t s)$ has an action of $V^{- m_t}(\gs\go_s) \otimes V^{-s/2}(\gs\gp_{m_t})$,
\item The free fermion algebra $\cF(2m_t s)$ has an action of $V^{m_t/2}(\gs\gp_{2s}) \otimes V^{s}(\gs\gp_{m_t})$,
\item The tensor product $\cF(2m_t s) \otimes \cS(m_t)$ has an action of $V^{m_t/2}(\go\gs\gp_{1|2s})\otimes V^{s-1/2}(\gs\gp_{m_t})$. 
\end{enumerate}
The following diagonal cosets are easily seen to have the generating type of $\cW^{C,S,M}_{\infty}$.
\begin{equation} \label{CGKO}
\begin{split}
& \text{Com}(V^{k'-m_t}(\gs\go_s), \cW^k(\gs\go_{N_{-1,s}}, f^{2BD}_{S,M,-1,s})\otimes \cS(\frac{1}{2} m_t s))^{\mathbb{Z}_2},
\\ &  \text{Com}(V^{k'+m_t}(\gs\gp_{2s}), \cW^k(\go\gs\gp_{N_{-1,0}|2s}, f^{2BD}_{S,M,-1,0}) \otimes \cF(2m_t s)),
\\ & \text{Com}(V^{k'+m_t}(\go\gs\gp_{1|2s}), \cW^k(\go\gs\gp_{1+N_{-1,0}|2s}, f^{2BD}_{S,M,-1,0}) \otimes \cF(2m_t s) \otimes \cS(m_t))^{\mathbb{Z}_2}.
\end{split}\end{equation}
Note that for all $k$, the level of the affine subVOA of type $\gs\gp_{2m_t}$ is $-\frac{s}{2}$, $s$, $s - \frac{1}{2}$, respectively, in the above cosets.
These are the analogues of the $1$-parameter quotients $\cC^{\psi}_{2B}(n,0)$, $\cC^{\psi}_{2D}(n,0)$, $\cC^{\psi}_{2C}(n,0)$, and $\cC^{\psi}_{2O}(n,0)$ of $\cW^{\text{ev}}_{\infty}$ defined in \cite{CL4}, which we regard as GKO cosets. We regard \eqref{CGKO} as a list of $4$ families, since the cases $s$ even or odd in the first family should be distinguished. 

\begin{example} \label{rectC} Consider $S = \emptyset$ and $M = \{2m\}$, so $P^{2BD}_{\emptyset, \{2m\}} = (2^{2m})$ and $P^{3C}_{\emptyset, \{2m\}} = (3^{2m})$. We say that $\cW^{C,\emptyset, \{2m\}}_{\infty}$ is of rectangular type, and it has generating type 
$$\cW(1^{2m^2+m}, 2^{2m^2-m},3^{2m^2+m}, 4^{2m^2-m},\dots),$$ where the affine subVOA is of type $\gs\gp_{2m}$. The $2m^2+m$ fields in each odd weight $d \geq 3$ transform under $\gs\gp_{2m}$ as the adjoint module, i.e., $V_{\lambda}$ for $\lambda = 2\omega_1$, and the $2m^2-m$ fields in each even weight $d \geq 2$ transform as the trivial module plus the irreducible module $V_{\lambda}$ for $\lambda = \omega_2$.
The GKO cosets \eqref{CGKO} in this example are
\begin{equation} \label{CGKORect}
\begin{split}
& \text{Com}(V^{k-2m}(\gs\go_s), V^k(\gs\go_{s})\otimes \cS(\frac{1}{2} m s))^{\mathbb{Z}_2},
\\ &  \text{Com}(V^{k+m}(\gs\gp_{2s}), V^k(\gs\gp_{2s}) \otimes \cF(2m s)),
\\ & \text{Com}(V^{k+m}(\go\gs\gp_{1|2s}), V^k(\go\gs\gp_{1|2s}) \otimes \cF(2m s) \otimes \cS(m))^{\mathbb{Z}_2}.
\end{split}\end{equation}
As above, the level of the affine $\gs\gp_{2m}$ is $-\frac{s}{2}$, $s$, $s - \frac{1}{2}$, respectively, in the above cosets.

When $k$ is admissible for $\gs\gp_{2s}$, we have an embedding of simple vertex algebras $L_{k+m}(\gs\gp_{2s}) \hookrightarrow L_k(\gs\gp_{2s}) \otimes \cF(2m s)$, and the coset 
\begin{equation} \label{typeCrectsimple} \text{Com}(L_{k+m}(\gs\gp_{2s}), L_k(\gs\gp_{2s}) \otimes \cF(2m s)),\end{equation} is an extension of the tensor product of the strongly rational vertex algebras 
$\text{Com}(L_{k + i}(\gs\gp_{2s}), L_{k+i-1}(\gs\gp_{2s}) \otimes \cF(4s))$ for $i = 1,\dots,\frac{m}{2}$ \cite{CKoL2}, and hence is strongly rational. We expect that \eqref{typeCrectsimple} is the simple quotient of $\cW^{C,\emptyset,\{2m\}}_{\infty}$ at all these points, so that $\cW^{C,\emptyset,\{2m\}}_{\infty}$ has many quotients which are strongly rational. In the case $m=1$, where $\cW^{C,\emptyset, \{2m\}}_{\infty} \cong \cW^{\gs\gp}_{\infty}$, this was proven in \cite{CKoL2}.

As shown in the proof of \cite[Corollary 7.6]{CL4}, the cosets \eqref{typeCrectsimple} satisfy the following level-rank duality:
$$
\text{Com}(L_s(\gs\gp_{2k}),L_s(\gs\gp_{2(k+m)})) 
= \text{Com}(L_{k+m}(\gs\gp_{2s}),L_{k}(\gs\gp_{2s}) \otimes \cF(2ms)).
$$
These cosets on the left-hand side appear in the higher spin  AdS/CFT duality of orthosymplectic rectangular type \cite{CHU}. 
\end{example}

\begin{example} Consider $S = \{2\}$ and $M = \{4,6\}$, so $P^{2BD}_{\{2\}, \{4,6\}} = (4^4, 2^6)$ and $P^{3C}_{\{2\}, \{4,6\}} = (5^4, 3^6)$. The generating type of $\cW^{C,\{2\}, \{4,6\}}_{\infty}$ is 
$$\cW(1^{31}, 2^{45},3^{55},4^{45},5^{55},\dots),$$ where the affine subVOA is of type $\gs\gp_4 \oplus \gs\gp_6$. The $55$ fields in weight $d$ for each odd $d \geq 3$ transform under $\gs\gp_4 \oplus \gs\gp_6$ as 
$$V_{2 \omega_1} \otimes \mathbb{C} \oplus \mathbb{C} \otimes V_{2\omega_1}  \oplus V_{\omega_1} \otimes V_{\omega_1}.$$ i.e. the sum of the adjoint representations of $\gs\gp_4$ and $\gs\gp_6$, and the product of the standard modules $\mathbb{C}^4 \otimes \mathbb{C}^6$. The $45$ fields in each even weight $d \geq 2$ transform under $\gs\gp_4 \oplus \gs\gp_6$ as $$\mathbb{C} \otimes \mathbb{C} \oplus  \mathbb{C} \otimes \mathbb{C}  \oplus V_{\omega_2} \otimes \mathbb{C} \oplus \mathbb{C} \otimes V_{\omega_2} \oplus V_{\omega_1} \otimes V_{\omega_1}.$$
The partition obtained by replacing $r$ with $-2$ is just $(2^4)$. Therefore the GKO cosets \eqref{CGKO} in this example are
\begin{equation} \label{CGKOExample2}
\begin{split}
& \text{Com}(V^{k'-6}(\gs\go_s), \cW^k(\gs\go_{8+s}, f_{2^4})\otimes \cS(3 s))^{\mathbb{Z}_2}, \qquad k' = k+4,
\\ &  \text{Com}(V^{k'+3}(\gs\gp_{2s}), \cW^k(\go\gs\gp_{8|2s}, f_{2^4}) \otimes \cF(12 s)),  \qquad k' = -\frac{k}{2} -2,
\\ & \text{Com}(V^{k'+3}(\go\gs\gp_{1|2s}), \cW^k(\go\gs\gp_{9|2s}, f_{2^4}) \otimes \cF(12 s) \otimes \cS(6))^{\mathbb{Z}_2},  \qquad k' = -\frac{k}{2} -2.
\end{split}\end{equation}
The level of the affine $\gs\gp_{6}$ is $-\frac{s}{2}$, $s$, $s - \frac{1}{2}$, respectively, in the above cosets.
\end{example}

\subsection{Families of VOAs with generating type of $\cW^{BD,S,M}_{\infty}$} Here we give a similar list of vertex algebras which we expect to arise as $1$-parameter quotients of $\cW^{BD,S,M}_{\infty}$. 
We use the notation $ f^{2C}_{S,M,r,s}$, $ f^{2C}_{S,M,r,0}$, and $f^{3BD}_{S,M,r,0}$ to denote the image of these nilpotent in $\go\gs\gp_{1|N_{r,s}}$, $\go\gs\gp_{s |N_{r,0}}$ and $\go\gs\gp_{N_{r,0}|2s}$ respectively.
The following vertex algebras have this generating type, and are analogues of the algebras $\cC^{\psi}_{iX}(n,m)$ in \cite{CL4}, for $n, m \geq 1$:
\begin{equation}  \label{BDsecond} \begin{split}
& \text{Com}(V^{k'}(\gs\gp_{2s}), \cW^k(\gs\gp_{N_{r,s}}, f^{2C}_{S,M,r,s})),
\\ & \text{Com}(V^{k'}(\gs\gp_{2s}), \cW^k(\go\gs\gp_{N_{r,0}|2s}, f^{3BD}_{S,M,r,0})),
\\ & \text{Com}(V^{k'}(\gs\go_s), \cW^k(\gs\go_{N_{r,s}}, f^{3BD}_{S,M,r,s}))^{\mathbb{Z}_2},
\\ & \text{Com}(V^{k'}(\gs\go_s), \cW^k(\go\gs\gp_{s|N_{r,0}}, f^{2C}_{S,M,r,0}))^{\mathbb{Z}_2},
\\ & \text{Com}(V^{k'}(\go\gs\gp_{1|2s}), \cW^k(\go\gs\gp_{1|N_{r,s}}, f^{2C}_{S,M,r,s}))^{\mathbb{Z}_2},
\\ & \text{Com}(V^{k'}(\go\gs\gp_{1|2s}), \cW^k(\go\gs\gp_{1+N_{r,0} |2s}, f^{3BD}_{S,M,r,s}))^{\mathbb{Z}_2}.
\end{split} \end{equation}
We regard \eqref{BDsecond} as a list of $8$ families, since the cases $s$ even or odd in the third and fourth families should be distinguished.

We also have families obtained by setting $r = -1$ in the case of $f^{3BD}_{S,M,r,s}$:
\begin{equation} \label{BDthird} \begin{split}
\\ & \text{Com}(V^{k'}(\gs\go_s), \cW^k(\gs\go_{N_{r,s}}, f^{3BD}_{S,M,-1,s}))^{\mathbb{Z}_2},
\\ & \text{Com}(V^{k'}(\gs\gp_{2s}), \cW^k(\go\gs\gp_{N_{r,0}|2s}, f^{3BD}_{S,M,-1,0})),
\\ & \text{Com}(V^{k'}(\go\gs\gp_{1|2s}), \cW^k(\go\gs\gp_{1+N_{r,0}|2s}, f^{3BD}_{S,M,-1,0}))^{\mathbb{Z}_2}.
\end{split} \end{equation} They are analogues of the algebras $\cC^{\psi}_{1B}(n,0)$, $\cC^{\psi}_{1D}(n,0)$, $\cC^{\psi}_{1C}(n,0)$, and $\cC^{\psi}_{1O}(n,0)$ in \cite{CL4}. We regard \eqref{BDthird} as a list of $4$ families, since the cases $s$ even or odd in the first family should be distinguished. 

We also have the families which are obtained by setting $r = -1$ in the case of $f^{2C}_{S,M,r,s}$. This replaces $n_i$ with $n_i -2$, and since $n_t = 2$, it is removed and we have replaced the partition $P^{2C}_{S,M}$ with the partition $((n_0-2)^{m_0}, (n_1-2)^{m_1},\dots, (n_{t-1}-2)^{m_{t-1}})$. Note that $\cW^{BD,S,M}_{\infty}$ has an affine subVOA of type $\gs\go_{m_t}$, and $m_t$ can be even or odd. We denote by $f^{2C}_{S,M,-1,s}$ the corresponding nilpotent in $\gs\gp_{N_{-1,s}}\subseteq \go\gs\gp_{1|N_{-1,s}}$, and we denote by $f^{2C}_{S,M,-1,0}$ the corresponding nilpotent in $\gs\gp_{N_{-1,0}}\subseteq \go\gs\gp_{s|N_{-1,0}}$. As above,
\begin{enumerate}
\item $\cW^k(\gs\gp_{N_{-1,s}}, f^{2C}_{S,M,-1,s})$, $\cW^k(\go\gs\gp_{1|N_{-1,s}}, f^{2C}_{S,M,-1,s})$, and $\cW^k(\go\gs\gp_{s|N_{-1,0}}, f^{2C}_{S,M,-1,0})$ have affine subVOAs of type $\gs\gp_{2s}$, $\go\gs\gp_{1|2s}$, and $\gs\go_s$ for some shifted levels $k'$,
\item The $\beta\gamma$-system $\cS(s m_t)$ has an action of $V^{- m_t/2}(\gs\gp_{2s}) \otimes V^{-2s}(\gs\go_{m_t})$,
\item The free fermion algebra $\cF(s m_t)$ has an action of $V^{m_t}(\gs\go_s) \otimes V^s(\gs\go_{m_t})$,
 \item The tensor product $ \cS(s m_t) \otimes \cF(m_t)$ has an action of $V^{- m_t / 2}(\go\gs\gp_{1|2s}) \otimes V^{-2s + 1}(\gs\go_{m_t})$.
 \end{enumerate}
The following diagonal cosets have the generating type of $\cW^{BD,S,M}_{\infty}$:
\begin{equation} \begin{split} \label{BDGKO} 
& \text{Com}(V^{k' - m_t/2}(\gs\gp_{2s}), \cW^k(\gs\gp_{N_{-1,s}}, f^{2C}_{S,M,-1,s}) \otimes \cS(m_t s)),
\\ & \text{Com}(V^{k' - m_t / 2}(\go\gs\gp_{1|2s}), \cW^k(\go\gs\gp_{1|N_{-1,s}}, f^{2C}_{S,M,-1,s}) \otimes \cS(m_t s) \otimes \cF(m_t))^{\mathbb{Z}_2},
\\ & \text{Com}(V^{k' + m_t}(\gs\go_s), \cW^k(\go\gs\gp_{s|N_{-1,0}}, f^{2C}_{S,M,-1,0}) \otimes \cF(m_t s))^{\mathbb{Z}_2}.
\end{split} \end{equation}
Note that for all $k$, the level of the affine subVOA of type $\gs\go_{m_t}$ is $-2s$, $-2s+1$, and $s$, respectively, in the above cosets. These are analogues of the GKO cosets $\cC^{\psi}_{2B}(n,0)$, $\cC^{\psi}_{2D}(n,0)$, $\cC^{\psi}_{2C}(n,0)$, and $\cC^{\psi}_{2O}(n,0)$ in \cite{CL4}, and we regard \eqref{BDGKO} as a list of $4$ families, since the cases $s$ even or odd in the third family should be distinguished.

\begin{example} \label{rectBD} Consider $S = \emptyset$ and $M = \{m\}$, so $P^{2C}_{\emptyset, \{m\}} = (2^m)$ and $P^{3BD}_{\emptyset, \{m\}} = (3^m)$. As above, we say that $\cW^{BD,\emptyset, \{m\}}_{\infty}$ is of rectangular type, and it has generating type 
$$\cW(1^{m(m-1)/2}, 2^{m(m+1)/2},3^{m(m-1)/2}, 4^{m(m+1)/2},\dots),$$ where the affine subVOA is of type $\gs\go_{m}$. In addition, the $m(m-1)/2$ fields in each odd weight $d \geq 3$ transform as the adjoint module, i.e., $V_{\lambda}$ for $\lambda = \omega_2$, and the $m(m+1)/2$ fields in each even weight $d \geq 2$ transform as the trivial module plus the irreducible module $V_{\lambda}$ for $\lambda = 2 \omega_1$.
The GKO cosets \eqref{BDGKO} in this example are
\begin{equation} \begin{split} \label{BDGKORect} 
& \text{Com}(V^{k - m/2}(\gs\gp_{2s}), V^k(\gs\gp_{2s}) \otimes \cS(m s)),
\\ & \text{Com}(V^{k - m / 2}(\go\gs\gp_{1|2s}), V^k(\go\gs\gp_{1|2s}) \otimes \cS(m s) \otimes \cF(m))^{\mathbb{Z}_2},
\\ & \text{Com}(V^{k + m}(\gs\go_s), V^k(\gs\go_{s}) \otimes \cF(m s))^{\mathbb{Z}_2}.
\end{split} \end{equation}
As above, the level of the affine $\gs\go_{m}$ is $-2s$, $-2s+1$, and $s$, respectively, in the above cosets.

When $k$ is admissible for $\gs\go_{s}$, we have an embedding of simple vertex algebras $L_{k + m}(\gs\go_s) \hookrightarrow L_k(\gs\go_{s}) \otimes \cF(s m)$, and the coset 
\begin{equation} \label{typeBDrectsimple} \text{Com}(L_{k + m}(\gs\go_s), L_k(\gs\go_{s}) \otimes \cF(s m)),\end{equation} is an extension of the tensor product of the vertex algebras 
$\text{Com}(L_{k + i}(\gs\go_{s}), L_{k+i-1}(\gs\go_{s}) \otimes \cF(s))$ for $i = 1,\dots,m$. These are strongly rational when $s$ is even \cite{ACL}, and conjecturally when $s$ is odd \cite[Conjecture 7.1]{CL4}. This would imply that \eqref{typeBDrectsimple}, as well as its $\mathbb{Z}_2$-orbifold, is strongly rational. Again, this implies that $\cW^{BD,\emptyset,\{m\}}_{\infty}$ has many quotients which are strongly rational. 
In the case $m = 2$, we will prove this in Section \ref{sect:rationality} for most admissible levels; see Corollary \ref{cor:rationalityatell}.
\end{example}

\begin{example} Consider $S = \{3\}$ and $M = \{2,3\}$, so $P^{2C}_{\{2\}, \{2,3\}} = (5^2, 2^3)$ and $P^{3B}_{\{2\}, \{2,3\}} = (6^2, 3^3)$. The generating type of $\cW^{BD,\{3\}, \{2,3\}}_{\infty}$ is 
$$\cW\bigg(1^{6}, 2^{7}, \bigg(\frac{5}{2}\bigg)^6, 3^{6},\bigg(\frac{7}{2}\bigg)^6, 4^{7}, \bigg(\frac{9}{2}\bigg)^6,5^6,\dots\bigg),$$ 
where the affine subVOA is of type $\gs\gp_2 \oplus \gs\go_3$. The $6$ fields in each odd weight $d \geq 3$ transform under $\gs\gp_2 \oplus \gs\go_3$ as the sum of the adjoint representations of $\gs\gp_2$ and $\gs\go_3$. The $7$ fields in each even weight $d \geq 2$ transform trivially under $\gs\gp_2$ and transform under $\gs\go_3$ as two copies of the trivial module and one copy of the $5$-dimensional module $V_{2\omega_1}$. The $6$ fields in each  weight $\frac{2d+1}{2}$ for $d \geq 2$ transform as the product of standard modules $\mathbb{C}^2 \otimes \mathbb{C}^3$.

The partition obtained by replacing $r$ with $-2$ is just $(3^2)$. Therefore the GKO cosets \eqref{BDGKO} in this example are
\begin{equation} \begin{split} \label{BDGKOExample2} 
& \text{Com}(V^{k' - 3/2}(\gs\gp_{2s}), \cW^k(\gs\gp_{6+2s}, f_{3^2}) \otimes \cS(3s)), \qquad k' = k+2,
\\ & \text{Com}(V^{k' - 3 / 2}(\go\gs\gp_{1|2s}), \cW^k(\go\gs\gp_{1|6+2s}, f_{3^2}) \otimes \cS(3s) \otimes \cF(3))^{\mathbb{Z}_2},  \qquad k' = k+2,
\\ & \text{Com}(V^{k' +3}(\gs\go_s), \cW^k(\go\gs\gp_{s|6}, f_{3^2}) \otimes \cF(3s))^{\mathbb{Z}_2},  \qquad k' = -2k-4.
\end{split} \end{equation}
The level of the affine $\gs\go_{3}$ is $-2s$, $-2s+1$, and $s$, respectively, in the above cosets.
\end{example}

\subsection{Rectangular families} We expect the universal objects of rectangular types given by Examples \ref{rectA}, \ref{rectC}, and \ref{rectBD}, to play a special role in the structure of $\cW^{X,S,M}_{\infty}$ for $X = A,C, BD$, because they should form an alternative set of building blocks. More precisely, we expect that $\cW^{A,S,M}_{\infty}$ is a conformal extension of $\cH(t) \otimes \big(\bigotimes_{i=0}^t \cW^{\emptyset, \{m_i\}}_{\infty}\big)$, where $\cH(t)$ is the Heisenberg algebra of rank $t$. Similarly, recall that $\cW^{C,S,M}_{\infty}$ corresponds to the partition $P^{3C}_{S,M} = (n_0^{m_0}, n_1^{m_1},\dots, n_{t}^{m_t})$ where $n_t = 3$. It should be a gluing of the tensor product of $\cW^{C,\emptyset, \{m_i\}}_{\infty}$ for each odd $n_i$ (so that $m_i$ is even), and $\cW^{BD,\emptyset, \{m_i\}}_{\infty}$ for each even $n_i$ (so that $m_i$ can be even or odd). Likewise, $\cW^{BD,S,M}_{\infty}$, which corresponds to the partition $P^{3BD}_{S,M} = (n_0^{m_0}, n_1^{m_1},\dots, n_{t}^{m_t})$ where $n_t = 3$, should be a gluing of the tensor product of $\cW^{C,\emptyset, \{m_i\}}_{\infty}$ for each even $n_i$ (so that $m_i$ is even), and $\cW^{BD,\emptyset, \{m_i\}}_{\infty}$ for each odd $n_i$ (so that $m_i$ can be even or odd). As mentioned above, the case $\cW^{C,\emptyset, \{2\}}_{\infty}$ is exactly the algebra $\cW^{\gs\gp}_{\infty}$ constructed in \cite{CKoL2}, and it is regarded as a fundamental building block on the same footing as $\cW_{\infty}$ and $\cW^{\text{ev}}_{\infty}$. Our main goal in this paper is to construct the first nontrivial example $\cW^{BD,\emptyset, \{2\}}_{\infty}$ which is {\it not} a fundamental building block, but instead is a glueing of two copies of $\cW^{\text{ev}}_{\infty}$. This has affine subVOA of type $\gs\go_2$, and to emphasize the parallels with $\cW^{\gs\gp}_{\infty}$, we will denote it by $\cW^{\gs\go_2}_{\infty}$ for the rest of the paper. In Section \ref{sect:YtypeSO} we will describe in detail the specialization of the family of $1$-parameter vertex algebras \eqref{BDfirst}, \eqref{BDsecond}, and \eqref{BDthird} in this case. We call them {\it $Y$-algebras of $\fr{so}_2$-rectangular type}, and we use the notation $\cC^{\psi}_{XY}(n,m)$ for $X = B,C$ and $Y = B,D,C,O$ to emphasize the parallels with the $Y$-algebras of $\gs\gp_2$-rectangular type in \cite{CKoL2}. Similarly, in Section \ref{sect:Diagonal}, we will describe the specialization of the GKO cosets \eqref{BDGKO}, and we use the notation $\cC^{\ell}(n)$ for $n \in \mathbb{Z}$ to denote these algebras in a uniform way.

	\section{$Y$-algebras of $\fr{so}_2$-orthogonal type} \label{sect:YtypeSO}
	In this section, we specialize the list of $8$ families given by \eqref{BDsecond} to the case of $\cW^{BD, \emptyset, \{2\}}_{\infty} :=\cW^{\gs\go_2}_{\infty}$.
	First, let $\gg$ be a simple Lie (super)algebra of type $B$, $C$, or $D$; in particular $\gg$ is either $\fr{so}_{2n+1}$, $\fr{sp}_{2n}$, $\fr{so}_{2n}$ or $\fr{osp}_{1|2n}$.
	We further assume that $\gg$ admits a decomposition as a $\fr{b}\oplus\fr{so}_2\oplus \fr{a}$-module, where 
	\begin{equation}\label{g decomposition}
		\begin{split}
			\gg\cong& \fr{so}_{2m+1}\oplus\fr{so}_2\oplus \fr{a}\oplus 	\rho_{\omega_2}\otimes \mathbb{C}^3\otimes \mathbb{C}\oplus \mathbb{C}^{2m+1}\otimes \mathbb{C}^2\otimes \rho_{\fr{a}},\\
			\gg\cong& \fr{sp}_{2m}\oplus\fr{so}_2\oplus \fr{a}\oplus \rho_{2\omega_1}\otimes \mathbb{C}^3\otimes \mathbb{C}\oplus \mathbb{C}^{2m}\otimes \mathbb{C}^2\otimes \rho_{\fr{a}},\\
		\end{split}
	\end{equation}
	with the following properties.
	\begin{enumerate}
		\item $\fr{a}$ is a Lie sub(super)algebras of $\gg$, and is either $\mathfrak{so}_{2n}$, $\mathfrak{so}_{2n+1}$,$ \mathfrak{sp}_{2n}$, or $\mathfrak{osp}_{1|2n}$.
		\item $\rho_{\fr{a}}$ and $\rho_{\fr{b}}$ have the same parity, which can be even or odd.
		\item $\rho_{\fr{a}}$ is transforms as the standard representation of $\fr{a}$.
	\end{enumerate}

	Note that if $\fr{a} = \fr{osp}_{1|2n}$, $\rho_{\fr{a}}$ even means that $\rho_{\fr{a}}\cong \mathbb{C}^{2n|1}$ as a vector superspace, whereas $\rho_{\fr{a}}$ odd means that $\rho_{\fr{a}}\cong \mathbb{C}^{1|2n}$.
	If $\fr{g} = \fr{osp}_{m|2n}$ we use the following convention for its dual Coxeter number $h^{\vee}$.
	\begin{equation}
		h^{\vee} = 
		\begin{cases}
			m-2n-2 & \T{type } B\\
			n+1-\frac{1}{2}m & \T{type } C\\
		\end{cases},
		\quad \T{sdim}(\fr{osp}_{m|2n}) = \frac{(m-2n)(m-2n-1)}{2}.
	\end{equation}
	In this notation, type $B$ (respectively $C$) means that the subalgebra $\fr{b} \subseteq \fr{g}$ is of type $B$ (respectively $C$), and the bilinear form on $\fr{osp}_{m|2n}$ is normalized so that it coincides with the normalized Killing form on $\fr{b}$.
	The cases that we need are recorded in Table (\ref{tab:Walgebras}).

	\begin{table}[h]
		\centering
		\caption{$\cW$-algebras of $\fr{so}_2$-rectangular type.}
		\label{tab:Walgebras}
		\begin{tabular}{|l|l|l|l|l|l|}
			\hline
			Case & $\gg$& $\fr{a}$ & $\fr{b}$ & $\rho_{\fr{a}}\otimes\mathbb{C}^2\otimes\rho_{\fr{b}}$& $a$\\
			\hline
			$BD$ & $\fr{so}_{2(2m+1)+2n}$ & $\fr{so}_{2n}$ & $\fr{so}_{2m+1}$ & Even& $\psi-2n$\\
			$BB$ & $\fr{so}_{2(2m+1)+2n+1}$ & $\fr{so}_{2n+1}$ & $\fr{so}_{2m+1}$ & Even& $\psi-2n-1$\\
			$BC$ & $\fr{osp}_{2(2m+1)|2n}$ & $\fr{sp}_{2n}$ & $\fr{so}_{2m+1}$ & Odd &$-\frac{1}{2}\psi - n$\\
			$BO$ & $\fr{osp}_{2(2m+1)+1|2n}$ & $\fr{osp}_{1|2n}$ & $\fr{so}_{2m+1}$ & Odd&$-\frac{1}{2}\psi-n+\frac{1}{2}$ \\
			$CC$ & $\fr{sp}_{2(2m)+2n}$ & $\fr{sp}_{2n}$ & $\fr{sp}_{2m}$ & Even& $\psi-n$\\
			$CD$ & $\fr{osp}_{2n|2(2m)}$ & $\fr{so}_{2n}$ & $\fr{sp}_{2m}$ & Odd &$-2\psi-2n+4$\\
			$CB$ & $\fr{osp}_{2n+1|2(2m)}$ & $\fr{so}_{2n+1}$ & $\fr{sp}_{2m}$ & Odd & $-2\psi-2n+3$\\
			$CO$ & $\fr{osp}_{1|2(2m)+2n}$ & $\fr{osp}_{1|2n}$ & $\fr{sp}_{2m}$ & Even &$\psi-n-\frac{3}{2}$\\
			\hline
		\end{tabular}
	\end{table}

	Let $f_{\fr{b}}$ be the nilpotent element which is principal in $\gb$ and trivial in $\fr{so}_2\oplus \fr{a}$. In our previous notation, in the case $\gb = \gs\go_{2m+1}$, $f_{\gb}$ corresponds to the partition $(2m+1, 2m+1)$, and in the case $\gb = \gs\gp_{2m}$, $f_{\gb}$ corresponds to the partition $(2m, 2m)$. Let $\rho_d$ denote the $(d+1)$-dimensional representation of the $\fr{sl}_2$-triple $\{f_{\fr{b}},x_{\fr{b}},e_{\fr{b}}\}$, and define two $\fr{sl}_2$-modules 
	\begin{equation}\label{evenodd}
		\begin{split}
			\T{Even}(m)= \bigoplus_{i=1}^m \rho_{4i-2},\quad \T{Odd}(m) =  \bigoplus_{i=1}^m \rho_{4i}.
		\end{split}
	\end{equation}
	Recall the decomposition (\ref{g decomposition}).
	Taking $f$ to be principal in $\fr{b}$ we have the following isomorphisms of $\fr{sl}_2$-modules.
	\begin{equation}
		\rho^{\fr{so}_{2n+1}}_{\omega_2} \cong \rho^{\fr{sp}_{2n}}_{2\omega_1}\cong  \T{Even}(n),\quad \rho^{\fr{so}_{2n+1}}_{2\omega_1} \cong \T{Odd}(n),\quad \rho^{\fr{sp}_{2n}}_{\omega_2} \cong\T{Odd}(n-1).\\
	\end{equation}
	To compute the total central charge we have to evaluate contribution of each term in the expression
	\[c=c_{\fr{g}}+c_{\T{dilaton}} + c_{\T{ghost}}.\]
	In types $B$ and $C$ the dilaton contribution reduces to the sum of odd and even squares, respectively.
	\begin{equation*}
		\begin{split}
			c_{\T{dilaton}} =& -\ell\times\begin{cases}
				(m+2)(2m)(2m+1), \quad &\fr{b} = \fr{so}_{2m+1},\\
				(2m-1)(2m)(2m+1), \quad &\fr{b} = \fr{sp}_{2m}.\\
			\end{cases}\\
		\end{split}
	\end{equation*}
	It remains to evaluate the contributions of $c_{\T{ghost}}$.
	Consider the decomposition $\gg=\bigoplus_{d}\rho_{d}$ intro irreducible $\fr{sl}_2$-modules.
	Then each $\rho_d$ gives rise a field of conformal weight $\frac{d+1}{2}$ in $\cW^{\ell}(\gg,f_{\fr{b}})$.
	\begin{equation}\label{ghosts}
		c_d = -\frac{(d-1)(d^2-2d-1)}{2}.
	\end{equation}
	Examining the decomposition (\ref{g decomposition}), we find that the charged fermion contribution consists of three terms
	\begin{equation}\label{ghost}
		c_{\T{ghost}}=c_{\T{odd}}+3c_{\T{even}}+2\T{sdim}(\rho_{\fr{g}})c_{d_{\fr{b}}}.
	\end{equation}
	To compute the contributions $c_{\T{Even}}$ and $c_{\T{Odd}}$ we use formula (\ref{ghosts}) and decomposition (\ref{evenodd}) evaluate the sums
	\[c_{\T{Even}}=\sum_{i=0}^n c_{4i+3}=6m^2-8m^4, \quad c_{\T{Odd}}=\sum_{i=0}^n c_{4i+1}=-2m(m+1)(4m(m+1)-1).\]
	The third term in (\ref{ghost}) is computed by applying formula (\ref{ghosts}).
	
	It follows from above discussion that $\cW^{\ell}(\gg,f_{\fr{b}})$ is of type 
	\begin{equation}
		\begin{split}
			\cW\left(1^{1+\T{dim}(\fr{a})},2^3,3,4^3,\dots,2m-1,(2m)^3, (m+\frac{1}{2})^{2\T{dim}(\rho_{\fr{a}})}\right),\quad &\fr{b}= \fr{sp}_{2m},\\
			\cW\left(1^{1+\T{dim}(\fr{a})},2^3,3,4^3,\dots,(2m)^3,2m+1, \left(m+1\right)^{2\T{dim}(\rho_{\fr{a}})}\right),\quad &\fr{b}= \fr{so}_{2m+1}.
		\end{split}
	\end{equation}
	The affine subalgebra is $V^k(\fr{so}_2)\otimes V^{a}(\fr{a})$ for some level $a$, recorded in the table (\ref{tab:Walgebras}).
	We will always replace $\ell$ with the {critically shifted level} $\psi = \ell + h^{\vee}_{\fr{g}}$, where $h^{\vee}_{\fr{g}}$ is the dual Coxeter number of $\fr{g}$.
	We now describe the examples we need in greater detail.
	
\subsection{Case BD}
	For $\gg =  \fr{so}_{4m+2n+2}$, with $m\geq 0$ and $n\geq 0$, we have an isomorphism of $\fr{so}_{2m+1}\oplus \fr{so}_2\oplus\fr{so}_{2n}$-modules
	\[\fr{so}_{4m+2n+2}\cong\fr{so}_{2m+1}\oplus\fr{so}_2\oplus\fr{so}_{2n} \oplus \rho_{2\omega_1}\otimes \mathbb{C}^3\otimes\mathbb{C} \oplus \mathbb{C}^{2m+1}\otimes \mathbb{C}^2\otimes \mathbb{C}^{2n},\]
	and the critically shifted level is $\psi = \frac{k + 4 m + 2 n}{1 + 2 m}$.
	We define 
	\[\cW^{\psi}_{BD}(n,m) : = \cW^{ \frac{k + 4 m + 2 n}{1 + 2 m} - h^{\vee}}(\fr{so}_{4m+2n},f_{\fr{so}_{2m+1}}), \qquad h^{\vee} = 4m+2n\]
	which has affine subalgebra $V^{k}(\fr{so}_2) \otimes V^{\psi-2n}(\fr{so}_{2n})$.
	Next, we define
	\[\cC^{\psi}_{BD}(n,m) : = \T{Com}(V^{\psi-2n}(\fr{so}_{2n}),\cW^{\psi}_{BD}(n,m))^{\mathbb{Z}_2}.\]
	The conformal element $L-L^{\fr{so}_2} - L^{\fr{so}_{2n}}$ has central charge
	\begin{equation}\label{eq:cBD}
		c_{BD}=-\frac{4 (k-1) (k m-2 m-n) (k m+k+n-1)}{(k+2 n-2) (k+4 m+2 n)}.
	\end{equation}
	The free field limit of $\cW^{\psi}_{BD}(n,m)$ is 
	\[\cO_{\T{ev}}(2n^2-n,2)\otimes \left( \bigotimes_{i=1}^{m}\cO_{\T{ev}}(3,4i)\bigotimes \bigotimes_{i=1}^{m+1}\cO_{\T{ev}}(1,4i-2) \right) \bigotimes \cO_{\T{ev}}(4n,2m+2).\]
		
	\begin{lemma}\label{lemma:BD}
		For $n+m\geq 1$, $\cC^{\psi}_{BD}(n,m)$ is a simple, $1$-parameter vertex algebra of type 
		$\cW(1,2^3,3,4^3,\dots)$.
	\end{lemma}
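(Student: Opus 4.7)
The plan is to establish the strong generating type and simplicity of $\cC^{\psi}_{BD}(n,m)$ via the free field limit and classical invariant theory, following the strategy used in \cite{CL3, CL4, CKoL2}.

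First, I would pass to the free field limit $\cW^{\T{free}}_{BD}(n,m)$ via Theorem \ref{thm:largelevellimit}, which is given by the explicit tensor product stated just before the lemma. The first factor $\cO_{\T{ev}}(2n^2-n,2) \cong \cH(\dim \fr{so}_{2n})$ corresponds to the affine subalgebra $V^{\psi-2n}(\fr{so}_{2n})$ in the limit, and under the $\fr{so}_{2n}$-action the middle two tensor factors $\bigotimes_{i=1}^m \cO_{\T{ev}}(3,4i)$ and $\bigotimes_{i=1}^{m+1} \cO_{\T{ev}}(1,4i-2)$ are pointwise invariant; they contribute $3$ fields of each even weight $2,4,\ldots,2m$ and $1$ field of each odd weight $1,3,\ldots,2m+1$. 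The only nontrivial $\fr{so}_{2n}$-module structure lies in the last factor $\cO_{\T{ev}}(4n,2m+2)$, whose $4n$ weight-$(m+1)$ generators transform as $\mathbb{C}^2 \otimes \mathbb{C}^{2n}$ under $\fr{so}_2 \oplus \fr{so}_{2n}$ (two copies of the standard representation of $\fr{so}_{2n}$, distinguished by $\fr{so}_2$-charge $\pm 1$).

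Next I would apply Weyl's First and Second Fundamental Theorems of invariant theory for $O(2n)$ to compute the orbifold. Writing $X^{i,\pm}$ for the $\fr{so}_2$-charge eigenvector generators, the $O(2n)$-invariants are strongly generated by bilinear combinations $W^{ab}_r := \sum_{i=1}^{2n}\,:\!X^{i,a} \partial^r X^{i,b}\!:$ with $a,b \in \{+,-\}$ and $r \geq 0$, carrying $\fr{so}_2$-charge $a+b$. Using the skew-symmetry identity \eqref{skew-symmetry}, the same-charge generators $W^{\pm\pm}_r$ with odd $r$ reduce to total derivatives modulo lower-weight terms, so they contribute new generators only for even $r$; the mixed generator $W^{+-}_r$ contributes at every $r$. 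This yields $3$ new invariant fields of $\fr{so}_2$-charges $0,\pm 2$ at each even weight $\geq 2m+2$, and $1$ new field of $\fr{so}_2$-charge $0$ at each odd weight $\geq 2m+3$. The $\mathbb{Z}_2$-orbifold enlarges $SO(2n)$-invariants to $O(2n)$-invariants, which is essential only for $n=1$ (eliminating the Pfaffian-type invariant $X^{1,+} X^{2,-}-X^{2,+} X^{1,-}$ of two copies of the standard $SO(2)$-representation) and is vacuous for $n \geq 2$.

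Assembling all contributions gives a strong generating set of type $\cW(1, 2^3, 3, 4^3,\ldots)$ for the free field limit of $\cC^{\psi}_{BD}(n,m)$. To transfer to generic $\psi$, I would exploit the integrability of the $\fr{so}_{2n}$-action on $\cW^{\psi}_{BD}(n,m)$: the graded character of the coset is independent of $\psi$ for generic values, and a standard reconstruction argument deforms the free-field $O(2n)$-invariants $W^{ab}_r$ to strong generators of $\cC^{\psi}_{BD}(n,m)$ in the same weights. Simplicity at generic $\psi$ will follow from the simplicity of $\cW^{\psi}_{BD}(n,m)$ at generic levels \cite{Ar1, Ar2}, together with the fact that affine cosets and $\mathbb{Z}_2$-orbifolds of simple vertex algebras remain simple at generic levels; the $1$-parameter property is immediate from the definition. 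The main obstacle will be verifying that no additional strong generators are needed at finite $\psi$: the free-field analysis yields only a lower bound on the generating set, and one must confirm by a character-matching argument (using the known isotypic decomposition of a Fock space under the classical groups) that the lifted invariants close under OPE without requiring higher-weight correction fields.
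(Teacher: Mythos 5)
Your proposal is correct and follows essentially the same route as the paper: pass to the large level (free field) limit, observe that the coset limit is the tensor product of the $\gs\go_{2n}$-invariant middle factors (of type $\cW(1,2^3,\dots,(2m)^3,2m+1)$) with the orbifold $\cO_{\T{ev}}(4n,2m+2)^{\text{O}_{2n}}$, compute the latter's generators of type $\cW((2m+2)^3,2m+3,\dots)$ by Weyl's first fundamental theorem, and deduce simplicity from \cite[Theorem 3.6]{CL3}. The extra details you supply on the charge decomposition of the bilinear invariants and the role of the $\mathbb{Z}_2$-orbifold are consistent with, and slightly more explicit than, the paper's argument.
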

	
	\begin{proof} By \cite[Lemma 4.2]{CL3}, $\cC^{\psi}_{BD}(n,m)$ has large level limit
	$$ \left( \bigotimes_{i=1}^{m}\cO_{\T{ev}}(3,4i)\bigotimes \bigotimes_{i=1}^{m+1}\cO_{\T{ev}}(1,4i-2) \right) \bigotimes \cO_{\T{ev}}(4n,2m+2)^{\text{O}_{2n}}.$$
	The first factor $( \bigotimes_{i=1}^{m}\cO_{\T{ev}}(3,4i)\bigotimes \bigotimes_{i=1}^{m+1}\cO_{\T{ev}}(1,4i-2))$ has strong generating type $\cW(1,2^3,3,\dots,(2m)^3, 2m+1)$. Using classical invariant theory, the second factor $\cO_{\T{ev}}(4n,2m+2)^{\text{O}_{2n}}$ can be seen to have an infinite strong generating set of type $\cW((2m+2)^3, 2m+3, (2m+4)^3, 2m+5,\dots)$, of which only finitely many are needed. It follows that $\cC^{\psi}_{BD}(n,m)$ has the desired strong generating type. Finally, the simplicity follows from \cite[Theorem 3.6]{CL3}.
	\end{proof}

	\subsection{Case BB}
	For $\gg =  \fr{so}_{4m+2n+3}$, we have an isomorphism of $\fr{so}_{2m+1}\oplus \fr{so}_2\oplus\fr{so}_{2n+1}$-modules
\[\fr{so}_{4m+2n+3}\cong\fr{so}_{2m+1}\oplus\fr{so}_2\oplus\fr{so}_{2n+1} \oplus \rho_{2\omega_1}\otimes \mathbb{C}^3\otimes\mathbb{C} \oplus \mathbb{C}^{2m+1}\otimes \mathbb{C}^2\otimes \mathbb{C}^{2n+1},\]
and the critically shifted level $\psi =\frac{k+4 m+2 n+1}{2 m+1}$.
We define 
\[\cW^{\psi}_{BB}(n,m) : = \cW^{\frac{k+4 m+2 n+1}{2 m+1}-h^{\vee}}(\fr{so}_{4m+2n+3},f_{\fr{so}_{2m+1}}),\qquad h^{\vee} = 4m+2n+1\]
which has affine subalgebra $V^{k}(\fr{so}_2) \otimes V^{\psi-2n-1}(\fr{so}_{2n+1})$.
Next, we define
\[\cC^{\psi}_{BB}(n,m) : = \T{Com}(V^{\psi-2n-1}(\fr{so}_{2n+1}),\cW^{\psi}_{BB}(n,m))^{\mathbb{Z}_2}.\]
The conformal element $L-L^{\fr{so}_2} - L^{\fr{so}_{2n+1}}$ has central charge
\begin{equation}\label{eq:cBB}
	c_{BB}=-\frac{(k-1) (2 k m-4 m-2 n-1) (2 k m+2 k+2 n-1)}{(k+2 n-1) (k+4 m+2 n+1)}.
\end{equation}
The free field limit of $\cW^{\psi}_{BB}(n,m)$ is the free field algebra $\cW^{\infty}_{BB}(n,m)$
\[\cO_{\T{ev}}(2 n^2+n,2)\otimes \left( \bigotimes_{i=1}^{m}\cO_{\T{ev}}(3,4i)\otimes \bigotimes_{i=1}^{m+1}\cO_{\T{ev}}(1,4i-2) \right) \otimes \cO_{\T{ev}}(4n+2,2m+2).\]

\begin{lemma}\label{lemma:BB}
	For $n+m\geq 1$, $\cC^{\psi}_{BB}(n,m)$ is a simple, $1$-parameter vertex algebra of type $\cW(1,2^3,3,4^3,\dots)$. \end{lemma}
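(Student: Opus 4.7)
The plan is to mirror the proof of Lemma~\ref{lemma:BD}, with $\fr{so}_{2n}$ and $O_{2n}$ replaced throughout by $\fr{so}_{2n+1}$ and $O_{2n+1}$. First, I would invoke \cite[Lemma 4.2]{CL3} to identify the large-level limit of the coset with a classical invariant ring. Since $V^{\psi-2n-1}(\fr{so}_{2n+1})$ degenerates in the limit to the rank-$(2n^2+n)$ commutative Heisenberg $\cO_{\T{ev}}(2n^2+n,2)$, the coset becomes the $\fr{so}_{2n+1}$-invariants of the remaining tensor factors in the stated free field limit of $\cW^{\psi}_{BB}(n,m)$, and the extra $\mathbb{Z}_2$-orbifold promotes these to full $O_{2n+1}$-invariants. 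Hence
$$\cC^{\psi}_{BB}(n,m)^{\infty} \;\cong\; \bigg( \bigotimes_{i=1}^{m}\cO_{\T{ev}}(3,4i)\otimes \bigotimes_{i=1}^{m+1}\cO_{\T{ev}}(1,4i-2) \bigg) \otimes \cO_{\T{ev}}(4n+2,2m+2)^{O_{2n+1}},$$
where $O_{2n+1}$ acts on $\cO_{\T{ev}}(4n+2,2m+2)$ through the factor $\mathbb{C}^{2m+1}\otimes \mathbb{C}^2 \otimes \mathbb{C}^{2n+1}$ of the decomposition of $\fr{g}$, i.e. as two copies of the standard representation.

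Second, I would read off the strong generating type of each piece. The first tensor factor contributes, by inspection, $3$ fields in each even weight $2,4,\dots,2m$ and $1$ field in each odd weight $1,3,\dots,2m+1$, i.e.\ type $\cW(1,2^3,3,\dots,(2m)^3,2m+1)$. For the orbifold $\cO_{\T{ev}}(4n+2,2m+2)^{O_{2n+1}}$, I would apply the First Fundamental Theorem for $O_{2n+1}$ to the $4n+2$ weight-$(m+1)$ generators, producing invariants built from the two $SO_{2n+1}$-invariant quadratic pairings, and then argue exactly as in the $BD$-case (and as in the constructions of $\cW_\infty$, $\cW^{\T{ev}}_\infty$, and $\cW^{\gs\gp}_\infty$ in \cite{Lin,KL,CKoL2}) that only finitely many of these invariants are required as strong generators because all higher-weight invariants reduce, modulo normally ordered products, to the two $O_{2n+1}$-orbits of quadratic generators of the lowest conformal weights. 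This yields an (infinite) strong generating set of type $\cW((2m+2)^3,2m+3,(2m+4)^3,2m+5,\dots)$, and concatenation with the first factor produces the claimed overall type $\cW(1,2^3,3,4^3,\dots)$.

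The main obstacle is the same as for $BD$: verifying the decoupling step that replaces the naive infinite set of quadratic $O_{2n+1}$-invariants by a minimal one with three generators in each even weight and one in each odd weight past $2m+2$. Here the Second Fundamental Theorem for $O_{2n+1}$ produces the needed syzygies among the quadratic invariants, and the free-field OPEs promote these relations to decoupling identities at the vertex-algebra level; the only case-specific check is that the parity of $2n+1$ (as opposed to $2n$) does not alter the counts of the surviving invariants in each weight, which follows because both orthogonal groups have the same FFT/SFT for two copies of the standard module up to the dimensions appearing in the quadratic constraints. Once the strong generating set has the stated type, simplicity of $\cC^{\psi}_{BB}(n,m)$ as a $1$-parameter vertex algebra follows immediately from \cite[Theorem~3.6]{CL3}, because the coset and $\mathbb{Z}_2$-orbifold of a generically simple $\cW$-algebra remain simple at generic $\psi$.
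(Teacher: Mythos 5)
Your proposal is correct and is essentially the argument the paper intends: the paper omits the proof of this lemma precisely because it is the proof of Lemma \ref{lemma:BD} with $\fr{so}_{2n}$, $\text{O}_{2n}$, and $\cO_{\T{ev}}(4n,2m+2)$ replaced by $\fr{so}_{2n+1}$, $\text{O}_{2n+1}$, and $\cO_{\T{ev}}(4n+2,2m+2)$ — passing to the large level limit via \cite[Lemma 4.2]{CL3}, reading off the type from the first tensor factor and from Weyl's fundamental theorems for the orthogonal group acting on two copies of the standard module, and deducing simplicity from \cite[Theorem 3.6]{CL3}. Your weight counts and the role of the $\mathbb{Z}_2$-orbifold in passing from $\text{SO}_{2n+1}$- to $\text{O}_{2n+1}$-invariants are all consistent with the paper's setup.
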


	\subsection{Case BC}
	For $\gg =  \fr{osp}_{4m+2|2n}$, we have an isomorphism of $\fr{so}_{2m+1}\oplus \fr{so}_2\oplus\fr{sp}_{2n}$-modules
\[\fr{osp}_{4m+2|2n} \cong \fr{so}_{2m+1}\oplus\fr{so}_2\oplus\fr{sp}_{2n} \oplus \rho_{2\omega_1}\otimes \mathbb{C}^3\otimes\mathbb{C} \oplus \mathbb{C}^{2m+1}\otimes \mathbb{C}^2\otimes \mathbb{C}^{0|2n},\]
and the critically shifted level $\psi = \frac{k+4 m-2 n}{2 m+1}$.
We define 
\[\cW^{\psi}_{BC}(n,m) : = \cW^{\frac{k+4 m-2 n}{2 m+1}-h^{\vee}}(\fr{osp}_{4m+2|2n},f_{\fr{so}_{2m+1}}),\qquad h^{\vee} = 4 m - 2 n\]
which has affine subalgebra $V^{k}(\fr{so}_2) \otimes V^{-\frac{1}{2}\psi - n}(\fr{sp}_{2n})$.
Next, we define its affine coset
\[\cC^{\psi}_{BC}(n,m) : = \T{Com}(V^{-\frac{1}{2}\psi - n}(\fr{sp}_{2n}),\cW^{\psi}_{BC}(n,m)).\]
The conformal element $L-L^{\fr{so}_2} - L^{\fr{sp}_{2n}}$ has central charge
\begin{equation}\label{eq:cBC}
	c_{BC}=\frac{4 (k-1) (k m+k-n-1) (k m-2 m+n)}{(-k+2 n+2) (k+4 m-2 n)}.
\end{equation}
The free field limit of $\cW^{\psi}_{BC}(n,m)$ is the free field algebra $\cW^{\infty}_{BC}(n,m)$
\[\cO_{\T{ev}}(2n^2+n,2)\otimes \left( \bigotimes_{i=1}^{m}\cO_{\T{ev}}(3,4i)\otimes \bigotimes_{i=1}^{m+1}\cO_{\T{ev}}(1,4i-2) \right) \otimes \cS_{\T{odd}}(2n,2m+2).\]

\begin{lemma}\label{lemma:BC}
	For $n+m\geq 1$, $\cC^{\psi}_{BC}(n,m)$ is a simple, $1$-parameter vertex algebra of type $\cW(1,2^3,3,4^3,\dots)$. \end{lemma}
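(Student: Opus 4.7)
The plan is to follow the proof of Lemma \ref{lemma:BD} mutatis mutandis, replacing the orthogonal orbifold by a symplectic coset in the large level limit. By Theorem \ref{thm:largelevellimit}, the large level limit of $\cW^{\psi}_{BC}(n,m)$ is the free field algebra displayed in the excerpt just before the lemma. Applying the symplectic analogue of \cite[Lemma 4.2]{CL3} to the affine coset $\cC^{\psi}_{BC}(n,m) = \T{Com}(V^{-\psi/2-n}(\fr{sp}_{2n}), \cW^{\psi}_{BC}(n,m))$, its large level limit is obtained by removing the Heisenberg factor $\cO_{\T{ev}}(2n^2+n, 2)$ (which is the free-field realization of the limiting affine $\fr{sp}_{2n}$) and taking $Sp_{2n}$-invariants of the remaining factors. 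The middle factor $\bigotimes_{i=1}^{m}\cO_{\T{ev}}(3,4i)\otimes \bigotimes_{i=1}^{m+1}\cO_{\T{ev}}(1,4i-2)$ commutes with $\fr{sp}_{2n}$ and survives intact, while the $Sp_{2n}$-action on $\cS_{\T{odd}}(2n, 2m+2)$ is through its defining embedding into $Sp_{4n}$, treating the $4n$ even generators as two copies of the standard $Sp_{2n}$-module.

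The middle factor has strong generating type $\cW(1, 2^3, 3, \ldots, (2m)^3, 2m+1)$ by exactly the same count as in the BD case. For the orbifold $\cS_{\T{odd}}(2n, 2m+2)^{Sp_{2n}}$, classical invariant theory (the first and second fundamental theorems for the symplectic group) applied to the graded commutative algebra $\T{gr}\,\cS_{\T{odd}}(2n, 2m+2)$ yields a strong generating set for the associated graded invariant ring, built from symplectic contractions of the generators and their derivatives. The deformation argument of \cite{CL3} lifts this to an infinite strong generating set of type $\cW((2m+2)^3, 2m+3, (2m+4)^3, 2m+5, \ldots)$, of which only finitely many are needed in any fixed weight range. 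Concatenating the two generating sets produces the desired type $\cW(1, 2^3, 3, 4^3, \ldots)$ for the free-field limit, and the same deformation argument transfers a finite strong generating set of this type to $\cC^{\psi}_{BC}(n,m)$ at generic $\psi$. Simplicity as a $1$-parameter vertex algebra then follows from \cite[Theorem 3.6]{CL3} applied to the coset.

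The main obstacle is verifying that $\cS_{\T{odd}}(2n, 2m+2)^{Sp_{2n}}$ has \emph{exactly} the claimed multiplicity pattern $(3, 1, 3, 1, \ldots)$ in weights $(2m+2, 2m+3, 2m+4, 2m+5, \ldots)$, so that after concatenation with the middle factor one obtains precisely the strong generating type $\cW(1, 2^3, 3, 4^3, \ldots)$ of $\cW^{\gs\go_2}_{\infty}$ with no surplus generators and no gaps. In the BD case this rests on Weyl's fundamental theorems for $O_{2n}$ and a careful count of invariants by conformal weight; for $Sp_{2n}$ the symplectic analogues apply, but the relations (governed by Pfaffian identities) differ in detail from the orthogonal relations (governed by symmetrization identities), and the weight-by-weight match must be checked explicitly. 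Once this multiplicity verification is in place, the remaining steps are completely parallel to the BD and BB arguments.
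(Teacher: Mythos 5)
The paper gives no separate proof of this lemma; it is left to the reader as a repeat of the proof of Lemma \ref{lemma:BD} with symplectic invariant theory in place of orthogonal, which is exactly what you do. Your proposal matches the paper's intended argument, including the one computation it leaves implicit (that $\cS_{\T{odd}}(2n,2m+2)^{\text{Sp}_{2n}}$ has strong generating type $\cW((2m+2)^3,2m+3,(2m+4)^3,\dots)$), so there is nothing further to add.
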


	\subsection{Case BO}
		For $\gg =  \fr{osp}_{4m+3|2n}$, we have an isomorphism of $\fr{so}_{2m+1}\oplus \fr{so}_2\oplus\fr{osp}_{1|2n}$-modules
	\[\fr{osp}_{4m+3|2n} \cong \fr{so}_{2m+1}\oplus\fr{so}_2\oplus\fr{osp}_{1|2n} \oplus \rho_{2\omega_1}\otimes \mathbb{C}^3\otimes\mathbb{C} \oplus \mathbb{C}^{2m+1}\otimes \mathbb{C}^2\otimes \mathbb{C}^{1|2n},\]
	and the critically shifted level $\psi = \frac{k+4 m+2 n+1}{2 m+1}$.
	We define 
	\[\cW^{\psi}_{BO}(n,m) : = \cW^{\frac{k+4 m+2 n+1}{2 m+1}-h^{\vee}}( \fr{osp}_{4m+3|2n}, f_{\fr{so}_{2m+1}}),\qquad h^{\vee} = 4m-2n+1\]
	which has affine subalgebra $V^{k}(\fr{so}_2) \otimes V^{-\frac{1}{2}\psi-n+\frac{1}{2}}(\fr{osp}_{1|2n})$.
	Next, we define
	\[\cC^{\psi}_{BO}(n,m) : = \T{Com}(V^{-\frac{1}{2}\psi-n+\frac{1}{2}}(\fr{osp}_{1|2n}),\cW^{\psi}_{BO}(n,m))^{\mathbb{Z}_2}.\]
	The conformal element $L-L^{\fr{so}_2} - L^{\fr{osp}_{1|2n}}$ has central charge
	\begin{equation}\label{eq:cBO}
		c_{BO}=\frac{(k-1) (2 k m+2 k-2 n-1) (2 k m-4 m+2 n-1)}{(-k+2 n+1) (k+4 m-2 n+1)}.
	\end{equation}
	The free field limit of $\cW^{\psi}_{BO}(n,m)$ is the free field algebra $\cW^{\infty}_{BO}(n,m)$
	\[\cO_{\T{ev}}(2n^2+n,2)\otimes \cO_{\T{odd}}(4n,2)\otimes \left( \bigotimes_{i=1}^{m}\cO_{\T{ev}}(3,4i)\otimes \bigotimes_{i=1}^{m+1}\cO_{\T{ev}}(1,4i-2) \right) \otimes \cS_{\T{ev}}(1,2m+2)\otimes \cS_{\T{odd}}(2n,2m+2).\]
	\begin{lemma}\label{lemma:BO}
		For $n+m\geq 1$, $\cC^{\psi}_{BO}(n,m)$ is of type 
		$\cW(1,2^3,3,4^3,\dots)$
		as a 1-parameter vertex algebra.
		Equivalently, this holds for generic values of $\psi$.
	\end{lemma}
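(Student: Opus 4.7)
The argument parallels the proofs of Lemmas \ref{lemma:BD}, \ref{lemma:BB}, and \ref{lemma:BC}. First, by \cite[Lemma 4.2]{CL3}, the formation of affine cosets commutes with the large-level limit; moreover, the $\mathbb{Z}_2$-orbifold also commutes with this limit. Applied to $\cW^{\psi}_{BO}(n,m)$, this identifies the affine subVOA $V^{-\frac{1}{2}\psi-n+\frac{1}{2}}(\fr{osp}_{1|2n})$ in the limit with the factors $\cO_{\T{ev}}(2n^2+n,2)\otimes \cO_{\T{odd}}(4n,2)$ in $\cW^{\infty}_{BO}(n,m)$. Therefore the large-level limit of $\cC^{\psi}_{BO}(n,m)$ equals
\[
\bigg(\bigotimes_{i=1}^{m}\cO_{\T{ev}}(3,4i)\otimes\bigotimes_{i=1}^{m+1}\cO_{\T{ev}}(1,4i-2)\bigg)\otimes\bigl(\cS_{\T{ev}}(1,2m+2)\otimes\cS_{\T{odd}}(2n,2m+2)\bigr)^{\T{OSp}_{1|2n}\rtimes\mathbb{Z}_2}.
\]
The first tensor factor is already fixed pointwise by $\T{OSp}_{1|2n}\rtimes\mathbb{Z}_2$ and contributes strong generators of type $\cW(1,2^3,3,4^3,\dots,(2m)^3,2m+1)$.

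For the remaining factor, the generating space of $\cS_{\T{ev}}(1,2m+2)\otimes\cS_{\T{odd}}(2n,2m+2)$ carries the structure of $(2m+2)$ copies of the standard $(1|2n)$-dimensional orthosymplectic supermodule of $\T{OSp}_{1|2n}$. Invoking the super version of the first fundamental theorem of invariant theory for the orthosymplectic supergroup, the polynomial invariants are generated by the pairwise contractions of these copies via the $\T{OSp}_{1|2n}$-form. Translated to the vertex algebra side, normal-ordered products of these contractions produce an infinite family of fields labelled by pairs of the $(2m+2)$ modes, whose lowest conformal weight is $2m+2$. The $\fr{sp}_{2m+2}$-symmetry permuting the modes organizes these into irreducible modules, and after imposing the further $\mathbb{Z}_2$-orbifold (which retains only the even-charge piece) the resulting strong generating set has type $\cW((2m+2)^3,2m+3,(2m+4)^3,2m+5,\dots)$, of which only finitely many fields are needed modulo polynomial relations in normal-ordered products.

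Concatenating these two strong generating sets yields the desired type $\cW(1,2^3,3,4^3,\dots)$ for the large-level limit of $\cC^{\psi}_{BO}(n,m)$. The standard lifting argument (cf.\ the proofs of Lemmas \ref{lemma:BD}--\ref{lemma:BC} and \cite[Section 4]{CL3}) then produces a strong generating set of the same type at generic $\psi$, since for generic values no new decoupling relations can appear in low weights.

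The main obstacle is to verify that the super analogue of Weyl's invariant theorem, applied to $\T{OSp}_{1|2n}$ acting diagonally on $2m+2$ copies of its standard module, produces exactly the claimed multiplicities — in particular the triples in each even weight $\geq 2m+2$ and the single field in each odd weight $\geq 2m+3$ after the $\mathbb{Z}_2$-orbifold. Unlike the ordinary group cases $\T{O}_{2n}$, $\T{O}_{2n+1}$, and $\T{Sp}_{2n}$ handled in the earlier lemmas, here one must carefully keep track of the parities of the two kinds of symplectic-type free field factors and verify that the super-contractions do not yield unwanted cancellations or additional generators at threshold weight $2m+2$. Once this character-theoretic count is in place, the rest of the argument is formal.
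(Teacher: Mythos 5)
Your argument is essentially the paper's own: the paper proves only Lemma \ref{lemma:BD} explicitly (large-level limit via \cite[Lemma 4.2]{CL3}, identification of the non-orbifold factor as type $\cW(1,2^3,3,\dots,(2m)^3,2m+1)$, and first-fundamental-theorem invariant theory for the orbifold factor starting at weight $2m+2$) and leaves the remaining cases, including this one, as analogous, and your adaptation replacing Weyl's theorem by Sergeev's for $\text{OSp}_{1|2n}$ \cite{SI,SII} is exactly the intended route. The caveat you flag at the end — verifying the multiplicities of the super-contractions at the threshold weight $2m+2$ — is precisely the step the paper also glosses over in the proven case, so your proof is complete to the same standard.
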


	\subsection{Case CC}
For $\gg =  \fr{sp}_{4m+2n}$, we have an isomorphism of $\fr{sp}_{2m}\oplus \fr{so}_2\oplus\fr{sp}_{2n}$-modules
\[ \fr{sp}_{4m+2n} \cong \fr{sp}_{2m}\oplus\fr{so}_2\oplus\fr{sp}_{2n} \oplus \rho_{\omega_2}\otimes \mathbb{C}^3\otimes\mathbb{C} \oplus \mathbb{C}^{2m}\otimes \mathbb{C}^2\otimes \mathbb{C}^{2n},\]
and the critically shifted level $\psi =\frac{k-4 m+2 n}{4 m}$.
We define 
\[\cW^{\psi}_{CC}(n,m) : = \cW^{\frac{k-4 m+2 n}{4 m}- h^{\vee}}(\fr{sp}_{4m+2n},f_{\fr{sp}_{2m}}), \qquad h^{\vee} = 2m+n+1,\]
which has affine subalgebra $V^{k}(\fr{so}_2) \otimes V^{\psi-n}(\fr{sp}_{2n})$.
Next, we define
\[\cC^{\psi}_{CC}(n,m) : = \T{Com}(V^{\psi-n}(\fr{sp}_{2n},\cW^{\psi}_{CC}(n,m)).\]
The conformal element $L-L^{\fr{so}_2} - L^{\fr{sp}_{2n}}$ has central charge
\begin{equation}\label{eq:cCC}
	c_{CC}=-\frac{(k-1) (2 k m-k-4 m-2 n) (2 k m+k+2 n)}{(k+2 n) (k+4 m+2 n)}.
\end{equation}
The free field limit of $\cW^{\psi}_{CC}(n,m)$ is the free field algebra $\cW^{\infty}_{CC}(n,m)$
\[\cO_{\T{ev}}(2n^2+n,2)\otimes \left( \bigotimes_{i=1}^{m}\cO_{\T{ev}}(3,4i)\otimes \bigotimes_{i=1}^{m}\cO_{\T{ev}}(1,4i-2) \right) \otimes \cS_{\T{ev}}(2n,2m+1).\]
\begin{lemma}\label{lemma:CC}
	For $n+m\geq 1$, $\cC^{\psi}_{CC}(n,m)$ is of type 
	$\cW(1,2^3,3,4^3,\dots)$
	as a 1-parameter vertex algebra.
	Equivalently, this holds for generic values of $\psi$.
\end{lemma}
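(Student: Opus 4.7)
The plan is to follow exactly the same strategy as in the proof of Lemma \ref{lemma:BD}. First, I would apply \cite[Lemma 4.2]{CL3}, which asserts that taking the coset by an affine subalgebra commutes with passing to the large level limit. In the limit, the subalgebra $V^{\psi-n}(\fr{sp}_{2n})$ degenerates to the Heisenberg algebra $\cO_{\text{ev}}(2n^2+n,2) = \cH(\dim \fr{sp}_{2n})$, and taking its coset in the free field limit of $\cW^{\psi}_{CC}(n,m)$ is equivalent to extracting the $\text{Sp}_{2n}$-invariants of the remaining factors. Thus the large level limit of $\cC^{\psi}_{CC}(n,m)$ is
\[
\left( \bigotimes_{i=1}^{m}\cO_{\text{ev}}(3,4i)\otimes \bigotimes_{i=1}^{m}\cO_{\text{ev}}(1,4i-2) \right) \otimes \cS_{\text{ev}}(2n,2m+1)^{\text{Sp}_{2n}}.
\]
Note that, in contrast to case $BD$, no additional $\mathbb{Z}_2$-orbifold is required, since the full reductive group acting symplectically on $\cS_{\text{ev}}(2n,2m+1)$ is $\text{Sp}_{2n}$, which is connected.

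Next I would identify the strong generating type of each factor. By the same count as in Lemma \ref{lemma:BD}, the first tensor product has strong generating type $\cW(1,2^3,3,4^3,\dots,(2m-1),(2m)^3)$: each $\cO_{\text{ev}}(3,4i)$ contributes three generators in weight $2i$, and each $\cO_{\text{ev}}(1,4i-2)$ contributes one generator in weight $2i-1$. For the second factor, $\cS_{\text{ev}}(2n,2m+1)$ carries $2n$ generators of weight $m+\tfrac{1}{2}$ transforming as the standard representation of $\text{Sp}_{2n}$. By Weyl's first fundamental theorem for $\text{Sp}_{2n}$, the invariant ring is generated by symplectic contractions of pairs of generators together with their derivatives; the symmetry considerations between the antisymmetric symplectic form and the (anti)symmetrization of derivative indices yield, as in the arguments for the analogous orbifolds appearing in \cite{CL3,CL4,CKoL2}, an infinite strong generating set of type $\cW(2m+1,(2m+2)^3,2m+3,(2m+4)^3,\dots)$ (only finitely many generators in each weight being necessary after making use of normal ordering relations).

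Combining the two factors yields strong generating type $\cW(1,2^3,3,4^3,\dots)$ for the large level limit. Now \cite[Theorem 3.5]{CL3} (or more precisely the method behind its proof, which lifts a strong generating set for the free field limit to a strong generating set at generic level) implies that $\cC^{\psi}_{CC}(n,m)$ has the same strong generating type as a $1$-parameter vertex algebra. Finally, simplicity as a $1$-parameter vertex algebra follows from \cite[Theorem 3.6]{CL3}, applied to the $\mathbb{Z}$-grading inherited from the affine $\fr{sp}_{2n}$ subalgebra, exactly as in the $BD$ case.

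The main obstacle is the invariant-theoretic count for $\cS_{\text{ev}}(2n,2m+1)^{\text{Sp}_{2n}}$: one must verify that the pattern of multiplicities coming from symplectic pairings with derivatives matches $1,3,1,3,\dots$ in weights $2m+1, 2m+2, 2m+3,\dots$, as opposed to a different pattern. This is a standard but delicate book-keeping exercise, parallel to the orthogonal case in Lemma \ref{lemma:BD}, and the key input is that the symplectic form is skew whereas the form on $\cO_{\text{ev}}$ is symmetric, which reverses the parity of the relevant contractions and explains why no $\mathbb{Z}_2$-orbifold is needed to extract only integer-weight invariants.
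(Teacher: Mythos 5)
Your proposal is correct and follows exactly the template the paper uses: the paper omits the proof of this lemma precisely because it is the same argument as Lemma \ref{lemma:BD}, namely passing to the large level limit via \cite[Lemma 4.2]{CL3}, identifying the coset limit as the first tensor factor times $\cS_{\T{ev}}(2n,2m+1)^{\text{Sp}_{2n}}$, counting generators by Weyl's first fundamental theorem, and invoking \cite[Theorem 3.6]{CL3} for simplicity. One small correction: $\cS_{\T{ev}}(2n,2m+1)$ has $4n$ (not $2n$) generators of weight $m+\tfrac{1}{2}$, transforming as $\mathbb{C}^2\otimes\mathbb{C}^{2n}$ under $\fr{so}_2\oplus\fr{sp}_{2n}$; this two-copy structure is essential, since a single copy of the standard representation would produce no odd-weight invariants and only one generator per even weight, whereas the three families of symplectic contractions among the two copies are what yield the multiplicity pattern $2m+1,(2m+2)^3,2m+3,\dots$ that you correctly state.
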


	\subsection{Case CB}
For $\gg =  \fr{osp}_{2n+1|4m}$, we have an isomorphism of $\fr{sp}_{2m}\oplus \fr{so}_2\oplus\fr{so}_{2n+1}$-modules
\[ \fr{osp}_{2n+1|4m}\cong \fr{sp}_{2m}\oplus\fr{so}_2\oplus\fr{so}_{2n+1} \oplus \rho_{\omega_2}\otimes \mathbb{C}^3\otimes\mathbb{C} \oplus \mathbb{C}^{2m}\otimes \mathbb{C}^2\otimes \mathbb{C}^{2n+1},\]
and the critically shifted level $\psi =\frac{k+4 m-2 n-1}{4 m}$.
We define 
\[\cW^{\psi}_{CB}(n,m) : = \cW^{\frac{k+4 m-2 n-1}{4 m}-h^{\vee}}(\fr{osp}_{2n+1|4m},f_{\fr{sp}_{2m}}), \qquad h^{\vee} = 2 m-n+\frac{1}{2}\]
which has affine subalgebra $V^{k}(\fr{so}_2) \otimes V^{-2\psi-2n+3}(\fr{so}_{2n+1})$.
Next, we define
\[\cC^{\psi}_{CB}(n,m) : = \T{Com}(V^{-2\psi-2n+3}(\fr{so}_{2n+1}),\cW^{\psi}_{CB}(n,m))^{\mathbb{Z}_2}.\]
The conformal element $L-L^{\fr{so}_2} - L^{\fr{so}_{2n+1}}$ has central charge
\begin{equation}\label{eq:cCB}
	c_{CB}=\frac{(k-1) (2 k m+k-2 n-1) (2 k m-k-4 m+2 n+1)}{(-k+2 n+1) (k+4 m-2 n-1)}.
\end{equation}
The free field limit of $\cW^{\psi}_{CB}(n,m)$ is the free field algebra $\cW^{\infty}_{CB}(n,m)$
\[\cO_{\T{ev}}(2n^2+n,2)\otimes \left( \bigotimes_{i=1}^{m}\cO_{\T{ev}}(3,4i)\otimes \bigotimes_{i=1}^{m}\cO_{\T{ev}}(1,4i-2) \right) \otimes  \cO_{\T{odd}}(4n+2,2m+1).\]
\begin{lemma}\label{lemma:CB}
	For $n+m\geq 1$, $C^{\psi}_{CB}(n,m)$ is of type 
	$\cW(1,2^3,3,4^3,\dots)$
	as a 1-parameter vertex algebra.
\end{lemma}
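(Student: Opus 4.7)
The plan is to mirror the proof of Lemma \ref{lemma:BD}. First, apply \cite[Lemma 4.2]{CL3} to obtain the large-level limit of $\cC^{\psi}_{CB}(n,m)$ as
\[\left(\bigotimes_{i=1}^{m}\cO_{\T{ev}}(3,4i)\otimes \bigotimes_{i=1}^{m}\cO_{\T{ev}}(1,4i-2)\right)\otimes\cO_{\T{odd}}(4n+2,2m+1)^{\text{O}_{2n+1}},\]
which is the free field limit $\cW^{\infty}_{CB}(n,m)$ with the Heisenberg factor $\cO_{\T{ev}}(2n^2+n,2)$ (the free field limit of the affine subalgebra $V^{-2\psi-2n+3}(\fr{so}_{2n+1})$) stripped off, and with the $\text{SO}_{2n+1}$-invariants from the coset extended to full $\text{O}_{2n+1}$-invariants by the additional $\mathbb{Z}_2$-orbifold.

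Second, the first tensor factor is freely generated of type $\cW(1,2^3,3,4^3,\dots,2m-1,(2m)^3)$. The second factor is analyzed by classical invariant theory: the $4n+2$ fermionic generators of weight $m+\frac{1}{2}$ transform as two copies of the standard representation of $\text{O}_{2n+1}$, and the first fundamental theorem for the orthogonal group acting on fermionic variables shows that invariants are generated by bilinears in the generators and their derivatives (the extra $\mathbb{Z}_2$ kills any $\text{SO}_{2n+1}$-only invariants built from the Levi-Civita tensor). Weight-by-weight bookkeeping, together with the standard decoupling of exact derivatives, then produces an infinite strong generating set of type $\cW(2m+1,(2m+2)^3,2m+3,(2m+4)^3,\dots)$, of which only finitely many are actually required.

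Combining both factors gives a strong generating set of type $\cW(1,2^3,3,4^3,\dots)$ for the large-level limit. Lifting this generating set to generic $\psi$ (an argument used repeatedly in \cite{CL3,CL4}) yields the claimed strong generating type for the 1-parameter vertex algebra $\cC^{\psi}_{CB}(n,m)$, and simplicity at generic $\psi$ is then immediate from \cite[Theorem 3.6]{CL3}.

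The main obstacle is the fermionic invariant-theoretic count at the tail. Because the diagonal bilinears $\sum_i{:}\phi^i\phi^i{:}$ and $\sum_i{:}\psi^i\psi^i{:}$ vanish identically by fermion antisymmetry in $\cO_{\T{odd}}$, the expected $1$--$3$--$1$--$3$ multiplicity pattern at weights $2m+1,2m+2,2m+3,2m+4,\dots$ must be produced from the single mixed bilinear $\sum_i{:}\phi^i\psi^i{:}$ together with its derivatives and, where necessary, quartic and higher invariants such as $\sum_{i,j}{:}\phi^i\psi^j\phi^j\psi^i{:}$. Matching this count weight-by-weight to the bosonic pattern appearing in Case BD (with symmetric and antisymmetric roles of derivative combinations reversed by fermion parity) is the only step of substance; the remainder of the argument is routine.
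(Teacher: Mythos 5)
Your proposal follows exactly the route the paper intends: the omitted proofs of Lemmas \ref{lemma:BB}--\ref{lemma:CO} are all modeled on the proof of Lemma \ref{lemma:BD}, i.e.\ pass to the large level limit via \cite[Lemma 4.2]{CL3}, read off the type of the finite tensor factor, analyze the orbifold factor by classical invariant theory, and get simplicity from \cite[Theorem 3.6]{CL3}. Your identification of the limit as $\big(\bigotimes_{i=1}^{m}\cO_{\T{ev}}(3,4i)\otimes\bigotimes_{i=1}^{m}\cO_{\T{ev}}(1,4i-2)\big)\otimes\cO_{\T{odd}}(4n+2,2m+1)^{\text{O}_{2n+1}}$ and of the two generating types $\cW(1,2^3,\dots,2m-1,(2m)^3)$ and $\cW(2m+1,(2m+2)^3,2m+3,(2m+4)^3,\dots)$ is correct.

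The one place your reasoning goes astray is the final paragraph. It is not true that the $1$--$3$--$1$--$3$ pattern has to be manufactured from the single mixed bilinear plus quartic invariants. By Weyl's first fundamental theorem the orbifold is strongly generated by quadratics alone, so quartics such as $\sum_{i,j}:\!\phi^i\psi^j\phi^j\psi^i\!:$ are never needed as strong generators. What you are missing is that only the underived diagonal bilinears $\sum_i:\!\phi^i\phi^i\!:$ vanish; the same-species bilinears $\sum_i:\!\partial^p\phi^{a,i}\partial^q\phi^{a,i}\!:$ with $p>q$ survive for $a=1,2$, and modulo total derivatives each species contributes one new generator in every \emph{even} weight $2m+2,2m+4,\dots$ (the parity of the surviving weights being flipped relative to the bosonic case, exactly as in your parenthetical remark). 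Together with the mixed bilinears $\sum_i:\!\partial^p\phi^{1,i}\partial^q\phi^{2,i}\!:$, which carry no symmetry constraint and hence give one new generator in every weight $2m+1,2m+2,2m+3,\dots$, this yields $1$ generator in each odd weight and $1+1+1=3$ in each even weight of the tail, which is the asserted count. With that correction the argument is complete and matches the paper's.
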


\subsection{Case CD}
For $\gg =  \fr{osp}_{2n|4m}$, we have an isomorphism of $\fr{sp}_{2m}\oplus \fr{so}_2\oplus\fr{so}_{2n}$-modules
\[ \fr{osp}_{2n|4m}\cong \fr{sp}_{2m}\oplus\fr{so}_2\oplus\fr{so}_{2n+1} \oplus \rho_{\omega_2}\otimes \mathbb{C}^3\otimes\mathbb{C} \oplus \mathbb{C}^{2m}\otimes \mathbb{C}^2\otimes \mathbb{C}^{2n},\]
and the critically shifted level $\psi = \frac{k+4 m-2 n}{4 m}$.
We define 
\[\cW^{\psi}_{CD}(n,m) : = \cW^{\frac{k+4 m-2 n}{4 m}-h^{\vee}}(\fr{osp}_{2n|4m},f_{\fr{sp}_{2m}}), \qquad h^{\vee} = 2m-n+1,\]
which has affine subalgebra $V^{k}(\fr{so}_2) \otimes V^{-2\psi-2n+4}(\fr{so}_{2n})$.
Next, we define
\[\cC^{\psi}_{CD}(n,m) : = \T{Com}(V^{-2\psi-2n+4}(\fr{so}_{2n}),\cW^{\psi}_{CD}(n,m)).\]
The conformal element $L-L^{\fr{so}_2} - L^{\fr{so}_{2n}}$ has central charge
\begin{equation}\label{eq:cCD}
	c_{CD}=\frac{(k-1) (2 k m+k-2 n) (2 k m-k-4 m+2 n)}{(2 n-k) (k+4 m-2 n)}.
\end{equation}
The free field limit of $\cW^{\psi}_{CD}(n,m)$ is the free field algebra $\cW^{\infty}_{CD}(n,m)$
\[\cO_{\T{ev}}(2n^2-n,2)\otimes \cO_{\T{odd}}(4n,2)\otimes \left( \bigotimes_{i=1}^{m}\cO_{\T{ev}}(3,4i)\otimes \bigotimes_{i=1}^{m}\cO_{\T{ev}}(1,4i-2) \right) \otimes \cO_{\T{odd}}(4n,2m+1).\]
\begin{lemma}\label{lemma:CD}
	For $n+m\geq 1$, $\cC^{\psi}_{CD}(n,m)$ is of type 
	$\cW(1,2^3,3,4^3,\dots)$
	as a 1-parameter vertex algebra.
\end{lemma}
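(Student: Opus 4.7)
The plan is to follow the proof of Lemma \ref{lemma:BD}, adjusted for the fact that no $\mathbb{Z}_2$-orbifold is taken in the CD case, so the relevant classical invariant theory is that of $\mathrm{SO}_{2n}$ rather than $\mathrm{O}_{2n}$. First, by \cite[Lemma 4.2]{CL3}, the large level limit of $\cC^{\psi}_{CD}(n,m)$ is the $\mathrm{SO}_{2n}$-orbifold of the tensor product obtained from the free field limit of $\cW^{\psi}_{CD}(n,m)$ by deleting the affine $\fr{so}_{2n}$ factor $\cO_{\T{ev}}(2n^2-n,2)$, namely
$$\left(\cO_{\T{odd}}(4n,2)\otimes \bigotimes_{i=1}^{m}\cO_{\T{ev}}(3,4i)\otimes \bigotimes_{i=1}^{m}\cO_{\T{ev}}(1,4i-2) \otimes \cO_{\T{odd}}(4n,2m+1)\right)^{\mathrm{SO}_{2n}}.$$

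Next, the plan is to separate the tensor factors by their $\mathrm{SO}_{2n}$-action. The inner factor $\bigotimes_{i=1}^{m}\cO_{\T{ev}}(3,4i)\otimes \bigotimes_{i=1}^{m}\cO_{\T{ev}}(1,4i-2)$ comes from the principal $\fr{sl}_2$ inside $\fr{sp}_{2m}$ and carries trivial $\mathrm{SO}_{2n}$-action, so a direct weight count gives a strong generating set of type $\cW(1,2^3,3,\dots,2m-1,(2m)^3)$. The remaining factors $\cO_{\T{odd}}(4n,2)\otimes \cO_{\T{odd}}(4n,2m+1)$ consist of $4n+4n=8n$ fields which transform under $\mathrm{SO}_{2n}$ as several copies of the standard representation. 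Applying Weyl's first and second fundamental theorems for the orthogonal group, the $\mathrm{O}_{2n}$-invariants are strongly generated by the quadratic OPE-contractions of these fields, yielding an infinite strong generating set of type $\cW(2m+1,(2m+2)^3,2m+3,(2m+4)^3,\dots)$, of which only finitely many are needed at any fixed weight. Passing from $\mathrm{O}_{2n}$- to $\mathrm{SO}_{2n}$-invariants only introduces Pfaffian-type generators, which either lie above the relevant conformal weight range or decompose into normally ordered products of the quadratic invariants and their derivatives, hence do not enlarge the strong generating type. Combining both contributions gives the claimed type $\cW(1,2^3,3,4^3,\dots)$ at the free field limit.

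Finally, the simplicity of $\cC^{\psi}_{CD}(n,m)$ as a $1$-parameter vertex algebra, together with the fact that its generic strong generating type agrees with the one computed in the large level limit, follows from \cite[Theorem 3.6]{CL3}, exactly as in Lemma \ref{lemma:BD}.

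I expect the main obstacle to be rigorously controlling the difference between $\mathrm{O}_{2n}$- and $\mathrm{SO}_{2n}$-invariants: the Pfaffian generators have conformal weight growing with $n$, but for $n$ small relative to $m$ they may fall within the strong generating range and must be shown redundant. A careful weight-by-weight expansion of their decomposition into normally ordered products of quadratic invariants and derivatives, in the spirit of \cite[Section 4]{CL3} and the analogous analyses in \cite{CKoL2}, should suffice to confirm that they do not enlarge the strong generating type.
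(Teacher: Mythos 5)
Your overall strategy---pass to the large level limit, apply classical invariant theory to the resulting orbifold of a free field algebra, and conclude via \cite[Theorem 3.6]{CL3}---is exactly the argument the paper intends here; the paper only writes it out for Lemma \ref{lemma:BD} and leaves the remaining cases, including this one, to the reader. The genuine gap is in your treatment of the passage from $\text{O}_{2n}$- to $\text{SO}_{2n}$-invariants. The determinant/Pfaffian-type generators supplied by the first fundamental theorem for $\text{SO}_{2n}$ are \emph{not} redundant: they are not normally ordered polynomials in the quadratic contractions and their derivatives, since everything generated by the quadratics is $\text{O}_{2n}$-invariant while the determinant-type fields change sign under a reflection. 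This is precisely the phenomenon the paper exploits in Section \ref{sect:rationality}, where the un-orbifolded coset $\cD_\ell(2n)$ is strictly larger than its $\mathbb{Z}_2$-orbifold $\cC_\ell(2n)$ and the determinant-type fields $P^{\pm}$ must be adjoined as new generators. So the weight-by-weight redundancy check you propose at the end would fail, and with the definition of $\cC^{\psi}_{CD}(n,m)$ read literally (no $\mathbb{Z}_2$), the generating type would acquire extra generators starting in weight $n(2m+1)$, falsifying the statement. The resolution is that $\cC^{\psi}_{CD}(n,m)$ is the specialization of the fourth family in \eqref{BDsecond}, which carries a $\mathbb{Z}_2$-orbifold---as do the sibling cases $BB$, $BD$, and $CB$, all cosets by an orthogonal affine subalgebra---so the omission of the $\mathbb{Z}_2$ in the definition is a typo to be corrected, not a feature to be argued around. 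Once the orbifold is restored, the relevant invariant theory is that of $\text{O}_{2n}$, Weyl's first fundamental theorem yields only the quadratic contractions, and the proof closes exactly as in Lemma \ref{lemma:BD}.

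A second, smaller inconsistency: you count the covariant fields as the $8n$ fields of $\cO_{\T{odd}}(4n,2)\otimes\cO_{\T{odd}}(4n,2m+1)$, but then report the invariant type $\cW(2m+1,(2m+2)^3,2m+3,(2m+4)^3,\dots)$, which is what the $4n$ weight-$(m+\tfrac{1}{2})$ fields of $\cO_{\T{odd}}(4n,2m+1)$ alone produce. If $4n$ additional odd weight-one fields in copies of the standard representation were genuinely present, their contractions would contribute extra generators in weight $2$ and in the half-integer weights $m+\tfrac{3}{2},m+\tfrac{5}{2},\dots$, contradicting the claimed type. In fact $\fr{a}=\fr{so}_{2n}$ is purely even and contributes no odd weight-one fields to $\gg^f$, so the factor $\cO_{\T{odd}}(4n,2)$ in the displayed free field limit of $\cW^{\psi}_{CD}(n,m)$ is spurious (compare with the $CC$ and $CB$ cases, which have no such factor); your computation is correct only after discarding it, and the proof should say so explicitly rather than carry the factor along.
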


\subsection{Case CO}
For $\gg =  \fr{osp}_{1|4m+2n}$, we have an isomorphism of $\fr{sp}_{2m}\oplus \fr{so}_2\oplus\fr{osp}_{1|2n}$-modules
\[ \fr{osp}_{1|4m+2n} \cong \fr{sp}_{2m}\oplus\fr{so}_2\oplus\fr{sp}_{2n} \oplus \rho_{\omega_2}\otimes \mathbb{C}^3\otimes\mathbb{C} \oplus \mathbb{C}^{2m}\otimes \mathbb{C}^2\otimes \mathbb{C}^{2n|1},\]
and the critically shifted level $\psi = \frac{k+4 m+2 n-1}{4 m}$.
We define 
\[\cW^{\psi}_{CO}(n,m) : = \cW^{\frac{k+4 m+2 n-1}{4 m} -h^{\vee}}(\fr{osp}_{1|4m+2n},f_{\fr{sp}_{2m}}), \qquad h^{\vee} = 2 m+n+\frac{1}{2},\]
which has affine subalgebra $V^{k}(\fr{so}_2) \otimes V^{\psi-n-\frac{3}{2}}(\fr{osp}_{1|2n})$.
Next, we define
\[\cC^{\psi}_{CO}(n,m) : = \T{Com}(V^{\psi-n-\frac{3}{2}}(\fr{osp}_{1|2n}),\cW^{\psi}_{CO}(n,m))^{\mathbb{Z}_2}.\]
The conformal element $L-L^{\fr{so}_2} - L^{\fr{osp}_{1|2n}}$ has central charge
\begin{equation}\label{eq:cCO}
	c_{CO}=-\frac{(k-1) (2 k m-k-4 m-2 n+1) (2 k m+k+2 n-1)}{(k+2 n-1) (k+4 m+2 n-1)}.
\end{equation}
The free field limit of $\cW^{\psi}_{CO}(n,m)$ is the free field algebra $\cW^{\infty}_{CO}(n,m)$
\[\cO_{\T{ev}}(2n^2+n,2)\otimes  \cO_{\T{odd}}(4n,2)\otimes \left( \bigotimes_{i=1}^{m}\cO_{\T{ev}}(3,4i)\otimes \bigotimes_{i=1}^{m}\cO_{\T{ev}}(1,4i-2) \right) \otimes \cS_{\T{ev}}(2n,2m+1)\otimes \cO_{\T{odd}}(1,2m+1).\]
\begin{lemma}\label{lemma:CO}
	For $n+m\geq 1$, $\cC^{\psi}_{CO}(n,m)$ is of type 
	$\cW(1,2^3,3,4^3,\dots)$
	as a 1-parameter vertex algebra.
\end{lemma}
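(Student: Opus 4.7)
The plan is to proceed in direct parallel to the proof of Lemma \ref{lemma:BD}, using \cite[Lemma 4.2]{CL3} and \cite[Theorem 3.6]{CL3}. First, I identify the large level limit $\cC^{\infty}_{CO}(n,m)$ by stripping off the free field limit of the affine subalgebra $V^{\psi-n-\frac{3}{2}}(\fr{osp}_{1|2n})$ from $\cW^{\infty}_{CO}(n,m)$ and taking invariants under the relevant supergroup action corresponding to the coset together with the $\mathbb{Z}_2$ orbifold. This should yield a factorization
$$\cC^{\infty}_{CO}(n,m) \;\cong\; \Bigl(\bigotimes_{i=1}^{m}\cO_{\T{ev}}(3,4i)\otimes \bigotimes_{i=1}^{m}\cO_{\T{ev}}(1,4i-2)\Bigr) \otimes \bigl(\cS_{\T{ev}}(2n,2m+1)\otimes \cO_{\T{odd}}(1,2m+1)\bigr)^{\T{OSp}(1|2n)},$$
with an appropriate parity convention placing the action in the defining super representation.

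The first tensor factor is a free field vertex algebra with strong generating type $\cW(1, 2^3, 3, 4^3, \dots, 2m-1, (2m)^3)$, independent of $n$. For the second factor, the $2n$ bosonic and $1$ fermionic generators of conformal weight $(2m+1)/2$ together carry the defining super representation of $\T{OSp}(1|2n)$. I then invoke Sergeev's super version of the First Fundamental Theorem of classical invariant theory, together with the $\mathbb{Z}_2$ piece that completes the special orthosymplectic group to the full one, to conclude that the invariant subalgebra has a (possibly non-minimal) strong generating set supported in integer weights starting from $2m+1$, with multiplicities matching $\cW(2m+1, (2m+2)^3, 2m+3, (2m+4)^3,\dots)$. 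Only finitely many are actually required, since higher-weight invariants can be reduced modulo normally ordered products and derivatives.

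Combining the two factors yields strong generating type $\cW(1, 2^3, 3, 4^3, \dots)$ for $\cC^{\infty}_{CO}(n,m)$. Since strong generation is an open condition on the parameter space, this descends from the free field limit to generic $\psi$, establishing the $1$-parameter claim; the equivalent reformulation at generic $\psi$ follows immediately. The hard part is the classical invariant theory step: the weight-by-weight accounting of $\T{OSp}(1|2n)$-invariants in the mixed bosonic-fermionic free field setting is more delicate than the purely orthogonal case of Lemma \ref{lemma:BD} or the purely symplectic cases, since the bosonic generators transform in a symplectic module and the fermionic generator in an orthogonal module, yet their contractions must be treated uniformly through the supersymmetric invariant form, and the $\mathbb{Z}_2$ orbifold must be reconciled with the supergroup structure so as to yield exactly three generators in each even weight and one in each odd weight. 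Once this multiplicity count is verified, the rest of the argument is standard.
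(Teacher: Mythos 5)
Your proposal is correct and follows exactly the template the paper uses for Lemma \ref{lemma:BD} (the paper omits the proofs of the remaining cases, including CO, precisely because they run along these lines): pass to the large level limit via \cite[Lemma 4.2]{CL3}, peel off the free field limit of the affine subalgebra, and apply Sergeev's first fundamental theorem to the $\T{Osp}_{1|2n}$-invariants of the weight-$\frac{2m+1}{2}$ fields transforming as $\mathbb{C}^2\otimes\mathbb{C}^{2n|1}$, which yields the $\cW(2m+1,(2m+2)^3,2m+3,\dots)$ tail with one generator in each odd weight (from the mixed $\mathbb{C}^2$-index contractions) and three in each even weight. Your multiplicity count and the descent from the free field limit to generic $\psi$ match the intended argument.
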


	A consequence of Theorem Lemmas \ref{lemma:CD}-\ref{lemma:BO} is the following.
\begin{proposition}\label{extension features}
	Let $X= B$ or $C$ and let $Y = B$, $C$, $D$, or $O$.
	\begin{enumerate}
		\item $\cC^{\psi}_{XY}(n,m)$ is simple for generic values of $\psi$.
		\item For $\cW^{\psi}_{CY}(n,m)$, without loss of generality we may replace the strong generating field in each odd weight $1,3,\dots, 2M-1$, and the three strong generating fields in each even weight $2,4,\dots, 2M$, with elements of the same weight in the coset $\cC^{\psi}_{CY}(n,m)$. Similarly, for $\cW^{\psi}_{BY}(n,m)$, we may replace the strong generating field in each odd weight $1,3,\dots, 2M-1$, and the three strong generating fields in each even weight $2,4,\dots, 2M+2$, with elements of the same weight in the coset $\cC^{\psi}_{BY}(n,m)$. 
		\item Let $U\cong \mathbb{C}^2\otimes \rho_{\fr{a}}$, where $\rho_{\fr{a}}$ is the standard representation of $\fr{a}$. It is spanned by $P^{\frac{1}{2},j},P^{-\frac{1}{2},j}$, where $j$ runs over a basis of $\rho_{\fr{a}}$.
		Then $U$ has a supersymmetric bilinear form 
		\[\langle \ ,\ \rangle:  U \to \mathbb{C}, \quad \begin{cases}
			\langle a,b\rangle = a_{(2m)}b,&\quad X=C,\\
			\langle a,b\rangle = a_{(2m+1)}b & \quad X=B.
		\end{cases}  \]
		This form is nondegenerate and coincides with the standard pairing on $\mathbb{C}^2\otimes \rho_{\fr{a}}$.
		Hence, without loss of generality, we may normalize the fields in $U$ as
		\begin{equation}\label{normalization}
			P^{\mu,i}(z)P^{\nu,j}(w)\sim 
			\begin{cases}
				\delta_{i,j}\delta_{\mu+\nu,0}\B{1}(z-w)^{-2m-1}+\dotsb,&\quad X=C,\\
				\delta_{i,j}\delta_{\mu+\nu,0}\B{1}(z-w)^{-2m-2}+\dotsb,&\quad X=B.\\
			\end{cases}
		\end{equation}
		Here, the remaining terms lie $V^{a}(\fr{a}) \otimes \cC_{XY}(n,m)$.
	\end{enumerate}
\end{proposition}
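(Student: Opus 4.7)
My plan is to treat the three parts in order, using the large level limit from Theorem \ref{thm:largelevellimit}, classical invariant theory, and standard coset/orbifold arguments.

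Part (1) follows uniformly, as in the proof of Lemma \ref{lemma:BD} (citing \cite[Theorem 3.6]{CL3}): at generic $\psi$, the universal affine vertex (super)algebra $V^{\psi-h^{\vee}}(\gg)$ is simple, so its Drinfeld--Sokolov reduction $\cW^{\psi}_{XY}(n,m)$ is simple by generic preservation of simplicity, and the affine coset together with its $\mathbb{Z}_2$-orbifold (when taken) remain simple since $V^k(\fr{so}_2)\otimes V^a(\fr{a})$ acts semisimply. For part (2), at generic $\psi$ the algebra $\cW^{\psi}_{XY}(n,m)$ decomposes completely as a module over $\cA:=V^k(\fr{so}_2)\otimes V^a(\fr{a})$. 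I induct on conformal weight $d\geq 2$: by the free field limit computations in the proofs of Lemmas \ref{lemma:BD}--\ref{lemma:CO}, the coset $\cC^{\psi}_{XY}(n,m)$ contains, in each weight, a field of each required $\fr{a}$-isotype whose leading term agrees with an original strong generator $Y$ modulo normally ordered products of $\cA$-generators with previously chosen coset representatives of lower weight. Subtracting those corrections produces replacements $\tilde Y\in\cC^{\psi}_{XY}(n,m)$, and in the orbifold cases I further project onto the $\mathbb{Z}_2$-invariant part; the resulting modified family is still a strong generating set of the same type.

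For part (3), I first observe that $a_{(r)}b$ for $a,b\in U$ and $r=2m$ (resp.\ $r=2m+1$) has conformal weight zero and hence is a scalar multiple of $\B{1}$, so $\langle\,,\,\rangle$ is well defined. Its symmetry type follows from the skew-symmetry identity \eqref{skew-symmetry}: the $\partial^i(b_{(r+i)}a)$ terms are derivatives and contribute $0$ to the weight-zero component, while the sign $(-1)^{|a||b|+r+1}$ combined with the parities recorded in Table \ref{tab:Walgebras} reproduces the claimed symmetric/skew-symmetric behaviour. Invariance under $\fr{so}_2\oplus\fr{a}$ follows from the Jacobi identities \eqref{Jacobi} applied to $J^c$, with $c\in\fr{so}_2\oplus\fr{a}$, and two elements of $U$. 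Since $U\cong\mathbb{C}^2\otimes\rho_{\fr{a}}$ decomposes into two irreducible $\fr{so}_2\oplus\fr{a}$-isotypes distinguished by the charge $\mu=\pm\tfrac12$, with $\rho_{\fr{a}}$ irreducible over $\fr{a}$, Schur's lemma forces $\langle\,,\,\rangle$ to be a scalar multiple of the standard pairing $(P^{1/2,i},P^{-1/2,j})\mapsto\delta_{i,j}$.

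The main obstacle is showing that this scalar is nonzero. I handle it by taking the large level limit $\psi\to\infty$ via Theorem \ref{thm:largelevellimit}: under this limit the fields $P^{\mu,i}$ become free field generators indexed by $\mathbb{C}^2\otimes\rho_{\fr{a}}\subseteq\gg^{f_{\fr{b}}}_{-k}$ with $k=m$ for $X=B$ and $k=m-\tfrac12$ for $X=C$, and their OPE pairing is the form $B_k(a,b)=(-1)^{2k}((\T{ad}\,f_{\fr{b}})^{2k}b\,|\,a)$ from \eqref{freeField}. Because the principal $\fr{sl}_2$-string in $\gb$ of top weight $2k$ appears tensored with $\mathbb{C}^2\otimes\rho_{\fr{a}}$ in the decomposition \eqref{g decomposition}, the operator $(\T{ad}\,f_{\fr{b}})^{2k}$ is an isomorphism from the top of that string onto the bottom, and the invariant form on $\gg$ pairs the two extremes nondegenerately. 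Hence $B_k$ is nondegenerate on $\mathbb{C}^2\otimes\rho_{\fr{a}}$, the scalar in question is nonzero in the limit, and therefore is a nonzero rational function of $\psi$, yielding nondegeneracy at generic $\psi$. Rescaling the $P^{\mu,i}$ by a suitable polynomial in $\psi$ produces \eqref{normalization}; the subleading singular terms have $\fr{so}_2$-charge $\mu+\nu$ and conformal weight strictly less than $2m+1$ (resp.\ $2m+2$), forcing them, via part (2), to lie in $V^a(\fr{a})\otimes\cC^{\psi}_{XY}(n,m)$.
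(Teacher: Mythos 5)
The paper offers no written proof of this proposition: it is stated as an immediate consequence of Lemmas \ref{lemma:BD}--\ref{lemma:CO} and the standard structure theory of $\cW$-algebras, so your write-up is really supplying the details the authors chose to omit. Your argument is correct and follows exactly the route the paper intends: generic simplicity via \cite[Theorem 3.6]{CL3} for (1), the free-field-limit/complete-reducibility induction for (2), and for (3) the weight count showing $a_{(2m)}b$ (resp.\ $a_{(2m+1)}b$) is a scalar, Schur's lemma to identify the form with a multiple of the standard pairing, and the large level limit of Theorem \ref{thm:largelevellimit} together with $\fr{sl}_2$-representation theory to see that this multiple is a nonzero rational function of $\psi$. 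Two small points. First, in the nondegeneracy step it is the pairing between $\gg_{j}$ and $\gg_{-j}$ induced by the invariant form, restricted to the lowest-weight spaces of the relevant $\fr{sl}_2$-strings, that is nondegenerate; your phrase about $(\T{ad}\,f_{\fr{b}})^{2k}$ being an isomorphism ``from the top of that string onto the bottom'' has the direction garbled, though the conclusion is right. Second, your final claim that the subleading singular terms of $P^{\mu,i}(z)P^{\nu,j}(w)$ lie in $V^{a}(\fr{a})\otimes\cC^{\psi}_{XY}(n,m)$ is not forced by the weight bound alone: a normally ordered monomial of weight $\leq 2m$ containing a single $\partial^{s}P^{\rho,l}$ is not excluded on weight grounds. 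What rules such terms out is that the products $P^{\mu,i}_{(r)}P^{\nu,j}$ have integral Heisenberg charge $\mu+\nu\in\{0,\pm 1\}$ (in units where $P$ carries charge $\pm\tfrac12$) and even parity, whereas any monomial containing an odd number of extension fields carries half-integral charge or odd parity. With that one sentence added, the proof is complete.
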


\section{GKO cosets}\label{sect:Diagonal}
In this section, we describe the specialization of GKO cosets \eqref{BDGKO} to the case of $\cW^{BD, \emptyset, \{2\}}_{\infty} := \cW^{\gs\go_2}_{\infty}$. We will use the notation $\cC^{\ell}(n)$ for $n \in \mathbb{Z}$, which have the property that the Heisenberg field (regarded as a generator for affine $\gs\go_2$) has constant level $n$.

%Unlike $\Winf$ and $\Wev$ where the $Y$-algebras are expected to account for all the simple, strongly finitely generated $1$-parameter quotients, and similar to $\Wsp$, $\Wso$ admits at least $4$ more infinite families of such $1$-parameter quotients. 

\subsection{Cases B and D} Consider the rank $2n$ free fermion algebra $\cF(2n)$ with basis $\{\phi^{1,i}, \phi^{2,i}|\ i = 1,\dots,n\}$ and OPE relations
$$\phi^{1,i}(z) \phi^{1,j} \sim \delta_{i,j} (z-w)^{-1},\qquad \phi^{2,i}(z) \phi^{2,j} \sim \delta_{i,j} (z-w)^{-1}.$$ We have an embedding $L_2(\gs\go_n) \hookrightarrow \cF(2n)$ given by $\omega_{i,j} \mapsto :\phi^{1,i} \phi^{1,j}: + :\phi^{2,i} \phi^{2,j}:$, for $i\neq j$. The commutant of $L_2(\gs\go_n)$ inside $\cF(2n)$ contains the Heisenberg field $H = \sqrt{-1}\sum_{i=1}^n :\phi^{1,i} \phi^{2,i}:$, which satisfies $H(z) H(w) \sim n(z-w)^{-2}$. The weight $\frac{1}{2}$ space of $\cF(2n)$ transforms as $\mathbb{C}^2 \otimes \mathbb{C}^{n}$ under $\gs\go_2 \oplus \gs\go_{n}$. We have the diagonal embedding $V^{\ell}(\gs\go_n)\hookrightarrow V^{\ell-2}(\gs\go_n) \otimes \cF(2n)$, and we define
\begin{equation}\label{diagonalBD}
	\cC^{\ell}\left(n \right) = \T{Com}(V^{\ell}(\fr{so}_n), V^{\ell-2}(\fr{so}_n)\otimes \cF(2n))^{\mathbb{Z}_2}.
\end{equation}

The central charge of the Heisenberg coset of $\cC^{\ell}(n)$ is
\begin{equation} \label{eq:cn} c_n= \frac{(-2 + \ell) (-1 +  n) (-4 + \ell + 2 n)}{(-4 + \ell +  n) (-2 + \ell +  n)}.\end{equation}

The free field limit of $\cC^{\ell}(n)$ is the invariant algebra $\cF(2n)^{\text{O}_{n}}$. By Weyl's first fundamental theorem for standard representation of $\text{O}_{n}$ \cite{W}, this has a strong generating set consisting of
\begin{equation} 
\begin{split} &  \sum_{j=1}^n \big(:\partial^p \phi^{1,j} \partial^q \phi^{1,j}: \big), \qquad p > q \geq 0,
\\ &  \sum_{j=1}^n \big(:\partial^p \phi^{2,j} \partial^q \phi^{2,j}: \big), \qquad p > q \geq 0,
\\ &  \sum_{j=1}^n \big(:\partial^p \phi^{1,j} \partial^q \phi^{2,j}: \big), \qquad p \geq q \geq 0.
\end{split}\end{equation}
Removing redundancy due to differential relations among the above generators, we see that $\cF(2n)^{\text{SO}_{n}}$ has a strong generating set of type $\cW(1,2^3,3,\dots)$. This holds generically for the coset \eqref{diagonalBD}, so we obtain

\begin{lemma}\label{lemma:B and D}
For $n \in \mathbb{Z}_{\geq 1}$, $\cC^{\ell}(n)$ is a simple, $1$-parameter vertex algebra of type $\cW(1,2^3,3,4^3,\dots)$ containing a standardly normalized Heisenberg field $H$ with level $n$.

\end{lemma}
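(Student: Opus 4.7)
The plan is to compute the strong generating type by passing to the large-level/free-field limit and applying classical invariant theory, and then promoting the result back to generic $\ell$ via the general framework of \cite{CL3}. First, using the standard limiting procedure (Theorem \ref{thm:largelevellimit} together with the analogue of \cite[Lemma 4.2]{CL3}), the affine factor $V^{\ell-2}(\fr{so}_n)$ degenerates and its action on $\cF(2n)$ reduces to the rotation action of $\fr{so}_n$ on the $2n$ free fermions. The coset therefore becomes $\cF(2n)^{\fr{so}_n}$ in the limit, and the $\mathbb{Z}_2$ orbifold in the definition of $\cC^{\ell}(n)$ amounts to upgrading from $\mathrm{SO}_n$ to $\mathrm{O}_n$, so the free field limit of $\cC^{\ell}(n)$ is $\cF(2n)^{\mathrm{O}_n}$.

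Second, I would apply Weyl's first fundamental theorem for two copies of the standard $\mathrm{O}_n$-module \cite{W} to obtain the three bilinear families of strong generators listed in the excerpt above the lemma. The task is then to extract a minimal strong generating set from this infinite collection by accounting for the differential relations, that is, by quotienting by total derivatives and normally ordered products of lower-weight generators. The outcome should be one generator at weight $1$ (the Heisenberg field), three at each even weight $2d$, and one at each odd weight $2d+1$, yielding the type $\cW(1,2^3,3,4^3,\dots)$.

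Third, to transfer this to generic $\ell$, I would invoke the deformation argument from \cite[Section 3]{CL3}: a minimal strong generating set at the free field limit lifts to strong generators of $\cC^{\ell}(n)$ at generic $\ell$ of the same conformal weights. Generic simplicity then follows from \cite[Theorem 3.6]{CL3}. The weight-one vector $H = \sqrt{-1}\sum_{i=1}^n :\phi^{1,i}\phi^{2,i}:$ is $\mathrm{O}_n$-invariant, commutes with the diagonal $\fr{so}_n$ (so lies in the coset), and a direct OPE calculation gives $H(z)H(w)\sim n(z-w)^{-2}$, identifying it as the standardly normalized Heisenberg field of level $n$.

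The main obstacle is the classical invariant theory computation: showing that the enumerated set is not only generating but minimal, i.e., that no further strong generators appear. This is a graded-character dimension count that has a close analogue in every construction of this flavour in \cite{Lin,KL,CKoL2}. For $\mathrm{O}_n$-invariants of $\cF(2n)$, I would combine Weyl's second fundamental theorem, which describes the relations among the bilinear invariants, with an explicit comparison of the Hilbert series of $\cF(2n)^{\mathrm{O}_n}$ against the Hilbert series of the free commutative vertex algebra on the proposed strong generating set, to verify that no collisions force extra generators in high weight.
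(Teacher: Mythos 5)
Your proposal is correct and follows essentially the same route as the paper: pass to the free field limit $\cF(2n)^{\mathrm{O}_n}$, apply Weyl's first fundamental theorem to get the three bilinear families, remove redundancy from differential relations to land on type $\cW(1,2^3,3,4^3,\dots)$, and transfer back to generic $\ell$ with simplicity via \cite[Theorem 3.6]{CL3}. The only difference is that you spell out the minimality check (second fundamental theorem plus a Hilbert series comparison) and the OPE computation for $H$, which the paper treats as routine.
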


\subsection{Case C}
Consider the rank $2n$ $\beta\gamma$-system $\cS(2n)$ with generators $\{\beta^{i,j}, \gamma^{i,j}|\ i=1,2,\ j =1,\dots,n\}$ and OPE relations
$$\beta^{1,j}(z) \gamma^{1,k}(w) \sim \delta_{j,k}(z-w)^{-1},\qquad \beta^{2,j}(z) \gamma^{2,k}(w) \sim \delta_{j,k}(z-w)^{-1}.$$
We have a homomorphism $V^{-1}(\fr{sp}_{2n}) \rightarrow \cS(2n)$, with generators
$$:\beta^{1,i} \beta^{1,j}: + :\beta^{2,i} \beta^{2,j}:,\qquad  :\beta^{1,i} \gamma^{1,j}: + :\beta^{2,i} \gamma^{2,j}: ,\qquad :\gamma^{1,i} \gamma^{1,j}:  +: \gamma^{2,i} \gamma^{2,j}:.$$
The commutant of $V^{-1}(\gs\gp_{2n})$ inside $\cS(2n)$ contains the Heisenberg field $$H = \sqrt{-1}\big(\sum_{i=1}^n :\beta^{1,i} \gamma^{1,i}: + :\beta^{2,i} \gamma^{2,i}:\big)$$ satisfying $H(z) H(w) \sim -2n(z-w)^{-2}$. The weight $\frac{1}{2}$ space of $\cS(2n)$ transforms as $\mathbb{C}^2 \otimes \mathbb{C}^{2n}$ under $\gs\go_2 \oplus \gs\gp_{2n}$. We have a diagonal embedding $V^{\ell}(\gs\gp_{2n}) \hookrightarrow V^{\ell+1}(\gs\gp_{2n}) \otimes \cS(2n)$, and we define
\begin{equation}\label{diagonalC}
	\cC^{\ell}(-2n) = \text{Com}(V^{\ell}(\fr{sp}_{2n}), V^{\ell+1}(\fr{sp}_{2n}) \otimes \cS(2n)).
\end{equation}
The central charge of the Heisenberg coset of $\cC^{\ell}(-2n)$ is 
\begin{equation} \label{eq:c-2n} c_{-2n} = -\frac{(1 + \ell) (1 + 2 n) (2 + \ell + 2 n)}{(1 + \ell + n) (2 + \ell + n)}.\end{equation}
The free field limit of $\cC^{\ell}(-2n)$ is the invariant algebra $\cS(2n)^{\text{Sp}_{2n}}$. By Weyl's first fundamental theorem for standard representation of $\text{Sp}_{2n}$ \cite{W}, this has a strong generating set consisting of
\begin{equation} 
\begin{split} & \sum_{j=1}^n \big(:\partial^p \beta^{1,j}  \partial^q \gamma^{1,j}: - :\partial^q\beta^{1,j} \partial^p \gamma^{1,j}: \big), \qquad p > q \geq 0,
\\ & \sum_{j=1}^n \big(:\partial^p \beta^{2,j}  \partial^q \gamma^{2,j}: - :\partial^q\beta^{2,j} \partial^p \gamma^{2,j}:\big), \qquad p > q \geq 0,
\\ & \sum_{j=1}^n \big(:\partial^p \beta^{1,j}  \partial^q \gamma^{2,j}: - :\partial^q\beta^{2,j} \partial^p \gamma^{1,j} :\big), \qquad p \geq q \geq 0.
\end{split}\end{equation}
Removing redundancy due to differential relations among the above generators, we see that $\cS(2n)^{\text{Sp}_{2n}}$ has a strong generating set of type $\cW(1,2^3,3,\dots)$. This holds generically for the coset \eqref{diagonalC}, so we obtain
\begin{lemma}\label{lemma:C} For $n \in \mathbb{Z}_{\geq 1}$, $\cC^{\ell}(-2n)$ is a simple, $1$-parameter vertex algebra of type $\cW(1,2^3,3,4^3,\dots)$ containing a standardly normalized Heisenberg field $H$ with level $-2n$.
\end{lemma}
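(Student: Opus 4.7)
The proof runs parallel to Lemma \ref{lemma:B and D}, with the orthogonal invariants $\cF(2n)^{\mathrm{O}_n}$ replaced by the symplectic invariants $\cS(2n)^{\mathrm{Sp}_{2n}}$; in fact, most of the work has been carried out in the paragraphs immediately preceding the statement. The plan is first to invoke \cite[Lemma 4.2]{CL3} (used in the same way in Lemma \ref{lemma:BD}) to identify the large level limit of $\cC^{\ell}(-2n)$ with the orbifold $\cS(2n)^{\mathrm{Sp}_{2n}}$: in the limit $\ell \to \infty$, the diagonal copy of $V^{\ell}(\fr{sp}_{2n})$ inside $V^{\ell+1}(\fr{sp}_{2n}) \otimes \cS(2n)$ decouples into $V^{\infty}(\fr{sp}_{2n})$ acting only on the first factor together with the $\mathrm{Sp}_{2n}$-action on $\cS(2n)$, and the coset passes to $\cS(2n)^{\mathrm{Sp}_{2n}}$.

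Next, I would apply Weyl's first fundamental theorem for the standard representation of $\mathrm{Sp}_{2n}$ to get the three families of generators displayed just above the lemma. The reduction to a minimal strong generating set of type $\cW(1,2^3,3,4^3,\dots)$ proceeds by removing differential redundancies exactly as in the $\mathrm{O}_n$ case: the third family at $(p,q) = (0,0)$ contributes the Heisenberg field $H$ in weight $1$; the third family modulo derivatives contributes three generators in each even weight $2,4,6,\dots$ (matching the $\fr{so}_2$-triplet with Heisenberg charges $0, \pm 2$ after a $\mathrm{GL}_2$-rotation of the two $\beta\gamma$-pairs); and the first two families modulo derivatives and skew-symmetry contribute one generator in each odd weight $3,5,\dots$. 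All higher-order generators can then be expressed via quasi-associativity and \eqref{conformal identity} in terms of normally ordered products and derivatives of these, producing the claimed strong generating type for the free field limit.

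Finally, the strong generating property transfers from the free field limit to generic $\ell$ by the standard deformation argument applied in the proofs of Lemmas \ref{lemma:BD}--\ref{lemma:CO}. Simplicity for generic $\ell$ follows from \cite[Theorem 3.6]{CL3}. The identification of $H$ with level $-2n$ is immediate from the OPE computation $H(z) H(w) \sim -2n(z-w)^{-2}$ given above, together with the observation that $H$ generates the $\mathfrak{u}(1)$-action that commutes with the diagonal $\mathrm{Sp}_{2n}$. The only nontrivial point is the combinatorial bookkeeping behind the minimal strong generating set, i.e., verifying that after removing all differential consequences one is left with exactly one generator per odd weight and three per even weight; this mirrors the analogous verification in the $\mathrm{O}_n$ case and is the main technical step.
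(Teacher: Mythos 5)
Your proposal follows the same route as the paper: the paper's ``proof'' of this lemma is exactly the material in the paragraphs preceding it (large level limit $\cS(2n)^{\text{Sp}_{2n}}$, Weyl's first fundamental theorem, removal of differential redundancies, transfer to generic $\ell$), and your outline, including the appeals to \cite[Lemma 4.2]{CL3} and \cite[Theorem 3.6]{CL3}, matches it. One detail in your bookkeeping is reversed, however. Writing the quadratic invariants as symplectic contractions $\omega(\partial^p v^{\epsilon}, \partial^q v^{\epsilon'})$ with $\epsilon,\epsilon'$ labelling the two copies of the standard module, the like-flavor contractions (your first two families, with $p>q$) contribute one new generator in each \emph{even} weight $2,4,6,\dots$ --- at weight $w$ there are $\lfloor w/2\rfloor$ such elements, so the count of new generators $\lfloor w/2\rfloor - \lfloor (w-1)/2\rfloor$ is $1$ for $w$ even and $0$ for $w$ odd --- while the mixed contractions, taken over all $p,q\geq 0$, contribute exactly one new generator in \emph{every} weight, namely $H$ in weight $1$ and the charge-zero fields thereafter. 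Equivalently, in the $\mathfrak{so}_2$-charge eigenbasis the charge-$\pm2$ generators come from contracting two fields of like charge, which vanish when $p=q$ by antisymmetry of $\omega$ and hence only occur in even weights, whereas the charge-$0$ tower is unconstrained. The totals still assemble to $\cW(1,2^3,3,4^3,\dots)$, so your conclusion is unaffected, but the attribution of the even-weight triplet to the mixed family and of the odd-weight singlets to the diagonal families is backwards.
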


\subsection{Case O}
Using the above notation, we have a homomorphism $V^{-1}(\go\gs\gp_{1|2n}) \rightarrow \cS(2n) \otimes \cF(2)$, with generators
\begin{equation} \begin{split} 
& :\beta^{1,i} \beta^{1,j}: + :\beta^{2,i} \beta^{2,j}:,\qquad  :\beta^{1,i} \gamma^{1,j}: + :\beta^{2,i} \gamma^{2,j}: ,\qquad :\gamma^{1,i} \gamma^{1,j}:  +: \gamma^{2,i} \gamma^{2,j}:,
\\ & :\phi^1 \phi^2:,\qquad :\beta^{1,j} \phi^1: + :\beta^{2,j} \phi^2:,\qquad  :\gamma^{1,j} \phi^1 : + :\gamma^{2,j} \phi^2:.\end{split} \end{equation}
The commutant of $V^{-1}(\go\gs\gp_{1|2n})$ inside $\cS(2n) \otimes \cF(2)$ contains the Heisenberg field $$H = \sqrt{-1} \big(\sum_{j=1}^n \big(: \beta^{1,j} \gamma^{2,j}: - \beta^{2,j} \gamma^{1,j} \big)- : \phi^1  \phi^2:\big)$$ satisfying $H(z) H(w) \sim (-2n+1)(z-w)^{-2}$. The weight $\frac{1}{2}$ space of $\cS(2n) \otimes \cF(2)$ transforms as $\mathbb{C}^2 \otimes \mathbb{C}^{2n|1}$ under $\gs\go_2 \oplus \go\gs\gp_{1|2n}$.
We have the diagonal embedding  $V^{\ell}(\go\gs\gp_{1|2n}) \hookrightarrow V^{\ell+1}(\go\gs\gp_{1|2n}) \otimes \cS(2n)\otimes \cF(2)$, and we define
\begin{equation}\label{diagonalO}
	\cC^{\ell}(-2n+1) = \text{Com}(V^{\ell}(\fr{osp}_{1|2n}), V^{\ell+1}(\go\gs\gp_{1|2n}) \otimes \cS(2n)\otimes \cF(2))^{\mathbb{Z}_2}.
\end{equation}
The central charge of the Heisenberg coset of $\cC^{\ell}(-2n+1)$ is
\begin{equation} \label{eq:c-2n+1}c_{-2n+1}= -\frac{8 (1 + \ell) n (1 + \ell + 2 n)}{(1 + 2 \ell + 2 n) (3 + 2 \ell + 2 n)}.\end{equation}
The free field limit of $\cC^{\ell}(-2n+1)$ is the invariant algebra $(\cS(2n) \otimes \cF(2))^{\text{Osp}_{1|2n}}$. By Sergeev's first fundamental theorem for the standard representation of $\text{Osp}_{1|2n}$ \cite{SI,SII}, this has a strong generating set consisting of
\begin{equation} 
\begin{split} & \sum_{j=1}^n \big(:\partial^p \beta^{1,j}  \partial^q \gamma^{1,j}: - :\partial^q\beta^{1,j} \partial^p \gamma^{1,j}: \big) - :\partial^p \phi^1 \partial^q \phi^1:, \qquad p > q \geq 0,
\\ & \sum_{j=1}^n \big(:\partial^p \beta^{2,j}  \partial^q \gamma^{2,j}: - :\partial^q\beta^{2,j} \partial^p \gamma^{2,j}: \big) - :\partial^p \phi^2 \partial^q \phi^2:, \qquad p > q \geq 0,
\\ & \sum_{j=1}^n \big(:\partial^p \beta^{1,j}  \partial^q \gamma^{2,j}: - :\partial^q\beta^{2,j} \partial^p \gamma^{1,j}: \big)- :\partial^p \phi^1 \partial^q \phi^2:, \qquad p \geq q \geq 0.
\end{split}\end{equation}
Removing redundancy due to differential relations among the above generators, we see that $(\cS(2n) \otimes \cF(2))^{\text{Osp}_{1|2n}}$ has a strong generating set of type $\cW(1,2^3,3,\dots)$. This holds generically for the coset \eqref{diagonalO}, so we obtain

\begin{lemma}\label{lemma:O}
For $n \in \mathbb{Z}_{\geq 1}$, $\cC^{\ell}(-2n+1)$ is a simple, $1$-parameter vertex algebra of type $\cW(1,2^3,3,4^3,\dots)$ containing a standardly normalized Heisenberg field $H$ with level $-2n+1$.
\end{lemma}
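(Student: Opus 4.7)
My plan is to mirror the strategy used for Lemma \ref{lemma:BD} (and the analogous preceding cases), with Sergeev's first fundamental theorem \cite{SI,SII} replacing Weyl's as the only new input. The first step is to pass to the large level limit $\ell \to \infty$ as in \cite[Lemma 4.2]{CL3}: the diagonal embedding $V^{\ell}(\go\gs\gp_{1|2n})\hookrightarrow V^{\ell+1}(\go\gs\gp_{1|2n})\otimes \cS(2n)\otimes \cF(2)$ degenerates to the straight inclusion into the first tensor factor, so in this limit the coset becomes exactly the orbifold $(\cS(2n)\otimes \cF(2))^{\text{Osp}_{1|2n}}$ already identified in the paragraph preceding the lemma.

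Next I would invoke Sergeev's theorem to conclude that the three families of bilinears displayed immediately before the lemma strongly generate this orbifold. The remaining work is then to extract a minimal strong generating set modulo derivative relations and the explicit $\mathbb{Z}_2$ identification. Each of the three families, of types $(1,1)$, $(2,2)$, and $(1,2)$, contributes one irreducible generator per integer conformal weight once total derivatives are removed, by the standard argument used in \cite{CL3,CL4,CKoL2}. The $\mathbb{Z}_2$ which exchanges the two copies identifies the $(1,1)$ and $(2,2)$ families and acts on the $(1,2)$ family in a way that retains only the symmetric combinations in odd weights, leaving one generator in each odd weight $\geq 1$ and three generators in each even weight $\geq 2$. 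This matches the stated type $\cW(1,2^3,3,4^3,\dots)$.

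The last step is to lift back to finite $\ell$. As in the earlier cases, the graded character of $\cC^{\ell}(-2n+1)$ is independent of $\ell$ and coincides with that of its free field limit, so by \cite[Corollary 3.4]{CL3} the minimal strong generating type established at $\ell=\infty$ persists for generic $\ell$ and indeed for the $1$-parameter vertex algebra. Simplicity for generic $\ell$ follows from \cite[Theorem 3.6]{CL3} applied to the generically simple ambient algebra $V^{\ell+1}(\go\gs\gp_{1|2n})\otimes \cS(2n)\otimes \cF(2)$; the explicit form and level of $H$ are already recorded above. I expect the main technical care to be required in the bookkeeping of derivative redundancies together with the $\mathbb{Z}_2$-eigenspace decomposition on the $(1,2)$ generators, since this is the only step where the odd direction contributed by $\cF(2)$ inside $\text{Osp}_{1|2n}$ interacts nontrivially with the orbifold projection.
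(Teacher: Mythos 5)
Your overall route is the same as the paper's: pass to the large level limit, identify it with $(\cS(2n)\otimes\cF(2))^{\text{Osp}_{1|2n}}$, apply Sergeev's first fundamental theorem to get the displayed bilinear generators, strip out derivative redundancies to land on type $\cW(1,2^3,3,4^3,\dots)$, and transfer back to generic $\ell$ (with simplicity from \cite[Theorem 3.6]{CL3}). However, the one substantive step -- the counting -- is carried out incorrectly, and as described it would not produce the stated type. First, the $\mathbb{Z}_2$ in \eqref{diagonalO} does not exchange the two copies $i=1,2$: it is the disconnected component of $\text{Osp}_{1|2n}$ (fermion parity), acting by $-1$ on $\phi^1,\phi^2$ and trivially on the $\beta$'s and $\gamma$'s. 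It is already absorbed into taking invariants under the full supergroup $\text{Osp}_{1|2n}$ rather than its connected part, and it acts trivially on every one of the listed bilinears (each is quadratic in the $\phi$'s). So there is no identification of the $(1,1)$ and $(2,2)$ families and no eigenspace decomposition of the cross family to perform; if there were, your arithmetic could not recover three generators in each even weight.

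Second, the claim that each of the three families contributes one generator per integer weight is false for the diagonal families. The $(1,1)$ and $(2,2)$ generators are antisymmetric in the derivative indices $(p,q)$ (both the $\beta\gamma$ and the $\phi\phi$ parts are, up to total derivatives), so in weight $w=p+q+1$ there are $\lfloor w/2\rfloor$ of them, of which $\lfloor (w-1)/2\rfloor$ are accounted for by derivatives of lower-weight ones; each diagonal family therefore contributes one new generator in each \emph{even} weight only. The cross family carries no symmetry in $(p,q)$ -- the independent generators are $\sum_j(:\!\partial^p\beta^{1,j}\partial^q\gamma^{2,j}\!:-:\!\partial^q\beta^{2,j}\partial^p\gamma^{1,j}\!:)-:\!\partial^p\phi^1\partial^q\phi^2\!:$ for \emph{all} $p,q\geq 0$, since supersymmetry of the pairing only relates the $(1,2)$ and $(2,1)$ contractions -- so it contributes $w-(w-1)=1$ new generator in every weight $w\geq 1$. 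The correct bookkeeping is thus $0+0+1$ in odd weights and $1+1+1$ in even weights, which is where $\cW(1,2^3,3,4^3,\dots)$ comes from. Your mechanism asserts the right answer but, followed literally, gives a different one, so this step needs to be redone along the lines above (exactly as in the B/D and C cases, Lemmas \ref{lemma:B and D} and \ref{lemma:C}, where the same three-family pattern occurs).
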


	\section{Universal $2$-parameter vertex algebra $\Wso$} \label{sect:main}

	In this section we will construct the universal $2$-parameter vertex algebra $\Wso$ of type 
	\begin{equation} \label{so2starting} 
		\cW(1, 2^3, 3, 4^3, 5, 6^3,\dots).
	\end{equation}
	The weight one field $H$ generates a copy of the Heisenberg algebra $V^k(\fr{so}_2)$, 
    \begin{equation}\label{eq:heis}
        H(z)H(w) \sim k(z-w)^{-2},
    \end{equation}
	and the weight two field $L$ generates the universal Virasoro algebra of central charge $c+1$,
	\begin{equation}\label{eq:vir}
		\begin{split}
			L(z) L(w) \sim& \frac{c+1}{2}(z-w)^{-4}+2  L(w)(z-w)^{-2}(w)+\partial  L(w)(z-w)^{-1}.
		\end{split}
	\end{equation}
	The remaining strong generators are denoted by $W^{n,\mu}$, with $n$ denoting the conformal weight and $\mu$ the Heisenberg charge.
    These are assumed to be primary for Heisenberg subVOA, transforming as the trivial and standard $\fr{so}_2$-modules,
	\begin{equation}\label{eq:affine primary}
	H(z) W^{n,\mu}(w) \sim \mu W^{n,\mu} (w)(z-w)^{-1},
	\end{equation}
	with $W^{3,0}$, $W^{4,0},W^{4,\pm 2}$ being also primary for Virasoro subVOA
	\begin{equation}\label{eq:lowprimary}
		\begin{split}
			L(z)W^{n,\mu}(w) \sim & n W^{n,\mu}(w)(z-w)^{-2}(w)+\partial  W^{n,\mu}(w)(z-w)^{-1}, \quad (n,\mu)=(3,0),(4,0),(4,\pm 2).
		\end{split}
	\end{equation}
	Note that in the above notation, generator $W^{2,0}$ is the Heisenberg-coset Virasoro field $L-\frac{2}{k}:\!HH\!:$, which has central charge $c$.
	Instead of assuming that all fields $W^{n,\mu}$ are Virasoro primary, we postulate that the algebra $\Wso$ is weakly generated by $V^k(\fr{so}_2)$, $\vir$, and $W^{4,0}$; specifically, $W^4$ satisfies the following weak generation property
	\begin{equation}\label{eq:raise}
		\begin{split}
			W^4_{(1)}W^{n,\mu}=W^{n+2,\mu},\quad (n,\mu)=(2,\pm 2),\T{ and } n\geq 3.
		\end{split}
	\end{equation}
	The remaining OPEs assume the most general form, compatible with conformal and Heisenberg gradations; that is, we make the following ansatz,
	\begin{equation}\label{eq:OPEgeneralG}
	W^{n,\mu}(z)W^{m,\nu}(w)\sim \sum_{r=0}^{n+m-1}\sum_{\Omega\in \text{PBW}_{r}} w^{W^{n,\mu},W^{m,\nu}}_{\Omega}\Omega(w) (z-w)^{-n-m+r},
	\end{equation}
	where $\Omega$ is a PBW normally ordered monomial in the generators and their derivatives, with conformal weight $n+m-r-1$ and Heisenberg charge $\mu+\nu$.

	Existence of $\vir\otimes \aff$ has useful consequences.
	Recall that $V^k(\fr{so}_2)$ has an automorphism $\sigma$ which maps $H\mapsto -H$; by postulating that $W^{3,0}\mapsto -W^{3,0}$ and $W^{2,\pm2}\mapsto W^{2,\mp2}$, thanks to the weak generation hypothesis (\ref{eq:raise}) it extends to $\Wso$ as
	\begin{equation}\label{eq:auto}
		\sigma:W^{n,\mu}\mapsto (-1)^nW^{n,-\mu}.
	\end{equation}
	In particular, the structure constants appearing in (\ref{eq:OPEgeneralG}) satisfy
	\begin{equation}\label{eq:autoSC}
		w^{W^{n,\mu},W^{m,\nu}}_{\Omega}=(-1)^{n+m}w^{W^{n,-\mu},W^{m,-\nu}}_{\sigma(\Omega)}.
	\end{equation}
	Further, we may without loss of generality refine the ansatz (\ref{eq:OPEgeneralG}).
	Let us denote by the round braces $W^{(\Gamma,\gamma)}$ a span of $V^k(\fr{so}_2) \otimes \vir$-primary fields $W^{(\Gamma,\gamma)}$ of conformal weight $\Gamma$ and Heisenberg charge $\gamma$, i.e. certain linear combinations of PBW monomials satisfying OPE relations
	\[L(z)W^{(\Gamma,\gamma)} \sim \Gamma W^{(\Gamma,\gamma)}(w)(z-w)^{-2}+W^{(\Gamma,\gamma)}(w)(z-w)^{-1}, \quad H(z)W^{(\Gamma,\gamma)}(w)\sim \gamma W^{(\Gamma,\gamma)}(w)(z-w)^{-1}.\]
	So we have a decomposition of $\Wso$ as a module
	\begin{equation*}
	\Wso \cong \bigoplus_{\Gamma,\gamma}W^{(\Gamma,\gamma)}= \B{1}\oplus W^{(2,\pm 2)}\oplus W^{(3,0)}\oplus 2W^{(4,0)}\oplus W^{(4,\pm 2)}\oplus W^{(4,\pm4)}\oplus 2W^{(5,0)}\oplus W^{(5,2)}\oplus\dotsb.
	\end{equation*}
	Rather heuristically, we will use round braces around a PBW monomial to denote symbol for a primary vector, which is a correction to it, for example we write
	\[(W^{4,0}) =W^{4,0}, \quad (:\!W^{2,2}W^{2,-2}\!:)=:\!W^{2,2}W^{2,-2}\!:+\dotsb,\]
	where the omitted terms are some PBW monomials involving $H,L,W^{2,\pm2}$, and $W^{3,0}$.
	Heisenberg and conformal symmetry constraints on the OPEs $W^{n,\mu}(z)W^{m,\nu}(w)$, i.e. the imposition of Jacobi identities $J(H, W^{n,\mu}, W^{m,\nu})$ and $J(L,W^{n,\mu},W^{m,\nu})$, give rise to linear constraints among the yet undetermined structure constants in (\ref{eq:OPEgeneralG}), over the field of rational functions $\mathbb{C}(c,k)$
\begin{equation}\label{conformal ansatz}
	\begin{split}
w^{W^{n,\mu},W^{m,\nu}}_{\Lambda(W^{\Gamma,\gamma})}=\beta_{\Gamma,\gamma}^{n,{\mu};m,{\nu}}(\Lambda)w^{W^{n,\mu},W^{m,\nu}}_{W^{\Gamma,\gamma}}.
	\end{split}
\end{equation}
Here, the fields $\Lambda(W^{(\Gamma,\gamma)})$ are $V^k(\fr{so}_2) \otimes \vir$ descendants of the primary field $W^{(\Gamma,\gamma)}$, and  $\beta_{\Gamma,\gamma}^{n,{\mu};m,{\nu}}(\Lambda)$ are rational functions of $c$, $k$, and polynomial in $w^{a,{\alpha};b,{\beta}}_{X,\xi}$ for $a+b \leq n+m-1$, and $X$ a $V^k(\fr{so}_2) \otimes \vir$-primary field; by definition, we have $\beta_{\Gamma,\gamma}^{n,{\mu};m,{\nu}}(\B{1})=1$.

Importantly for us, if any additional parameters in the OPE algebra of $\Wso$ exist, they must arise as structure constants $w^{n,{\mu};m,{\nu}}_{\Gamma,\gamma}$.
Therefore, we extract the coefficients of primary fields arising in OPEs, i.e. terms in equation (\ref{conformal ansatz}) with $\Lambda=\B{1}$, and use the shorthand
\begin{equation}\label{OPEs refined}
	W^{n,\mu} \times  W^{m,\nu} = \sum_{\Gamma,\gamma}w^{W^{n,\mu},W^{m,\nu}}_{W^{\Gamma,\gamma}}W^{(\Gamma,\gamma)}.
\end{equation}

Lastly, we organize the OPEs $W^{n,\mu}(z)W^{m,\nu}(w)$ into three types of interactions.
	\begin{enumerate}
		\item Trivial and trivial. 
		We denote by $D^{n}_{r}(1,1)$ the following set of products
		\begin{equation*}
			\begin{split}
				D^{n}_{r}(1,1) = &\{H_{(r)}W^{n-1,0},W^{2,0}_{(r)}W^{n-2,0},W^{3,0}_{(r)}W^{n-3,0},\dotsb\}.
			\end{split}
		\end{equation*}
		\item Trivial and standard.
		We denote by $D^{n}_{r}(1,S)$ the following set of products
		\begin{equation*}
			\begin{split}
				D^{n}_{r}(1,S) = &\{W^{2,\pm2}_{(r)}W^{n-2,0},W^{4,\pm2}_{(r)}W^{n-4,0},W^{6,\pm2}_{(r)}W^{n-6,0},\dotsb\}.
			\end{split}
		\end{equation*}
		\item Standard and standard. 
		We denote by $D^{2n+2}_{r}(S,S)$ the following set of products
		\[D^{2n}_{r}(S,S) =\{W^{2,\pm2}_{(r)}W^{2n-2,\pm2}, W^{2,\pm2}_{(r)}W^{2n-2,\mp2}, W^{4,\pm2}_{(r)}W^{2n-2,\pm2},W^{4,\pm2}_{(r)}W^{2n-2,\mp2},\dotsb\} \]
	\end{enumerate}
	Let $D^n_r$ denote the set of all $r^{ \T{th} }$ products among generators of total weight $n$; specifically
    \begin{equation}\label{eq:products}
    \begin{split}
        D^{2m}_r = D^{2m}_r(1,1)\cup D^{2m}_r(1,S)\cup D^{2m}_r(S,S),\quad D^{2m+1}_r =D^{2m+1}_r(1,S)\cup D^{2m+1}_r(1,1).
    \end{split}
    \end{equation}
Write $D^n = \bigcup_{r\leq n} D^{n}_r$ and $D_n = \bigcup_{m\leq n}^n D^m$ for the OPE data among fields of total weight $n$ and not exceeding $n$, respectively. Lastly, let $J^m$ denote the set of all Jacobi identities $J_{r,s}(a,b,c)$ among generating fields $a,b,c$ of total weight exactly $m$, and $J_n = \bigcup_{m\leq n}^n J^m$.

	\subsection{Step 1: base computation}
The first OPEs that are not a direct consequence of our assumptions are 
$W^{2,2}(z)W^{2,-2}(w)$, $W^{2,2}(z)W^{3,0}(w)$, and $W^{2,2}(z)W^{4,2}(w)$, with $W^{2,-2}(z)W^{3,0}(w)$ and $W^{2,-2}(z)W^{4,-2}(w)$ determined by automorphism (\ref{eq:auto}).
The Jacobi identities $J(W^{2,2}, W^{2,2},W^{2,-2})$, $J(W^{2,2}, W^{2,2},W^{3,0})$ and $J(W^{2,2}, W^{2,-2},W^{3,0})$ express all the structure constants arising in these OPEs as rational functions in the central charge $c$ and $\fr{so}_2$-level $k$
\begin{equation*}
\begin{split}
    W^{2,2} \times W^{2,-2} =& \omega^{W^{2,2}, W^{2,-2}}_{\B{1}}\B{1}+\omega^{W^{2,2}, W^{2,-2}}_{W^{3,0}}W^{3,0}, \quad  \omega^{W^{2,2}, W^{2,-2}}_{\B{1}}=\frac{3 c k^3 \omega _2 \omega _3}{8 (k-1) \left(c k+4 c+3 k^2-15 k+12\right)},\\
    W^{2,2} \times W^{3,0} =&\omega^{W^{2,2},W^{3,0}}_{W^{2,2}} W^{2,2} +  \omega^{W^{2,2},W^{3,0}}_{W^{4,2}} W^{4,2},\\
W^{2,2}\times W^{4,2} =& \omega^{W^{2,2}, W^{4,2}}_{:\!W^{2,2}W^{2,2}\!:} :\!W^{2,2}W^{2,2}\!:,\quad \omega^{W^{2,2}, W^{4,2}}_{:\!W^{2,2}W^{2,2}\!:}=-\frac{6 k  \left(c k+4 c+8 k^2-12 k+4\right)\omega _2}{(k-1)\left(5 c k^2-4c k+44 k^2-108 k+64\right) \omega _4 },
\end{split}
\end{equation*}
where we chose $\omega^{W^{2,2},W^{3,0}}_{W^{2,2}}=\omega_2,$ $\omega^{W^{2,2}, W^{2,-2}}_{W^{3,0}}=\omega_3$, $\omega^{W^{2,2},W^{3,0}}_{W^{4,2}} =\omega_4$ to be scaling parameters, since conditions (\ref{eq:affine primary}) and (\ref{eq:lowprimary}) leave them undetermined.
	
Next, imposing Jacobi identities $J(W^{2,2},W^{2,2},W^{4,0})$, $J(W^{2,0},W^{2,-2},W^{4,0})$, $J(W^{2,2},W^{2,-2},W^{4,-2})$, and $J(W^{2,2},W^{2,2},W^{4,-2})$ allows to us determine OPEs $W^{2,2}(z)W^{4,0}(w)$, $W^{3,0}(z)W^{3,0}(w)$, $W^{4,0}(z)W^{3,0}(w)$, and $W^{2,2}(z)W^{4,-2}(w)$  uniquely in terms the scaling parameters $\omega_2,\omega_3,\omega_4$ and $c,k$. 
Specifically, we find
\begin{equation*}\label{data}
\begin{split}
     W^{3,0} \times  W^{3,0}  =&  w^{W^{3,0},W^{3,0}}_{\B{1}} \B{1} + w^{W^{3,0},W^{3,0}}_{W^{4,0}} W^{4,0}+w^{W^{3,0},W^{3,0}}_{(:\!W^{2,2}W^{2,-2}\!:)} (:\!W^{2,2}W^{2,-2}\!:),\\
    W^{4,0} \times  W^{3,0}  =&   w^{W^{4,0},W^{3,0}}_{W^{3,0}}  W^{3,0} +W^{5,0}+w^{W^{4,0},W^{3,0}}_{(:\!W^{2,2}W^{4,-2}\!:)} (:\!W^{2,2}W^{4,-2}\!:) ,\\
    W^{2,0} \times  W^{5,0}  =&   w^{W^{2,0} ,W^{5,0}}_{ W^{3,0} }  W^{3,0} +5 (W^{5,0}),\\
    W^{4,0}\times W^{2,2}   =&  W^{4,2}+ w^{W^{2,2} ,W^{4,0}}_{ :\!W^{2,2}W^{3,0}\!:} (:\!W^{2,2}W^{3,0}\!:),\\
     W^{2,2} \times  W^{4,-2}  =&  w^{W^{2,2},W^{4,-2}}_{W^{3,0}} W^{3,0} + w^{W^{2,2},W^{4,-2}}_{:\!W^{2,2}W^{2,-2}\!:}(:\!W^{2,2}W^{2,-2}\!:)+w^{W^{2,2},W^{4,-2}}_{W^{4,0}}  W^{4,0}\\
     &+w^{W^{2,2},W^{4,-2}}_{(W^{5,0})}  (W^{5,0})+w^{W^{2,2},W^{4,-2}}_{(:\!\partial W^{2,2}W^{2,-2}\!:)}  (:\!\partial W^{2,2}W^{2,-2}\!:).
\end{split}
\end{equation*}
For the primary fields
\begin{equation*}
	\begin{split}
		(W^{5,0})=&W^{5,0}+\dotsb,\quad (:\!W^{2,0}W^{3,0}\!:)=\!W^{2,2}W^{3,0}\!:+\dotsb,\\(:\!W^{2,2}W^{4,-2}\!:)=&:\!W^{2,2}W^{4,-2}\!:-:\!W^{2,-2}W^{4,2}\!:+\dotsb,\\
		(:\!\partial W^{2,2}W^{2,-2}\!:)=&:\!\partial W^{2,2}W^{2,-2}\!:-:\! W^{2,2}\partial W^{2,-2}\!:+\dotsb,\\
	\end{split}
\end{equation*}
we find the following structure constants,
\begin{equation*}
	\begin{split}
w^{W^{3,0},W^{3,0}}_{\B{1}}=&\frac{3 c k^3 \omega _2^2}{8 (k-1) \left(c k+4 c+3 k^2-15 k+12\right)},\\
w^{W^{3,0},W^{3,0}}_{W^{4,0}}=&\frac{12 (k-3) k (k+2)(c k+c-2 k+1) \left(5 c k^2-4 c k+44 k^2-108k+64\right) \omega _2 \omega _4 }{(k-2) \left(c k+4 c+3 k^2-15 k+12\right) D(c,k),}\\
w^{W^{3,0},W^{3,0}}_{:\!W^{2,2}W^{2,-2}\!:}=&\frac{6 k^2 \left(5 c^2 k+20 c^2+16 c k^2-54 c k+148 c-200 k+200\right) \omega _2}{(k-1)D(c,k)\omega _3 },\\
w^{W^{4,0},W^{3,0}}_{W^{3,0}}=&\frac{9 k^2(c k-4 c+4 k-4) \left(c k+4 c+8 k^2-12 k+4\right) \omega _2 }{2 (k-1)  \left(c k+4 c+3 k^2-15 k+12\right) \left(5 c k^2-4 c k+44 k^2-108 k+64\right)\omega_4},\\
w^{W^{4,0},W^{3,0}}_{:\!W^{2,2}W^{4,-2}\!:}=&\frac{2 k \left(9 c^2 k^2+14 c^2 k-40 c^2+12 c k^3-65 c k^2+214 c k-240 c-172 k^2+468k-296\right)}{(k+2)(c k+c-2 k+1) \left(5 c k^2-4 c k+44 k^2-108k+64\right) \omega _3 },\\
w^{W^{2,0},W^{5,0}}_{W^{3,0}}=&\frac{36 k^2(c k-4 c+4 k-4) \left(c k+4 c+8 k^2-12 k+4\right) \omega _2 }{(k-1)\left(c k+4 c+3 k^2-15 k+12\right) \left(5 c k^2-4 c k+44 k^2-108 k+64\right) \omega _4},\\
w^{W^{2,2},W^{4,-2}}_{W^{3,0}}=&-\frac{9 k^2(c k-4 c+4 k-4) \left(c k+4 c+8 k^2-12 k+4\right) \omega _2 \omega _3 }{2(k-1) \left(c k+4 c+3 k^2-15 k+12\right) \left(5 c k^2-4 c k+44 k^2-108
	k+64\right) \omega _4},\\
w^{W^{2,2},W^{4,-2}}_{W^{4,0}}=&\frac{12 (k-3) k (k+2) (c k+c-2 k+1) \left(5 c k^2-4 c k+44 k^2-108k+64\right) \omega _2 \omega _3}{(k-2) \left(c k+4 c+3 k^2-15 k+12\right) D(c,k)},\\
w^{W^{2,2},W^{4,-2}}_{:\!W^{2,2}W^{2,-2}\!:}=&\frac{6 (5 c+22) (k-8) k^2(c k-4 c+4 k-4) \left(c k+4 c+8 k^2-12k+4\right) \omega _2 }{(k-1)\left(5 c k^2-4 c k+44 k^2-108 k+64\right) D(c,k)\omega _4 },\\
w^{W^{2,2},W^{4,-2}}_{W^{5,0}}=&-\frac{4 (k-3) (k+2)(c k+c-2 k+1) \left(5 c k^2-4 c k+44 k^2-108k+64\right) \omega _3 }{k Q(c,k)},\\
w^{W^{2,2},W^{4,-2}}_{:\!\partial W^{2,2}W^{2,-2}\!:}=&\frac{6 (k-8) k^2 (c k-4 c+4 k-4) \left(c k+4 c+8 k^2-12 k+4\right) E(c,k)\omega _2 }{(k-1)  \left(5 c k^2-4 c k+44 k^2-108k+64\right)D(c,k)Q(c,k)\omega _4},\\
w^{W^{2,2} ,W^{4,0}}_{ :\!W^{2,2}W^{3,0}\!:}&=-\frac{(k-8) \left(c k+4 c+8 k^2-12 k+4\right) \left(3 c k^2-2 c k-10 k+12\right)}{(k-3)(k+2) (c k+c-2 k+1) \left(5 c k^2-4 c k+44 k^2-108 k+64\right) \omega _4},
	\end{split}
\end{equation*}
and $D(c,k),Q(c,k)$, $E(c,k)$ are the polynomials
\begin{equation}
	\begin{split}\label{eq:loc}
D(c,k)=&5 c^2 k^3+30 c^2 k^2-160
	c^2+46 c k^3-52 c k^2+480 c k-1024 c-88 k^3-264 k^2+1600 k-1248,\\
	Q(c,k)=&25 c^2 k^4+28 c^2 k^3-196 c^2 k^2-144 c^2 k+640 c^2+134 c k^4+88
	c k^3-352 c k^2-2864 c k+4480 c\\
	&-440 k^4+936 k^3+2768 k^2-8640 k+5376,\\
E(c,k)=&25c^3 k^4+60 c^3 k^3+130 c^3 k^2+20 c^3 k-1200 c^3+110 c^2 k^4+454 c^2 k^3-518 c^2k^2+7308 c^2 k\\
&-15680 c^2-968 c k^4+5764 c k^3-20876 c k^2+59168 c k-61040 c+5808k^3-48576 k^2\\
&+106800 k-64032.
	\end{split}
\end{equation}
% where the primary vectors and structure constants are given in (\ref{app:primaries})

Next, we proceed to analyze OPEs among higher weight fields.
Unfortunately, we cannot display the structure constants here, since they are too complicated. 
One can proceed similarly, and we obtain the following proposition.
\begin{proposition}\label{prop:base case}
All OPEs in $D_9$ are expressed in terms of the central charge $c$ and $\fr{so}_2$-level $k$, and the scaling parameters of weights 2,3 and 4 fields $\omega_{2},\omega_3,\omega_4$.
Furthermore, all denominators $D$ in this algebra are powers of linear factors $(k-3),(k-2),(k-1),(k),(k+2)$, together with the set of curves $$\{(c k+c-2 k+1),(c k+4 c+3 k^2-15 k+12),(5 c k^2-4 c k+44 k^2-108 k+64),D(c,k),Q(c,k),P(c,k),R(c,k)\},$$
where in addition to the polynomials (\ref{eq:loc}) we have
\begin{align*}
    P(c,k)=&35 c^2 k^4+68 c^2 k^3-316 c^2 k^2-464 c^2 k+1280 c^2+226 c k^4-200 c k^3\\
    &+816 c k^2-6832 c
k+8576 c-616 k^4+760 k^3+7024 k^2-17536 k+10368,\\
R(c,k)=&25 c^2 k^4+28 c^2 k^3-196 c^2 k^2-144 c^2 k+640 c^2+134 c k^4+88 c k^3-352 c k^2\\
&-2864 c
k+4480 c-440 k^4+936 k^3+2768 k^2-8640 k+5376.
\end{align*}
\end{proposition}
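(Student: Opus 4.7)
My proof proposal is to extend the inductive Jacobi-identity analysis already performed for $D_6$ up through $D_9$, and track the denominators that are forced to be inverted along the way.

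\medskip

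The plan is to proceed by induction on the total conformal weight $n$. The base of the induction is the explicit computation carried out in the paragraphs immediately preceding the proposition: everything in $D^n$ for $n\leq 6$ is expressed in terms of $c, k, \omega_2, \omega_3, \omega_4$, with the denominators listed in \eqref{eq:loc}. For the inductive step, I would process weights $n = 7, 8, 9$ in order. At each weight $n$, the new primary generators $W^{n,0}$ (and $W^{n,\pm 2}$ if $n$ is even) are \emph{defined} via the weak generation hypothesis \eqref{eq:raise}, namely $W^{n,\mu} := W^4_{(1)} W^{n-2,\mu}$, whose OPEs with the weight $\leq 4$ generators are then completely determined by the lower-weight data together with the Jacobi identities $J(W^{4,0}, W^{n-2,\mu}, -)$. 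Symmetry under the automorphism $\sigma$ of \eqref{eq:auto} halves the work: the OPE $W^{n,-\mu}(z) W^{m,-\nu}(w)$ is obtained from $W^{n,\mu}(z) W^{m,\nu}(w)$ via \eqref{eq:autoSC}.

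\medskip

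For each OPE $W^{n,\mu}(z) W^{m,\nu}(w)$ with $n+m\leq 9$ that remains to be determined, I would write the most general ansatz \eqref{eq:OPEgeneralG}, organized according to the primary decomposition \eqref{OPEs refined}. The Heisenberg Jacobi identities $J(H,W^{n,\mu},W^{m,\nu})$ and Virasoro identities $J(L, W^{n,\mu}, W^{m,\nu})$ reduce every descendant coefficient to a primary coefficient via \eqref{conformal ansatz}; this is automatic and needs no new input. The genuine constraints come from the Jacobi identities $J(W^{a,\alpha}, W^{n,\mu}, W^{m,\nu})$ for $a\in \{2,3,4\}$, which produce a linear system in the undetermined primary coefficients over $\mathbb{C}(c,k)$ (quasi-linear in general, because products of lower-weight constants appear). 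Solving this system for each OPE in turn determines all coefficients as rational functions of $c, k, \omega_2, \omega_3, \omega_4$. Freeness of $\omega_2,\omega_3,\omega_4$ (three undetermined scaling parameters) persists because no choice of higher-weight generator can rescale the weight $2,3,4$ primaries.

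\medskip

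The denominators that arise are the pivots of the linear systems above. Each pivot is a polynomial in $c,k$ coming either from the normalization constants of the lower-weight fields (hence a product of the factors in \eqref{eq:loc} with the linear factors $(k-3),(k-2),(k-1),k,(k+2)$) or from new obstructions appearing at weight $7,8,9$. By direct inspection of the systems, the only new irreducible polynomial factors that appear in denominators for $n\leq 9$ are $P(c,k)$ and $R(c,k)$; to verify this I would simply compute the Smith/echelon form of each linear system, clear denominators, and factor the resulting pivots over $\mathbb{Z}[c,k]$. The main obstacle is computational rather than conceptual: at weights $8$ and $9$ the PBW spaces of fixed conformal weight and Heisenberg charge become large, the Jacobi systems couple many unknowns at once, and the intermediate expressions for the structure constants are too large to write down by hand. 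I would therefore carry out the computation in a computer algebra system (e.g. \textsc{Mathematica} with the OPE package used in \cite{CL3,CL4,CKoL2}), outputting the final denominators to confirm the list claimed in the proposition; consistency of the resulting solution with every remaining Jacobi identity provides the verification that no new obstruction has been overlooked.
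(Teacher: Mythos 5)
Your proposal matches the paper's approach: the paper's justification for this proposition is precisely the explicit, computer-assisted continuation of the weight-$\leq 7$ Jacobi-identity computation displayed just before it (the structure constants being "too complicated" to print), organized by the same primary decomposition, $\sigma$-symmetry, and raising property you describe. The denominator list is likewise verified by factoring the pivots of the resulting linear systems, so your plan is essentially the paper's proof.
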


	\subsection{Step 2: induction}	
	We begin by introducing modified strong generators $\tilde W^{n,\mu}$, that are adapted for the purposes of induction. Whenever possible, we denote them by
    \begin{equation}\label{eq:notation}
        X^{2n}=\tilde W^{2n,-2},\quad Y^{2n}=\tilde W^{2n,2},\quad H^{2n}=\tilde W^{2n,0},\quad W^{2n+1}=\tilde W^{2n+1,0},
    \end{equation}
        to ease the syntactical burden.
		The Heisenberg field $H$, Virasoro fields $L$, fields $W^{2,\pm2}$, and $W^{3,0}$ remain fixed, i.e. we have        
        \[W^{2,2}=X^2,\quad W^{2,-2}=Y^2, \quad W^{3,0}=W^3,\]
        while field $W^{4,0}$ is modified
			\begin{equation}\label{W4deformed}
			\begin{split}
				{W}^{4} = W^{4,0} +\frac{1}{k}:\!HW^{3}\!:-\frac{1-k\omega_{1,3}}{\omega_3}(:\!X^2Y^2\!:).
			\end{split}
		\end{equation}
		Up to rescaling, $W^4$ is unique correction to $W^{4,0}$, that it has Heisenberg charge $0$, is primary for $L$, and satisfies $W^{4}_{(1)} H = W^{3}$.
		Further, we set the scaling constants as follows.
		\begin{equation}\label{eq:scale1}
			\begin{split}
				\omega_2=&-\frac{c k+c+12 k^2-15 k+3}{36 k^2},\quad \omega_3=-3,\quad 				\omega_{1,3}=\frac{A(c,k)}{3 k (2 k-1)B(c,k)},\\
				\omega_4=&-\frac{6 k (2 k-1)B(c,k)}{(2 k+1) (4
					k-3) (4 c k+c-8 k+1) \left(5 c k^2-c k+44 k^2-27 k+4\right)},\\
				A(c,k)=&60 c^2 k^4-12 c^2 k^3-19 c^2 k^2+11 c^2 k+168 c k^4+376 c k^3-380 c k^2+69 ck+6 c\\
				&-1056 k^4+1112 k^3-372 k^2+16 k+6,\\
				B(c,k)=&10 c^2 k^3+15 c^2
				k^2-5 c^2+92 c k^3-26 c k^2+60 c k-32 c-176 k^3-132 k^2+200 k-39.
			\end{split}
		\end{equation}
        \begin{remark}
            Scaling conditions (\ref{eq:scale1}) for $\omega_2$ and $\omega_{3}$ are somewhat arbitrary, while those for $\omega_4$ and $\omega_{1,3}$ are necessary. 
            These are chosen so that certain structure constants we define soon, and that appear in Proposition (\ref{prop:structure constants}) are either nonzero rational constants, or zero.
        \end{remark}
    Lastly, we postulate that the raising property is respected by fields of higher conformal weights
		\begin{equation}\label{raise}
		{W}^{4}_{(1)}\tilde{W}^{n,\mu}= \tilde{W}^{n+2,\mu}, \quad n\geq 1.
		\end{equation}
		We continue using notation $D^N_r(1,1),D^N_r(1,S),D^N_r(S,S),D^N_r,D_N$ as defined in (\ref{eq:products}), since both sets contain the same data, via change of strong generators.
Now, we name the relevant structure constants for our induction procedure in the following general ansatz
	\begin{equation}\label{eq:gen anchors}
		\begin{split}
			\tilde W^{n,\mu}_{(0)}\tilde W^{m,\nu}=&w^{\tilde W^{n,\mu},\tilde W^{m,\nu}}_{ \tilde W^{n+m-1,\mu+\nu}}\tilde W^{n+m-1,\mu+\nu}+w^{\tilde W^{n,\mu},\tilde W^{m,\nu}}_{ \partial \tilde W^{n+m-2,\mu+\nu}}\partial \tilde W^{n+m-2,\mu+\nu}\\
			+&\sum w^{\tilde W^{n,\mu},\tilde W^{m,\nu}}_{:\!\tilde W^{i,\alpha}\tilde W^{j,\beta}\!:}:\!\tilde W^{i,\alpha}\tilde W^{j,\beta}\!:+W^{n,\mu;n,\nu}_0,\\
	\tilde W^{n,\mu}_{(1)}\tilde W^{m,\nu}=&w^{\tilde W^{n,\mu},\tilde W^{m,\nu}}_{ \tilde W^{n+m-2,\mu+\nu}}\tilde W^{n+m-2,\mu+\nu}+W^{n,\mu;n,\nu}_1.
		\end{split}
	\end{equation}
	We proceed to perform a combinatorial analysis on each of these OPEs. 
	\begin{itemize}
		\item Trivial with trivial. The first order pole of $W^{i}$ and $W^{j}$ transforms as the trivial $\fr{so}_2$-module.
            When parity of conformal weights are equal we have
			\begin{equation}\label{ansatzWW}
			\begin{split}
				W^{i}_{(0)}W^{j}=&w_{\partial W^{i+j-2}}^{W^i,W^j}\partial W^{i+j-2}+W_{0}^{i,j},\\
				W^{i}_{(1)}W^{j}=&w_{W^{i+j-2}}^{W^i,W^j}W^{i+j-2}+W_{1}^{i,j}.
			\end{split}
		\end{equation}
		When the parities of conformal weights are distinct we have
				\begin{equation}\label{ansatzWWeo}
		\begin{split}
			W^{2i}_{(0)}W^{2j-1}=&w_{\partial W^{2i+2j-3}}^{W^{2i},W^{2j-1}}\partial W^{2i+2j-3}+w_{:\!X^2Y^{2i+2j-4}\!:}^{W^{2i},W^{2j-1}}:\!X^2Y^{2i+2j-4}\!:+\dotsb\\
			&+w_{:\!Y^2X^{2i+2j-4}\!:}^{W^{2i},W^{2j-1}}:\!Y^2X^{2i+2j-4}\!:+W_{0}^{2i,2j-1},\\
			W^{2i}_{(1)}W^{2j-1}=&w_{ W^{2i+2j-1}}^{W^{2i},W^{2j-1}}W^{2i+2j-3}+W_{1}^{2i,2j-1}.
		\end{split}
	\end{equation}
		Here, in both of the above cases (\ref{ansatzWW}) and (\ref{ansatzWWeo}), expressions $W_{r}^{i,j}$ are some normally ordered polynomials in the generators $\tilde W^{n,\mu}$ for $n\leq i+j-3$, and their derivatives. They are $\fr{so}_2$-invariant of conformal weight $i+j-r-1$, and have the following dependencies.
		\begin{equation}
		\begin{split}
			W^{2i,2j}_{-} =& W^{2i,2j}_{-} (H,L,X^2,Y^2,\dots,W^{2i+2j-4},X^{2i+2j-4},Y^{2i+2j-4},W^{2i+2j-3}),\\
			W^{2i,2j-1}_{-} =& W^{2i,2j-1}_{-} (H,L,X^2,Y^2,\dots,W^{2i+2j-6},X^{2i+2j-6},Y^{2i+2j-6},W^{2i+2j-5},W^{2i+2j-4}).\\
		\end{split}
		\end{equation}
				
		\item  Trivial with standard. The first order pole of $X^{2i}$ and $W^{2j}$ has odd conformal weight $2i+2j-1$ and transforms as the standard $\fr{so}_2$-module.
		
		\begin{equation}\label{ansatzWH}
			\begin{split}
				W^{2i}_{(0)}X^{2j}=&w_{ \partial X^{2j+2i-2}}^{W^{2i},X^{2j}} \partial X^{2j+2i-2}+w_{:\!W^1X^{2i+2j-2}\!:}^{W^{2i},X^{2j}}:\!W^{1}X^{2i+2j-2}\!:+\dotsb\\
				&+w_{:\!X^{2}W^{2i+2j-3}\!:}^{W^{2i},X^{2j}}:\!X^{2}W^{2i+2j-3}\!:+\dotsb +X_{0}^{2i,2j},\\
				W^{2i}_{(1)}X^{2j}=&w_{ X^{2j+2i-2}}^{W^{2i},X^{2j}}  X^{2j+2i-2}+X_{1}^{2i,2j}.
			\end{split}
		\end{equation}
		If the parities of conformal weights are unequal, then the first order pole of $X^{2i}$ and $W^{2j-1}$ has even conformal weight $2i+2j-2$, again transforming as the standard $\fr{so}_2$-module. 
		\begin{equation}\label{ansatzWHeo}
		\begin{split}
			W^{2i-1}_{(0)}X^{2j}=&x_{0}^{2i-1,2j} X^{2j+2i-2}+X_{0}^{2i-1,2j}.
		\end{split}
		\end{equation}
			Here, in both of the above cases (\ref{ansatzWH}) and (\ref{ansatzWHeo}), expressions $X_{-}^{i,j}$ are some normally ordered polynomial in the generators $\tilde W^{n,\mu}$ for $n\leq i+j-4$, and their derivatives. Specifically, we have the following dependencies.
			\begin{equation}
				\begin{split}
					X^{2i,2j}_{-} =& X^{2i,2j}_{-} (H,L,X^2,Y^2,\dots,W^{2i+2j-5},W^{2i+2j-4},X^{2i+2j-4},Y^{2i+2j-4}),\\
					X^{2i-1,2j}_{-} =& X^{2i,2j-1}_{-} (H,L,X^2,Y^2,\dots,W^{2i+2j-5},X^{2i+2j-4},Y^{2i+2j-4}).\\
				\end{split}
			\end{equation}

		\item Standard with standard.  The first order pole of $X^{2i}$ and $Y^{2j}$ has odd conformal weight $2i+2j-1$ and transforms as the trivial $\fr{so}_2$-module. 
		\begin{equation}\label{ansatzXH}
			\begin{split}
				X^{2i}_{(0)}Y^{2j}=& w^{W^{2i},W^{2j}}_{W^{2i+2j-1}} W^{2i+2j-1}+ w^{W^{2i},W^{2j}}_{\partial W^{2i+2j-2}} \partial W^{2i+2j-2}+ M_{0}^{2i,2j},\\
				X^{2i}_{(1)}Y^{2j}=& w^{W^{2i},W^{2j}}_{W^{2i+2j-2}} W^{2i+2j-2}+M_{1}^{2i,2j},\\
			\end{split}
		\end{equation}
		where expressions $M_{-}^{2i,2j}$ are some normally ordered polynomial in the generators $\tilde W^{n,\mu}$ for $n\leq 2i+2j-3$, and their derivatives. Specifically, we have the following dependence.
		\begin{equation}
			\begin{split}
				M^{2i,2j}_{-} =& M^{2i,2j}_{-} (H,L,X^2,Y^2,\dots,W^{2i+2j-4},X^{2i+2j-4},Y^{2i+2j-4},W^{2i+2j-3}).\\
			\end{split}
		\end{equation}
	\end{itemize}

	% \begin{remark}
	% 	Ansatz (\ref{ansatzWW}) is formally the same as the one used in the construction of $\Winf$. As in construction of $\Wsp$, we see a quadratic $W^{(2l-1,2i+2j-2l-3)}_2$ arising in (\ref{ansatzWH}) and a derivative-free monomial $W^{2j+2i-3}$ in (\ref{ansatzXH}).
	% 	This is a consequence of the nontrivial Poisson structure on Zhu's commutative algebra for $\cW^{\gs\go_2}_{\infty}$.
	% \end{remark}
	Our notation above extends the one used for the structure constants (\ref{eq:autoSC}). 
	Specifically, if we denote by $W^{n,\mu;m,\nu}(r)$ the normally ordered differential polynomial, arising as above in $W^{n,\mu}(z)W^{m,\nu}(w)$, then $\T{O}_2$-symmetry affords a relation
	\[W^{n,\mu;m,\nu}_r = (-1)^{n+m}W^{n,-\mu;m,-\nu}_r ,\quad r\leq n+m.\]
	
	Denote the double factorial by
	$$a!!=\begin{cases} 
		1\cdot 3\dotsb (2n-3) (2n-1) & a=2n-1,\\
		2\cdot 4 \dotsb (2n-2)(2n) & a=2n.
	\end{cases}$$

	\begin{proposition} \label{prop:structure constants} Let the notation be fixed as in (\ref{ansatzWW}-\ref{ansatzXH}). We have the following expressions for the first and second order poles among the strong generators.
		\begin{enumerate}
			\item Trivial and trivial fields. 
			\begin{equation*}
				\begin{split}
					W^{i}_{(0)}W^{j}=&\frac{(i-1)i!!j!!}{8(i+j-2)(i+j-4)!!}\partial W^{i+j-2}+W^{i,j}_{0},\\
					W^{i}_{(1)}W^{j}=&\frac{i!!j!!}{8(i+j-4)!!}W^{i+j-2}+W^{i,j}_{1}, \quad i\geq2.
				\end{split}
			\end{equation*}

			\item Trivial and standard fields. 
			\begin{equation*}
				\begin{split}
					W^{2i-1}_{(0)}X^{2j}&=\frac{(2i-1)!!(2j)!!}{(2i+2j-2)!!} X^{2i+2j-2}+X_{0}^{2i-1,2j},\quad i\geq 2,\\
					W^{2i}_{(0)}X^{2j}&=\frac{(2i-1)i!!(2j)!!}{8(2+2j-2)(2i+2j-4)!!}\partial X^{2i+2j-2}+X_{0}^{2i,2j},\\
					W^{2i}_{(1)}X^{2j}&=\frac{(2i)!!(2j)!!}{8(2i+2j-4)!!} X^{2i+2j-2}+X_{1}^{2i,2j}, \quad i\geq2.
				\end{split}
			\end{equation*}
			\item Standard and standard fields.
			\begin{equation*}
				\begin{split}
					X^{2i}_{(0)}Y^{2j}=&\frac{9(2i)!!(2j)!!}{4(2i+2j-1)!!} W^{2j+2i-1}+\frac{2(2i-1)(2i)!!(2j)!!}{9(2i+2j-4)(2i+2j-4)!!}\partial W^{2i+2j-2}+M_{0}^{2i,2j},\\
					X^{2i}_{(1)}Y^{2j}=&\frac{2(2i)!!(2j)!!}{9(2i+2j-4)!!} W^{2i+2j-2}+M_{1}^{2i,2j}.
				\end{split}
			\end{equation*}
		\end{enumerate}
	\end{proposition}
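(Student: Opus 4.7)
The plan is to proceed by induction on the conformal weight of the first factor in the OPE. The base cases up to total weight $9$ are supplied by Proposition \ref{prop:base case}, which determines all relevant structure constants as explicit rational functions of $(c,k)$ and the scaling parameters. Substituting the specific choices \eqref{eq:scale1} and the $W^4$-redefinition \eqref{W4deformed} verifies the claimed double-factorial expressions at the base level, which also fixes the initial values of the scalar recursions produced in the inductive step.

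For the inductive step, I exploit the raising property \eqref{raise}: for $n\geq 1$ one has $\tilde{W}^{n+2,\mu}=W^4_{(1)}\tilde{W}^{n,\mu}$. Applying the Jacobi identity $J_{1,r}(W^4,\tilde{W}^{n,\mu},\tilde{W}^{m,\nu})$, and using that all generators are bosonic, yields
\begin{equation*}
\tilde{W}^{n+2,\mu}_{(r)}\tilde{W}^{m,\nu}
= W^4_{(1)}\bigl(\tilde{W}^{n,\mu}_{(r)}\tilde{W}^{m,\nu}\bigr)
-\tilde{W}^{n,\mu}_{(r)}\tilde{W}^{m+2,\nu}
-(W^4_{(0)}\tilde{W}^{n,\mu})_{(r+1)}\tilde{W}^{m,\nu},
\end{equation*}
which rewrites the LHS in terms of OPEs whose first factor has strictly smaller conformal weight, together with products involving only the fixed weight-four field $W^4$. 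Projecting both sides onto the coefficient of $\partial\tilde{W}^{n+m,\mu+\nu}$ for $r=0$, respectively $\tilde{W}^{n+m,\mu+\nu}$ for $r=1$, converts this vector identity into a scalar recursion for the leading structure constants recorded in the proposition; all normally ordered quasi-primary contributions are absorbed into the remainder terms $W^{i,j}_r$, $X^{i,j}_r$, and $M^{i,j}_r$.

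The verification then reduces to elementary double-factorial manipulations showing that the proposed closed forms solve these scalar recursions. For example, in the first-order-pole trivial-trivial case, writing $a_{i,j}:=\frac{i!!\,j!!}{8(i+j-4)!!}$, the scalar recursion collapses to an identity that follows from $(i+2)!!=(i+2)\,i!!$ together with the known weight-four input $W^4_{(1)}W^{j}$. The trivial-standard and standard-standard cases follow the same template, with the automorphism $\sigma$ of \eqref{eq:auto} halving the number of independent computations. The scaling choices \eqref{eq:scale1} and the correction \eqref{W4deformed} are tuned precisely so that the leading recursion decouples cleanly from the subleading normally ordered terms and admits the clean double-factorial solution.

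The main obstacle is the combinatorial bookkeeping underlying the decoupling. One must verify that every term contributing to $W^{i,j}_r$, $X^{i,j}_r$, $M^{i,j}_r$ involves only fields of strictly smaller weight than the targeted leading pole, and in particular produces no spurious contribution to the structure constant being tracked. The modification \eqref{W4deformed} is essential here: without the corrections $\frac{1}{k}:\!HW^3\!:$ and $-\frac{1-k\omega_{1,3}}{\omega_3}(:\!X^2Y^2\!:)$, the identity $W^{4,0}_{(1)}\tilde{W}^{n,\mu}=\tilde{W}^{n+2,\mu}$ would only hold modulo error terms whose leading coefficients would contaminate the recursion. A careful case analysis checking that these corrections project trivially onto the leading poles, combined with the double-factorial recurrences, then completes the induction.
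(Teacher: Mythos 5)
Your overall strategy — induction on conformal weight, anchored at Proposition \ref{prop:base case}, driven by the raising property $W^4_{(1)}\tilde W^{n,\mu}=\tilde W^{n+2,\mu}$ and Jacobi identities with $W^4$ in the first slot — is the same as the paper's. However, there are two concrete gaps in the execution.

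First, the identity you extract from $J_{1,r}(W^4,\tilde W^{n,\mu},\tilde W^{m,\nu})$ does not collapse to a scalar recursion in a single unknown: projecting onto $\tilde W^{n+m,\mu+\nu}$ produces a relation coupling the two \emph{unknown} constants $w^{\tilde W^{n+2,\mu},\tilde W^{m,\nu}}_{\tilde W^{n+m,\mu+\nu}}$ and $w^{\tilde W^{n,\mu},\tilde W^{m+2,\nu}}_{\tilde W^{n+m,\mu+\nu}}$, which live at the same total weight, to the known $w^{\tilde W^{n,\mu},\tilde W^{m,\nu}}_{\tilde W^{n+m-2,\mu+\nu}}$. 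You must either argue that this chain of coupled relations (anchored at the boundary where one factor is $L$ or $W^4$, together with skew-symmetry) has a unique solution, or replace it by the paper's device: the coefficient of $\tilde W^{2l+n+m-4,\mu+\nu}$ in $J_{2,0}(W^{2l},\tilde W^{n,\mu},\tilde W^{m,\nu})$. Taking $l=1$ there first yields $w_{\partial\tilde W^{n+m-2}}=\frac{n-1}{n+m-2}w_{\tilde W^{n+m-2}}$, and then $l=2$ yields a genuine two-term recursion $w^{\tilde W^{n+2,\mu},\tilde W^{m,\nu}}_{\tilde W^{n+m,\mu+\nu}}=\frac{n+2}{n+m-2}w^{\tilde W^{n,\mu},\tilde W^{m,\nu}}_{\tilde W^{n+m-2,\mu+\nu}}$ that increments one index at a time; the standard--standard and mixed-parity cases are then closed with $J_{1,0}$ and $J_{0,1}$ of $(W^4,X^{2i},Y^{2j})$ and $(W^4,W^{2i-1},X^{2j})$.

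Second, and more seriously, the proposition implicitly asserts the vanishing of the ``large'' quadratic coefficients singled out in the ans\"atze \eqref{ansatzWWeo} and \eqref{ansatzWH} (e.g. $w^{W^{2i},X^{2j}}_{:\!X^2W^{2i+2j-3}\!:}$ and $w^{W^{2i},W^{2j-1}}_{:\!X^2Y^{2i+2j-4}\!:}$), since by definition the remainders $W^{i,j}_r$, $X^{i,j}_r$, $M^{i,j}_r$ only involve generators of strictly smaller weight and therefore cannot absorb these terms. Their vanishing is not ``bookkeeping'': it is an inductive claim that must be re-established at each weight, and without it the application of $W^4_{(1)}$ to the inductive OPE data contaminates the leading coefficients you are tracking. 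The paper proves it as the first step of each inductive stage using the identities $J_{0,0}(W^{2l},X^{2i},Y^{2j})$ and $J_{0,0}(W^{2l},W^{2i-1},X^{2j})$, exploiting that the leading constants $w^{X^{2i},Y^{2j}}_{W^{2i+2j-1}}$ and $w^{W^{2i-1},X^{2j}}_{X^{2i+2j-2}}$ are nonzero. Your proposal needs this step made explicit before the double-factorial verification can go through.
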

	
	\begin{proof}
		We will proceed by induction on integer index $N$.
		Our base case can be taken to be the Proposition (\ref{prop:base case}). Specifically, the fact that the structure constants highlighted in (\ref{eq:gen anchors}) appearing in $D^7_0\cup D^7_1\cup D^8_0\cup D^8_1$, in the new generating set, have the desired form.
		Inductively, assume that all structure constants defined in (\ref{ansatzWW}-\ref{ansatzXH}) have the form as in the Proposition (\ref{prop:structure constants}) for products in $D^{2N+1}_0\cup D^{2N+1}_1\cup D^{2N+2}_0\cup D^{2N+2}_1$. In particular, it means that the structure constants of quadratic terms all vanish, that is we have the following.
		\begin{equation}
		\begin{split}
		w_{:\!HX^{2i+2j-2}\!:}^{W^{2i},X^{2j}}=&w_{:\!W^3X^{2i+2j-4}\!:}^{W^{2i},X^{2j}}=\dotsb = w_{:\!X^2W^{2i+2j-3}\!:}^{W^{2i},X^{2j}}=0, \\	w_{:\!X^2Y^{2i+2j-4}\!:}^{W^{2i},W^{2j-1}}=&w_{:\!X^4Y^{2i+2j-6}\!:}^{W^{2i},W^{2j-1}}=\dotsb = w_{:\!Y^2X^{2i+2j-4}\!:}^{W^{2i},W^{2j-1}}=0. \\
	\end{split}	
	\end{equation}
		First, we will show that quadratics $w_{:\!HX^{2i+2j}\!:}^{W^{2i+1},X^{2j}},\dotsc,$ and $w_{:\!Y^2X^{2i+2j-2}\!:}^{W^{2i+2},W^{2j-1}},\dots,$ arising in $D^{2N+3}_0(1,S)$, and $D^{2N+3}_0(1,1)$, respectively, all vanish.
		
		Clearly no quadratics arise in the products $H_{(0)}W^{2N+2}$ and $L_{(0)}X^{2N}$. Now consider the Jacobi identities $J_{0,0}(W^{2l},X^{2i},Y^{2j})$ and  $J_{0,0}(W^{2l},W^{2i-1},X^{2j})$, which have the form
		\begin{equation}\label{eqs}
			\begin{split}
			W^{2l}_{(0)}(X^{2i}_{(0)}Y^{2j}) =&X^{2i}_{(0)}(W^{2l}_{(0)}Y^{2j})+ (W^{2l}_{(0)}X^{2i})_{(0)}Y^{2j}, \quad w^{X^{2i},Y^{2j}}_{W^{2i+2j-1}}\neq 0,\\
			W^{2l}_{(0)}(W^{2i-1}_{(0)}X^{2j}) =&W^{2i-1}_{(0)}(W^{2l}_{(0)}X^{2j})+ (W^{2l}_{(0)}W^{2i-1})_{(0)}X^{2j},\quad w^{W^{2i-1},X^{2j}}_{X^{2i+2j-2}}\neq 0.
			\end{split}
		\end{equation}
		By induction only the left sides can contributes a nonzero quadratic in (\ref{ansatzWHeo}) and (\ref{ansatzXH}), and thanks to (\ref{eqs}) no quadratics arise in the products $W^{2l}_{(0)}W^{2i+2j-1}$ and $W^{2l}_{(0)}X^{2i+2j-2}$.
		
		Now, assuming that no quadratics arise in $D^{2N+3}_0(1,1)$ and in $D^{2N+2}_0(1,S)$, we
		extract the coefficient of fields $\tilde W^{2l+n+m-4,\mu+\nu}$ in identities $J_{2,0}(W^{2l},\tilde W^{n,\mu},\tilde W^{m,\nu})$ to obtain a relation
		\begin{equation}\label{20identity}
			 w^{\tilde W^{n,\mu}, \tilde W^{m,\nu}}_{\partial \tilde W^{n+m-2,\mu+\nu}}w^{W^{2l},\tilde W^{n+m-2,\mu+\nu}}_{\tilde W^{2l+n+m-4,\mu+\nu}}+ \left(w^{W^{2l}, \tilde W^{n,\mu}}_{\partial \tilde W^{2l+n-2,\mu}} - w^{W^{2l}, \tilde W^{n,\mu}}_{\tilde W^{2l+n-2,\mu}}\right) w^{\tilde W^{2l+n-2,\mu}, \tilde W^{m,\nu}}_{\tilde W^{2l+n+m-4,\mu+\nu}}=0.
		\end{equation}
		First, set $l=1$ in(\ref{20identity}), and recall that the Virasoro action implies that $w^{L,\tilde W^{n,\mu}}_{\partial W^{n,\mu}}=1$ and $w^{L,\tilde W^{n,\mu}}_{\tilde W^{n,\mu}}=n$.
		Thus we find the following relation.
		\begin{equation}\label{tempEq}
			w^{\tilde W^{n,\mu}, \tilde W^{m,\nu}}_{\partial\tilde  W^{n+m-2,\mu+\nu}}=\frac{n-1}{n+m-2}w^{\tilde W^{n,\mu}, \tilde W^{m,\nu}}_{\tilde W^{n+m-2,\mu+\nu}}.
		\end{equation}
		Next, set $l=2$ in (\ref{20identity}), and recall the raising property $w^{W^4,\tilde W^{n,\mu}}_{\tilde W^{n+2,\mu}}=1$. Using (\ref{tempEq}) we obtain the following recurrence relation.
		\begin{equation}\label{inductLeft}
			w^{\tilde W^{n+2,\mu}, \tilde W^{m,\nu}}_{\tilde W^{n+m,\mu+\nu}}=-\frac{(n-1)w^{W^4,\tilde W^{n+m-2,\mu}}_{\tilde W^{n+m,\mu}}}{(n+m-2)(\frac{3}{n+2}w^{W^4,\tilde W^{n,\mu}}_{\tilde W^{n+2,\mu}} - w^{W^4,\tilde W^{n,\mu}}_{\tilde W^{n+2,\mu}})}w^{\tilde W^{n,\mu}, \tilde W^{m,\nu}}_{\tilde W^{n+m-2,\mu+\nu}} =\frac{n+2}{n+m-2}w^{\tilde W^{n,\mu}, \tilde W^{m,\nu}}_{\tilde W^{n+m-2,\mu+\nu}}.
		\end{equation}
		Thanks to skew-symmetry we exchange indices $n$ and $m$ to obtain
		\begin{equation*}\label{inductRight}
			w^{\tilde W^{n,\mu},\tilde W^{m+2,\nu}}_{\tilde W^{n+m,\mu+\nu}}=\frac{m+2}{n+m-2}w^{\tilde W^{n,\mu},\tilde W^{m,\nu}}_{\tilde W^{n+m-2,\mu+\nu}}.
		\end{equation*}
		This proves the inductive hypothesis for $w^{\tilde W^{n,\mu},\tilde W^{m,\nu}}_{\tilde W^{2i+2j-2,\mu+\nu}}$ and $w^{\tilde W^{n,\mu},\tilde W^{m,\nu}}_{\partial \tilde W^{2i+2j-2,\mu+\nu}}$. 
		
		Lastly, we evaluate the remaining structure constants appearing in the first order poles (\ref{ansatzWHeo}) and (\ref{ansatzXH}).
		Extracting the coefficient of $W^{2i+2j+1}$ arising in identities $J_{1,0}(W^{4},X^{2i},Y^{2j})$ and $J_{0,1}(W^{4},X^{2i},Y^{2j})$ affords the following recursions
		\begin{equation*}\label{xyw}
w^{X^{2i},Y^{2j}}_{W^{2i+2j+1}}=\frac{2 (i+1) }{2 i+2 j+1}w^{X^{2i},Y^{2j}}_{W^{2i+2j-1}},\quad w^{X^{2i},Y^{2j+2}}_{W^{2i+2j+1}}=\frac{2 (j+1) }{2 i+2 j+1}w^{X^{2i},Y^{2j}}_{W^{2i+2j-1}}.
		\end{equation*}
		Similarly, we extract the coefficient of $W^{2i+2j+1}$ arising in identities $J_{1,0}(W^{4},W^{2i-1},X^{2j})$ and $J_{0,1}(W^{4},W^{2i-1},X^{2j})$ yields the following recursions
		\begin{equation*}\label{xwx}
		w^{W^{2i+1},X^{2j}}_{X^{2i+2j}}=\frac{i+1}{i+j}w^{W^{2i-1},X^{2j}}_{X^{2i+2j-2}},\quad w^{W^{2i-1},X^{2j+2}}_{X^{2i+2j}}=\frac{2 j+1 }{2 i+2 j}w^{W^{2i-1},X^{2j}}_{X^{2i+2j-2}}.
	\end{equation*}
    This completes the proof of inductive hypothesis.
	\end{proof}

	\subsection{Step 3: Induction}\label{induction}
	The main result of this subsection is Theorem (\ref{thm:induction}). 
	It is proved by induction, and  the process is similar to that of \cite{Lin}, \cite{KL}, and \cite{CKoL2}.
        First, recall the notation from (\ref{eq:products}), and let us augment it by setting
        \[D_n(1,1)=\bigcup_{r=1}^n D^n_r(1,1), \quad D_n(1,S)=\bigcup_{r=1}^n D^n_r(1,S), \quad D_n(S,S)=\bigcup_{r=1}^n D^n_r(S,S).\]
	We take as our base case is the Proposition (\ref{prop:base case}).
	By inductive data we mean the set of OPEs in $D_{2n}\cup D_{2n+1}$, where
	\[D_{2n} = D_{2n}(S,S)\cup D_{2n}(1,S)\cup D_{2n}(1,1), \quad D_{2n+1} = D_{2n+1}(1,S)\cup D_{2n+1}(1,1),\] and that they are fully expressed in terms of parameters $c$ and $k$. 
	At this stage, the OPEs in $D^{2n+2}\cup D^{2n+3}$ are yet undetermined.
	We will use a subset of Jacobi identities in $J^{2n+4}\cup J^{2n+5}$ to express $D^{2n+2}\cup D^{2n+3}$ in terms of inductive data.
	We write $A\equiv 0$ to denote that $A$ is computable from inductive data $D_{2n}\cup D_{2n+1}$.
	
	In the Lemmas (\ref{recursionsSym}-\ref{lemma:recursionsr}) we use the raising property of $W^4$ to establish the following.
	\begin{proposition}\label{aa}
		\
		\begin{enumerate}
			\item  $W^4(z)W^{2n-2}(w)$ and $W^3(z)W^{2n-1}(w)$ with inductive data together determine $D^{2n+2}(1,1)$.
			\item $W^4(z)W^{2n-1}(w)$ with inductive data together determine $D^{2n+3}(1,1)$.
			\item  $X^2(z)W^{2n-1}(w)$ with inductive data together determine $D^{2n+1}(1,S)$.
			\item  $W^4(z)X^{2n-2}(w)$ with inductive data together determine $D^{2n+2}(1,S)$.
			\item  $X^2(z)Y^{2n-2}(w)$ with inductive data together determine $D^{2n+2}(S,S)$.
		\end{enumerate}
	\end{proposition}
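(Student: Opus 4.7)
The strategy is to leverage the raising property \eqref{raise} in combination with Jacobi identities involving $W^4$ to propagate specific seed OPEs into full knowledge of $D^{2n+2}\cup D^{2n+3}$. Setting $a=\tilde W^{n,\mu}$ and $b=\tilde W^{m,\nu}$ in \eqref{Jacobi} with $r=1$ and invoking $W^4_{(1)}\tilde W^{n,\mu}=\tilde W^{n+2,\mu}$, we obtain the key recursion
\[
\tilde W^{n+2,\mu}_{(s)}\tilde W^{m,\nu}
=W^4_{(1)}\bigl(\tilde W^{n,\mu}_{(s)}\tilde W^{m,\nu}\bigr)
-\tilde W^{n,\mu}_{(s)}\tilde W^{m+2,\nu}
-\bigl(W^4_{(0)}\tilde W^{n,\mu}\bigr)_{(s+1)}\tilde W^{m,\nu}.
\]
On the right, the first term involves only the OPE $\tilde W^{n,\mu}\times\tilde W^{m,\nu}$ of total weight $n+m\leq 2n+1$, hence is inductive; the second term lies in the same stratum as the target but with a strictly smaller first index; and the third term decomposes via the quasi-derivation formula \eqref{quasi-derivation} applied to the known differential polynomial $W^4_{(0)}\tilde W^{n,\mu}$, whose leading term $c\,\partial\tilde W^{n+2,\mu}$ is pinned down by Proposition \ref{prop:structure constants}. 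That leading term reintroduces the target mode, producing a linear equation $\bigl(1-(s+1)c\bigr)\tilde W^{n+2,\mu}_{(s)}\tilde W^{m,\nu}=(\text{known})$, whose coefficient is nonzero outside a finite exceptional set of $(n,s)$; in the exceptional cases we replace $J_{1,s}$ by the companion identity $J_{2,s}(W^4,\tilde W^{n,\mu},\tilde W^{m,\nu})$, which yields an analogous recursion with a different nonvanishing leading coefficient.

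For claim (1), I would induct on the first index $i\geq 5$ in pairs $(W^i,W^{2n+2-i})\in D^{2n+2}(1,1)$: the cases $i=3,4$ are the two seeds $W^3\times W^{2n-1}$ and $W^4\times W^{2n-2}$, and the recursion with $n\mapsto i-2$, $m\mapsto 2n+2-i$ expresses $(W^i,W^{2n+2-i})$ in terms of $(W^{i-2},W^{2n+4-i})$, which for $i=5,6$ is a seed and for larger $i$ is the preceding induction step. The boundary pairs $(H,W^{2n+1,0})$ and $(W^{2,0},W^{2n,0})$ are handled by the Heisenberg primary condition \eqref{eq:affine primary} and an analogous Jacobi recursion starting from $W^4\times W^{2n-2,0}$. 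Claim (2) is entirely parallel, initialized at the odd seed $W^4\times W^{2n-1}$; the remaining pair $(W^3,W^{2n,0})$ is then recovered from the computed $W^5\times W^{2n-2,0}$ via skew-symmetry \eqref{skew-symmetry}.

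Claims (3)--(5) follow the same template with raising applied now to mixed-charge generators $X^{2m}=W^4_{(1)}X^{2m-2}$ and $W^{2k+1}=W^4_{(1)}W^{2k-1}$, coupled when needed with the $\sigma$-automorphism \eqref{eq:auto} to exchange $X\leftrightarrow Y$. Explicitly: for (3), induction on the standard index starting from $X^2\times W^{2n-1}$ (equivalently, descending in $k$ along $W^{2k+1}\times X^{2(n-k)}$) exhausts $D^{2n+1}(1,S)$; for (4), induction on $k\geq 2$ in $(W^{2k},X^{2(n+1-k)})$ starting from $W^4\times X^{2n-2}$ yields $D^{2n+2}(1,S)$, with the boundary case $L\times X^{2n}$ treated by a final descent step; for (5), iterating the raising in both tensor slots starting from $X^2\times Y^{2n-2}$ determines every $X^{2i}\times Y^{2j}$ in $D^{2n+2}(S,S)$. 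In each case, the leading coefficient $1-(s+1)c$ is read off from the appropriate formula in Proposition \ref{prop:structure constants} and is nonzero outside a finite set handled exactly as in claim (1).

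The principal obstacle is to verify that the auxiliary term $\bigl(W^4_{(0)}\tilde W^{n,\mu}\bigr)_{(s+1)}\tilde W^{m,\nu}$, after expansion of $W^4_{(0)}\tilde W^{n,\mu}$ as a differential polynomial in the generators according to the dependence rules following Proposition \ref{prop:structure constants}, produces only mode products that lie either in inductive data, in the already-computed portion of the current stratum, or again in the target OPE. Here the scaling normalizations \eqref{eq:scale1} play a crucial role: they force the quadratic structure constants $w^{\tilde W^{n,\mu},\tilde W^{m,\nu}}_{:\!\tilde W^{i,\alpha}\tilde W^{j,\beta}\!:}$ at inductive weight to vanish in precisely the patterns recorded at the start of the proof of Proposition \ref{prop:structure constants}, thereby preventing cascading dependencies on yet-uncomputed OPEs and ensuring that the recursion closes consistently at each stratum.
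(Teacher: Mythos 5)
Your core strategy is the same as the paper's: use the raising property together with Jacobi identities of the form $J(W^4,\tilde W^{i,\mu},\tilde W^{j,\nu})$ to set up recursions that reduce every product in $D^{2n+2}\cup D^{2n+3}$ to the listed seeds plus inductive data, with the target mode isolated by a leading coefficient coming from $w^{W^4,\tilde W^{n,\mu}}_{\partial \tilde W^{n+2,\mu}}$. For $s=0,1$ your recursion is literally the one in Lemmas \ref{lemma:recursions0} and \ref{lemma:recursions1}. The genuine gap is in your treatment of the degenerate coefficients. By Proposition \ref{prop:structure constants}, $w^{W^4,\tilde W^{n,\mu}}_{\partial \tilde W^{n+2,\mu}}=\tfrac{3}{n+2}$, so your coefficient $1-(s+1)\tfrac{3}{n+2}$ vanishes whenever $n=3s+1$: this is not a sporadic boundary phenomenon but recurs at every pole order (raising weight $4\to 6$ kills the second-order-pole recursion, $7\to 9$ the third-order one, and so on). Your proposed repair via $J_{2,s}(W^4,\tilde W^{n,\mu},\tilde W^{m,\nu})$ does not close this gap: in that identity the raising term enters as $2\,(W^4_{(1)}\tilde W^{n,\mu})_{(s+1)}\tilde W^{m,\nu}$, so it isolates the $(s+1)$-th product rather than the $s$-th, and its remaining terms (including $(W^4_{(l)}\tilde W^{n,\mu})_{(s+2-l)}\tilde W^{m,\nu}$ for $l\geq 2$ and the expansion of the left-hand side) pull in OPE data of total weight beyond the current stratum for small $s$. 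The paper instead uses $J_{r,1}(W^4,\cdot,\cdot)$ for higher-order poles (Lemma \ref{lemma:recursionsr}), whose coefficient $r-(r+1)\tfrac{3}{n+2}$ is nonzero for all weights $n\geq 3$ being raised, and it handles the surviving degeneracy for second-order poles by re-seeding the recursion at $W^6$ rather than $W^4$ (Lemma \ref{lemma:recursions1}); your uniform $J_{1,s}$ scheme needs an analogous restructuring that you have not supplied.

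A second, smaller inaccuracy: the term $W^4_{(1)}\bigl(\tilde W^{n,\mu}_{(s)}\tilde W^{m,\nu}\bigr)$ is not purely inductive. For $s=0$ the differential polynomial $\tilde W^{n,\mu}_{(0)}\tilde W^{m,\nu}$ contains $\partial\tilde W^{n+m-2,\mu+\nu}$ with a known nonzero coefficient, and
\begin{equation*}
W^4_{(1)}\partial\tilde W^{n+m-2,\mu+\nu}=\partial\tilde W^{n+m,\mu+\nu}+W^4_{(0)}\tilde W^{n+m-2,\mu+\nu},
\end{equation*}
where the last OPE has total weight $n+m+2$ and therefore lies in the \emph{current} stratum. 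This is exactly why the recursion in Lemma \ref{lemma:recursions0} carries the extra seed term $W^{4,0;i+j-2,\mu+\nu}_0$, and why, for instance, determining $D^{2n+2}(S,S)$ from $X^2_{(0)}Y^{2n-2}$ also requires $W^4_{(0)}W^{2n-2}$ and $W^4_{(0)}X^{2n-2}$, i.e., the seeds of items (1) and (4). These cross-dependencies among the five items are harmless once all seeds are assumed simultaneously and the computation is ordered so that the resulting linear system is triangular, but your bookkeeping attributes all of the danger to the third term of your recursion and does not specify such an ordering.
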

    In broad strokes, this is proved by making use of some Jacobi identities of the form $J(W^4,\tilde W^{i,\mu},\tilde W^{j,\nu})$ and $J(W^3,\tilde W^{i,\mu},\tilde W^{j,\nu})$. First, recall that $H(z)W^{2n+2}(w),L(z)W^{2n+1}(w) \in D^{2n+3}(1,1)$, $H(z)W^{2n+1}(w),L(z)W^{2n}(w) \in D^{2n+2}(1,1)$, $\in D^{2n+3}(1,1)$.
	First, we show how the action of symmetry algebra $V^k(\fr{so}_2)\otimes \vir$ is determined.
	\begin{lemma}\label{recursionsSym}
		Let $n$ be a positive integer. Then we have the following.
		\begin{enumerate}
			\item OPEs $W^4(z)\tilde W^{n,\mu}(w)$ and $L(z)\tilde W^{n,\mu}(w)$ together determine $L(z)\tilde W^{n+2,\mu}(w)$.
			\item OPEs $W^{3}(z)\tilde W^{n,\mu}(w)$ and $H(z)\tilde W^{n,\mu}(w)$ together determine $H(z)\tilde W^{n+2,\mu}(w)$.	
		\end{enumerate}
	\end{lemma}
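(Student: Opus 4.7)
The plan for both parts is to exploit the raising property \eqref{raise}, namely $\tilde W^{n+2,\mu}=W^4_{(1)}\tilde W^{n,\mu}$, and use the Jacobi identity \eqref{Jacobi} together with the quasi-derivation rule \eqref{quasi-derivation} to rewrite $L_{(r)}\tilde W^{n+2,\mu}$ and $H_{(r)}\tilde W^{n+2,\mu}$ in terms of data that is either explicitly hypothesized or already contained in the inductive data $D_{2n}\cup D_{2n+1}$.

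For part (1), I would apply $J_{r,1}(L,W^4,\tilde W^{n,\mu})$ for each $r\geq 0$:
\begin{equation*}
L_{(r)}\tilde W^{n+2,\mu} \;=\; W^4_{(1)}\bigl(L_{(r)}\tilde W^{n,\mu}\bigr) \;+\; \sum_{i=0}^r \binom{r}{i}\bigl(L_{(i)}W^4\bigr)_{(r+1-i)}\tilde W^{n,\mu}.
\end{equation*}
Because $W^4$ is Virasoro primary of weight $4$, only $L_{(0)}W^4=\partial W^4$ and $L_{(1)}W^4=4W^4$ contribute to the sum; invoking \eqref{conformal identity} collapses it to the closed form $(3r-1)W^4_{(r)}\tilde W^{n,\mu}$, which is read off directly from $W^4(z)\tilde W^{n,\mu}(w)$. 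The remaining term $W^4_{(1)}(L_{(r)}\tilde W^{n,\mu})$ is obtained by expanding the normally ordered polynomial $L_{(r)}\tilde W^{n,\mu}$ (which is supplied by $L(z)\tilde W^{n,\mu}(w)$) and applying $W^4_{(1)}$ monomial by monomial via \eqref{quasi-derivation}; all the required $W^4$-OPEs with constituents are with generators of weight at most $n$, hence lie in the ambient inductive data $D_{2n}$.

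For part (2), the same manipulation with $H$ in place of $L$ gives
\begin{equation*}
H_{(r)}\tilde W^{n+2,\mu} \;=\; W^4_{(1)}\bigl(H_{(r)}\tilde W^{n,\mu}\bigr) \;+\; \sum_{i=0}^r \binom{r}{i}\bigl(H_{(i)}W^4\bigr)_{(r+1-i)}\tilde W^{n,\mu}.
\end{equation*}
The asymmetry with part (1) is explained by the fact that $W^4$ is not Heisenberg primary: although $H_{(0)}W^4=0$ (since $W^4$ has charge zero), the explicit formula \eqref{W4deformed} for $W^4$ produces nontrivial higher poles with $H$. A short direct calculation using quasi-derivation on the summands $\tfrac{1}{k}:\!HW^3\!:$ and $(:\!X^2Y^2\!:)$ gives $H_{(1)}W^4 = W^3 + C\, X^2{}_{(0)}Y^2$ for an explicit scalar $C$, and $H_{(i)}W^4$ for $i\geq 2$ is a differential-polynomial expression in $X^2,Y^2$ alone. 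Substituting this back, every term in the Jacobi sum is a combination of $W^3_{(s)}\tilde W^{n,\mu}$ (supplied by the hypothesis $W^3(z)\tilde W^{n,\mu}(w)$) and $r$-th products of $X^2_{(?)}Y^2$ with $\tilde W^{n,\mu}$, which are in $D_{2n}$ by induction.

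The main bookkeeping task — and the only nontrivial issue — is to verify that when $W^4_{(1)}$ is distributed over the normally ordered polynomials $L_{(r)}\tilde W^{n,\mu}$ and $H_{(r)}\tilde W^{n,\mu}$ via \eqref{quasi-derivation}, every $W^4$-OPE invoked is indeed with a generator of weight $\leq n$, so that it lies strictly below the weight level currently being constructed. Provided this weight accounting is honest (which is automatic because $L_{(r)}\tilde W^{n,\mu}$ and $H_{(r)}\tilde W^{n,\mu}$ have conformal weight $\leq n+1$, with the only weight-$(n+1)$ summand being $\partial\tilde W^{n,\mu}$, handled by $[\partial, W^4_{(1)}]=-W^4_{(0)}$), both determinations follow from a bounded application of the two core identities.
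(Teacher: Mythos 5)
Your proposal is correct and follows essentially the same route as the paper: both apply the Jacobi identity for the triple $(W^4,L,\tilde W^{n,\mu})$ (resp.\ $(W^4,H,\tilde W^{n,\mu})$, with $W^3=W^4_{(1)}H$) together with the raising property $\tilde W^{n+2,\mu}=W^4_{(1)}\tilde W^{n,\mu}$, reducing the correction sum to $(3r-1)W^4_{(r)}\tilde W^{n,\mu}$ and $rW^3_{(r)}\tilde W^{n,\mu}$ respectively. One small inaccuracy: since $(:\!X^2Y^2\!:)$ denotes the $V^k(\fr{so}_2)\otimes\vir$-primary projection, one in fact has $H_{(1)}W^4=W^3$ exactly and $H_{(i)}W^4=0$ for $i\geq 2$, so the extra $X^2{}_{(0)}Y^2$ term you posit is absent — but this only simplifies, and does not affect, your argument.
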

	
	\begin{proof}
		The Jacobi identity $J_{1,r}(W^4,H,\tilde W^{n,\mu})$ gives rise to the following relation
		\[ H_{(r)}\tilde W^{n+2,\mu}=W^4_{(1)}H_{(r)}\tilde W^{n,\mu}-rW^{3}_{(r)}\tilde W^{n,\mu}.\]
		Note it also reproduces the affine action when restricted to $r=0$.
		Similarly, the Jacobi identity $J_{1,r}(W^4,L,\tilde W^{n,\mu})$ gives rise to a relation
		\[L_{(r)}\tilde W^{n+2,\mu} = W^4_{(1)}L_{(r)}\tilde W^{n,\mu}-(3r-1)W^4_{(r)}\tilde W^{n,\mu},\]
		which also reproduces the Virasoro action when restricted to $r=1$ and $r=0$.
	\end{proof}
    
	Now we explain how the remaining OPEs in $D^{2n+2} \cup D^{2n+3}$ are determined.
	
	We begin with the first order poles.
	In the Proposition (\ref{prop:structure constants}) we have already determined some structure constants arising in the first order poles.
	Therefore, to determine first order poles it is sufficient to analyze the normally ordered differential monomials $W^{i,j}_{0}$, $X^{i,j}_{0}$, and $M^{i,j}_{0}$, defined in (\ref{ansatzWW},\ref{ansatzWWeo}), (\ref{ansatzWH},\ref{ansatzWHeo}), and (\ref{ansatzXH}), respectively.
	Note that by Proposition (\ref{prop:structure constants}), the coefficients of $\partial \tilde W^{i+j,\mu+\nu}$ and $\tilde W^{i+j+1,\mu+\nu}$ arising in Jacobi identities $J_{1,0}(W^4,\tilde W^{i,\mu},\tilde W^{j,\nu})$ are zero.

	\begin{lemma}\label{lemma:recursions0}
		Modulo the inductive data, the following hold.
		\begin{enumerate}
			\item  $W^4_{(0)}X^{2n-2}$  determine $\{W^{2i+4}_{(0)}X^{2n-2i-2} | i\geq 1\}$.
			\item $X^2_{(0)}Y^{2n}$, $W^4_{(0)}W^{2n-2}$ and $W^4_{(0)}X^{2n-2}$ determine $\{X^{2i+2}_{(0)}Y^{2n-2i-2} | i\geq 1\}$.
			\item $X^2_{(0)}X^{2n}$  determine $\{X^{2i+2}_{(0)}X^{2n-2i} | i\geq 1\}$.
			\item $W^3_{(0)}X^{2n-1}$  determine $\{W^{2i+3}_{(0)}X^{2n-2i-2} | i\geq 1\}$.
			\item $W^4_{(0)}W^{2n-1}$  determine $\{W^{2i+4}_{(0)}W^{2n-2i-1}| i\geq 1\}$.
			\item $W^{3}_{(0)}W^{2n-1}$ and $W^4_{(0)}W^{2n-2}$ determine $\{ W^{2i+3}_{(0)}W^{2n-2i-1}| i\geq 1\}$.
		\end{enumerate}
	\end{lemma}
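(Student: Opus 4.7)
The plan is to prove all six items by a single uniform mechanism: combine one Jacobi identity with the raising property \eqref{raise}, $W^4_{(1)}\tilde W^{n,\mu}=\tilde W^{n+2,\mu}$, so that $W^4_{(1)}$ acts as a shift of the upper index and converts OPEs into recursions on $i$.

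Concretely, the Jacobi identity \eqref{Jacobi} at $(r,s)=(1,0)$ with $a=W^4$ yields, after applying \eqref{raise} to the two occurrences of $W^4_{(1)}$ on the right,
\[
\tilde W^{a+2,\mu}_{(0)}\tilde W^{b,\nu}
\;=\;W^4_{(1)}\bigl(\tilde W^{a,\mu}_{(0)}\tilde W^{b,\nu}\bigr)
-\tilde W^{a,\mu}_{(0)}\tilde W^{b+2,\nu}
-\bigl(W^4_{(0)}\tilde W^{a,\mu}\bigr)_{(1)}\tilde W^{b,\nu}.
\]
For each item I would fix the appropriate triple $(W^4,\tilde W^{a,\mu},\tilde W^{b,\nu})$ and iterate in $i$. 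For (1), take $(W^4,W^{2i+2},X^{2n-2i-2})$; for (3), take $(W^4,X^{2i},X^{2n-2i})$; for (5), take $(W^4,W^{2i+2},W^{2n-2i-1})$. At $i=1$ the middle term $\tilde W^{a,\mu}_{(0)}\tilde W^{b+2,\nu}$ on the right is literally the base datum in the hypothesis of the corresponding item, and at $i>1$ it is the product computed at the previous step. The first term $W^4_{(1)}(\cdots)$ involves $W^4_{(1)}$ acting on a field whose OPE with $\tilde W^{b,\nu}$ lies in the inductive data $D_{2n}\cup D_{2n+1}$, so by Lemma \ref{recursionsSym} (which reduces $W^4_{(1)}$-descendants to $L$- and $H$-descendants) it is computable from the inductive data. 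The final term $(W^4_{(0)}\tilde W^{a,\mu})_{(1)}\tilde W^{b,\nu}$ is handled by \eqref{quasi-derivation}, expanding the outer $(1)$-product into a combination of $r$-th products of the generators appearing in $W^4_{(0)}\tilde W^{a,\mu}$ with $\tilde W^{b,\nu}$; each of those products is either in $D_{2n+1}$ or is itself a stated base datum.

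For item (2), run the same recursion with $(W^4,X^{2i},Y^{2n-2i})$ and base datum $X^2_{(0)}Y^{2n}$. The subtlety is that the residue term $(W^4_{(0)}X^{2i})_{(1)}Y^{2n-2i}$ at the largest index $i$ encountered contains contributions proportional to $W^4_{(0)}W^{2n-2}$ and $W^4_{(0)}X^{2n-2}$ after normal-ordering rearrangement via \eqref{quasi-derivation}, which is why those two additional base OPEs must be assumed; with them the recursion closes. Item (4) is analogous with a $W^3$ generator replacing one $W^4$ on the left, and uses a single base datum. Item (6) is the only place one must instead invoke the Jacobi identity with $W^3$ rather than $W^4$: applying $J_{1,0}(W^3,W^{2i+1},W^{2n-2i-1})$ together with $W^3_{(1)}$-shifts produced from Lemma \ref{recursionsSym} yields the analogous recursion, where $W^4_{(0)}W^{2n-2}$ plays the role of the auxiliary datum absorbing the $(W^3_{(0)}W^{2i+1})_{(1)}W^{2n-2i-1}$ contribution.

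I do not expect any genuine obstacle; the work is bookkeeping. The main care needed is to keep track, at each iteration step, of exactly which subproducts of $(W^4_{(0)}\tilde W^{a,\mu})_{(1)}\tilde W^{b,\nu}$ fall inside the inductive data $D_{2n}\cup D_{2n+1}$ and which are the base data listed in the statement. The choice of base data in each item is precisely what is needed to close the recursion with no unexplained residues, and the scaling conventions \eqref{eq:scale1} on $\omega_2,\omega_3,\omega_4,\omega_{1,3}$ ensure that the leading coefficients in the recursion given by Proposition \ref{prop:structure constants} are nonzero rationals, so each recursion step is actually invertible.
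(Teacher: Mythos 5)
Your proposal is correct and is essentially the paper's own argument: the paper likewise imposes $J_{1,0}(W^4,\tilde W^{i,\mu},\tilde W^{j,\nu})$, uses the raising property to turn it into a recursion transferring weight $2$ from one factor to the other, and inverts the coefficient $1-w^{W^4,\tilde W^{i,\mu}}_{\partial \tilde W^{i+2,\mu}}$ (nonzero by Proposition \ref{prop:structure constants}) that appears because $(W^4_{(0)}\tilde W^{i,\mu})_{(1)}\tilde W^{j,\nu}$ feeds back a multiple of the unknown through its $\partial \tilde W^{i+2,\mu}$ term. The only minor discrepancies are bookkeeping ones: the auxiliary data $W^4_{(0)}W^{2n-2}$ and $W^4_{(0)}X^{2n-2}$ actually enter through $W^4_{(1)}$ acting on the $\partial \tilde W^{i+j-2,\mu+\nu}$ part of the first-order pole (via $W^4_{(1)}\partial A=\partial(W^4_{(1)}A)+W^4_{(0)}A$) rather than through the $(W^4_{(0)}\cdot)_{(1)}\cdot$ term, and the paper handles item (6) with the same uniform $W^4$-recursion rather than the $W^3$-variant you sketch.
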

	
	\begin{proof}
		Consider a general Jacobi identity $J_{1,0}(W^4,\tilde W^{i,\mu},\tilde W^{j,\nu})$ which reads
		\begin{equation}\label{Recurion0Full}
			\left(1-w^{W^4,\tilde W^{i,\mu}}_{\partial \tilde W^{i+2,\mu}}\right)W^{i+2,\mu;j,\nu}_0 =
	w^{\tilde W^{i,\mu},\tilde W^{j,\nu}}_{\partial \tilde W^{i+j-2,\mu+\nu}} W^{4,0;i+j-2,\mu+\nu}_0-W^{i,\mu;j+2,\nu}_0+R^{i,\mu;j,\nu}_0,
		\end{equation}
		where 
		\begin{equation}
		R^{i,\mu;j,\nu}(0)=W^4_{(1)}W^{i,\mu;j,\nu}_0 - {W^{4,0;i,\mu}_0}_{(1)}\tilde W^{j,\nu}.
		\end{equation}
		Note that term $R_{1,0}$ is known from inductive data, so we may write
		\begin{equation}\label{Recurion0Partial}
			W^{i+2,\mu;j,\nu}_0\equiv \frac{w^{\tilde W^{i,\mu},\tilde W^{j,\nu}}_{\tilde W^{i+j-2,\mu+\nu}}}{1-w^{W^4,\tilde W^{i,\mu}}_{\partial \tilde W^{i+2,\mu}}} W^{4,0;i+j-2,\mu+\nu}_0-\frac{1}{1-w^{W^4,\tilde W^{i,\mu}}_{\partial \tilde W^{i+2,\mu}}}W^{i,\mu;j+2,\nu}_0.
		\end{equation}
		Finally, we iterate recursion (\ref{Recurion0Partial}) to obtain the desired relations.
	\end{proof}

	Next, we consider second order poles.
	As before, by Proposition (\ref{prop:structure constants}) we have already determined some structure constants arising in the second order poles.
	Therefore, to determine first order poles it is sufficient to analyze the normally ordered differential monomials $W^{i,j}_{1}$, $M^{i,j}_{1}$, and $X^{i,j}_{1}$.
	Moreover by Proposition (\ref{prop:structure constants}), the coefficients of $\tilde W^{i+j,\mu+\nu}$ arising in Jacobi identities $J_{1,1}(W^4,\tilde W^{i,\mu},\tilde W^{j,\nu})$ vanish.
	\begin{lemma}\label{lemma:recursions1}
	Modulo the inductive data, the following hold.
	\begin{enumerate}
		\item  $W^6_{(1)}X^{2n-4}$  determine $\{W^{2i+6}_{(0)}X^{2n-2i-4} | i\geq 1\}$.
		\item $X^2_{(1)}Y^{2n}$ and $W^4_{(0)}X^{2n-2}$ determine $\{X^{2i+2}_{(1)}Y^{2n-2i-2} | i\geq 1\}$.
		\item $X^2_{(1)}X^{2n}$  determine $\{X^{2i+2}_{(1)}X^{2n-2i} | i\geq 1\}$.
		\item $W^3_{(1)}X^{2n-1}$  determine $\{W^{2i+3}_{(1)}X^{2n-2i-2} | i\geq 1\}$.
		\item $W^4_{(1)}W^{2n-1}$  determine $\{W^{2i+4}_{(1)}W^{2n-2i-1}| i\geq 1\}$.
		\item $W^{3}_{(1)}W^{2n-1}$ determine $\{ W^{2i+3}_{(1)}W^{2n-2i-1}| i\geq 1\}$.
	\end{enumerate}
\end{lemma}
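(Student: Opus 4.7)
The proof mirrors that of Lemma \ref{lemma:recursions0}, with the Jacobi identity $J_{1,0}(W^4,\tilde W^{i,\mu},\tilde W^{j,\nu})$ replaced by $J_{1,1}(W^4,\tilde W^{i,\mu},\tilde W^{j,\nu})$ throughout statements (1)--(3) and (5), and with $J_{1,1}(W^3,-,-)$ used in the $W^3$-raising statements (4) and (6). Expanding \eqref{Jacobi} and applying the raising property \eqref{raise} gives
\[
W^4_{(1)}\bigl(\tilde W^{i,\mu}_{(1)}\tilde W^{j,\nu}\bigr) \;=\; \tilde W^{i,\mu}_{(1)}\tilde W^{j+2,\nu} + \bigl(W^4_{(0)}\tilde W^{i,\mu}\bigr)_{(2)}\tilde W^{j,\nu} + \tilde W^{i+2,\mu}_{(1)}\tilde W^{j,\nu}.
\]
By Proposition \ref{prop:structure constants}, $W^4_{(0)}\tilde W^{i,\mu} = \tfrac{3}{i+2}\,\partial\tilde W^{i+2,\mu} + R^{i,\mu}$, where $R^{i,\mu}$ is a differential polynomial in generators of weight strictly less than $i+2$, so that $R^{i,\mu}_{(2)}\tilde W^{j,\nu}$ is expressible via inductive data. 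Combined with the identity $(\partial a)_{(2)}b = -2 a_{(1)}b$ from \eqref{conformal identity}, this yields the recursion
\[
\frac{i-4}{i+2}\,\tilde W^{i+2,\mu}_{(1)}\tilde W^{j,\nu} \;\equiv\; W^4_{(1)}\bigl(\tilde W^{i,\mu}_{(1)}\tilde W^{j,\nu}\bigr) - \tilde W^{i,\mu}_{(1)}\tilde W^{j+2,\nu},
\]
where $\equiv$ denotes congruence modulo $D_{2n}\cup D_{2n+1}$.

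The crucial feature is that the leading coefficient $\tfrac{i-4}{i+2}$ is invertible precisely for $i \geq 6$. This is exactly why the base cases for statements (1), (2), (3), and (5) involve weight-$6$ (or $X^2$, $X^{2n}$) generators rather than weight-$4$ ones: the $J_{1,1}$ recursion degenerates at $i=4$, reflecting that the coefficient of $\tilde W^{i+j,\mu+\nu}$ in $J_{1,1}(W^4,\tilde W^{i,\mu},\tilde W^{j,\nu})$ vanishes automatically. Iterating the recursion from the stated base in each case then gives every member of the claimed family one step at a time. In statement (2), the additional ingredient $W^4_{(0)}X^{2n-2}$ from Lemma \ref{lemma:recursions0} is needed to evaluate the $R^{i,\mu}_{(2)}\tilde W^{j,\nu}$ contribution explicitly, and similarly statement (6) uses $W^4_{(0)}W^{2n-2}$.

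For statements (4) and (6), which raise odd-weight generators through $W^3$, a parallel analysis of $J_{1,1}(W^3, \tilde W^{i,\mu},\tilde W^{j,\nu})$ using the explicit $W^3_{(0)}$ data from Proposition \ref{prop:structure constants} yields an analogous recursion whose leading coefficient is again a nonzero rational number on the relevant index range.

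The main obstacle is the bookkeeping: for each of the six product types, one must confirm that the two residual contributions $R^{i,\mu}_{(2)}\tilde W^{j,\nu}$ and $W^4_{(1)}(\tilde W^{i,\mu}_{(1)}\tilde W^{j,\nu})$ involve only OPEs of total weight at most $2n+1$ once one expands them via \eqref{quasi-derivation} and the known structure of the OPEs among fields of lower weight. This combinatorial weight-and-Heisenberg-charge tracking is routine given Proposition \ref{prop:structure constants} and the vanishing of the quadratic leading structure constants guaranteed by the normalization \eqref{eq:scale1}, but it is the step where the induction hypothesis is actually used.
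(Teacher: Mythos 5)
Your proposal is essentially the paper's own proof: the paper likewise imposes $J_{1,1}(W^4,\tilde W^{i,\mu},\tilde W^{j,\nu})$, isolates the target product with leading coefficient $1-2w^{W^4,\tilde W^{i,\mu}}_{\partial \tilde W^{i+2,\mu}}=\tfrac{i-4}{i+2}$, absorbs the term $(W^{4,0;i,\mu}_0)_{(2)}\tilde W^{j,\nu}$ into the inductive data, and iterates. The one place you deviate is in claiming that statements (4) and (6) require $J_{1,1}(W^3,-,-)$: the families there increment the odd-weight first index by two, which is the $W^4_{(1)}$ raising, so the paper's uniform recursion $J_{1,1}(W^4,W^{2i+1},\cdot)$ applies with nonvanishing coefficient $\tfrac{2i-3}{2i+3}$, whereas a $W^3$-based identity raises the weight by one and does not match the index pattern of those families; this is a minor misstep rather than a gap, since the machinery you set up for the other cases already covers (4) and (6).
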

	
	\begin{proof}
		Consider a general Jacobi identity $J_{1,1}(W^4,\tilde W^{i,\mu},\tilde W^{j,\nu})$ which reads
		\begin{equation}\label{Recursion1Full}
			\left(1-2w^{\tilde W^4,\tilde W^{i,\mu}}_{\partial \tilde W^{i+2,\mu}}\right)W^{i+2;\mu,j,\nu}_1 =W^{i,\mu;j+2,\nu}_1+R^{i,\mu;j,\nu}_1,
		\end{equation}
		where
		\[R^{i,\mu;j,\nu}_1=W^4_{(1)}W^{i,\mu;j,\nu}_1-{W^{4,0;i,\mu}_0}_{(2)}\tilde W^{j,\nu}.
		\]
		Note that term $R^{i,\mu;j,\nu}_1$ is known from inductive data, so we may write
		\begin{equation}\label{Recurion1Partial}
			W^{i+2,\mu;j,\nu}_1 \equiv \frac{1}{1-2w^{W^4,\tilde W^{i,\mu}}_{\partial \tilde W^{i+2,\mu}}}W^{i,\mu;j+2,\nu}_1.
		\end{equation}
		Iterating recursion (\ref{Recurion0Partial}) gives rise to the desired linear relations.
	\end{proof}
	
	Lastly, we consider the higher order products.
	\begin{lemma}\label{lemma:recursionsr}
	Let $r>1$. Modulo the inductive data, the following hold.
	\begin{enumerate}
		\item  $W^4_{(r)}X^{2n-4}$  determine $\{W^{2i+6}_{(0)}X^{2n-2i-4} | i\geq 1\}$.
		\item $X^2_{(r)}Y^{2n}$ and $W^4_{(r)}W^{2n-2}$ determine $\{X^{2i+2}_{(r)}Y^{2n-2i-2} | i\geq 1\}$.
		\item $X^2_{(r)}X^{2n}$  determine $\{X^{2i+2}_{(r)}X^{2n-2i} | i\geq 1\}$.
		\item $W^3_{(r)}X^{2n-1}$  determine $\{W^{2i+3}_{(r)}X^{2n-2i-2} | i\geq 1\}$.
		\item $W^4_{(r)}W^{2n-1}$  determine $\{W^{2i+4}_{(r)}W^{2n-2i-1}| i\geq 1\}$.
		\item $W^{3}_{(r)}W^{2n-1}$ determine $\{ W^{2i+3}_{(r)}W^{2n-2i-1}| i\geq 1\}$.
	\end{enumerate}
\end{lemma}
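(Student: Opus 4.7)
The plan is to adapt the arguments of Lemmas \ref{lemma:recursions0} and \ref{lemma:recursions1} to higher modes by applying the Jacobi identity $J_{1,r}(W^4,\tilde W^{i,\mu},\tilde W^{j,\nu})$ for $r\geq 2$. Combining \eqref{Jacobi} with the raising property \eqref{raise}, this identity becomes
\[
W^4_{(1)}\bigl(\tilde W^{i,\mu}_{(r)}\tilde W^{j,\nu}\bigr) = \tilde W^{i,\mu}_{(r)}\tilde W^{j+2,\nu} + (W^4_{(0)}\tilde W^{i,\mu})_{(r+1)}\tilde W^{j,\nu} + \tilde W^{i+2,\mu}_{(r)}\tilde W^{j,\nu},
\]
which I would rearrange into a recursion raising the first index $i \mapsto i+2$ in exchange for raising the second index $j \mapsto j+2$ and adding correction terms. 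This is the direct analog for higher modes of \eqref{Recurion0Partial} and \eqref{Recurion1Partial}.

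I would then iterate this recursion along the fixed-total-weight diagonal. Starting from the base OPE listed in each item, successive applications produce the remaining products in that family; for example, beginning from $W^4_{(r)}X^{2n-4}$ in item (1), one obtains $W^6_{(r)}X^{2n-6}, W^8_{(r)}X^{2n-8},\dots$ in turn. For items (2), (5), and (6), which each require two base OPEs, the second is needed to seed the complementary branch of the iteration connected to the first via skew-symmetry \eqref{skew-symmetry}, which reduces the reversed ordering modulo derivatives of lower-weight products already in the inductive data.

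The three terms on the right-hand side of the recursion are all computable from inductive data together with the specified base OPEs. The first-order pole $W^4_{(0)}\tilde W^{i,\mu}$ is determined by Lemma \ref{lemma:recursions0}, and its $(r+1)$-st mode acting on $\tilde W^{j,\nu}$ reduces by the quasi-derivation identity \eqref{quasi-derivation} to modes of products in $D_{2n+1}$. The term $W^4_{(1)}\bigl(\tilde W^{i,\mu}_{(r)}\tilde W^{j,\nu}\bigr)$ is evaluated by applying $W^4_{(1)}$ term by term to the (already known) differential polynomial $\tilde W^{i,\mu}_{(r)}\tilde W^{j,\nu}$, using inductive data together with Lemma \ref{recursionsSym}. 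Finally, $\tilde W^{i,\mu}_{(r)}\tilde W^{j+2,\nu}$ is the previously determined case in the diagonal iteration.

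The main obstacle is the non-degeneracy of the recursion: after extracting the coefficient of each primary descendant of $\tilde W^{i+j-r,\mu+\nu}$, the scalar multiplying the unknown structure constant of $\tilde W^{i+2,\mu}_{(r)}\tilde W^{j,\nu}$ must be invertible in the base ring $R = D^{-1}\mathbb{C}[c,k]$. By Proposition \ref{prop:structure constants}, this coefficient is a rational function of $c$, $k$, and the known base structure constants, with an explicit combinatorial form analogous to the factor $(i+2)/(i+j-2)$ in \eqref{inductLeft}; if needed, $D$ is enlarged by finitely many polynomials to ensure invertibility, without affecting the statement of Theorem \ref{thm:induction}.
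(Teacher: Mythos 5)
Your overall strategy is parallel to the paper's, but you have chosen the wrong member of the family of Jacobi identities, and this creates a genuine gap. You use $J_{1,r}(W^4,\tilde W^{i,\mu},\tilde W^{j,\nu})$, whose $s=0$ and $s=1$ contributions combine to give the unknown $\tilde W^{i+2,\mu}_{(r)}\tilde W^{j,\nu}$ the coefficient $1-(r+1)\,w^{W^4,\tilde W^{i,\mu}}_{\partial \tilde W^{i+2,\mu}}$. By Proposition \ref{prop:structure constants} (or equivalently \eqref{tempEq} together with the raising property), $w^{W^4,\tilde W^{i,\mu}}_{\partial \tilde W^{i+2,\mu}}=\tfrac{3}{i+2}$, so your coefficient equals $\tfrac{i-3r-1}{i+2}$, which vanishes \emph{identically} whenever $i=3r+1$. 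This is a pure rational number, not a rational function of $c$ and $k$, so your proposed remedy of enlarging the multiplicatively closed set $D$ cannot restore invertibility; the recursion step from the member with first field of weight $3r+1$ to the member of weight $3r+3$ simply drops out (e.g.\ for $r=2$ the step producing $W^{9}_{(2)}W^{2n-7}$ in items (5)--(6), and for $r=3$ the step producing the weight-$12$ members of the even families). At such a step the identity degenerates into a consistency relation on the already-known member and gives no information about the new one, and for the items with only a single base OPE there is no second equation to close the resulting system.

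The paper instead imposes $J_{r,1}(W^4,\tilde W^{i,\mu},\tilde W^{j,\nu})$, i.e.\ it puts the mode $r$ on $W^4$ and works inside the \emph{second}-order pole $\tilde W^{i,\mu}_{(1)}\tilde W^{j,\nu}$. This has two advantages you lose. First, the coefficient of the unknown becomes $r-(r+1)\tfrac{3}{i+2}=\tfrac{r(i-1)-3}{i+2}$, which is nonzero for all $r\geq 2$ and all weights $i$ occurring in the families. Second, since $\tilde W^{i,\mu}_{(1)}\tilde W^{j,\nu}=w^{\tilde W^{i,\mu},\tilde W^{j,\nu}}_{\tilde W^{i+j-2,\mu+\nu}}\tilde W^{i+j-2,\mu+\nu}+(\text{inductive data})$, the left-hand side collapses in a single step to a multiple of the base product $W^4_{(r)}\tilde W^{i+j-2,\mu+\nu}$, so each family member is expressed directly in terms of the listed base OPE rather than through a chain of diagonal steps whose nondegeneracy must be checked one by one. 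To repair your argument you would need to replace $J_{1,r}$ by $J_{r,1}$ (or otherwise supply an independent identity at each degenerate step $i=3r+1$); as written, the proof does not go through.
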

	
	\begin{proof}
		Consider Jacobi identity $J_{r,1}(W^4,\tilde W^{i,\mu},\tilde W^{j,\nu})$ which reads
		\begin{equation}\label{rRecution}
			\begin{split}
				\left(r-(r+1)w^{W^4,\tilde W^{i,\mu}}_{\partial \tilde W^{i+2,\mu}}\right){\tilde W^{i+2,\mu}}_{(r)}\tilde W^{j,\nu} ={w^{\tilde W^{i,\mu},\tilde W^{j,\nu}}_{\tilde W^{i+j-2,\mu+\nu}}} W^4_{(r)}\tilde W^{i+j-2,\mu+\nu}+ R^{i,\mu;j,\nu}(r),
			\end{split}
		\end{equation}
		where \[R^{i,\mu;j,\nu}_r=W^4_{(r)}W^{i,\mu;j,\nu}_1-{W^{4,0;i\mu}_0}_{(r+1)}\tilde W^{j,\nu}-\sum_{j=2}^r\binom{r}{j}(W^4_{(j)}\tilde W^{i,\mu})_{(r+1-j)}\tilde W^{j,\nu}.\]
		Note that term $R^{i,\mu;j,\nu}(r)$ is known from inductive data, so we may write
		\begin{equation}\label{Recurionrpartial}
			(\tilde W^{i+2,\mu})_{(r)}\tilde W^{j,\nu} \equiv \frac{w^{\tilde W^{i,\mu},\tilde W^{j,\nu}}_{\tilde W^{i+j-2,\mu+\nu}}}{r-(r+1)w^{W^4,\tilde W^{i,\mu}}_{\partial \tilde W^{i+2,\mu}}}W^{4}_{(r)}\tilde W^{i+j-2,\mu+\nu}.
		\end{equation}
		Iterating recursions (\ref{Recurionrpartial}) gives rise to the desired linear relations.
	\end{proof}
	
	From now on we assume that Jacobi identities used in the proof of Proposition (\ref{aa}) have been imposed.
	In the next series of Lemmas (\ref{0product}-\ref{rproduct}), we write down a small set of Jacobi identities to obtain linear relations among the desired products. 
	This reduces our problem to solving a linear system of equations.

	First, the first-order poles are also uniquely determined.
	\begin{lemma}\label{0product}
		First-order poles in $D^{2n+3}_0$ and $D^{2n+2}_0$ are uniquely determined from the inductive data with $D^{2n+3}_1$ and $D^{2n+2}_1$. Specifically, assuming the relations appearing in Lemma (\ref{lemma:recursions0}) we have the following.
		\begin{enumerate}
			\item Jacobi relations $J_{1,0}(W^3,W^3,W^{2n-2})$ and $J_{0,1}(W^4,W^3,W^{2n-3})$ express $W^{3}_{(0)}W^{2n-1}$, and $W^{4}_{(0)}W^{2n-2}$ in terms of inductive data, and $W^{3,2n-1}_1$. Specifically, we have the following relations.
			\begin{equation*}
				\begin{split}
			W^{3,2n-1}_0\equiv & \frac{2 (n-1)}{n (2 n+1)}\partial W^{3,2n-1}_1  \  \T{mod} \ D^{2n+1}(1,1)\cup D^{2n}(1,1),\\
			W^{4,2n-2}_0 \equiv & \frac{(n-1)!}{n(2n+1)\left(\frac{1}{2}\right)_{n-1}}\partial W^{3,2n-1}_1  \  \T{mod} \ D^{2n+1}(1,1)\cup D^{2n}(1,1).
				\end{split}
			\end{equation*}
		\item Jacobi relation $J_{0,1}(W^4,W^6,W^{2n-5})$ expresses $W^{4}_{(0)}W^{2n-1}$ in terms of inductive data and $W^{6,2n-3}_1 $. Specifically, we have the following relation.
		\[W^{4,2n-1}_0\equiv \frac{5 (2 n-1)}{4 (n+1) (2 n+1)}\partial W^{6,2n-3}_1 \  \T{mod} \ D^{2n+1}(1,1).\]
			\item Jacobi relations $J_{0,1}(W^4,W^6,X^{2n-6})$ and $J_{0,1}(W^4,X^2,W^{2n-3})$ express $W^4_{(0)}X^{2n-2}$, and $X^2_{(0)}W^{2n-1}$ in terms of inductive data, $X^{6,2n-4}_1$, and $X^{2}_{(1)}W^{2n-1}$, respectively. Specifically, we have the following relations.
		\begin{equation*}
			\begin{split}
				X^{4,2n-2}_0\equiv &\frac{5 (n-1)}{2 n (2 n+1)}\partial X^{6,2n-4}_1  \  \T{mod} \ D^{2n}(1,S),\\
				X^{2,2n-1}_0\equiv &-\frac{1}{2n}\partial (X^{2}_{(1)}W^{2n-1}) \  \T{mod} \ D^{2n+1}(1,1).\\
			\end{split}
		\end{equation*}
			\item Jacobi relation  $J_{0,1}(W^4,X^2,Y^{2n-2})$ expresses $X^{2}_{(0)}Y^{2n}$ in terms of inductive data, $M^{2,2n}_1$, $W^{4,2n-2}_0$ and $X^{4,2n-2}_0$. Specifically, we have the following relation.
			\[M^{2,2n}_0\equiv \frac{1}{6} n W^{4,2n-2}_0 -\frac{1}{2 n+1}\partial M^{2,2n}_1-\frac{2 n}{3 (2 n+1)}X^2_{(1)}\sigma X^{4,2n-2}_0\  \T{mod} \ D^{2n}(1,1)\cup D^{2n}(S,S).\]
		\end{enumerate}
	\end{lemma}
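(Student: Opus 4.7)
The plan is to evaluate each of the specified Jacobi identities $J_{r,s}(a,b,c)$ by the identity
\[
a_{(r)}(b_{(s)}c) \;=\; (-1)^{|a||b|} b_{(s)}(a_{(r)}c) + \sum_{i=0}^{r}\binom{r}{i}(a_{(i)}b)_{(r+s-i)}c,
\]
and, in each case, to isolate a \emph{single} unknown first-order pole belonging to $D^{2n+2}_0\cup D^{2n+3}_0$. Every other summand will fall into one of three classes: (i) products of total weight $\leq 2n+1$, hence inductive data in $D_{2n}\cup D_{2n+1}$; (ii) second-order poles $W^{i,j}_1,\,X^{i,j}_1,\,M^{i,j}_1$ lying in $D^{2n+2}_1\cup D^{2n+3}_1$, which the lemma treats as known input; or (iii) first-order poles in $D^{2n+2}_0\cup D^{2n+3}_0$ that have already been expressed via the preceding Lemmas \ref{lemma:recursions0}--\ref{lemma:recursionsr}. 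The scalar coefficients with which the target first-order pole appears are all nonzero rational functions for $n\geq 2$, computable directly from Proposition \ref{prop:structure constants}, so the resulting equation is solvable.

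First I would carry out $J_{1,0}(W^3,W^3,W^{2n-2})$. The term $W^3_{(1)}(W^3_{(0)}W^{2n-2})$, after the quasi-derivation identity \eqref{quasi-derivation}, produces exactly one copy of $W^3_{(0)}W^{2n-1}$ through the leading structure constant of $W^3_{(0)}W^{2n-2}$; the symmetric term $W^3_{(0)}(W^3_{(1)}W^{2n-2})$ contributes only inductive data together with $W^{3,2n-1}_1$ (after using that $W^3_{(1)}W^{2n-2}\in D^{2n}_1$ is known); and the sum $\sum_i\binom{1}{i}(W^3_{(i)}W^3)_{(1-i)}W^{2n-2}$ reduces to inductive data via Step~1. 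Extracting the coefficient of $W^{3,2n-1}_0$ gives a linear equation with the rational coefficient predicted by Proposition \ref{prop:structure constants}, which solves to $\frac{2(n-1)}{n(2n+1)}\partial W^{3,2n-1}_1$. The identity $J_{0,1}(W^4,W^3,W^{2n-3})$ is handled the same way: the raising property $W^4_{(1)}W^{2n-3}=W^{2n-1}$ ensures that $W^{4,2n-2}_0$ appears once, and the only remaining unknown on the right-hand side is $W^{3,2n-1}_1$, producing the stated formula. Parts (2)--(4) are structurally identical: in each, the chosen Jacobi triple is arranged so that $W^4_{(1)}$ (or $W^3_{(1)}$) acts on a lower-weight field via the raising property, generating exactly one copy of the desired first-order pole, while every other quasi-derivation summand either belongs to inductive data or is one of the $W^{i,j}_1$, $X^{i,j}_1$, $M^{i,j}_1$ on the right-hand side of the statement.

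The main obstacle will be bookkeeping the quasi-derivation expansion. Each Jacobi identity unfolds into a sizable collection of normally-ordered differential monomials, and one must systematically verify (a) that no \emph{other} unknown in $D^{2n+2}_0\cup D^{2n+3}_0$ sneaks in beyond the target, (b) that all contributions collapse into a derivative of the indicated $W^{i,j}_1$, $X^{i,j}_1$, or $M^{i,j}_1$ modulo inductive data, and (c) that the precise rational coefficients ($\tfrac{2(n-1)}{n(2n+1)}$, $\tfrac{(n-1)!}{n(2n+1)(\frac{1}{2})_{n-1}}$, $\tfrac{5(2n-1)}{4(n+1)(2n+1)}$, $\tfrac{5(n-1)}{2n(2n+1)}$, $-\tfrac{1}{2n}$, and the three coefficients in part (4)) emerge correctly. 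The latter follows by combining the combinatorial $\binom{r}{i}$ factors from \eqref{quasi-derivation} with the explicit double-factorial coefficients of Proposition \ref{prop:structure constants}; part (4), being the most intricate, additionally consumes the already-derived expressions for $W^{4,2n-2}_0$ and $X^{4,2n-2}_0$ to absorb the $W^4_{(1)}$-descendants, which is precisely what produces the mixed coefficient $\tfrac{1}{6}n$ in front of $W^{4,2n-2}_0$ and $-\tfrac{2n}{3(2n+1)}$ in front of $X^2_{(1)}\sigma X^{4,2n-2}_0$. The denominators appearing in the formulas are nonzero throughout the inductive range $n\geq 2$, and the remaining small-$n$ cases are covered by the base case Proposition \ref{prop:base case}.
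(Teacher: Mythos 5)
Your proposal matches the paper's (implicit) argument: the paper gives no separate proof of this lemma beyond the remark that one writes down the listed Jacobi identities, expands them via \eqref{quasi-derivation} and Proposition \ref{prop:structure constants}, and solves the resulting linear system — exactly the procedure you describe, with the same role assigned to the raising property, the inductive data, and the already-established recursions of Lemmas \ref{lemma:recursions0}--\ref{lemma:recursionsr}. One small imprecision: in $J_{0,1}(W^4,W^3,W^{2n-3})$ the copy of $W^{4,2n-2}_0$ arises from $W^4_{(0)}$ acting on the leading term $W^{2n-2}$ of $W^3_{(1)}W^{2n-3}$ (Proposition \ref{prop:structure constants}), not from the raising property $W^4_{(1)}W^{2n-3}=W^{2n-1}$ as you state, but this does not affect the validity of the argument.
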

		
	Next, the second-order poles are also uniquely determined.
	
	\begin{lemma}\label{1product}
		Second-order poles in $D^{2n+3}_1$ and $D^{2n+2}_1$ are uniquely determined from the inductive data with $D^{2n+3}_2$ and $D^{2n+2}_2$. Specifically, assuming the relations appearing in Lemma (\ref{lemma:recursions1}) we have the following.
		\begin{enumerate}
			\item Jacobi relations $J_{0,2}(W^3,W^3,W^{2n-2})$ and $J_{0,2}(W^4,W^3,W^{2n-3})$ express $W^{3}_{(1)}W^{2n-1}$, and $W^{4}_{(1)}W^{2n-2}$ in terms of inductive data, $W^{3}_{(2)}W^{2n-1}$, and $W^{4}_{(2)}W^{2n-2}$. Specifically, we have the following relations.
			\begin{equation*}
				\begin{split}
					W^{3,2n-1}_1\equiv & \frac{1}{4 (n-1)} \partial (W^{3}_{(2)}W^{2n-1})  \  \T{mod} \ D^{2n+1}(1,1)\cup D^{2n}(1,1),\\
					W^{6,2n-4}_1 \equiv & \frac{3}{2 (n-1)}\partial (W^{4}_{(2)}W^{2n-2})  \  \T{mod} \ D^{2n+1}(1,1)\cup D^{2n}(1,1).
				\end{split}
			\end{equation*}
			\item Jacobi relation $J_{0,2}(W^4,W^6,W^{2n-5})$ expresses $W^{4}_{(1)}W^{2n-1}$ in terms of inductive data and $W^6_{(2)}W^{2n-3} $. Specifically, we have the following relation.
			\[W^{6,2n-3}_1\equiv -\frac{1}{2 n-1} \partial (W^6_{(2)}W^{2n-3}) \  \T{mod} \ D^{2n+1}(1,1).\]
			\item Jacobi relations $J_{0,2}(W^4,W^6,X^{2n-6})$ and $J_{0,2}(W^4,X^2,W^{2n-3})$ expresses $W^4_{(1)}X^{2n-2}$, and $X^{2}_{(1)}W^{2n-1}$  in terms of inductive data, $W^{6}_{(2)}X^{2n-4}$, and $X^{2}_{(2)}W^{2n-1}$, respectively. Specifically, we have the following relations.
			\begin{equation*}
				\begin{split}
					X^{6,2n-4}_1\equiv &-\frac{1}{2 (n-1)}\partial (W^{6}_{(2)}X^{2n-4})  \  \T{mod} \ D^{2n}(1,S),\\
					X^{2}_{(1)}W^{2n-1}\equiv &			\frac{1}{2 n-3}\partial (X^{2}_{(2)}W^{2n-1})  \  \T{mod} \ D^{2n+1}(1,S).
				\end{split}
			\end{equation*}
			\item Jacobi relation  $J_{0,2}(W^4,X^2,Y^{2n-2})$ expresses $X^{2}_{(1)}Y^{2n}$ in terms of inductive data, $X^{2}_{(2)}Y^{2n}$ and $X^{4,2n-2}_0$. Specifically, we have the following relation.
			\[M^{2,2n}_1\equiv \frac{1}{2 (n-1)}\partial (X^{2}_{(2)}Y^{2n})+\frac{n}{3 (n-1)}X^2_{(2)}\sigma X^{4,2n-2}_0\  \T{mod} \ D^{2n}(1,1)\cup D^{2n}(S,S).\]
		\end{enumerate}
	\end{lemma}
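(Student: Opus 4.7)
The plan is to mirror the proof of Lemma \ref{0product} shifted up by one in pole order. Whereas that lemma recovered first-order poles from second-order poles via $J_{0,1}$ and $J_{1,0}$ identities, second-order poles are recovered from third-order poles via $J_{0,2}$ identities. The underlying Jacobi relation
\begin{equation*}
a_{(0)}\bigl(b_{(2)} c\bigr) = (-1)^{|a||b|} b_{(2)} \bigl(a_{(0)} c\bigr) + (a_{(0)} b)_{(2)} c
\end{equation*}
moves a low-weight ``probe'' field $a \in \{W^3, W^4, X^2\}$ through the target product $b_{(2)} c$, converting an unknown entry of $D^{2n+3}_1 \cup D^{2n+2}_1$ into a total derivative of a third-order pole plus terms already in the inductive data. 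Since Lemma \ref{lemma:recursions1} already eliminates the raising ambiguity among second-order poles at this weight, determining the six anchor poles $W^{3,2n-1}_1$, $W^{6,2n-4}_1$, $W^{6,2n-3}_1$, $X^{6,2n-4}_1$, $X^2_{(1)} W^{2n-1}$, and $M^{2,2n}_1$ suffices to pin down all of $D^{2n+3}_1 \cup D^{2n+2}_1$.

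For each of the six Jacobi identities listed in the statement, I would: expand using \eqref{Jacobi}; apply the base-case OPEs of Proposition \ref{prop:base case} together with the leading-coefficient formulas of Proposition \ref{prop:structure constants} to rewrite each term; extract the coefficient of the unique weight-$(n+m-3)$ primary of the correct Heisenberg charge appearing on the left-hand side; and solve the resulting scalar equation. For example, in the case $J_{0,2}(W^3,W^3,W^{2n-2})$ the left side decomposes as $W^3_{(0)}\bigl(W^3_{(2)}W^{2n-2}\bigr)$, which after use of \eqref{conformal identity} and the known base OPE $W^3(z)W^3(w)$ contributes precisely $W^{3,2n-1}_1$ times a known nonzero scalar; the right side produces $\partial\bigl(W^3_{(2)}W^{2n-1}\bigr)$ modulo inductive data, and the ratio yields the coefficient $\tfrac{1}{4(n-1)}$. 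The remaining cases proceed identically with the triples $(W^4,W^3,W^{2n-3})$, $(W^4,W^6,W^{2n-5})$, $(W^4,W^6,X^{2n-6})$, $(W^4,X^2,W^{2n-3})$, and $(W^4,X^2,Y^{2n-2})$, each chosen so that exactly one anchor unknown appears with a nonvanishing coefficient. Relation (4) additionally calls on the previously obtained $X^{4,2n-2}_0$ from Lemma \ref{0product} to handle the standard-standard cross-term.

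The main obstacle, as in the analogous step for first-order poles, is verifying channel by channel that the descendant contributions $\Lambda\bigl(W^{(\Gamma,\gamma)}\bigr)$ with $\Lambda \neq \mathbf{1}$ in the ansatz \eqref{conformal ansatz} arising from the third-order-pole content of each Jacobi identity collapse modulo inductive data to total derivatives of lower-pole primaries. This collapse is a weight-and-charge counting argument: at the specific conformal weight $n+m-3$ and Heisenberg charge $\mu+\nu$ of interest, the only symmetry-algebra descendant of a genuinely new primary that is not already subsumed in the inductive data is $\partial$ applied to a weight-$(n+m-4)$ primary, which itself lies in $D_{2n+1}\cup D_{2n}$. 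Once this collapse is verified for each of the four items of the lemma, the closed-form coefficients $\tfrac{1}{4(n-1)}$, $\tfrac{3}{2(n-1)}$, $-\tfrac{1}{2n-1}$, $-\tfrac{1}{2(n-1)}$, $\tfrac{1}{2n-3}$, and $\tfrac{1}{2(n-1)}$ fall out as algebraic rearrangements using the leading-pole ratios recorded in Proposition \ref{prop:structure constants}, completing the induction step for the second-order pole data.
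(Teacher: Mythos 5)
Your strategy coincides with the paper's: Lemma \ref{1product} carries no separate proof, the intended argument being exactly the one you outline --- impose the listed $J_{0,2}$ identities, expand them against the inductive data and Proposition \ref{prop:structure constants}, and solve the resulting linear relations for the six second-order anchors, with Lemma \ref{lemma:recursions1} propagating these to the rest of $D^{2n+2}_1\cup D^{2n+3}_1$. At the level of method there is nothing to object to.

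Your one concrete computation, however, has the mechanics backwards, and since it is the only place you commit to details it should be fixed. In $J_{0,2}(W^3,W^3,W^{2n-2})$ the left-hand side $W^3_{(0)}\bigl(W^3_{(2)}W^{2n-2}\bigr)$ is \emph{entirely} inductive data: $W^3_{(2)}W^{2n-2}\in D^{2n+1}_2$ is a normally ordered polynomial in generators of weight at most $2n-2$, so applying $W^3_{(0)}$ produces only products of total weight at most $2n+1$. The unknown $W^{3,2n-1}_1$ and the third-order pole do not come from the left side; they arise \emph{together} from the right-hand term $W^3_{(2)}\bigl(W^3_{(0)}W^{2n-2}\bigr)$, because $W^3_{(0)}W^{2n-2}$ contains $\partial W^{2n-1}$ with the known nonzero coefficient of Proposition \ref{prop:structure constants} and $W^3_{(2)}\partial W^{2n-1}=\partial\bigl(W^3_{(2)}W^{2n-1}\bigr)+2\,W^3_{(1)}W^{2n-1}$. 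Relatedly, your claim that each triple is ``chosen so that exactly one anchor unknown appears'' is not what the lemma asserts: item (1) uses two Jacobi identities to determine $W^{3,2n-1}_1$ and $W^{6,2n-4}_1$ jointly, and item (4) expresses $M^{2,2n}_1$ partly through $X^{4,2n-2}_0\in D^{2n+2}_0$, which is \emph{not} previously obtained --- it is determined only afterwards, in Lemma \ref{0product}(3), in terms of the $X^{6,2n-4}_1$ found here, so the first- and second-order data form one coupled linear system rather than a sequence of decoupled ratios. None of this invalidates the approach --- expanding the identities honestly still yields a solvable linear system --- but as written your ratio argument extracts the scalar from the wrong term and would not reproduce the stated coefficients.
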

	
	Finally, the higher-order poles are also uniquely determined.
\begin{lemma}\label{rproduct}
	Let $r\geq 2$. The $r^{\T{th}}$-order poles in $D^{2n+3}_r$ and $D^{2n+2}_r$ are uniquely determined from the inductive data with $D^{2n+3}_{r+1}$ and $D^{2n+2}_{r+1}$. Specifically, assuming the relations appearing in Lemma (\ref{lemma:recursionsr}) we have the following.
	\begin{enumerate}
		\item Jacobi relations $J_{r,1}(W^3,W^3,W^{2n-2})$, $J_{0,r+1}(W^4,W^3,W^{2n-3})$ and $J_{0,r+1}(W^4,W^4,W^{2n-4})$ express $W^{3}_{(r)}W^{2n-1}$, and $W^{4}_{(r)}W^{2n-2}$ in terms of inductive data, $W^{3}_{(r+1)}W^{2n-1}$, and $W^{4}_{(r+1)}W^{2n-2}$. Specifically, we have the following relations.
		\begin{equation*}
			\begin{split}
				W^{3}_{(r)}W^{2n-1} \equiv &\frac{(r-1) (2 r-3)}{(r+1) (2 n-2 r+1) (n+r-2)}\partial (W^{3}_{(r+1)}W^{2n-1})  \  \T{mod} \ D^{2n+1}(1,1)\cup D^{2n}(1,1),\\
				W^{4}_{(r)}W^{2n-2} \equiv & \frac{3(2 r-3) (n-1)!}{4(r+1)  \left(\frac{1}{2}\right)_{n-1} (n+r-2) (2 n-2 r+1)}\partial (W^{3}_{(r+1)}W^{2n-1})  \  \T{mod} \ D^{2n+1}(1,1)\cup D^{2n}(1,1).
			\end{split}
		\end{equation*}
		\item Jacobi relation $J_{0,r+1}(W^4,W^6,W^{2n-5})$ expresses $W^{4}_{(r)}W^{2n-1}$ in terms of inductive data and $W^{6}_{(r+1)}W^{2n-3}$. Specifically, we have the following relation.
		\[W^{4}_{(r)}W^{2n-1}\equiv -\frac{(r-1) (5 r-3)}{4 r (r+1)}\partial(W^{6}_{(r+1)}W^{2n-3})\  \T{mod} \ D^{2n+1}(1,1).\]
		\item Jacobi relations $J_{0,r+1}(W^4,W^6,X^{2n-6})$ and $J_{0,r+1}(W^4,X^2,W^{2n-3})$ expresses $W^6_{(r)}X^{2n-4}$, and $X^{2}_{(r)}W^{2n-1}$  in terms of inductive data, $W^{4}_{(r+1)}X^{2n-2}$, and $X^{2}_{(r+1)}W^{2n-1}$, respectively. Specifically, we have the following relations.
		\begin{equation*}
			\begin{split}
				W^{4}_{(r)}X^{2n-2}\equiv &-\frac{r-1}{2 (r+1)}\partial (W^{6}_{(r+1)}X^{2n-4})  \  \T{mod} \ D^{2n}(1,S),\\
				X^{2}_{(r)}W^{2n-1}\equiv &	-\frac{1}{r+1}\partial (X^{2}_{(r+1)}W^{2n-1})  \  \T{mod} \ D^{2n+1}(1,S).
			\end{split}
		\end{equation*}
		\item Jacobi relation $J_{1,r}(W^4,X^2,Y^{2n-2})$ and $J_{0,r+1}(W^4,X^4,Y^{2n-4})$ express $X^{2}_{(r)}Y^{2n}$ in terms of inductive data, $X^{2}_{(r+1)}Y^{2n}$, and $W^{4}_{(r)}W^{2n-2}$.  Specifically, we have the following relation.
	\begin{equation*}
		\begin{split}
				X^{2}_{(r)}Y^{2n}\equiv& \frac{(n-1) (3 r-1)}{6 (r-1)}W^{4}_{(r)}W^{2n-2}-\frac{3 r-1}{4 (r+1)}\partial (X^{2}_{(r+1)}Y^{2n}) \  \T{mod} \ D^{2n}(1,1)\cup D^{2n}(S,S).
		\end{split}
	\end{equation*}
	\end{enumerate}
\end{lemma}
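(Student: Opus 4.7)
The plan is to follow the pattern of Lemmas \ref{0product} and \ref{1product}: expand each listed Jacobi identity via \eqref{Jacobi}, identify the coefficient of each field that does not already lie in the inductive data $D_{2n+2} \cup D_{2n+3}$ known from the higher-pole orders, and solve the resulting small linear system over $\mathbb{C}(c,k)$. Since the claim is a congruence, terms in $D_{2n+1}(1,1) \cup D_{2n}(1,1) \cup \dots$ (the inductive data at smaller total weight) are discarded throughout, and the relations from Lemma \ref{lemma:recursionsr} collapse all structure constants of products $\tilde W^{i+2,\mu}_{(r)}\tilde W^{j,\nu}$ with $i\geq 4$ down to combinations involving only products with $H, L, X^2, Y^2, W^3, W^4, W^6$.

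For item (1), I would first expand $J_{r,1}(W^3, W^3, W^{2n-2})$ using \eqref{Jacobi}:
\[
W^3_{(r)}\bigl(W^3_{(1)}W^{2n-2}\bigr) = W^3_{(1)}\bigl(W^3_{(r)}W^{2n-2}\bigr) + \sum_{i=0}^{r}\binom{r}{i}\bigl(W^3_{(i)}W^3\bigr)_{(r+1-i)}W^{2n-2}.
\]
By Proposition \ref{prop:structure constants}, $W^3_{(1)}W^{2n-2}$ equals a nonzero rational multiple of $W^{2n-1}$ modulo $D^{2n+1}_1(1,1)$, so the LHS contributes a specific nonzero multiple of $W^3_{(r)}W^{2n-1}$. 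On the RHS, $W^3_{(r)}W^{2n-2}$ is inductive data, and the $i=0$ term of the sum carries $(W^3_{(0)}W^3)_{(r+1)}W^{2n-2}$; using the explicit form of $W^3_{(0)}W^3$ from Proposition \ref{prop:structure constants} (which contains $\partial W^4$) together with \eqref{conformal identity}, this produces a multiple of $W^4_{(r)}W^{2n-2}$. All remaining terms in the binomial sum are inductive data. This yields a single linear relation between the two unknowns $W^3_{(r)}W^{2n-1}$ and $W^4_{(r)}W^{2n-2}$, with $\partial\bigl(W^3_{(r+1)}W^{2n-1}\bigr)$ on the right. A parallel expansion of $J_{0,r+1}(W^4, W^3, W^{2n-3})$, together with the raising property $W^4_{(1)}W^{2n-3} = W^{2n-1}$, produces a second linearly independent relation in the same two unknowns. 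Solving the $2 \times 2$ system gives the asserted formulas; the third identity $J_{0,r+1}(W^4, W^4, W^{2n-4})$ provides a consistency check which is automatically satisfied since the system is already determined.

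Items (2), (3), and (4) proceed in an identical manner. For (2), $J_{0,r+1}(W^4,W^6,W^{2n-5})$ isolates $W^4_{(r)}W^{2n-1}$ on the LHS (via $W^4_{(1)}W^6 = W^8$ and the lowering to $W^6$ by the raising relation), while $W^6_{(r+1)}W^{2n-3}$ appears on the RHS from the $(W^4_{(0)}W^6)_{(r+1)}$ term. For (3), the two identities decouple into one for $W^4_{(r)}X^{2n-2}$ and one for $X^2_{(r)}W^{2n-1}$ because the only Heisenberg-charged unknown in each equation is the target. Item (4) is the most delicate: $J_{1,r}(W^4,X^2,Y^{2n-2})$ produces $X^2_{(r)}Y^{2n}$ on the LHS via $W^4_{(1)}X^2 = X^4$ and a contribution from $(W^4_{(0)}X^2)_{(r)}Y^{2n-2}$ via \eqref{conformal identity}, while the RHS contains $W^4_{(r)}(X^2_{(1)}Y^{2n-2})$ which feeds in the already-determined $W^4_{(r)}W^{2n-2}$ from (1), and $J_{0,r+1}(W^4,X^4,Y^{2n-4})$ produces $\partial(X^2_{(r+1)}Y^{2n})$ on the other side.

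The main obstacle is the bookkeeping of binomial coefficients to verify that the denominator $(r+1)(2n-2r+1)(n+r-2)$ and its analogues appearing in the stated formulas arise precisely as the coefficient of the solved-for product, and that they are nonzero in the relevant range. These denominators vanish at exceptional $(r,n)$ values which correspond either to boundary cases handled separately in Lemmas \ref{0product} and \ref{1product} (namely $r \in \{0,1\}$), or to inequalities $n + r \geq 3$ and $2n > 2r-1$ that are automatic from the requirement that the products in question be nontrivial for the inductive step. The nonvanishing of the leading coefficients of the unknowns on the LHS of each Jacobi expansion follows from the scaling normalizations \eqref{eq:scale1}, which were precisely chosen so that the structure constants of low-order poles recorded in Proposition \ref{prop:structure constants} are nonzero rational constants rather than rational functions of $(c,k)$ with accidental zeros.
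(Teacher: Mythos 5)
Your proposal is correct and follows essentially the same route as the paper, which gives no separate proof of this lemma beyond the surrounding remark that one expands the listed Jacobi identities, reduces all weight-$(2n+2)$ and weight-$(2n+3)$ products to the named unknowns via Lemma \ref{lemma:recursionsr}, and solves the resulting linear system over $\mathbb{C}(c,k)$ using the nonvanishing leading coefficients from Proposition \ref{prop:structure constants}. The only caveats are bookkeeping-level: in item (1) the $i=1$ term of the binomial sum also feeds $W^4_{(r)}W^{2n-2}$ (not just the $i=0$ term), and your claim that $J_{0,r+1}(W^4,W^4,W^{2n-4})$ is a mere consistency check is plausible but unverified; neither affects the validity of the approach.
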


Therefore we have proven the following.
	\begin{theorem}\label{thm:induction}
	There exists a nonlinear conformal algebra $\nlcalg$ over the localization of a polynomial ring $D^{-1}\mathbb{C}[c,k]$, with $D$ being the set defined in Proposition \ref{prop:base case}, and satisfying features (\ref{eq:heis}-\ref{eq:raise}) whose universal enveloping vertex algebra $\Wso$ has the following properties.
	\begin{enumerate}\label{Wsp 2 parameters}
		\item It has conformal weight grading \[\Wso=\bigoplus_{N=0}^{\infty}\Wso[N],\quad \Wso[0]=D^{-1}\mathbb{C}[c,k].\]
		\item It is strongly generated by fields $\{X^{2i},Y^{2i} | i\geq 1\}\cup \{H,L,W^{i} | i \geq 3\}$ and satisfies the OPE relations in Proposition \ref{prop:base case}, Jacobi identities in $J_{11}$ and those which appear in Lemmas \ref{lemma:recursions0} -\ref{rproduct}.
		\item It is the unique initial object in the category of vertex algebras with the above properties.
	\end{enumerate} 
	\end{theorem}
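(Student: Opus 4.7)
The plan is to construct the nonlinear conformal algebra $\nlcalg$ by induction on total conformal weight, at each stage determining all OPEs in $D^{2n+2}\cup D^{2n+3}$ from the inductive data $D_{2n}\cup D_{2n+1}$. The base case is Proposition \ref{prop:base case}: after fixing the Heisenberg and Virasoro actions on the $W^{n,\mu}$ and pinning the three scaling constants $\omega_2,\omega_3,\omega_4$ as in (\ref{eq:scale1}), an explicit iterative computation---starting with $J(W^{2,\pm 2},W^{2,\pm 2},W^{2,\mp 2})$, then $J(W^{2,\pm 2},W^{2,\pm 2},W^{3,0})$, and so on through fields of total weight up to $9$---forces every structure constant to be a specific rational function in $c$ and $k$ over the localization $D^{-1}\mathbb{C}[c,k]$, where $D$ collects the finitely many denominators that arise.

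For the inductive step, the raising property $W^4_{(1)}\tilde W^{n,\mu}=\tilde W^{n+2,\mu}$, which is available thanks to (\ref{eq:raise}) applied to the corrected generator $W^4$ of (\ref{W4deformed}), combines with Proposition \ref{aa} to reduce the full set of new OPEs to five seed OPEs: $W^4\times W^{2n-2}$, $W^3\times W^{2n-1}$, $W^4\times W^{2n-1}$, $W^4\times X^{2n-2}$, and $X^2\times Y^{2n-2}$. Lemmas \ref{lemma:recursions0}--\ref{lemma:recursionsr} then extract the non-seed OPEs from these via Jacobi identities of the form $J_{r,s}(W^4,-,-)$ and $J_{r,s}(W^3,-,-)$; the essential input is Proposition \ref{prop:structure constants}, whose closed-form expressions for the leading coefficients $w^{\tilde W^{n,\mu},\tilde W^{m,\nu}}_{\tilde W^{n+m-2,\mu+\nu}}$ are themselves established by a nested induction from the base case, exploiting the raising property and the $J_{2,0}(W^4,\tilde W^{n,\mu},\tilde W^{m,\nu})$ recursion (\ref{20identity}) followed by its $\ell=2$ specialization (\ref{inductLeft}).

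The seed OPEs themselves are determined by Lemmas \ref{0product}--\ref{rproduct}: for each pole order $r$ and each seed, a carefully selected Jacobi identity (for example $J_{0,r+1}(W^4,W^6,W^{2n-5})$ for $W^4\times W^{2n-1}$, or $J_{0,r+1}(W^4,X^2,Y^{2n-2})$ for $X^2\times Y^{2n-2}$) expresses the $r$-th order pole in terms of the $(r+1)$-th order pole of a related seed plus inductive data. Because OPEs of fields whose conformal weights sum to $N$ have pole orders bounded by $N-1$, these recursions terminate after finitely many steps and return each seed OPE in closed form, with all new denominators again absorbed into $D$. The $\mathbb{Z}_2$-automorphism $\sigma$ of (\ref{eq:auto}) further halves the bookkeeping by identifying OPEs related by charge conjugation.

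The principal obstacle is consistency: one must verify that the Jacobi identities not explicitly invoked in the inductive step are automatically satisfied by the constructed structure constants, so that $\nlcalg$ is a bona fide nonlinear conformal algebra. The strategy is to appeal to the $1$-parameter quotient families $\cC^\psi_{XY}(n,m)$ and $\cC^\ell(n)$, constructed independently in Sections \ref{sect:YtypeSO} and \ref{sect:Diagonal} as cosets and orbifolds of known $\cW$-algebras; these provide honest realizations of the structure constants at a Zariski-dense set of points in parameter space, so any polynomial identity forced by an unimposed Jacobi identity must vanish on each quotient and therefore identically. The uniqueness claim (3) is then immediate: any vertex algebra satisfying (1)--(5) has its OPE algebra constrained by the same system of Jacobi identities, so its structure constants must agree with those of $\nlcalg$, exhibiting it as a quotient of the universal enveloping vertex algebra $\cW^{\gs\go_2}_{\infty}$.
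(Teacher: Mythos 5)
Your account of the inductive machinery is essentially the paper's proof: the base case is Proposition \ref{prop:base case}, the raising property reduces everything to the five seed OPEs of Proposition \ref{aa}, Proposition \ref{prop:structure constants} supplies the closed-form leading coefficients via the $J_{2,0}(W^4,-,-)$ recursion, Lemmas \ref{lemma:recursions0}--\ref{lemma:recursionsr} propagate the seeds to the non-seed OPEs, and Lemmas \ref{0product}--\ref{rproduct} close the seeds themselves by descending in pole order. The paper's proof of Theorem \ref{thm:induction} is exactly the accumulation of these lemmas, so up to that point you and the paper coincide.

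Where you diverge is the final consistency paragraph, and there are two problems with it. First, it is not needed for the theorem as stated: item (2) only asserts that $\Wso$ satisfies the OPE relations of Proposition \ref{prop:base case} together with the \emph{listed} Jacobi identities ($J_{11}$ and those invoked in Lemmas \ref{lemma:recursions0}--\ref{rproduct}), not all Jacobi identities. The verification that the remaining identities hold --- equivalently that $\nlcalg$ is a nonlinear \emph{Lie} conformal algebra and $\Wso$ is freely generated --- is deliberately deferred to Corollary \ref{Wso freely generated}. Second, your proposed verification is circular as written: you appeal to the full families $\cC^{\psi}_{XY}(n,m)$ and $\cC^{\ell}(n)$ as "honest realizations of the structure constants," but the fact that these realize the universal structure constants (i.e., are quotients of $\Wso$) is only established in Section \ref{sect:1paramquot}, and those proofs presuppose the constructed, freely generated $\Wso$. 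The paper escapes this by using only the rectangular $\cW$-algebras $\cC^{\psi}_{BD}(0,m)$ and $\cC^{\psi}_{CC}(0,m)$, whose weak generation by the weight $1$ and $2$ fields is proven directly in Theorem \ref{weakgeneration:Walgebras} by comparing Lie algebra structure constants of $\gs\go_{4n+2}^f$ with the universal constants of Proposition \ref{prop:structure constants}; these give quotients whose characters agree with the conjectured free character up to arbitrarily high weight, which is what forces the unimposed Jacobi identities to hold. If you want your consistency step to survive, you must restrict to realizations whose identification with quotients of $\Wso$ does not already assume the conclusion.
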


	\subsection{Step 4: free generation}
	There are more Jacobi identities than those imposed in Lemmas (\ref{lemma:recursions0}-\ref{rproduct}).
	So it is not yet clear that all Jacobi identities among the strong generators hold as a consequence of (\ref{conformal identity}-\ref{quasi-derivation}) alone, or equivalently, that $\nlcalg$ is a nonlinear Lie conformal algebra \cite{DSK} and $\Wso$ is freely generated. 
	In order to prove it, we will consider certain simple quotients of $\Wso$. 
	Let $I\subseteq R \cong \Wso [0]$
	be an ideal, and let $I\cdot \Wso$ denote the vertex algebra ideal generated by $I$. 
	The quotient 
	\begin{equation}
		\cW^{\fr{so}_2,I}_{\infty} = \Wso / I\cdot \Wso
	\end{equation}
	has strong generators $\{X^{2i},Y^{2i} | i\geq 1\}\cup \{H,L,W^{i} | i \geq 3\}$ satisfying the same OPE relations as the corresponding  generators of $\Wso$ where all structure constants in $R$ are replaced by their images in $R/I$.
	
	We now consider a localization of $\cW^{\fr{so}_2,I}_{\infty}$.
	Let $E \subseteq R/I$ be a multiplicatively closed set, and let $S= E^{-1}(R/I)$ denote the localization of $R/I$ along $S$.
	\\
	Thus we have localization of $R/I$-modules
	\[\cW^{\fr{so}_2,I}_{\infty, S}= S\otimes_{R/I}\cW^{\fr{so}_2,I}_{\infty},\]
	which is a vertex algebra over $S$.
	
	\begin{theorem}\label{one-parameter quotients theorem}
		Let $R$, $I$, $E$, and $S$ be as above, and let $\cW$ be a simple vertex algebra over $S$ with the following properties.
		\begin{enumerate}
			\item $\cW$ is generated by affine fields $\bar {H}^1$, Virasoro field $\bar{L}$ of central charge $c$ and weight 2 primary fields $\bar{X}^2,\bar{Y}^2$.
			\item Setting $\bar{W}^{n+2} =\bar{W}^4_{(1)} \bar{W}^{n}$ for all $i\geq 2$, the OPE relations for $\bar{W}^{n}(z)\bar{W}^{m}(z)$ for $n+m\leq 9$ are the same as in $\Wso$ if the structure constants are replaced with their images in $S$.
		\end{enumerate}
		Then $\cW$ is the simple quotient of $\cW^{\fr{so}_2,I}_{\infty,S}$ by its maximal graded ideal $\cI$.
	\end{theorem}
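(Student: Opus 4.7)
The plan is to construct a surjective vertex algebra homomorphism $\Phi: \cW^{\fr{so}_2,I}_{\infty,S} \to \cW$ and then invoke simplicity of $\cW$ to identify it with a simple quotient by a maximal graded ideal. The homomorphism is specified on the weight $\leq 4$ generators by $H \mapsto \bar H$, $L \mapsto \bar L$, $X^2 \mapsto \bar X^2$, $Y^2 \mapsto \bar Y^2$, and by defining $\bar W^3 := c_3 \bar X^2_{(1)} \bar Y^2$ and $\bar W^4$ via the same normalization that relates $W^4$ to $W^{4,0}$ in \eqref{W4deformed}; hypothesis (2) guarantees that these elements satisfy the relations of $\nlcalg$ in weights $\leq 9$. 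Higher generators are defined inductively by $\bar W^{n+2,\mu} := \bar W^4_{(1)} \bar W^{n,\mu}$.

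The main task is then to verify that $\Phi$ extends to a well-defined vertex algebra morphism, i.e., that every defining OPE of $\cW^{\fr{so}_2,I}_{\infty,S}$ holds after substituting barred generators in $\cW$. By Theorem \ref{thm:induction}, the full set of OPEs of $\cW^{\fr{so}_2,I}_{\infty,S}$ is determined recursively from the base case data $D_9$ together with the Jacobi identities exploited in Lemmas \ref{recursionsSym}--\ref{rproduct}. I would proceed by a double induction on the total weight $n+m$ of products $\bar W^{n,\mu}_{(r)}\bar W^{m,\nu}$ in $\cW$, verifying at each stage that the same recursion formulas hold. The key input is that each recursion in Lemmas \ref{lemma:recursions0}--\ref{rproduct} is derived from a specific Jacobi identity which holds automatically in any vertex algebra; the divisions by the coefficients $(1 - w^{W^4, \tilde W^{i,\mu}}_{\partial \tilde W^{i+2,\mu}})$, $(r - (r+1)w^{W^4,\tilde W^{i,\mu}}_{\partial \tilde W^{i+2,\mu}})$, etc., make sense in $\cW$ because the denominators appearing in the structure constants of $\cW^{\fr{so}_2,I}_{\infty,S}$ are in $D$, hence invertible in $S$. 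Consequently the barred generators in $\cW$ satisfy exactly the same OPE relations as their unbarred counterparts, proving that $\Phi$ is a vertex algebra homomorphism.

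Surjectivity of $\Phi$ follows immediately from hypothesis (1): $\cW$ is generated by $\bar H, \bar L, \bar X^2, \bar Y^2$, which lie in the image of $\Phi$, together with all their iterated $n$-th products; since $\cW^{\fr{so}_2,I}_{\infty,S}$ contains each $W^{n,\mu}$ (and these are produced from $W^4$-action, which in turn is built from the lower generators), the image of $\Phi$ contains a weak generating set of $\cW$. Because $\cW$ is simple by hypothesis, $\ker \Phi$ is a proper graded vertex algebra ideal of $\cW^{\fr{so}_2,I}_{\infty,S}$, and simplicity of $\cW$ forces $\ker \Phi$ to be the unique maximal proper graded ideal $\cI \subseteq \cW^{\fr{so}_2,I}_{\infty,S}$; indeed, any strictly larger graded ideal would have nontrivial intersection with the image and thus descend to a nonzero ideal in $\cW$, contradicting simplicity. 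Therefore $\cW \cong \cW^{\fr{so}_2,I}_{\infty,S}/\cI$ as claimed.

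The main obstacle I anticipate is purely bookkeeping: confirming that every recursion used in the proof of Theorem \ref{thm:induction} genuinely involves only denominators already inverted in $S$, and that at each inductive step in $\cW$ the identity $\bar W^{n+2,\mu} = \bar W^4_{(1)} \bar W^{n,\mu}$ is compatible with the Heisenberg-charge and conformal-weight constraints dictated by \eqref{eq:OPEgeneralG}. This is a finite combinatorial verification at low weights, followed by the recursive extension, and does not require new ideas beyond those in the construction of $\Wso$ itself; the argument parallels the corresponding reconstruction theorems in \cite{Lin, KL, CKoL2}.
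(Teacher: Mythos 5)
Your argument is correct and is essentially the approach the paper intends: the paper omits the proof, noting only that it parallels the corresponding theorems for $\Winf$, $\Wev$, and $\Wsp$ in \cite{Lin,KL,CKoL2}, and those proofs proceed exactly as you do --- build the surjection from the universal object using the recursive determination of the OPE algebra (with all needed denominators invertible in $S$), then identify the kernel with the unique maximal graded ideal via simplicity of $\cW$. The only slip is that $\bar{W}^3$ should be proportional to the first-order pole $\bar{X}^2_{(0)}\bar{Y}^2$ (which has conformal weight $3$) rather than $\bar{X}^2_{(1)}\bar{Y}^2$ (which has weight $2$); this does not affect the argument.
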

	The proof is similar to that of $\Winf$,$\Wev$ and $\Wsp$, and is omitted.
	
\subsubsection{$\cW$-algebras of $\gs\go_2$-rectangular type}
	
	In this subsection, we will prove that the $\cW$-algebras $\cC^{\psi}_{CD} = \cW^{\psi - 2n-1}(\gs\gp_{4n}, f_{2n, 2n})$ and $\cW^{\psi}_{BD} = \cW^{\psi -  4n}(\gs\go_{4n+2}, f_{2n+1, 2n+1})$ both arise as quotients of $\cW^{\gs\go_2}_{\infty}$ as $1$-parameter vertex algebras. This requires proving that they have the weak generation property, i.e, they are generated by the fields in weights one and two.

\begin{theorem} \label{weakgeneration:Walgebras} For all $m\geq 2$, consider the following $\cW$-algebras: 
\begin{enumerate}
\item $\cC^{\psi}_{CC}(0,m) = \cW^{\psi - 2m-2}(\gs\gp_{4m}, f_{2m, 2m})$, 
\item $\cC^{\psi}_{BD}(0,m) = \cW^{\psi -  4n}(\gs\go_{4n+2}, f_{2n+1, 2n+1})$.
\end{enumerate}
These have the weak generation property $\cC^{\psi}_{BD}(0,m) = \tilde{\cC}^{\psi}_{BD}(0,m)$ and $\cC^{\psi}_{CC}(0,m) = \tilde{\cC}^{\psi}_{CC}(0,m)$ for all $\psi \in \mathbb{C}$, with the possible exception of 
\begin{enumerate}
\item The critical value $\psi = 0$,
\item The values of $\psi$ where the denominator of any structure constants in $\cW^{\gs\go_2}_{\infty}$ vanishes (this is a finite set).
\end{enumerate}
\end{theorem}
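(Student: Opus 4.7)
The plan is to identify $\cC^{\psi}_{CC}(0,m)$ and $\cC^{\psi}_{BD}(0,m)$ with $1$-parameter quotients of $\cW^{\gs\go_2}_{\infty}$ via Theorem \ref{one-parameter quotients theorem}, and then transport the weak generation of $\cW^{\gs\go_2}_{\infty}$ to these $\cW$-algebras through the specialization. Since $n=0$, no nontrivial affine coset is taken on the $\cW$-algebra side, so $\tilde{\cC}^{\psi}_{XY}(0,m)$ is (up to a possible $\mathbb{Z}_2$-orbifold in the $BD$ case) just the honest rectangular $\cW$-algebra, whose strong generating type $\cW(1,2^3,3,4^3,\ldots)$ is known by Kac--Wakimoto and matches that of $\cW^{\gs\go_2}_{\infty}$, as recorded in Lemmas \ref{lemma:BD} and \ref{lemma:CC}.

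In $\cW^{\gs\go_2}_{\infty}$ the raising relation \eqref{raise}, $W^4_{(1)}\tilde W^{n,\mu}=\tilde W^{n+2,\mu}$, holds by construction and inductively produces every strong generator of weight greater than $4$ as an iterated first-order product with $W^4$. All structure constants involved lie in the localization $D^{-1}\mathbb{C}[c,k]$ of Theorem \ref{thm:induction}. Substituting the rational expressions for $c$ and $k$ in terms of $\psi$ coming from the truncation curves of $\cC^{\psi}_{CC}(0,m)$ and $\cC^{\psi}_{BD}(0,m)$ gives a family of well-defined specializations at every $\psi$ outside the finite locus of zeros of denominators from $D$. At any such admissible $\psi$, the specialization is a vertex algebra homomorphism, so the raising relation descends verbatim, and the subVOA weakly generated by weight $\leq 2$ fields together with $W^4$ already contains a full strong generating set of $\cC^{\psi}_{XY}(0,m)$. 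Because this same set also strongly generates $\tilde{\cC}^{\psi}_{XY}(0,m)$, the two algebras must coincide.

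The main obstacle will be confirming that the image of $\tilde W^{n+2,\mu}$ under the specialization genuinely produces the intended Kac--Wakimoto strong generator of the $\cW$-algebra, rather than collapsing to a lower-weight element or to zero. For generic $\psi$ this is automatic by the uniqueness in Theorem \ref{thm:induction}, so the set of problematic $\psi$ is Zariski closed in $\mathbb{A}^1$. The task is then to show this exceptional set is contained in the union of the denominator loci from $D$ together with $\{\psi=0\}$: the former captures every numerical obstruction visible in the recursive identities of Lemmas \ref{lemma:recursions0}--\ref{rproduct}, while the latter reflects the intrinsic degeneration of the $\cW$-algebra at its critical level. No hidden polynomial factor in $\psi$ can arise outside this list, because every identity producing $\tilde W^{n+2,\mu}$ from $W^4$ and lower-weight generators only involves structure constants from $D^{-1}\mathbb{C}[c,k]$ evaluated along the truncation curve.
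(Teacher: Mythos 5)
There is a genuine gap, and it sits exactly at the point you flag as ``the main obstacle'' and then dismiss. Writing a universal generator as $\tilde W^{n+2,\mu} = \lambda\, W(\text{KW generator}) + (\text{normally ordered terms in lower-weight generators})$, the coefficient $\lambda$ is \emph{not} a structure constant of $\cW^{\gs\go_2}_{\infty}$: it is a change-of-basis coefficient between the universal generators and a Kac--Wakimoto strong generating set of the $\cW$-algebra whose OPE coefficients are polynomial in $\psi$. A priori $\lambda$ is a polynomial in $\psi$, and nothing about the structure constants of $\cW^{\gs\go_2}_{\infty}$ lying in $D^{-1}\mathbb{C}[c,k]$ prevents $\lambda$ from vanishing at values of $\psi$ outside the denominator locus. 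Your closing assertion that ``no hidden polynomial factor in $\psi$ can arise outside this list, because every identity producing $\tilde W^{n+2,\mu}$ \ldots only involves structure constants from $D^{-1}\mathbb{C}[c,k]$'' is therefore a non sequitur: those identities control the image of $\tilde W^{n+2,\mu}$ as an abstract element, not whether that element remains a genuine strong generator (as opposed to a decomposable element) of the $\cW$-algebra at the given $\psi$. If some $\lambda$ vanished at a non-excluded $\psi$, the image of $\cW^{\gs\go_2}_{\infty}$ would be a proper subalgebra and weak generation would fail there, so this is precisely the point that must be proved rather than asserted.

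The paper closes this gap by a two-sided comparison that your proposal is missing. On the universal side, Proposition \ref{prop:structure constants} pins down the leading coefficients of $X^2_{(0)}Y^{2j}$, $W^3_{(0)}X^{2j}$, $W^3_{(1)}W^{2j}$, etc., as explicit \emph{nonzero rational constants} (e.g.\ $X^2_{(0)}Y^2 = 3W^3+\cdots$). On the $\cW$-algebra side, the same products computed in the Kac--Wakimoto generators have leading coefficients equal to the corresponding Lie algebra structure constants of $\gg^f$ (the constants $c_{3,2j}, d_{3,2j}, e_{2,2j}$), which are nonzero and independent of $\psi$, multiplied by the unknown polynomials $\lambda$. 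Equating the two expressions forces each $\lambda$ (or, in the even-weight case, a product $\lambda_{w^{2j}}d_{2j}$ of two polynomials in $\psi$) to equal a nonzero constant, hence each $\lambda$ is a nonzero constant and never vanishes. This is the mechanism that rules out extra exceptional values of $\psi$; without it, your argument only establishes weak generation on a Zariski-open set whose complement you cannot identify with the stated finite list.
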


\begin{proof} 

We only give the proof for $\cC^{\psi}_{BD}(0,n)$ since the proof for  $\cC^{\psi}_{CC}(0,n)$ is the same. 
Let $n$ be fixed, and recall that $\cW^{\psi - 4n}(\gs\go_{4n+2},f_{2n+1, 2n+1})$ has strong generating type 
\begin{equation}\label{gentype}
    \cW(1,2^3,3,4^3,\dots,(2n-2)^3,2n-1,(2n)^3,2n+1).
\end{equation}
The strong generating set is in bijection with a basis for Lie algebra $\gs\go_{4n+2}^f$.
Let $\{w^{i}|i\leq 2n+1\}\cup \{x^{2i}, y^{2i}|i\leq n\}$ be a basis of $\gs\go_{4n+2}^f$, and $\{W(w^{i})|i\leq 2n+1\}\cup \{W(x^{2i}), W(y^{2i})|i\leq n\}$ be some choice of strong generators of the $\cW$-algebra, so that the structure constants are given by polynomial functions of $\psi$.
This implies that for all $j\leq n-1$, we have the following relations in the $\cW$-algebra
$$W(w^3)_{(0)} W(x^{2j}) = c_{3,2j} W(x^{2j+2}) + \cdots,\quad W(w^3)_{(0)} W(y^{2j}) = d_{3,2j} W(y^{2j+2}) + \cdots,$$ where structure constants $c_{3,2j}$ and $d_{3,2j}$ are nonzero, and arise via the following Lie algebra relations
$$[w^{3}, x^{2j} ] = c_{3,2j} x^{2j+2},\qquad [w^{3}, y^{2j}]= d_{3,2j} y^{2j+2},$$ 
and the remaining terms are normally ordered monomials in the strong generators $\{W(w^{i})|i\leq 2n\}\cup \{W(x^{2i}), W(y^{2i})|i< n\}$ and their derivatives. 
Similarly, for $j\leq n$, we have $$W(x^2)_{(0)} W(y^{2j}) = e_{2,2j} W(w^{2j+1}) + \cdots,\quad [{x^{2}}, {y^{2j}}] = e_{2,2j} {w^{2j+1}},\quad e_{2,2j}\neq 0.$$ 
Using the nonzero Lie algebra structure constants $\{c_{3,2j},d_{3,2j}|j\leq n-1\}\cup \{e_{2,2j}|j\leq n\}$ and Proposition \ref{prop:structure constants}, we will establish the weak generation property for the $\cW$-algebra.

Vertex algebra identities (\ref{conformal identity}-\ref{Jacobi}) together with the known strong generating type (\ref{gentype}), imply that there exists unique choice for Heisenberg generator $W(w^1)=H$ with a given level, and for generators $W(x^2)$ and $W(y^2)$, that are primary of charge $2$, and $-2$, and are Virasoro primary of weight two. So we may write 
$$H = W(w^1),\qquad L = W(w^2),\qquad X^2 = W(x^2),\qquad Y^2 = W(y^2).$$ 
By our choice of strong generators, any OPE structure constant among $\{W(w^i), W(x^{2j}),W(y^{2j})| \ i \geq 3, \ j \geq 2\}$ that does not involve $H, L, X^2, Y^2$, is a polynomial in $\psi$. For $1\leq i\leq 2n+1$ and $1\leq j\leq n $, we can write
$$W^{i} = \lambda_{w^i}W(w^{i}) + \dots,\quad X^{2j} = \lambda_{x^{2j}} W(x^{2j}) + \cdots, \quad Y^{2j} = \lambda_{y^{2j}} W(y^{2j}) + \cdots,$$ where $\lambda_{w^i}$, $\lambda_{x^{2j}}$, $\lambda_{y^{2j}}$ are polynomials in $\psi$, and the remaining terms are normally ordered polynomials in previous generators. 
To prove the theorem, it suffices to show that $\lambda_{w^i}$, $\lambda_{x^{2i}}$, $\lambda_{y^{2i}}$ are all nonzero constants for the defined range of indices.

According to Proposition \ref{prop:structure constants},
$$X^2_{(0)} Y^{2} = \frac{9}{4}\frac{ 2!! 2!!}{3!!} W^{3}+\dotsb = 3 W^3+\dotsb,$$ while on the other hand
$$X^2_{(0)} Y^{2} = W(x^2)_{(0)} W(y^{2}) = e_{2,2} W(w^{3}) + \cdots = \frac{e_{2,2}}{\lambda_{w^3}}W^{3}+\dotsb.$$
Therefore $\frac{e_{2,2}}{\lambda_{w^3}} =  3$, so $\lambda_{w^3} = -\frac{e_{2,2}}{3}$, which is a nonzero constant.

Next, by Proposition \ref{prop:structure constants}, we have $$W^3_{(0)} Y^{2j} = \frac{3!! (2j)!!}{(2j+2)!!} Y^{2j+2}+\dotsb,$$ while on the other hand $$W^3_{(0)} Y^{2j} = \lambda_{w^3}\lambda_{y^{2j}}  W(w^3)_{(0)} W(y^{2j} ) + \cdots = \lambda_{w^3} \lambda_{y^{2j}}  d_{3,2j} W(y^{2j+2}) + \cdots = \frac{\lambda_{w^3} \lambda_{y^{2j}}  d_{3,2j}}{\lambda_{w^{2j+2}}} Y^{2j+2} + \cdots.$$ We get $$ \frac{\lambda_{w^3} \lambda_{y^{2j}}  d_{3,2j}}{\lambda_{y^{2j+2}}} = \frac{3!! (2j)!!}{(2j+2)!!} .$$ Inductively, we get that $\lambda_{y^{2j}}$ is nonzero constant for all $2\leq j\leq n$.
A similar argument shows that  $\lambda_{x^{2j}}$ is nonzero constant for all $2\leq j\leq n$.

Next, by Proposition \ref{prop:structure constants}, for $j \geq 2$ we have
$$X^2_{(0)} Y^{2j} = \frac{9}{4}\frac{2!! (2j)!!}{(2j+1)!!} W^{2j+1}+\dotsb,$$ while on the other hand
$$X^2_{(0)} Y^{2j} = \lambda_{y^{2j}} W(x^2)_{(0)}W(y^{2j})  + \cdots =\lambda_{y^{2j}}  e_{2,2j} W(w^{2j+1}) + \cdots = \frac{\lambda_{y^{2j}}  e_{2,2j}}{\lambda_{w^{2j+1}}} W^{2j+1} + \cdots.$$
We get $$\frac{\lambda_{y^{2j}}  e_{2,2j}}{\lambda_{w^{2j+1}}} =\frac{9}{4} \frac{2!! (2j)!!}{(2j+1)!!}.$$
Inductively, we get that $\lambda_{w^{2j+1}}$ is a nonzero constant for all $1\leq j \leq n$.

It remains to show that $\lambda_{w^{2j}}$ is a nonzero constant for all $2\leq j \leq n$, and for this we need a different argument. First, we observe that 
$$W(w^3)_{(1)} W(w^{2j}) = d_{2j} W(w^{2j+1}) + \cdots,$$ for some structure constant $d_{2j}$, which a priori is a polynomial in $\psi$.
Then we have
 $$W^3_{(1)} W^{2j} = \frac{3!! (2j)!!}{8(2j-1)!!} W^{2j+1} + \cdots=\frac{\lambda_{w^3} \lambda_{w^{2j}}  d_{2j} }{\lambda_{w^{2j+1}}} W^{2j+1} + \cdots,$$ so that
$$\frac{\lambda_{w^3} \lambda_{w^{2j}}  d_{2j} }{\lambda_{w^{2j+1}}}  =  \frac{3!! (2j)!!}{8(2j-1)!!}.$$
In particular, $\lambda_{w^{2j}}  d_{2j}$ is a nonzero constant, and since both $\lambda_{w^{2j}}$ and $d_{2j}$ are polynomials in $\psi$, they must both be nonzero constants.
\end{proof}

	\begin{corollary}\label{Wso freely generated}
		All Jacobi identities among generators $\{\tilde W^{i,\mu} | i\geq 1\}$ holds as consequences of (\ref{conformal identity}-\ref{quasi-derivation}) alone, so $\nlcalg$ is a nonlinear Lie conformal algebra with generators $\{W^{i} | i\geq 1\}$. Equivalently, $\Wso$ is freely generated by even fields  $\{X^{2i},Y^{2i} | i\geq 1\}\cup \{H,L,W^{i} | i \geq 3\}$, and has graded character
		\begin{equation}\label{character}
			\chi (\Wso,q) =\sum_{n=0}^{\infty}rank_{R}(\Wso [n])q^n = \prod_{i,j,l=1}^{\infty}\frac{1}{(1-q^{2i+2j+1})(1-q^{2i+2l+2})^3}.
		\end{equation}
		For any prime ideal $I \subseteq R$, $\cW^{\fr{so}_2,I}_{\infty}$ is freely generated by $\{W^{i} | i\geq 1\}$ as a vertex algebra over $R/I$ and
		\[\chi (\cW^{\fr{so}_2,I}_{\infty},q) =\sum_{n=0}^{\infty}rank_{R/I}(\cW^{\fr{so}_2,I}_{\infty}[n])q^n = \prod_{i,j,l=1}^{\infty}\frac{1}{(1-q^{2i+2j+1})(1-q^{2i+2l+2})^3}.\]
		For any localization $S=(E^{-1}R)/I$ along a multiplicatively closed set $E\subseteq R/I$, $\cW^{\fr{so}_2,I}_{\infty,S}$ is freely generated by $\{W^{i} | i\geq 1\}$ and 
		\[\chi (\cW^{\fr{so}_2,I}_{\infty,S},q) = \sum_{n=0}^{\infty}rank_{S}(\cW^{\fr{so}_2,I}_{\infty,S}[n])q^n = \prod_{i,j,l=1}^{\infty}\frac{1}{(1-q^{2i+2j+1})(1-q^{2i+2l+2})^3}.\]
	\end{corollary}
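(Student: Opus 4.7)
The goal is to upgrade Theorem~\ref{thm:induction} by showing that all Jacobi identities among the strong generators of $\nlcalg$ hold as consequences of the general vertex-algebra identities (\ref{conformal identity}-\ref{quasi-derivation}) alone, so that $\nlcalg$ is a nonlinear Lie conformal algebra and its universal enveloping vertex algebra $\Wso$ is freely generated. The character identity (\ref{character}) and the statements about quotients and localizations will then follow formally. The plan, following the strategy of \cite{Lin, KL, CKoL2}, is to produce a sufficiently rich family of $1$-parameter quotients of $\Wso$ that are already known to be freely generated, and to combine this with a Zariski-density argument in the parameter space $\T{Spec}\,R$.

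First, by construction $\Wso$ is strongly generated by $\{X^{2i},Y^{2i}\mid i\geq 1\}\cup\{H,L,W^i\mid i\geq 3\}$, so its graded character is bounded above by the product (\ref{character}); equality is equivalent to free generation. Next, I will invoke Theorem~\ref{weakgeneration:Walgebras}, which provides, for each $m\geq 2$, two families of $1$-parameter quotients $\cC^{\psi}_{CC}(0,m)$ and $\cC^{\psi}_{BD}(0,m)$ satisfying the weak generation property for all but finitely many $\psi$. By Theorem~\ref{one-parameter quotients theorem}, each is therefore the simple quotient of $\Wso$ along a prime ideal cutting out a truncation curve $C_m \subseteq \T{Spec}\,R$. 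By Theorem~\ref{thm:kacwakimoto}, these $\cW$-algebras are freely generated of type $\cW(1,2^3,3,4^3,\dots,(2m)^3,2m+1)$, i.e.\ they realize the product character (\ref{character}) truncated at weight $2m+1$.

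Now suppose, for contradiction, that some normally ordered polynomial $P=\sum_i r_i M_i \in \Wso$ of weight $d$ vanishes, where the $M_i$ are distinct normally ordered monomials in the strong generators and $r_i\in R$ are not all zero. Choose $m$ with $2m+1\geq d$; then each $M_i$ is a monomial in the free strong generators of $\cC^{\psi}_{BD}(0,m)$ up to weight $d$, and the image of $P$ in this quotient is $\sum_i r_i|_{C_m}(\psi)\, M_i$. Free generation of $\cC^{\psi}_{BD}(0,m)$ in this weight range forces each restriction $r_i|_{C_m}$ to vanish identically in $\psi$. Varying $m\geq m_0:=\lceil (d-1)/2\rceil$, the curves $C_m$ read off from the central-charge formulas (\ref{eq:cBD}) and (\ref{eq:cCC}) are pairwise distinct irreducible algebraic curves in the $2$-dimensional space $\T{Spec}\,R$, so their union is Zariski dense; hence $r_i=0$ in $R$ for all $i$, a contradiction. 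Therefore $\Wso$ is freely generated and the character identity (\ref{character}) holds.

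The statements for quotients $\cW^{\fr{so}_2,I}_{\infty}$ and localizations $\cW^{\fr{so}_2,I}_{\infty,S}$ are then immediate: since $\Wso$ is a free $R$-module on the ordered monomials in the strong generators, base change to $R/I$ or to $S$ preserves this basis and hence the character. The main obstacle is the Zariski-density step, where I must verify that the truncation curves $C_m$ are pairwise distinct and cannot all lie in the vanishing locus of any single nonzero element of $R$. This is a direct but careful inspection of the explicit formulas in Section~\ref{sect:YtypeSO}, which parametrize $c$ and the $\fr{so}_2$-level $k$ as rational functions of $\psi$ and $m$; the bookkeeping is delicate because $R$ is only a finite localization of $\mathbb{C}[c,k]$, so one must also check that no $C_m$ is entirely swept away by the localizing denominators of $D$.
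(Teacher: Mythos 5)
Your proposal is correct and follows the same strategy the paper intends: the paper leaves this corollary without an explicit written proof (deferring to the analogous arguments in \cite{Lin,KL,CKoL2}), but the device of restricting a putative normally ordered relation to the freely generated quotients $\cC^{\psi}_{BD}(0,m)$ and $\cC^{\psi}_{CC}(0,m)$ supplied by Theorem \ref{weakgeneration:Walgebras}, whose characters match the product formula up to weight $2m+1$, and then letting $m\to\infty$ so that infinitely many distinct truncation curves force the coefficients in $R$ to vanish, is exactly the argument the authors use for the companion simplicity statement in Corollary \ref{cor:simplicity}. Your added care about Zariski density of the curves and about the localizing set $D$ is sound and consistent with the paper's setup.
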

	\begin{corollary} \label{cor:simplicity} 
		The vertex algebra $\Wso$ is simple.
	\end{corollary}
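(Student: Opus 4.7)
The strategy is to prove simplicity by exhibiting enough simple $1$-parameter quotients of $\Wso$ to rule out any nontrivial proper graded ideal $\cI$ with $\cI[0] = 0$. First I would collect the list of simple $1$-parameter quotients constructed in Sections \ref{sect:YtypeSO} and \ref{sect:Diagonal}: the $Y$-algebras $\cC^{\psi}_{XY}(n,m)$ for $X \in \{B,C\}$, $Y \in \{B,C,D,O\}$, and the GKO cosets $\cC^{\ell}(n)$ for $n \in \mathbb{Z}$. By Theorem \ref{one-parameter quotients theorem}, each of these is the simple quotient of some $\cW^{\fr{so}_2, P}_{\infty, S}$, where $P \subseteq R$ is the prime ideal cut out by the relation between $c$ and $k$ coming from the explicit central charge formulas \eqref{eq:cBD}--\eqref{eq:cCO}, \eqref{eq:cn}, \eqref{eq:c-2n}, \eqref{eq:c-2n+1}. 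These truncation curves are pairwise distinct irreducible curves in $\text{Spec}(R)$, and there are infinitely many of them; in particular their common intersection, viewed as a family of height-$1$ primes in the Noetherian domain $R$, is zero.

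Next I would observe that at a generic point $(c_0, k_0)$ on any one of these curves, the evaluation of $\Wso$ at $(c_0, k_0)$ agrees with the specialization of the corresponding simple $1$-parameter quotient at that point. Indeed, by Corollary \ref{Wso freely generated} the evaluation is freely generated with the universal graded character, while the surjection $\cW^{\fr{so}_2, P}_{\infty, S} \twoheadrightarrow \cC^{\psi}_{XY}(n,m)$ (or $\cC^{\ell}(n)$) guaranteed by Theorem \ref{one-parameter quotients theorem} must be an isomorphism at generic points of the curve, since both sides have the same graded dimension as $1$-parameter vertex algebras. Hence at a Zariski-dense subset of each truncation curve, the specialization of $\Wso$ is itself a simple vertex algebra over $\mathbb{C}$.

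Now, for contradiction, I would suppose that $\cI \subseteq \Wso$ is a proper graded ideal with $\cI[0] = 0$ and fix a nonzero homogeneous $\omega \in \cI[N]$. By Corollary \ref{Wso freely generated}, write $\omega = \sum_\alpha f_\alpha X_\alpha$ uniquely with $f_\alpha \in R$ in a PBW basis $\{X_\alpha\}$ of $\Wso[N]$. At any generic point $(c_0, k_0)$ of any truncation curve $P$, the specialization of $\omega$ at $(c_0, k_0)$ lies in the image of $\cI$, which is a proper graded ideal of a simple vertex algebra with trivial weight-zero part; hence it is zero. Since the specializations of the $X_\alpha$ remain linearly independent at generic points, $f_\alpha(c_0, k_0) = 0$ for every $\alpha$ on a Zariski-dense subset of $P$, so $f_\alpha \in P$. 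Running this over all infinitely many $P$ forces every $f_\alpha$ into their common intersection, which is $0$; thus $\omega = 0$, a contradiction.

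The main obstacle in making this rigorous is the second step: verifying that the surjections $\cW^{\fr{so}_2, P}_{\infty, S} \twoheadrightarrow \cC^{\psi}_{XY}(n,m)$ and $\cW^{\fr{so}_2, P}_{\infty, S} \twoheadrightarrow \cC^{\ell}(n)$ are generically isomorphisms, equivalently, that the $1$-parameter quotients retain the full PBW basis of $\Wso$ at a generic point on their truncation curves. This reduces to the weak generation property for each target, which is established in Theorem \ref{weakgeneration:Walgebras} for the rectangular $\cW$-algebras $\cC^{\psi}_{CC}(0,m)$ and $\cC^{\psi}_{BD}(0,m)$, and via the generic large-level-limit computations underlying Lemmas \ref{lemma:BD}--\ref{lemma:CO} and \ref{lemma:B and D}--\ref{lemma:O} for the remaining $Y$-algebras and GKO cosets. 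Distinctness of the truncation curves is then immediate from inspection of the explicit central charge formulas, and the argument above delivers the desired contradiction.
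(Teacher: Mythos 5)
There is a genuine gap in the second step, and it is the load-bearing one. You claim that at a generic point of each truncation curve the surjection $\cW^{\fr{so}_2,P}_{\infty,S}\twoheadrightarrow \cC^{\psi}_{XY}(n,m)$ is an isomorphism ``since both sides have the same graded dimension,'' and hence that the specialization of $\Wso$ along each truncation curve is generically simple. This is false, and it is false precisely because these are truncation curves: the quotients $\cC^{\psi}_{XY}(n,m)$ and $\cC^{\ell}(n)$ are of type $\cW(1,2^3,3,4^3,\dots)$ only in the sense of having a strong generating set of that type, not a free one. For example, $\cC^{\psi}_{BD}(0,m)$ has only finitely many strong generators, and the proof of Theorem \ref{quotientproperty} explicitly uses that the first normally ordered relation in $\cC^{\psi}_{BD}(n,m)$ occurs at weight $2(1+m+2n+2mn+n^2)$. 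So the kernel $\cI_P$ of $\Wso/P\cdot\Wso \to \cC^{\psi}_{XY}(n,m)$ is nonzero, the graded characters do not match, and from $\bar\omega\in\cI_P$ you cannot conclude $\bar\omega=0$, i.e.\ you cannot conclude $f_\alpha\in P$. Relatedly, the ``main obstacle'' you identify is not the right one: weak generation concerns whether the weight $1,2$ fields generate the quotient, not whether the quotient map from $\Wso$ is injective in a given weight.

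The repair, which is what the paper's proof does, is to use only the rectangular families $\cC^{\psi}_{BD}(0,m)$ (or $\cC^{\psi}_{CC}(0,m)$). By Theorem \ref{thm:kacwakimoto} these are \emph{freely} generated $\cW$-algebras whose graded character agrees with that of $\Wso$ up to weight $2m$, so for these curves one really does have $\cI_P[N]=0$ whenever $N\leq 2m$. Given a putative singular vector $\omega$ of weight $N$, choose $m$ with $2m\geq N$, let $p$ be the irreducible polynomial cutting out the corresponding truncation curve, and rescale $\omega$ so that it is not divisible by $p$; then $\omega$ descends to a nonzero element of $\cI_P[N]=0$, a contradiction. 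Note that with this rescaling trick a single curve suffices, so the intersection over infinitely many height-one primes is unnecessary; alternatively, your intersection argument does go through if you run it only over the rectangular curves with $2m\geq N$, since for those $f_\alpha\in P_m$ genuinely follows.
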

    \begin{proof}
		If $\Wso$ were not simple, it would have a singular vector $\omega$ in some weight $N$. Let $p\in R$ be an irreducible polynomial, and $I$ be the ideal $(p)\subseteq R$. By rescaling if necessary, we may assume that $\omega$ is not divisible by $p$, and hence descends to a nontrivial singular vector in $\cW^{\gs\go_2,I}_{\infty}$. For any localization $S$ of $R/I$, the simple quotient of $\cW^{\gs\go_2,I}_{\infty,S}$ would then have a smaller weight $N$ submodule than $\cW^{\gs\go_2,I}_{\infty,S}$ for all such $I$. This contradicts Theorem \ref{weakgeneration:Walgebras}, since $\cC^{\psi}_{BD}(0,m)$ is a quotient of $\cW^{\gs\go_2,I}_{\infty,S}$ for some $I$ and $S$, and has the same character as $\cW^{\gs\go_2}_{\infty}$ up to weight $2m$.
	\end{proof}

	Next, we describe the automorphisms of $\Wso$.
	\begin{proposition}\label{prop:autos}
		The vertex algebra $\Wso$ has full automorphism group $\text{O}_2(\mathbb{C})$.
	\end{proposition}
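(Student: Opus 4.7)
The plan is to construct an explicit action of $\text{O}_2(\mathbb{C})$ on $\Wso$ by vertex algebra automorphisms, and then to show that no further automorphism can arise by exploiting the weak generation of $\Wso$ by fields of weights one and two.

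For existence, the connected component $\text{SO}_2(\mathbb{C}) \cong \mathbb{C}^*$ will act by $\phi_s(H) = H$, $\phi_s(L) = L$, and $\phi_s(W^{n,\mu}) = s^{\mu/2} W^{n,\mu}$ on every strong generator, which is well-defined since all Heisenberg charges $\mu$ are even integers, and is extended to $\Wso$ by $R$-linearity and the usual Wick rule for normally ordered products. Every term in the OPE $W^{n,\mu}(z)W^{m,\nu}(w)$ carries Heisenberg charge $\mu + \nu$ (as an eigenvector of the derivation $H_{(0)}$), so both sides rescale by the common factor $s^{(\mu+\nu)/2}$ and $\phi_s$ preserves the OPE structure. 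The reflection $\sigma$ of equation (\ref{eq:auto}) is an explicit involutive automorphism whose compatibility with all OPEs is recorded in (\ref{eq:autoSC}), and $\sigma \phi_s \sigma^{-1} = \phi_{s^{-1}}$ identifies $\langle \phi_s, \sigma \rangle$ with $\text{SO}_2(\mathbb{C}) \rtimes \mathbb{Z}_2 \cong \text{O}_2(\mathbb{C})$.

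For the converse, let $\phi$ be an $R$-linear vertex algebra automorphism of $\Wso$. Since $\Wso[1] = R \cdot H$ is free of rank one, $\phi(H) = \lambda H$ for some $\lambda \in R^*$, and the Heisenberg relation $H_{(1)} H = k$ forces $\lambda^2 k = k$, i.e.\ $\lambda = \pm 1$ as $R$ is a domain with $k \neq 0$. Composing with $\sigma$ if $\lambda = -1$, we may assume $\phi(H) = H$, so $\phi$ commutes with $H_{(0)}$ and preserves the Heisenberg charge decomposition; conformality also gives $\phi(L) = L$. The weight-two charge-$(\pm 2)$ parts of $\Wso$ are free $R$-modules of rank one generated by $W^{2,\pm 2}$, so $\phi(W^{2,\pm 2}) = \alpha_\pm W^{2,\pm 2}$ for some $\alpha_\pm \in R^*$, and matching the nonzero vacuum coefficient $\omega^{W^{2,2},W^{2,-2}}_{\B{1}}\B{1}$ in the OPE $W^{2,2}(z)W^{2,-2}(w)$ forces $\alpha_+ \alpha_- = 1$. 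By the weak generation property proved in Theorem \ref{thm:induction} and Corollary \ref{Wso freely generated}, $\phi$ is uniquely determined by its values on $\{H, L, W^{2,\pm 2}\}$, so $\phi = \phi_{\alpha_+}$ on all of $\Wso$, placing $\phi$ in $\text{SO}_2(\mathbb{C})$ and, together with the possible composition with $\sigma$, in $\text{O}_2(\mathbb{C})$.

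The main subtlety concerns the scaling $\alpha_+$, which a priori lies in $R^*$ rather than $\mathbb{C}^*$; the cleanest interpretation is that the above argument identifies the group scheme of $R$-linear automorphisms of $\Wso$ with $\text{O}_2$, whose $\mathbb{C}$-points are exactly $\text{O}_2(\mathbb{C})$, and at every closed point of $\text{Spec}\, R$ one recovers $\text{O}_2(\mathbb{C})$ honestly. A minor but necessary check is that the action defined on the weak generators in the first step extends coherently to all strong generators; this is guaranteed by the raising property (\ref{eq:raise}) together with the free generation of Corollary \ref{Wso freely generated}.
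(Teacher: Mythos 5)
Your proof is correct and follows essentially the same route as the paper's: any automorphism preserves $L$ and sends $H\mapsto \pm H$, is then pinned down on the rank-one weight-two charge-$\pm 2$ spaces up to reciprocal scalings (or a swap after composing with $\sigma$), and weak generation by the fields of weight at most two forces it to be the corresponding element of $\text{O}_2(\mathbb{C})$. If anything you are more careful than the paper, which simply takes the scaling parameter in $\mathbb{C}\setminus\{0\}$ without addressing that a priori it could be a nonconstant unit of $R$.
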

	\begin{proof}
		Let $g$ be any automorphism of $\Wso$ that is not contained in $\T{SO}_2$-subgroup of automorphisms, so it fixes the Heisenberg generator $H$.  By definition, $g$ preserves the Virasoro generator $L$. Then, the most general deformation of weight 2 fields $X^2$ and $Y^2$ compatible with OPEs
		\[H(z) X^2(w) \sim X^2(w) (z-w)^{-1} ,\quad H(z) Y^2(w) \sim -Y^2(w) (z-w)^{-1},\quad X^2(z)Y^{2}(w)\sim (3W^{3}+\dotsb)(w)(z-w)^{-1}+\dotsb,\] are the scaling maps $g_+$ and $g_-$ defined by
		\[g_+(X^2) = x Y^2,\quad g_+(Y^2) = x^{-1} X^2, \quad g_-(X^2) = x X^2,\quad g_-(Y^2) = -x^{-1} Y^2\quad x  \in\mathbb{C}\setminus \{0\}.\]
		Since weight 2 fields weakly generate $\Wso$, the map $g$ is fully determined by $x$. Thanks to OPE $X^2(z)Y^{2}(w)$ we obtain that $g_+(W^3)=W^3$ and $g_-(W^3)=-W^3$.
	\end{proof}

	\subsection{Quotients by maximal ideals of $\Wso$}
	So far, we have considered quotients of the form $\cW^{\fr{so}_2,I}_{\infty}$ which are 1-parameter vertex algebras in the sense that $R$ has Krull dimension 1. Here, we consider simple quotients of $\cW^{\fr{so}_2,I}_{\infty}$ where $I \subseteq R$ is a maximal ideal.
	Such an ideal always has the form $I=(c-c_0,k-k_0)$ for $c_0,k_0 \in\mathbb{C}$, and $\cW^{\fr{so}_2,I}_{\infty}$ is a vertex algebra over $\mathbb{C}$.
	We first need a criterion for when the simple quotients of two such vertex algebras are isomorphic.
	\begin{theorem}\label{max quotients}
		Let $c_0,c_1,k_0,k_1$ be complex numbers and let \[I_0 = (c-c_0,k-k_0),\quad I_1 =(c-c_1,k-k_1)\]
		be the corresponding maximal ideals in $R$.
		Let $\cW_0$ and $\cW_1$ be the simple quotients of $\cW^{\fr{so}_2,I_0}_{\infty}$ and $\cW^{\fr{so}_2,I_1}_{\infty}$, respectively. 
		Then $\cW_0 \simeq \cW_1$ if and only if:
        \begin{itemize}
            \item $c_0 = c_1$ and $k_0=k_1$,
            \item $c_0 =-1= c_1$, and $k_0=k_1$,
            \item $k_0 =0= k_1$, and $c_0=c_1$.
        \end{itemize}
	\end{theorem}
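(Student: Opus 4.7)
The plan is to observe that the ``if'' direction is essentially tautological: each of the three listed conditions amounts to $(c_0, k_0) = (c_1, k_1)$, in which case the ideals $I_0$ and $I_1$ coincide, so $\cW^{\fr{so}_2, I_0}_{\infty} = \cW^{\fr{so}_2, I_1}_{\infty}$ and the simple quotients are identical. The substantive content lies in the ``only if'' direction: given a conformal VOA isomorphism $\phi: \cW_0 \xrightarrow{\sim} \cW_1$, I need to reconstruct both $k_i$ and $c_i$ as intrinsic invariants of $\cW_i$.

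To recover $k$, I would first use Corollary \ref{Wso freely generated} to conclude that $\cW^{\fr{so}_2, I_i}_\infty[1] = \mathbb{C} H_i$, and argue that in the generic simple quotient this remains one-dimensional, so $\phi(H_0) = \lambda H_1$ for some $\lambda \neq 0$. The OPE $H_i(z) H_i(w) \sim k_i \mathbf{1} (z-w)^{-2}$ then yields $k_0 = \lambda^2 k_1$. To pin down $\lambda$, I would exploit the $\fr{so}_2$-grading on weight two: the space $\langle :\!H_iH_i\!:, L_i, X_i^2, Y_i^2 \rangle$ carries $H_{i,(0)}$-eigenvalues $\{0, 0, 2, -2\}$, and since $\phi$ intertwines the $H_0$-eigenspace decomposition on $\cW_0[2]$ with the $(\lambda H_1)$-decomposition on $\cW_1[2]$, matching eigenvalue sets forces $\lambda = \pm 1$, whence $k_0 = k_1$. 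To recover $c$, note that the conformal vector $L_i$ is part of the conformal VOA structure and is preserved by $\phi$; alternatively, $L_i$ is uniquely pinned down within $\cW_i[2]$ by requiring Heisenberg charge zero together with the property that all low-weight weak generators of $\cW^{\fr{so}_2}_\infty$ are primary of the prescribed weights, a characterization which extends to all higher generators via the raising hypothesis (\ref{eq:raise}) and Lemma \ref{recursionsSym}. Reading the central charge off $L_i(z)L_i(w)$ gives $c_0 + 1 = c_1 + 1$.

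The hard part will be handling the two degenerate loci $k_0 = 0$ and $c_0 = -1$ that the theorem flags separately. At $k = 0$ the Heisenberg becomes commutative, the Virasoro coset $L - \tfrac{2}{k}:\!HH\!:$ degenerates, and one must independently confirm that $H_i$ survives in the simple quotient (using Corollary \ref{cor:simplicity} and the non-vanishing action of $H_i$ on $X_i^2, Y_i^2$) before $\lambda$ can even be defined; the central charge must then be read off $L_i$ itself, since the coset presentation is unavailable. At $c = -1$, the coset Virasoro has central charge $-1$ and the Heisenberg--Virasoro coset presentation becomes structurally special, so candidate non-identity isomorphisms must be excluded by examining their action on the weight-three and weight-four strong generators directly, using the structure constants recorded in Proposition \ref{prop:base case}. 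The main obstacle, therefore, is verifying that no additional isomorphisms sneak in at these two special parameter loci beyond those already forced by parameter equality; once this is done, the three listed cases will have been shown to exhaust all possibilities.
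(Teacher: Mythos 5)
Your argument for the generic case --- recovering $k$ from the Heisenberg OPE after pinning the rescaling $\lambda$ to $\pm 1$ via the charge grading on the weight-two space, and reading $c$ off the conformal vector --- is a correct fleshing-out of what the paper leaves implicit: its one-line justification is that the converse implication can only fail ``if one of the parameters disappears in the simple quotient,'' and your computation makes precise why nothing disappears away from the special loci.

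However, your treatment of the two degenerate loci rests on a picture that is the opposite of what actually happens there, and this is where the proposal breaks down. At $k=0$ the paper's proof observes that $H$ \emph{becomes singular}: it generates a proper graded ideal, and the simple quotient collapses to the Virasoro algebra $\vir$ of central charge $c+1$. So your plan to ``independently confirm that $H_i$ survives in the simple quotient'' cannot succeed --- $H$ does not survive, and the correct argument on this locus is simply that simple Virasoro algebras at distinct central charges are non-isomorphic, which is why only the condition $c_0=c_1$ carries information there. Dually, at $c=-1$ it is $L$ that becomes singular and the simple quotient collapses to the Heisenberg algebra $\pi^k$; there are then no weight-three or weight-four strong generators left whose structure constants you could ``examine directly'' to exclude extra isomorphisms. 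Indeed, on that locus extra isomorphisms genuinely do occur (Heisenberg vertex algebras at distinct nonzero levels are isomorphic after rescaling the generator), which is precisely why the theorem singles out $c=-1$ and $k=0$ as separate cases: these are the loci where the simple quotient \emph{forgets} a parameter, not loci where one must work harder to show that it does not. Your proposal would therefore spend its effort trying to prove a statement that fails at $c=-1$ and attacking $k=0$ with a field ($H$) that has already vanished in the quotient.
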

    \begin{proof}
        Clearly condition (1) implies an isomorphism. 
        The only way it can fail is if one of the parameters disappears in the simple quotient.
        This happens precisely in the two cases (2) and (3).
        In (2), $L$ becomes singular and the simple quotient is the Heisenberg algebra $\pi^k$.
        In (3), $H$ becomes singular and the simple quotient is the Virasoro algebra $\vir$.
    \end{proof}
	
	\begin{corollary}\label{max quotients2}
		Let $I=(p)$ and $J=(q)$ be prime ideals in $R$ such that $\cW^{\fr{so}_2,I}_{\infty}$ and $\cW^{\fr{so}_2,J}_{\infty}$ are not simple. Then any pointwise coincidences between the simple quotients of $\cW^{\fr{so}_2,I}_{\infty}$ and $\cW^{\fr{so}_2,J}_{\infty}$ must correspond to intersection points of the truncation curves $V(I)\cap V(J)$.
	\end{corollary}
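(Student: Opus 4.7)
The plan is to reduce the statement directly to Theorem \ref{max quotients}. A pointwise coincidence consists, by definition, of a point $p_0\in V(I)$, a point $p_1\in V(J)$, and an isomorphism $\cW_{I,p_0}\cong \cW_{J,p_1}$ of simple vertex algebras over $\mathbb{C}$. The key observation is that the simple specialization $\cW_{I,p_0}$ depends only on the maximal ideal $M_{p_0}=(c-c_0,k-k_0)\subseteq R$ corresponding to $p_0$, and not on the intermediate prime $I$. Indeed, $p_0\in V(I)$ forces $I\subseteq M_{p_0}$, so $M_{p_0}\cdot \cW^{\fr{so}_2}_{\infty}\supseteq I\cdot \cW^{\fr{so}_2}_{\infty}$; the canonical surjection $\cW^{\fr{so}_2,I}_{\infty}\twoheadrightarrow \cW^{\fr{so}_2,M_{p_0}}_{\infty}$ followed by passage to the maximal graded ideal produces $\cW_{I,p_0}$ as the unique simple quotient of $\cW^{\fr{so}_2,M_{p_0}}_{\infty}$. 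The analogous identification holds on the $J$-side.

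Consequently, a pointwise coincidence $\cW_{I,p_0}\cong \cW_{J,p_1}$ is exactly an isomorphism between the simple quotients of $\cW^{\fr{so}_2,M_{p_0}}_{\infty}$ and $\cW^{\fr{so}_2,M_{p_1}}_{\infty}$ in the sense of Theorem \ref{max quotients}. That theorem forces the parameter identifications $c_0=c_1$ and $k_0=k_1$ in each of its three listed scenarios, so $p_0=p_1$. Since $p_0\in V(I)$ and $p_1\in V(J)$, this common point lies in $V(I)\cap V(J)$, which is the desired conclusion.

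I do not expect a serious obstacle. The only real work is the bookkeeping step identifying $\cW_{I,p_0}$ with the simple quotient of $\cW^{\fr{so}_2,M_{p_0}}_{\infty}$; once this is made explicit, the corollary is a formal consequence of Theorem \ref{max quotients}. It is worth noting that the degenerate scenarios in that theorem, where the quotient collapses to a Heisenberg or Virasoro algebra, still carry the matching-parameter condition $c_0=c_1$, $k_0=k_1$, so they produce no additional coincidences off the diagonal and the argument applies uniformly across all three cases.
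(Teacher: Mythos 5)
Your argument is correct and is exactly the route the paper intends: the paper states this corollary without a displayed proof, treating it as immediate from Theorem \ref{max quotients} together with the tautological identification \eqref{tautological} of the specialization $\cW_{I,p_0}$ with the simple graded quotient of $\cW^{\fr{so}_2,M_{p_0}}_{\infty}$, which is precisely your ``bookkeeping step.'' Since all three cases of Theorem \ref{max quotients} force $c_0=c_1$ and $k_0=k_1$, your conclusion $p_0=p_1\in V(I)\cap V(J)$ follows as you say.
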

	
	\begin{corollary}
		Suppose that $\cA$ is a simple, 1-parameter vertex algebra which is isomorphic to the simple quotient of $\cW^{\fr{so}_2,I}_{\infty}$ for some prime ideal $I\subseteq R$, possibly after localization. 
		Then if $\cA$ is the quotient of $\cW^{\fr{so}_2,I}_{\infty}$ for some prime ideal $J$, possibly localized, we must have $I=J$.
	\end{corollary}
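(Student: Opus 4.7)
The plan is to combine Corollary \ref{max quotients2} with a dimension count on $\mathrm{Spec}(R)$. Suppose for contradiction that $\cA$ is realized as the simple quotient of $\cW^{\fr{so}_2,I}_{\infty}$ and also of $\cW^{\fr{so}_2,J}_{\infty}$ (after suitable localization) for distinct prime ideals $I \neq J$ in $R$. Because $\cA$ is a $1$-parameter vertex algebra, its parameter ring has Krull dimension one, and the two realizations describe $\cA$ as a family over (a localization of) $R/I$ and over (a localization of) $R/J$, respectively, via finite dominant morphisms onto these curves.

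First I would specialize $\cA$ at a generic closed point $\psi_0$ of its parameter space, avoiding the finite set of \emph{bad} values where structure constants of $\cW^{\fr{so}_2}_{\infty}$ acquire poles, where $c_0=-1$ or $k_0=0$ (so that Theorem \ref{max quotients} applies cleanly), or where the formal specialization could fail to coincide with the honest simple quotient (cf.\ the discussion in Subsection \ref{subsection:voaring}). For every such generic $\psi_0$, the specialization $\cA_{\psi_0}$ is realized simultaneously as the simple quotient of $\cW^{\fr{so}_2,I}_{\infty}$ at some closed point $p(\psi_0) \in V(I)$ and as the simple quotient of $\cW^{\fr{so}_2,J}_{\infty}$ at some closed point $q(\psi_0) \in V(J)$.

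By Theorem \ref{max quotients}, outside the excluded loci an isomorphism of simple quotients forces $p(\psi_0)=q(\psi_0)$ as points of $\mathrm{Spec}(R)$; hence by Corollary \ref{max quotients2} this common point lies in $V(I)\cap V(J)$. Since distinct generic values of $\psi_0$ yield non-isomorphic specializations of $\cA$ (again by Theorem \ref{max quotients}), the assignment $\psi_0 \mapsto p(\psi_0)$ is injective on a cofinite subset of the parameter space of $\cA$, and so produces infinitely many closed points of $V(I)\cap V(J)$. Because $R$ has Krull dimension $2$ and $V(I), V(J)$ are irreducible curves, their scheme-theoretic intersection is $0$-dimensional unless $V(I)=V(J)$; infinitely many common closed points therefore force $V(I)=V(J)$, and the primality of both ideals yields $I=J$.

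The main obstacle is the first step: one must verify that the exceptional locus where either weak generation fails, a structure-constant denominator vanishes, or the formal quotient differs from the honest simple quotient is genuinely finite, so that infinitely many \emph{honest} coincidences can be extracted. For the $\cC^{\psi}_{XY}(n,m)$ families this finiteness follows from Theorem \ref{weakgeneration:Walgebras} (and the analogous rationality/weak-generation results proven later in the paper); in the localized $R$ a general argument identifying $\cA_{\psi_0}$ with the specialization of $\cW^{\fr{so}_2,I}_{\infty}$ at a uniquely determined closed point of $V(I)$ suffices, so the dimension argument goes through.
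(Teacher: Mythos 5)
Your proof is correct and follows essentially the same route as the paper's: both reduce to the observation (Theorem \ref{max quotients} plus Corollary \ref{max quotients2}) that pointwise coincidences of simple quotients must occur at points of $V(I)\cap V(J)$, which for distinct prime ideals is a finite set, whereas a coincidence as one-parameter families would require infinitely many such points. The additional care you take about honest versus formal specializations is not actually needed for this statement, since it concerns abstract quotients of $\cW^{\fr{so}_2}_{\infty}$ and the identification at a closed point is the tautological isomorphism \eqref{tautological} of Subsection \ref{subsection:voaring}.
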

	\begin{proof}
		This is immediate from Theorem (\ref{max quotients}) and Corollary (\ref{max quotients2}), since if $I$ and $J$ are distinct prime ideals, their truncation curves $V(I)$ and $V(J)$ can intersect in at most finitely many points. The simple quotients of $\cW^{\fr{so}_2,I}_{\infty}$ and $\cW^{\fr{so}_2,J}_{\infty}$ therefore cannot coincide as one-parameter families. \end{proof}

	\section{One-parameter quotients of $\cW^{\fr{so}_2}_{\infty}$} \label{sect:1paramquot}
	
In this section, we prove that the $8$ families of $Y$-algebras of type $\gs\go_2$ introduced in Section \ref{sect:YtypeSO}, as well as the $4$ additional families of diagonal cosets introduced in Section \ref{sect:Diagonal}, all arise as $1$-parameter quotients of $\cW^{\fr{so}_2}_{\infty}$. This means that there exist localizations $E^{-1} R/I_{XY,n,m}$ and $F^{-1}\mathbb{C}[\ell]$ such we have isomorphisms of $1$-parameter vertex algebras
$$ E^{-1} R/I_{XY,n,m} \otimes_{R/I_{XY,n,m}} \cW^{\gs\go_2}_{\infty, I_{XY,n,m}} \cong \cC^{\psi}_{XY}(n,m),\qquad F^{-1} R/I_n \otimes_{R/I_n} \cW^{\gs\go_2}_{\infty, I_n} \cong \cC^{\ell}(n),$$
 where $E$ and $F$ are the (finite) sets of denominators of structure constants of $\cW^{\gs\go_2}_{\infty}$ after replacing $c, k$ with the corresponding functions of $\psi$ (respectively $\ell$). Throughout this section, we omit these localizations from our notation. We have already proven this for $\cC^{\psi}_{CC}(0,m)$ and $\cC^{\psi}_{BD}(0,m)$ in Theorem \ref{weakgeneration:Walgebras}. Of the remaining cases, the easiest are the following.
	 
	 \begin{theorem} The following algebras have this property
\begin{enumerate}
		\item For $n\geq 1$, $\text{Com}(V^{\ell}(\fr{so}_{2n+1}), V^{\ell}(\fr{so}_{2n+3}))^{\mathbb{Z}_2}$ (Case $\cC^{\psi}_{BB}(n,0)$),
		\item For $n\geq 1$, $\text{Com}(V^{\ell}(\fr{so}_{2n}), V^{\ell}(\fr{so}_{2n+2}))^{\mathbb{Z}_2}$ (Case $\cC^{\psi}_{BD}(n,0)$),
		\item For $n\geq 1$, $\text{Com}(V^{\ell}(\fr{sp}_{2n}), V^{\ell}(\fr{osp}_{2|2n}))$ (Case $\cC^{\psi}_{BC}(n,0)$),	
		\item For $n\geq 1$, $\text{Com}(V^{\ell}(\fr{osp}_{1|2n}), V^{\ell}(\fr{osp}_{3|2n}))^{\mathbb{Z}_2}$ (Case $\cC^{\psi}_{BO}(n,0)$),
		\item For $n\geq 2$, $\text{Com}(V^{\ell}(\fr{so}_{n}), V^{\ell-2}(\fr{so}_{n}) \otimes \cF(2n))^{\mathbb{Z}_2}$,
		\item For $n\geq 1$, $\text{Com}(V^{\ell}(\fr{sp}_{2n}), V^{\ell+1}(\fr{sp}_{2n}) \otimes \cS(2n))$,
		\item For $n\geq 1$, $\text{Com}(V^{\ell}(\fr{osp}_{1|2n}), V^{\ell+1}(\fr{osp}_{1|2n}) \otimes \cS(2n)\otimes \cF(2))^{\mathbb{Z}_2}$.
	
	\end{enumerate}
\end{theorem}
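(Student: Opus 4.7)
The plan is to apply Theorem \ref{one-parameter quotients theorem} uniformly to each of the seven families. For a given candidate $\cC$, we must verify: (i) simplicity of $\cC$ at generic $\ell$ (or $\psi$); (ii) existence of strong generators $\bar H,\bar L,\bar X^2,\bar Y^2$ in weights $\leq 2$ with correctly normalized Heisenberg level $k$ and Virasoro central charge $c$; (iii) weak generation $\bar W^{n+2,\mu}=\bar W^4_{(1)}\bar W^{n,\mu}$ for all $n\geq 2$; and (iv) agreement of all OPE coefficients of $\bar W^{n,\mu}(z)\bar W^{m,\nu}(w)$ for $n+m\leq 9$ with those of $\cW^{\fr{so}_2}_{\infty}$ under the specialization $c=c(\ell),\ k=k(\ell)$. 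Item (i) is generic simplicity for affine cosets and $\mathbb{Z}_2$-orbifolds of simple affine vertex superalgebras or free field algebras, via \cite[Theorem 3.6]{CL3} and standard orbifold theory. Item (ii) follows from Lemmas \ref{lemma:BB}, \ref{lemma:BD}, \ref{lemma:BC}, \ref{lemma:BO}, \ref{lemma:B and D}, \ref{lemma:C}, \ref{lemma:O}: these establish the strong generating type $\cW(1,2^3,3,4^3,\dots)$, while the Heisenberg level $k$ is read from the embedding and $c$ is computed in \eqref{eq:cBB}, \eqref{eq:cBD}, \eqref{eq:cBC}, \eqref{eq:cBO}, \eqref{eq:cn}, \eqref{eq:c-2n}, \eqref{eq:c-2n+1}.

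The heart of the argument is (iii). For families (1)--(4), $\cC$ is an affine coset of an affine vertex superalgebra, and the strong generators are in bijection with a weight basis of the $\fr{a}\oplus\fr{so}_2$-isotypic decomposition of the complementary module. The argument parallels that of Theorem \ref{weakgeneration:Walgebras}: nonvanishing of the Lie-theoretic structure constants of the form $[w^3,w^n],[w^3,x^{2j}]$, and $[x^2,y^{2j}]$ read off from the decomposition \eqref{g decomposition} forces the corresponding OPE raising coefficients to be nonzero polynomials in $\psi$, and rescaling generators then realizes $\bar W^4_{(1)}\bar W^{n,\mu}=\bar W^{n+2,\mu}$ outside a finite set of $\psi$. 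For families (5)--(7), weak generation is verified in the free field limit: by Weyl's and Sergeev's first fundamental theorems \cite{W,SI,SII}, the invariant algebras $\cF(2n)^{\text{O}_n}$, $\cS(2n)^{\text{Sp}_{2n}}$, and $(\cS(2n)\otimes\cF(2))^{\text{Osp}_{1|2n}}$ are strongly generated by the explicit bilinear invariants listed after \eqref{diagonalBD}, \eqref{diagonalC}, \eqref{diagonalO}, and these can be produced by iterated $W^4_{(1)}$-brackets from the weight-$\leq 2$ invariants; generic deformation in $\ell$ preserves this outside finitely many exceptional values.

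Once (iii) is in hand, item (iv) is automatic: using the $\text{O}_2$-automorphism freedom of Proposition \ref{prop:autos} we normalize $\bar X^2,\bar Y^2,\bar W^3,\bar W^4$ so that the scaling identities \eqref{eq:scale1} hold, whereupon the low-weight Jacobi identities force all structure constants in $D_9$ to coincide with those of Proposition \ref{prop:base case} as rational functions of $(c,k)$. After inverting the finitely many denominators that appear, Theorem \ref{one-parameter quotients theorem} identifies each family with the simple quotient of $\cW^{\fr{so}_2,I}_{\infty}$ along the prime ideal $I\subseteq R$ obtained by eliminating $\ell$ (or $\psi$) between $c(\ell)$ and $k(\ell)$. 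The principal obstacle will be the verification of weak generation for families (5)--(7), where one must control how iterated $W^4_{(1)}$-brackets act on classical invariant-theoretic generators and rule out degeneracies at generic $\ell$; however, this reduces to an explicit combinatorial check using the standard invariant bases, and an analogous analysis already underlies the parallel constructions of $\cW_{\infty}$, $\cW^{\text{ev}}_{\infty}$, and $\cW^{\fr{sp}}_{\infty}$ in \cite{Lin,KL,CKoL2}.
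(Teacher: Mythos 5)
Your treatment of families (5)--(7) is exactly the paper's argument, and in fact the paper applies that same argument uniformly to all seven families: in each case the free field limit is an orbifold of a free field algebra (namely $\cO_{\text{ev}}(2(2n+1),2)^{\text{O}_{2n+1}}$, $\cO_{\text{ev}}(2(2n),2)^{\text{O}_{2n}}$, $\cO_{\text{odd}}(2n,2)^{\text{Sp}_{2n}}$, $(\cO_{\text{odd}}(2n,2)\otimes\cO_{\text{ev}}(2,2))^{\text{Osp}_{1|2n}}$ for (1)--(4), and the three limits you list for (5)--(7)), one checks the weak generation property there directly via classical invariant theory, and since weak generation can only fail on a proper closed subset of the parameter line it holds generically.

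The gap is in your argument for families (1)--(4). You propose to run the proof of Theorem \ref{weakgeneration:Walgebras}, invoking ``Lie-theoretic structure constants $[w^3,x^{2j}]$, $[x^2,y^{2j}]$'' and a bijection between the strong generators and a weight basis of the complementary module. That mechanism is specific to $\cW$-algebras $\cW^k(\gg,f)$ with $f\neq 0$: there the strong generators are indexed by a basis of $\gg^f$ graded by $\mathrm{ad}\,x$-eigenvalues, and the leading raising coefficients are literally Lie brackets in $\gg^f$. The cosets in (1)--(4) are the $m=0$ members $\cC^{\psi}_{BY}(n,0)$, for which the nilpotent is zero and the ambient algebra is just an affine vertex (super)algebra. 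Their strong generators are not in bijection with the complementary module $\mathbb{C}^{2}\otimes\rho_{\fr{a}}$, which is finite-dimensional and sits in weight one; they are the infinitely many quadratic invariants $\sum_j :\partial^p a^{i,j}\partial^q a^{i',j}:$ supplied by Weyl's and Sergeev's first fundamental theorems in the free field limit. There is no $\gg^f$-grading and no Lie bracket producing the raising constants you invoke, so that step does not get off the ground. The repair is immediate: apply to (1)--(4) the same free-field-limit check you already perform for (5)--(7). (Your step (iv) is also unnecessary: once the symmetry, normalization, and weak generation hypotheses hold, Proposition \ref{prop:base case} already forces all OPEs in $D_9$ to be the unique rational functions of $(c,k)$, so agreement is automatic rather than something to verify.)
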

	
\begin{proof} In these cases, the free field limits are as follows:
\begin{enumerate}
		\item $\lim_{\psi \rightarrow \infty} \cC^{\psi}_{BB}(n,0) \cong \cO_{\text{ev}}(2(2n+1),2)^{\text{O}_{2n+1}}$, %  \cH(\mathbb{C}^2 \otimes \mathbb{C}^{2n+1})^{\text{O}_{2n+1}}$
		\item $\lim_{\psi \rightarrow \infty} \cC^{\psi}_{BD}(n,0) \cong  \cO_{\text{ev}}(2(2n),2)^{\text{O}_{2n}}$, %\cH(\mathbb{C}^2 \otimes \mathbb{C}^{2n})^{\text{O}_{2n}}$
		\item $\lim_{\psi \rightarrow \infty} \cC^{\psi}_{BC}(n,0) \cong \cO_{\text{odd}}(2n,2)^{\text{Sp}_{2n}}$,
		\item $\lim_{\psi \rightarrow \infty} \cC^{\psi}_{BO}(n,0) \cong  (\cO_{\text{odd}}(2n,2) \otimes \cO_{\text{ev}}(2,2))^{\text{Osp}_{1|2n}}$,
		
		%For $n\geq 1$, $\text{Com}(V^{\ell}(\fr{osp}_{1|2n}), V^{\ell}(\fr{osp}_{3|2n}))^{\mathbb{Z}_2}$ (Case $\cC^{\psi}_{BO}(n,0)$),
		\item For $n\geq 2$, $\lim_{\ell \rightarrow \infty} \cC^{\ell}(n) \cong \cO_{\text{odd}}(2n,1)^{\text{O}_n}$, %\text{Com}(V^{\ell}(\fr{so}_{n}), V^{\ell-2}(\fr{so}_{n}) \otimes \cF(2n))^{\mathbb{Z}_2}$,
		\item For $n\geq 1$, $\lim_{\ell \rightarrow \infty} \cC^{\ell}(-2n) \cong \cS_{\text{ev}}(2n,1)^{\text{Sp}_{2n}}$, %\text{Com}(V^{\ell}(\fr{sp}_{2n}), V^{\ell+1}(\fr{sp}_{2n}) \otimes \cS(2n))$,
		\item For $n\geq 1$, $\lim_{\ell \rightarrow \infty} \cC^{\ell}(-2n+1) \cong \cS_{\text{ev}}(2n,1) \otimes \cO_{\text{odd}}(2,1))^{\text{Osp}_{1|2n}}$. %$\text{Com}(V^{\ell}(\fr{osp}_{1|2n}), V^{\ell+1}(\fr{osp}_{1|2n}) \otimes \cS(2n)\otimes \cF(2))^{\mathbb{Z}_2}$.
	\end{enumerate} It is straightforward to check that these free field limits have the desired weak generation property, which therefore holds at generic levels.
\end{proof}

However, this argument does not work for $\cC^{\psi}_{XY}(n,m)$ when $m \geq 1$ since the weak generation property fails in the large level level limit, so we need a different approach.

\begin{lemma} Suppose $\cW$ is a $1$-parameter vertex algebra satisfying the symmetry and strong generation requirements, but not necessarily the weak generation hypotheses. Consider the subalgebra $\tilde{\cW} \subseteq \cW$ generated by the fields in weights at most $4$. Then $\tilde{\cW}$ is of type $\cW(1, 2^3, 3, 4^3,\dots)$, and is a $1$-parameter quotient of $\cW^{\fr{so}_2}_{\infty}$ which need not be simple. 
\end{lemma}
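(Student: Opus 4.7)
The strategy is to invoke the universal property of $\cW^{\fr{so}_2}_{\infty}$ established in Theorem~\ref{thm:induction}(3), building a surjective vertex algebra homomorphism $\phi\colon \cW^{\fr{so}_2, I}_{\infty, S} \twoheadrightarrow \tilde{\cW}$, where $I \subseteq R$ is the prime ideal cutting out the $1$-parameter slice determined by $\cW$ and $S$ localizes $R/I$ at the finite set of denominators of structure constants appearing in Proposition~\ref{prop:base case}.

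First I would identify the weight $\leq 4$ generators. The symmetry and strong generation hypotheses furnish fields $H, L, X^2, Y^2, W^3, X^4, W^4, Y^4$ in $\cW$ with the prescribed Heisenberg charges and Virasoro primary structure. Since $\cW$ is $1$-parameter, its central charge $c$ and $\fr{so}_2$-level $k$ are rational functions of a single parameter, which determines $I$. By Proposition~\ref{prop:base case}, up to the scaling freedoms $\omega_2, \omega_3, \omega_4$ attached to $X^2, W^3, X^4$, the Jacobi identities force every OPE through $D_9$ to assume exactly the form it takes in $\cW^{\fr{so}_2}_{\infty}$. After absorbing these rescalings into the normalization of the generators of $\cW$, the weight-$\leq 4$ OPE data in $\cW$ coincides with that of $\cW^{\fr{so}_2, I}_{\infty, S}$.

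Next I would extend the identification to all weights. Inside $\tilde{\cW}$, define $\tilde{W}^{n+2, \mu} := W^4_{(1)} \tilde{W}^{n, \mu}$ inductively, starting from the weight-$\leq 4$ generators; each such element lies in $\tilde{\cW}$ by construction. Because Jacobi identities hold in any vertex algebra, the inductive recursions of Lemmas~\ref{lemma:recursions0}--\ref{rproduct}, which determine all higher OPEs of $\cW^{\fr{so}_2}_{\infty}$ from the weight-$\leq 4$ data, are automatically satisfied in $\tilde{\cW}$. By Theorem~\ref{thm:induction}(3), the assignment of the strong generators of $\cW^{\fr{so}_2, I}_{\infty, S}$ to the corresponding $\tilde{W}^{n, \mu} \in \tilde{\cW}$ extends uniquely to a vertex algebra homomorphism $\phi$. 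Surjectivity is immediate, since the image of $\phi$ contains the weight-$\leq 4$ generators of $\tilde{\cW}$ and these generate it by definition.

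Consequently, $\tilde{\cW} \cong \cW^{\fr{so}_2, I}_{\infty, S}/\ker \phi$ is a $1$-parameter quotient of $\cW^{\fr{so}_2}_{\infty}$, and the images $\phi(\tilde{W}^{n, \mu})$ furnish a strong generating set of type $\cW(1, 2^3, 3, 4^3, \dots)$, although individual fields $\phi(\tilde{W}^{n, \mu})$ may be redundant, i.e., expressible as normally ordered polynomials in strictly lower-weight generators of $\cW$. Such redundancies produce nontrivial elements of $\ker \phi$, so non-simplicity is expected precisely when the weak generation property fails in $\cW$. The main obstacle I anticipate is the normalization step --- absorbing $\omega_2, \omega_3, \omega_4$ cleanly and verifying that the base OPE data of $\cW$ matches Proposition~\ref{prop:base case} exactly; once this match is in place, the inductive apparatus of Section~\ref{sect:main} applies verbatim.
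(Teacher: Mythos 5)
Your proposal is correct and follows essentially the argument the paper intends: the paper omits the proof, deferring to the analogous Lemma 5.10 and Theorem 5.4 of \cite{CL4}, and the standard route there is exactly what you describe --- match the weight-$\leq 4$ OPE data to Proposition \ref{prop:base case} after fixing the scalings $\omega_2,\omega_3,\omega_4$, define the higher generators recursively via $W^4_{(1)}$, and invoke the classifying/initial-object property (Theorem \ref{thm:induction}(3) rather than Theorem \ref{one-parameter quotients theorem}, since $\tilde{\cW}$ need not be simple) to obtain the surjection. Your closing remark correctly identifies the source of non-simplicity as the failure of weak generation, i.e., redundancies among the recursively defined generators producing elements of $\ker\phi$.
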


\begin{proof} This is similar to the proof of Lemma 5.10 and Theorem 5.4 of \cite{CL4}, and is omitted.
\end{proof}

\begin{lemma} \label{upto2m+2} For $X = B,C$ and $Y = B,C,D,O$, let $\tilde{\cC}^{\psi}_{XY}(n,m) \subseteq \cC^{\psi}_{XY}(n,m)$ be the subalgebra generated by the fields in weight at most $4$. Suppose that $\tilde{\cC}^{\psi}_{XY}(n,m)$ contains the strong generators up to weight $2m+2$. Then $\tilde{\cC}^{\psi}_{XY}(n,m) = \cC^{\psi}_{XY}(n,m)$, and in particular, $\tilde{\cC}^{\psi}_{XY}(n,m)$ is simple.
\end{lemma}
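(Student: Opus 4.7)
The plan is to use the raising property $W^{4,0}_{(1)}\tilde W^{n,\mu}=\tilde W^{n+2,\mu}$ of the universal algebra $\cW^{\fr{so}_2}_{\infty}$ to bootstrap the hypothesis from weights $\leq 2m+2$ up to all weights, after which $\tilde{\cC}^{\psi}_{XY}(n,m)$ contains every strong generator of $\cC^{\psi}_{XY}(n,m)$ and the two subalgebras must coincide.

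First I would observe that $\tilde{\cC}^{\psi}_{XY}(n,m)$ is itself a $1$-parameter quotient of $\cW^{\fr{so}_2}_{\infty}$. By construction it is weakly generated by the weight-$\leq 4$ fields of $\cC^{\psi}_{XY}(n,m)$, and among these lie the natural candidates $H$, $L$, $W^{2,\pm 2}$, $W^{3,0}$, $W^{4,0}$, $W^{4,\pm 2}$ for the images of the generators of $\cW^{\fr{so}_2}_{\infty}$. Since $\cC^{\psi}_{XY}(n,m)$ is a simple $1$-parameter vertex algebra of strong generating type $\cW(1,2^3,3,4^3,\dots)$ equipped with the $\mathbb{Z}_2$-symmetry exchanging the two weight-$2$ primaries together with the required Heisenberg and Virasoro structure (Proposition \ref{extension features}), the OPEs among these weight-$\leq 4$ fields are uniquely determined up to the scaling freedom already fixed in \eqref{eq:scale1}. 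Applying Theorem \ref{one-parameter quotients theorem} then identifies $\tilde{\cC}^{\psi}_{XY}(n,m)$ (or its simple quotient) as a $1$-parameter quotient of $\cW^{\fr{so}_2,I}_{\infty,S}$ for an appropriate prime ideal $I\subseteq R$ and localization $S$.

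Next I would induct on $N\geq 2m+2$ to show that every strong generator $\tilde W^{N+1,\mu}$ of $\cC^{\psi}_{XY}(n,m)$ lies in $\tilde{\cC}^{\psi}_{XY}(n,m)$. The base case $N=2m+2$ is the hypothesis. For the inductive step, let $\tilde W^{N-1,\mu}$ be a strong generator of weight $N-1$, which lies in $\tilde{\cC}^{\psi}_{XY}(n,m)$ by the inductive hypothesis. Since $W^{4,0}$ is a weight-$4$ field of $\cC^{\psi}_{XY}(n,m)$, it lies in $\tilde{\cC}^{\psi}_{XY}(n,m)$, and hence so does $W^{4,0}_{(1)}\tilde W^{N-1,\mu}$. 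By Proposition \ref{prop:structure constants}, the image of the raising relation in the quotient reads
\[
W^{4,0}_{(1)}\tilde W^{N-1,\mu}=\tilde W^{N+1,\mu}+P,
\]
where $P$ is a normally ordered polynomial in strong generators of strictly lower weight, and the leading structure constant is $1\in\mathbb{C}$ independently of $\psi$. Since $P$ lies in $\tilde{\cC}^{\psi}_{XY}(n,m)$ by the inductive hypothesis, we obtain $\tilde W^{N+1,\mu}\in\tilde{\cC}^{\psi}_{XY}(n,m)$.

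Completing the induction shows that $\tilde{\cC}^{\psi}_{XY}(n,m)$ contains a full strong generating set of $\cC^{\psi}_{XY}(n,m)$, so the equality $\tilde{\cC}^{\psi}_{XY}(n,m)=\cC^{\psi}_{XY}(n,m)$ follows, and simplicity is inherited from Proposition \ref{extension features}(1). The main obstacle will be the first step, namely confirming that the weight-$\leq 4$ OPEs inside $\cC^{\psi}_{XY}(n,m)$ coincide with those of $\cW^{\fr{so}_2}_{\infty}$ after a canonical rescaling, so that Theorem \ref{one-parameter quotients theorem} genuinely applies and the raising property descends with the nonzero structure constant; once this is in place, the inductive argument runs automatically.
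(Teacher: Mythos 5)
Your inductive step has a genuine gap: it conflates the images in $\tilde{\cC}^{\psi}_{XY}(n,m)$ of the universal generators $\tilde W^{N,\mu}$ of $\cW^{\fr{so}_2}_{\infty}$ with the strong generators of the ambient coset $\cC^{\psi}_{XY}(n,m)$. The raising relation $W^{4}_{(1)}\tilde W^{N-1,\mu}=\tilde W^{N+1,\mu}$ of Proposition \ref{prop:structure constants} does descend to the quotient, but it only tells you that the image of $\tilde W^{N+1,\mu}$ lies in $\tilde{\cC}^{\psi}_{XY}(n,m)$ --- something that is automatic, since $\tilde{\cC}^{\psi}_{XY}(n,m)$ is by definition the image of $\cW^{\fr{so}_2}_{\infty}$. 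What it does \emph{not} tell you is that this image projects with nonzero coefficient onto the weight-$(N+1)$ strong generator of $\cC^{\psi}_{XY}(n,m)$; if that coefficient vanishes, the image is a normally ordered polynomial in lower-weight fields and the induction stalls. That this is a real danger, and not a technicality, is visible from the fact that your argument never uses the hypothesis in an essential way: the same induction started at $N=4$ would ``prove'' that the weight-$\leq 4$ fields generate everything, yet the paper explicitly records that this weak generation fails in the large level limit for $m\geq 1$ (the fields in weights $1,\dots,2m+1$ decouple there from the higher-weight part, so $W^{4}_{(1)}$ applied to a higher-weight strong generator loses its leading term in the limit). Establishing the nonvanishing of such coefficients is exactly the hard content of results like Theorem \ref{weakgeneration:Walgebras}, where it requires the specific Lie-algebraic structure constants of the $\cW$-algebra; no such input is available here.

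The paper's proof takes a different and essentially invariant-theoretic route that uses the weight-$2m+2$ hypothesis crucially: pass to the large level limit, where $\cC^{\psi}_{XY}(n,m)$ splits into a decoupled part in weights $\leq 2m+1$ and an orbifold of a free field algebra (e.g.\ $\cO_{\T{ev}}(4n+2,2m+2)^{\text{SO}_{2n+1}}$, $\cS_{\T{ev}}(2n,2m+1)^{\text{Sp}_{2n}}$, etc.) whose generators start in weights $2m+1$ and $2m+2$; classical invariant theory shows these orbifolds are generated by their lowest-weight fields, so the fields up to weight $2m+2$ generate everything in the limit, hence generically. If you want to salvage your approach, you would need to replace the appeal to Proposition \ref{prop:structure constants} by an argument that the structure constants of $\cC^{\psi}_{XY}(n,m)$ itself expressing $W^{4}_{(1)}(\text{weight-}(N-1)\text{ generator})$ in terms of the weight-$(N+1)$ generator are nonzero as functions of $\psi$, which is precisely the statement you are trying to prove.
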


\begin{proof} If $\tilde{\cC}^{\psi}_{XY}(n,m)$ contains the strong generators up to weight $2m+2$, so does its large level limit $\lim_{\psi \ra \infty} \tilde{\cC}^{\psi}_{XY}(n,m)$. We will show that the subalgebra of $\lim_{\psi \ra \infty} \cC^{\psi}_{XY}(n,m)$ generated by the fields in weights up to $2m+2$ is all of $\lim_{\psi \ra \infty} \cC^{\psi}_{XY}(n,m)$, which proves the lemma. For $X=B$, in the large level limit the fields in weights $1,2,\dots,2m+1$ decouple from the ones in higher weight. These higher weight fields are the generators of the following orbifolds
\begin{enumerate}
\item $\cO_{\T{ev}}(4n+2,2m+2)^{\text{SO}_{2n+1}}$, for $\cC^{\psi}_{BB}(n,m)$,
\item $ \cS_{\T{odd}}(2n,2m+2)^{\text{Sp}_{2n}}$, for $\cC^{\psi}_{BC}(n,m)$,
\item $\cO_{\T{ev}}(4n,2m+2)^{\text{SO}_{2n}}$, for $\cC^{\psi}_{BD}(n,m)$,
\item $(\cS_{\T{ev}}(1,2m+2)\otimes \cS_{\T{odd}}(2n,2m+2))^{\text{Osp}_{1|2n}}$, for $\cC^{\psi}_{BO}(n,m)$.
\end{enumerate}
In all cases, it can be checked easily that these orbifolds are generated by the fields in weights $2m+2$. The proof is similar to the proof of Lemmas 5.1-5.8 of \cite{CL4}, and is omitted. Similarly, in the case $X=C$, the fields in weights $1,2,\dots, 2m$ decouple from the ones in higher weight. The fields in higher weights are the generators of the following orbifolds
\begin{enumerate}
\item $\cO_{\T{odd}}(4n+2,2m+1)^{\text{SO}_{2n+1}}$, for $\cC^{\psi}_{CB}(n,m)$,
\item $ \cS_{\T{ev}}(2n,2m+1)^{\text{Sp}_{2n}}$, for $\cC^{\psi}_{CC}(n,m)$,
\item $\cO_{\T{odd}}(4n,2m+1)^{\text{SO}_{2n}}$, for $\cC^{\psi}_{CD}(n,m)$,
\item $( \cS_{\T{ev}}(2n,2m+1)\otimes \cO_{\T{odd}}(1,2m+1))^{\text{Osp}_{1|2n}}$, for $\cC^{\psi}_{CO}(n,m)$.
\end{enumerate}
Again, it is straightforward to check that these orbifolds are generated by the fields in weights $2m+1$ and $2m+2$.
\end{proof}

\begin{theorem} \label{quotientproperty} For $m\geq 1$, $X = B,C$, and $Y = B,C,D,O$, $\tilde{\cC}^{\psi}_{XY}(n,m) = \cC^{\psi}_{XY}(n,m)$ as $1$-parameter vertex algebras. In particular, $\cC^{\psi}_{XY}(n,m)$ is a simple, $1$-parameter quotient of $\cW^{\fr{so}_2}_{\infty}$, and we have 
$$\cC^{\psi}_{XY}(n,m) \cong \cW^{\fr{so}_2}_{\infty, I_{XY,n,m}}.$$ In this notation, $I_{XY,n,m} \subseteq \mathbb{C}[c,k]$ is the ideal generated by $c = c_{XY}$, where $c_{XY}$ is given by \eqref{eq:cBD}-\eqref{eq:cCO}, and $\cW^{\fr{so}_2}_{\infty, I_{XY,n,m}}$ is the simple quotient of $\cW^{\fr{so}_2}_{\infty} / I_{XY,n,m} \cdot \cW^{\fr{so}_2}_{\infty}$ by its maximal graded ideal. \end{theorem}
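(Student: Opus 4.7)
The plan is to combine the preceding structural lemmas with an OPE-comparison argument generalizing Theorem \ref{weakgeneration:Walgebras} from the $\cW$-algebra setting to the affine-coset and $\mathbb{Z}_2$-orbifold settings of the eight $(X,Y)$-families. By the lemma preceding Lemma \ref{upto2m+2}, the subalgebra $\tilde{\cC}^{\psi}_{XY}(n,m) \subseteq \cC^{\psi}_{XY}(n,m)$ generated by the fields in conformal weight at most $4$ is itself a $1$-parameter quotient of $\cW^{\fr{so}_2}_{\infty}$ of type $\cW(1,2^3,3,4^3,\dots)$. By Lemma \ref{upto2m+2}, to conclude $\tilde{\cC}^{\psi}_{XY}(n,m) = \cC^{\psi}_{XY}(n,m)$ it suffices to show that $\tilde{\cC}^{\psi}_{XY}(n,m)$ already contains the natural strong generators of $\cC^{\psi}_{XY}(n,m)$ up through conformal weight $2m+2$.

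Inside $\tilde{\cC}^{\psi}_{XY}(n,m)$, iterating the raising operator $W^4_{(1)}$ on the weight-$\le 4$ generators produces specific fields $W^{k,\mu}$ in every higher weight, with OPE structure governed by Proposition \ref{prop:structure constants}. Viewed in the ambient $\cC^{\psi}_{XY}(n,m)$, each such $W^{k,\mu}$ decomposes as
\[
W^{k,\mu} \;=\; \lambda_{k,\mu}(\psi)\,\bar W^{k,\mu} \;+\; (\text{normally ordered polynomial in lower-weight strong generators}),
\]
where $\bar W^{k,\mu}$ is a chosen natural strong generator of $\cC^{\psi}_{XY}(n,m)$ and $\lambda_{k,\mu}(\psi)$ is a rational function of $\psi$. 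The theorem then reduces to proving $\lambda_{k,\mu}(\psi) \not\equiv 0$ for all $k \le 2m+2$.

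To verify this nonvanishing, I would compute the first-order poles of OPEs $X^2 \cdot Y^{2k}$, $W^3 \cdot W^{2k-1,\mu}$, $W^4 \cdot W^{2k-2,\mu}$, and $X^2 \cdot W^{2k-1}$ in two ways. On one side, Proposition \ref{prop:structure constants} yields the coefficients of $W^{k,\mu}$ as explicit nonzero rational constants (ratios of double factorials). On the other side, the same OPEs in $\cC^{\psi}_{XY}(n,m)$ can be evaluated via the coset OPE calculus applied to the known OPE structure of $\cW^{\psi}_{XY}(n,m)$, and produce coefficients of the form $\mu_{k,\mu}(\psi)\,\bar W^{k,\mu} + \cdots$, with $\mu_{k,\mu}(\psi)$ controlled by a combination of the Lie algebra structure constants of the nilpotent centralizer and the normalization of the coset projection. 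Equating the two expressions and using skew-symmetry and the $\mathrm{O}_2$-symmetry (\ref{eq:auto}) then provides an inductive recursion for $\lambda_{k,\mu}(\psi)$ in terms of $\lambda_{k',\mu'}(\psi)$ for $k' < k$, mirroring the recursions in the proof of Theorem \ref{weakgeneration:Walgebras}. Since the base-case coefficients $\lambda_{3,0}, \lambda_{4,0}, \lambda_{4,\pm 2}$ are manifestly nonzero (being determined by the faithful identification of the weight-$\le 4$ fields of $\tilde{\cC}$ with those of $\cC$), the induction yields $\lambda_{k,\mu}(\psi) \not\equiv 0$ for all $k \le 2m+2$.

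The main obstacle is showing that the first-order-pole coefficients $\mu_{k,\mu}(\psi)$ on the coset side are themselves nonzero (as polynomials in $\psi$) for all eight families and all relevant $(k,\mu)$. In the pure $\cW$-algebra cases $\cC^{\psi}_{CC}(0,m)$ and $\cC^{\psi}_{BD}(0,m)$ already treated in Theorem \ref{weakgeneration:Walgebras}, this nonvanishing reduces to nonzero Lie algebra structure constants in $\fr{g}^f$. For the remaining $(X,Y,n,m)$ with $n \ge 1$, the strong generators of $\cC^{\psi}_{XY}(n,m)$ are $\fr{a}$-invariants (or $\mathbb{Z}_2$-invariant $\fr{a}$-invariants) in $\cW^{\psi}_{XY}(n,m)$, and one must carefully track how the extension fields $P^{\pm\frac{1}{2},j}$ satisfying the normalization (\ref{normalization}) interact with the invariant generators in the first-order pole, using the decomposition (\ref{g decomposition}) in each of the eight cases of Table \ref{tab:Walgebras}. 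Once this case analysis is carried out — the bulk of which is case-uniform thanks to Proposition \ref{extension features} — the inductive extraction of the $\lambda_{k,\mu}(\psi)$ proceeds as in Theorem \ref{weakgeneration:Walgebras}, and the theorem follows via Lemma \ref{upto2m+2} together with the simplicity statement from Proposition \ref{extension features}(1).
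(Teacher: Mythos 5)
Your overall reduction is sound: it is correct that, by the lemma preceding Lemma \ref{upto2m+2}, the subalgebra $\tilde{\cC}^{\psi}_{XY}(n,m)$ generated in weight $\leq 4$ is a quotient of $\cW^{\fr{so}_2}_{\infty}$, and that by Lemma \ref{upto2m+2} everything comes down to producing the strong generators of $\cC^{\psi}_{XY}(n,m)$ up to weight $2m+2$ inside $\tilde{\cC}^{\psi}_{XY}(n,m)$. However, the step you defer to ``a case analysis'' is precisely the hard point, and there is reason to believe it does not go through the way you describe. The mechanism behind Theorem \ref{weakgeneration:Walgebras} is that each high-weight strong generator of the $\cW$-algebra is reached from lower ones via a first-order pole whose leading coefficient is a \emph{nonzero Lie algebra structure constant} of $\gg^f$, hence a nonzero constant independent of $\psi$. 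For $n\geq 1$ the strong generators of $\cC^{\psi}_{XY}(n,m)$ in weights above $2m+1$ are not of this form: they are quadratic $\fr{a}$-invariants built from the extension fields $P^{\mu,j}$, and the paper points out explicitly (just before the lemma preceding Lemma \ref{upto2m+2}) that in the large level limit the weight $\leq 2m+1$ fields \emph{decouple} from these higher-weight invariants. Consequently your coefficients $\mu_{k,\mu}(\psi)$ must tend to zero as $\psi\to\infty$; they are genuinely $\psi$-dependent rational functions, their nonvanishing is not controlled by Lie-algebraic data, and establishing it in all eight families is a substantial computation that your proposal does not carry out. As written, the argument has a gap exactly where the theorem's content lies.

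The paper sidesteps this entirely with a specialization argument. Since $\tilde{\cC}^{\psi}_{XY}(n,m) = \cC^{\psi}_{XY}(n,m)$ is an identity of $1$-parameter vertex algebras, it suffices to verify it at a \emph{single} value of $\psi$ at which no normally ordered relation occurs below weight $2m+2$. The paper locates intersection points of the truncation curve of $\cC^{\psi}_{XY}(n,m)$ with those of the already-settled family $\cC^{\psi'}_{BD}(0,r)$ (Theorem \ref{weakgeneration:Walgebras} gives infinitely many admissible $r$); at such a point the two algebras share the same OPE data as quotients of $\cW^{\fr{so}_2}_{\infty}$, so weak generation transfers, while Weyl's second fundamental theorem bounds the first relation in $\tilde{\cC}^{\psi}_{XY}(n,m)$ below by a weight exceeding $2m+2$ for all but finitely many specializations. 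If you want to salvage your route, you would need to actually compute the coset-side first-order poles $X^2_{(0)}Y^{2k}$, etc., in terms of the $P^{\mu,j}$ and verify nonvanishing of the resulting rational functions family by family; the truncation-curve argument is considerably cheaper.
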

	
\begin{proof} First, to show that $\tilde{\cC}^{\psi}_{XY}(n,m) = \cC^{\psi}_{XY}(n,m)$ as $1$-parameter vertex algebras, it is enough to find a level $\psi$ where the simple quotients coincide, and there is no singular vector in weight below $2m+2$. Consider first the case of $\tilde{\cC}^{\psi}_{BD}(n,m)$ for $n,m\geq 1$. 

The truncation curves for $\tilde{\cC}^{\psi}_{BD}(n,m)$ and $\tilde{\cC}^{\psi'}_{BD}(0,r)$ intersect at the point when 
$\psi = \frac{1 + 2 m + n}{1 + m + r}$ and $\psi' = \frac{1 + 2 r - n}{1 + m + r}$, 
so that $k =\frac{1 - 4 m n - r - 2 n r}{1 + m + n}$. It follows from Theorem \ref{weakgeneration:Walgebras} that for infinitely many choices of $r$, this is a point where $\cC^{\psi'}_{BD}(0,r)$ has the weak generation property $\tilde{\cC}^{\psi'}_{BD}(0,r) = \cC^{\psi'}_{BD}(0,r)$. 

%Suppose first that $n=0$. Then $\cC^{\psi}_{BD}(0,m)$ is freely generated of type $\cW(1, 2^3, 3, \dots, (2m)^3, 2m+1)$ for generic $\psi$, so there can be relations below weight $2m+1$ only for finitely many levels. So we can choose $r$ such that there are no relations in $\cC^{\psi}_{CD}(0,m)$ below weight $2m+1$ at the value $\psi =  \frac{1 + 2 m}{1 + m + r}$. Since $\tilde{\cC}^{\psi}_{BD}(0,m)$ has the same OPE algebra as $\cC^{\psi'}_{BD}(0,r)$ and there are no relations below weight $2m+1$, $\tilde{\cC}^{\psi}_{BD}(0,m)$ must contain the strong generators of $\cC^{\psi}_{BD}(0,m)$ up to weight $2m$. Then $\tilde{\cC}^{\psi}_{BD}(0,m) = \cC^{\psi}_{BD}(0,m)$ at this value of $\psi$. By Lemma \ref{upto2m+2}, $\tilde{\cC}^{\psi}_{BD}(0,m) = \cC^{\psi}_{BD}(0,m)$ as $1$-parameter vertex algebras.

Using Weyl's second fundamental of invariant theory for the standard representation of $\text{SO}_{2n}$ \cite{W}, one checks that first relation among the generators of $\tilde{\cC}^{\psi}_{BD}(n,m)$ occurs at weight
$w = 2 (1 + m + 2 n + 2 m n + n^2)$ for generic $\psi$. There can be relations in lower weight only for finitely many values of $k$. Therefore we can find $r$ such that at the corresponding value of $\psi$ the first relation occurs at weight $w$. Since $\tilde{\cC}^{\psi}_{BD}(n,m)$ has the same OPE algebra as $\cC^{\psi}_{BD}(0,r)$ and there are no relations below weight $w$, $\tilde{\cC}^{\psi}_{BD}(n,m)$ must contain the strong generators of $\cC^{\psi}_{BD}(n,m)$ up to weight $w > 2m+2$. By Lemma \ref{upto2m+2}, $\tilde{\cC}^{\psi}_{BD}(n,m) = \cC^{\psi}_{BD}(n,m)$ at this value of $\psi$, and hence this holds as $1$-parameter vertex algebras. The proof for the other families other than $\cC^{\psi}_{CC}(0,m)$ and $\cC^{\psi}_{BD}(0,m)$, is similar and is based on the following computations.

% \begin{table}[h!]
% \centering
% \renewcommand{\arraystretch}{1.4}
% \begin{tabular}{|c|c|c|c|}
% \hline
% Curve type & Curve 2 & level $\psi$ & level $\psi'$ \\
% \hline
% $\tilde{\cC}^{\psi}_{BB}(n,m)$ & $\tilde{\cC}^{\psi'}_{BD}(0,r)$ 
% & $\frac{3 + 4 m + 2 n}{2 (1 + m + r)}$
% & $\frac{1 + 4 r - 2 n}{2 (1 + m + r)}$ \\
% \hline
% $\tilde{\cC}^{\psi}_{BC}(n,m)$ & $\tilde{\cC}^{\psi'}_{BD}(0,r)$ 
% & $\frac{1 + 2 m - n}{1 + m + r}$ 
% & $\frac{-m + r + n}{1 + m + r}$ \\
% \hline
% $\tilde{\cC}^{\psi}_{BO}(n,m)$ & $\tilde{\cC}^{\psi'}_{BD}(0,r)$ 
% & $\frac{3 + 4 m - 2 n}{2 (1 + m + r)}$ 
% & $\frac{1 + 4 r + 2 n}{2 (1 + m + r)}$ \\
% \hline
% $\tilde{\cC}^{\psi}_{CC}(n,m)$ & $\tilde{\cC}^{\psi'}_{BD}(0,r)$ 
% & $\frac{1 + 2 m + n}{1 + 2 m + 2 r}$ 
% & $\frac{2 (2 r - n)}{1 + 2 m + 2 r}$ \\
% \hline
% $\tilde{\cC}^{\psi}_{CB}(n,m)$ & $\tilde{\cC}^{\psi'}_{BD}(0,r)$ 
% & $\frac{1 + 4 m - 2 n}{2 (1 + 2 m + 2 r)}$ 
% & $\frac{1 + 4 r + 2 n}{1 + 2 m + 2 r}$ \\
% \hline
% $\tilde{\cC}^{\psi}_{CD}(n,m)$ & $\tilde{\cC}^{\psi'}_{BD}(0,r)$ 
% & $\frac{1 + 2 m - n}{1 + 2 m + 2 r}$ 
% & $\frac{2 (2 r + n)}{1 + 2 m + 2 r}$ \\
% \hline
% $\tilde{\cC}^{\psi}_{CO}(n,m)$ & $\tilde{\cC}^{\psi'}_{BD}(0,r)$ 
% & $\frac{1 + 4 m + 2 n}{2 (1 + 2 m + 2 r)}$ 
% & $\frac{1 + 4 r - 2 n}{1 + 2 m + 2 r}$ \\
% \hline
% \end{tabular}
% \caption{Intersections of truncation curves with corresponding $\psi$ and $\psi'$ values.}
% \end{table}

\begin{enumerate}
\item The truncation curves for $\tilde{\cC}^{\psi}_{BB}(n,m)$, $\tilde{\cC}^{\psi'}_{BD}(0,r)$ intersect when 
$\psi = \frac{3 + 4 m + 2 n}{2 (1 + m + r)},\ \psi' = \frac{1 + 4 r - 2 n}{2 (1 + m + r)}$.
\item The truncation curves for $\tilde{\cC}^{\psi}_{BC}(n,m)$, $\tilde{\cC}^{\psi'}_{BD}(0,r)$ intersect when 
$\psi = \frac{1 + 2 m - n}{1 + m + r}$ and $\psi' = \frac{-m + r + n}{1 + m + r}$.
\item The truncation curves for $\tilde{\cC}^{\psi}_{BO}(n,m)$, $\tilde{\cC}^{\psi'}_{BD}(0,r)$ intersect when 
$\psi = \frac{3 + 4 m - 2 n}{2 (1 + m + r)}$ and $\psi' = \frac{1 + 4 r + 2 n}{2 (1 + m + r)}$. 
\item The truncation curves for $\tilde{\cC}^{\psi}_{CC}(n,m)$, $\tilde{\cC}^{\psi'}_{BD}(0,r)$ intersect when 
$\psi = \frac{1 + 2 m + n}{1 + 2 m + 2 r}$ and $\psi' = \frac{2 (2 r - n)}{1 + 2 m + 2 r}$.
\item The truncation curves for $\tilde{\cC}^{\psi}_{CB}(n,m)$, $\tilde{\cC}^{\psi'}_{BD}(0,r)$ intersect when 
$\psi = \frac{1 + 4 m - 2 n}{2 (1 + 2 m + 2 r)}$ and $\psi' = \frac{1 + 4 r + 2 n}{1 + 2 m + 2 r}$.
\item The truncation curves for $\tilde{\cC}^{\psi}_{CD}(n,m)$, $\tilde{\cC}^{\psi'}_{BD}(0,r)$ intersect when
$\psi = \frac{1 + 2 m - n}{1 + 2 m + 2 r}$ and $\psi' = \frac{2 (2 r + n)}{1 + 2 m + 2 r}$.
\item The truncation curves for $\tilde{\cC}^{\psi}_{CO}(n,m)$, $\tilde{\cC}^{\psi'}_{BD}(0,r)$ intersect when 
$\psi =\frac{1 + 4 m + 2 n}{2 (1 + 2 m + 2 r)}$ and $\psi' = \frac{1 + 4 r - 2 n}{1 + 2 m + 2 r}$.
\end{enumerate}\end{proof}
	
\begin{corollary} \label{weakgeneration:finiteness} For generic values of $\psi_0 \in \mathbb{C}$ such that the shifted level of $V^a(\ga)$ does not lie in $\mathbb{Q}_{\leq 0}$, $\cC^{\psi_0}_{XY}(n,m)$ is a quotient of $\cW^{\gs\go_2}_{\infty}$. Similarly, for $n \in \mathbb{Z}$, $\cC^{\ell}(n)$ is a quotient of $\cW^{\fr{so}_2}_{\infty}$ for generic values of $\ell$. \end{corollary}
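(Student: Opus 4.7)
\medskip

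\noindent\textbf{Proof proposal.} The plan is to deduce the corollary from the one-parameter results already proved. By Theorem \ref{quotientproperty}, each $\cC^{\psi}_{XY}(n,m)$ for $m \geq 1$ (together with the easier cases $\cC^{\psi}_{XY}(n,0)$ handled just before it and the cases $\cC^{\psi}_{CC}(0,m)$, $\cC^{\psi}_{BD}(0,m)$ handled in Theorem \ref{weakgeneration:Walgebras}) is realized as $\cW^{\gs\go_2}_{\infty,I_{XY,n,m}}$, i.e., the simple quotient of $\cW^{\gs\go_2}_{\infty}/I_{XY,n,m}\cdot \cW^{\gs\go_2}_{\infty}$, where $I_{XY,n,m}$ is the prime ideal cut out by $c=c_{XY}(\psi)$, $k=k(\psi)$ in terms of the variable $\psi$. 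This isomorphism of one-parameter vertex algebras is defined over the finite localization of $\mathbb{C}[\psi]$ obtained by inverting the denominators of the structure constants of $\cW^{\gs\go_2}_{\infty}$ after substituting $c_{XY}(\psi)$ and $k(\psi)$.

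The first step is to specialize this one-parameter isomorphism at $\psi=\psi_0$. Provided $\psi_0$ avoids the finite set of zeros of the inverted denominators, the ideal $(\psi-\psi_0)$ remains proper in the localized ring, so the quotient
\[
\cC^{\psi_0}_{XY}(n,m) := \cC^{\psi}_{XY}(n,m)/(\psi-\psi_0)\cC^{\psi}_{XY}(n,m)
\]
inherits surjective maps from the corresponding specializations of $\cW^{\gs\go_2}_{\infty}/I_{XY,n,m}\cdot \cW^{\gs\go_2}_{\infty}$, and hence from $\cW^{\gs\go_2}_{\infty}$ specialized at $(c_0,k_0)=(c_{XY}(\psi_0),k(\psi_0))$. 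This exhibits $\cC^{\psi_0}_{XY}(n,m)$ as a quotient of $\cW^{\gs\go_2}_{\infty}$ in the desired sense. The condition that the shifted level of $V^a(\ga)$ does not lie in $\mathbb{Q}_{\leq 0}$ is used here to guarantee that $V^a(\ga)$ is simple at $\psi_0$, so that the coset construction defining $\cC^{\psi}_{XY}(n,m)$ commutes properly with specialization and no extra relations appear.

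For the diagonal cosets $\cC^{\ell}(n)$, I would apply the universal property directly: Lemmas \ref{lemma:B and D}, \ref{lemma:C}, and \ref{lemma:O} establish that $\cC^{\ell}(n)$ is a simple, one-parameter vertex algebra of type $\cW(1,2^3,3,4^3,\dots)$ containing a standardly normalized Heisenberg field of level $n$, with the appropriate Heisenberg-coset Virasoro vector realizing central charge given by \eqref{eq:cn}, \eqref{eq:c-2n}, or \eqref{eq:c-2n+1}. By Theorem \ref{one-parameter quotients theorem}, it suffices to check that the OPEs among the generators in weights $\leq 4$ match those of $\cW^{\gs\go_2}_{\infty}$ up to weight $9$ after the substitution $c=c_n(\ell)$, $k=n$; the inductive structure of $\cW^{\gs\go_2}_{\infty}$ then forces $\cC^{\ell}(n)$ to be a one-parameter quotient. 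Specializing at generic $\ell_0$ (away from the finite set of poles of structure constants) then produces $\cC^{\ell_0}(n)$ as a quotient of the appropriate specialization of $\cW^{\gs\go_2}_{\infty}$.

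The main obstacle is the verification of weak generation at the specialized point: a priori, $\cC^{\psi_0}_{XY}(n,m)$ could be a proper subalgebra of the ``honest'' coset or orbifold at $\psi=\psi_0$, so that the natural map from the specialized $\cW^{\gs\go_2}_{\infty}$ might not land surjectively onto the honest coset. However, the statement of the corollary concerns only $\cC^{\psi_0}_{XY}(n,m)$ as defined by specialization of the one-parameter family, for which surjectivity is automatic from Theorem \ref{quotientproperty}. The genuine obstruction, addressed elsewhere in Section \ref{sect:rationality}, is to identify at which special values $\psi_0$ one actually has $\cC^{\psi_0}_{XY}(n,m)$ equal to the honest coset; this is precisely the weak generation issue discussed in Subsection \ref{subsection:voaring}, and it is not required for the corollary itself.
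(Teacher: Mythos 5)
Your middle paragraph contains the right idea, but your final paragraph retracts it and in doing so empties the corollary of its content. If $\cC^{\psi_0}_{XY}(n,m)$ meant only the formal specialization $\cC^{\psi}_{XY}(n,m)/(\psi-\psi_0)\cC^{\psi}_{XY}(n,m)$, then the statement would follow trivially from Theorem \ref{quotientproperty} for every $\psi_0$ outside the finite set of poles, and the hypothesis that the shifted level of $V^a(\ga)$ avoids $\mathbb{Q}_{\leq 0}$ would never be used. That hypothesis is precisely the point: the corollary asserts that the \emph{honest} coset (or orbifold) of the specialized $\cW$-algebra at $\psi_0$ is a quotient of $\cW^{\gs\go_2}_{\infty}$, i.e., that taking the affine coset commutes with specialization at such $\psi_0$. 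The paper's proof is a one-line appeal to \cite[Theorem 8.1]{CL2}, which states exactly this under the stated condition on the shifted level. You assert the commutation in passing (``so that the coset construction \dots commutes properly with specialization and no extra relations appear'') but give no justification for it, attribute it to the wrong mechanism (simplicity of $V^a(\ga)$ at $\psi_0$ is neither equivalent to the level condition nor the operative ingredient in \cite{CL2}), and then declare in your closing paragraph that the identification with the honest coset ``is not required for the corollary itself.'' That is the gap: the one nontrivial step is left unproven and explicitly disclaimed.

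Your treatment of the second sentence, concerning $\cC^{\ell}(n)$ for generic $\ell$, is fine: these are already established as simple one-parameter quotients via their free field limits, and specializing at a generic $\ell_0$ away from the poles of the structure constants causes no difficulty. But for the first sentence you should replace the final paragraph with an actual argument (or citation) that the affine coset of $\cW^{\psi_0}_{XY}(n,m)$ coincides with the specialization of the one-parameter coset whenever the shifted level of $V^a(\ga)$ is not in $\mathbb{Q}_{\leq 0}$; this is \cite[Theorem 8.1]{CL2}.
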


\begin{proof} Fix $\psi_0 \in \mathbb{C}$ such that the shifted level of $V^a(\ga) \notin \mathbb{Q}_{\leq 0}$. By \cite[Theorem 8.1]{CL2} that $\cC^{\psi_0}_{XY}(n,m)$ is obtained from the $1$-parameter vertex algebra $\cC^{\psi}_{XY}(n,m)$ by setting $\psi = \psi_0$.
\end{proof}

\begin{corollary} \label{notrialities} There are no isomorphisms between distinct $1$-parameter vertex algebras of the form $\cC^{\psi}_{XY}(n,m)$.
\end{corollary}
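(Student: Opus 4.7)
The strategy is to identify each $\cC^{\psi}_{XY}(n,m)$ with a specific truncation curve in $\mathrm{Spec}(R)$, where $R = D^{-1}\mathbb{C}[c,k]$, and to verify directly that the eight families produce pairwise-distinct curves. By Theorem \ref{quotientproperty}, each $\cC^{\psi}_{XY}(n,m)$ is the simple quotient $\cW^{\fr{so}_2}_{\infty, I_{XY,n,m}}$, where $I_{XY,n,m}\subseteq R$ is the prime ideal generated by $c - c_{XY}(k; n, m)$, and $c_{XY}$ is the rational function of $k$ obtained from the explicit formulas \eqref{eq:cBD}--\eqref{eq:cCO} after substituting the affine relation between $\psi$ and $k$ recorded in Table \ref{tab:Walgebras}.

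The first step is to show that any isomorphism $\cC^{\psi}_{XY}(n,m)\xrightarrow{\sim}\cC^{\psi'}_{X'Y'}(n',m')$ of $1$-parameter vertex algebras forces $I_{XY,n,m} = I_{X'Y',n',m'}$. In each algebra the weight-one subspace is one-dimensional, so the Heisenberg generator $H$ is canonical up to a scalar $\lambda\in \mathbb{C}^{\times}$; but the requirement that the two weight-two charged primaries carry Heisenberg charges $\pm 2$ forces $\lambda = \pm 1$, so the level $k = H_{(1)}H/\mathbf{1}$ is an intrinsic invariant. Similarly, the conformal vector of central charge $c$ is uniquely determined as the Heisenberg-coset Virasoro field, so $c$ is intrinsic. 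Hence the isomorphism induces the identity on the parameter ring $R$, and identifies the defining ideals.

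The second step is a finite verification that the rational functions $c_{XY}(k; n, m)$ are pairwise distinct for distinct labels $(X,Y,n,m)$. I would organize the comparison by two numerical invariants of each curve: the multiset of simple poles of $c_{XY}(k;n,m)$, and the leading coefficient of the numerator as a polynomial in $k$. Inspection of \eqref{eq:cBD}--\eqref{eq:cCO} shows that each $c_{XY}$ is a cubic-over-quadratic rational function whose pole set is an explicit pair of affine expressions in $(n,m)$ whose shape depends transparently on $(X,Y)$: the eight families are first separated by $X \in \{B,C\}$ (which dictates whether the poles scale linearly in $m$ or $2m$), and then within each $X$ separated by $Y\in\{B,C,D,O\}$ (which shifts the pole locations by small fixed integers or half-integers coming from the level formulas in Table \ref{tab:Walgebras}). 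For fixed $(X,Y)$, the dependence on $(n,m)$ is affine and injective, so no two distinct $(n,m)$ yield the same pole pair.

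The main obstacle I anticipate is ruling out accidental cross-family coincidences, for instance of the form $c_{BD}(k;n,m) = c_{CC}(k;n',m')$; these reduce to a small system of polynomial equations in $n,m,n',m'$ which must be checked directly, and I would handle them uniformly by tabulating the pole multiset and the three numerator roots for each of the eight families. Once the ideals $I_{XY,n,m}$ are shown to be pairwise distinct, the corollary follows from Corollary \ref{max quotients2}: two simple quotients of $\cW^{\fr{so}_2}_{\infty}$ along distinct prime ideals can coincide only at the finitely many intersection points of their truncation curves, and this is incompatible with an isomorphism of $1$-parameter families.
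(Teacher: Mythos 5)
Your proposal is correct and follows essentially the same route as the paper: reduce the statement to the distinctness of the prime ideals $I_{XY,n,m}$ (using the fact that a $1$-parameter quotient of $\cW^{\gs\go_2}_{\infty}$ determines its truncation curve, which is the content of Theorem \ref{max quotients} and its corollaries, here re-derived via the intrinsic invariance of $k$ and $c$), and then verify distinctness from the explicit central charge formulas \eqref{eq:cBD}--\eqref{eq:cCO}. The paper compresses both steps into two sentences; your write-up simply fills in the details of the same argument.
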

	
\begin{proof} Since the algebras $\cC^{\psi}_{XY}(n,m)$ all arise as $1$-parameter quotients of $\cW^{\fr{so}_2}_{\infty}$ of the form $\cW^{\fr{so}_2}_{\infty, I_{XY,n,m}}$, it suffices to show that the ideals $I_{XY,n,m}$ are all distinct. But this is straightforward to check using the formulas for these ideals given by \eqref{eq:cBD}-\eqref{eq:cCO}.
\end{proof}

	\section{Reconstruction} \label{sect:recon} 
	Recall the $\fr{so}_2$-rectangular $\cW$-(super)algebra $\cW^{\psi}_{XY}(n,m)$ from Section \ref{sect:YtypeSO} whose coset by $V^{a}(\fr{a})$ is the $\fr{so}_2$-rectangular $Y$-algebra $\cC^{\psi}_{XY}(n,m)$, where $\fr{a}$-level $a$ is determined in terms of $\fr{so}_2$-level $k$, as in the Table (\ref{tab:Walgebras}). 
    Proposition \ref{extension features} implies that $\cW^{\psi}_{XY}(n,m)$ is an extension of $V^{a}(\mathfrak{a}) \otimes \cW$, where $\cW \cong \cC^{\psi}_{XY}(n,m)$ is a simple, $1$-parameter quotient of $\cW^{\mathfrak{so}_2}_{\infty}$. 
    We will use the same notation for the generators of $\cW$; in particular, we have fields of strong generators $\{X^{2i},Y^{2i}|i\geq 1\}\cup\{H,L,W^i|i\geq 3\}$, which we refer to uniformly as $W^{n,\mu}$, see (\ref{eq:notation}). The extension is generated by fields in a fixed weight and parity (determined by $X$ and $Y$), which transform as $\mathbb{C}^2 \otimes \rho_{\mathfrak{a}}$ as a module for $\mathfrak{so}_2 \oplus \mathfrak{a}$, see Table (\ref{tab:Walgebras}); recall we denote these by $P^{\mu,j}$ for $\mu=\pm\frac{1}{2}$ and $j=1,\dots,m$, and normalize in accordance with (\ref{normalization}). The remaining OPEs $W^{n,\nu}(z)P^{\mu,j}(w)$ and $P^{\mu,i}(z)P^{\nu,j}(w)$ are assumed to be as general as possible, compatible with the affine and conformal symmetries, and non-degeneracy assumption (\ref{normalization}).

	The main result of this section is a {\it reconstruction theorem}, which states that the full OPE algebra of $\cW^{\psi}_{XY}(n,m)$ is determined by the structure of $\cW$ and the action of $\fr{so}_2 \oplus\fr{a}$ on the generating fields. 
	\begin{theorem}\label{thm:reconstruction}
		Let $\cA^{\psi}_{XY}(n,m)$ be a simple $1$-parameter vertex (super)algebra with the following properties, which are shared with $\cW^{\psi}_{XY}(n,m)$.
		
		\begin{enumerate}\label{features of extension}\label{assumptions}
			\item 
			$\cA^{\psi}_{XY}(n,m)$ is a conformal extension of $\cW\otimes V^a(\fr{a})$, where $\cW$ is some 1-parameter quotient of $\Wso$.
			
			\item The extension is generated by fields $P^{\mu,j}$ in weight $m$ which are primary for the conformal vector $L+L^{\fr{a}}$ of $\cA^{\psi}_{XY}(n,m)$, primary for the affine subVOA $V^k(\fr{so}_2)\otimes V^{a}(\fr{a})$, and transform as $\mathbb{C}^2\otimes \rho_{\fr{a}}$ under $\fr{so}_2\oplus\fr{a}$.
			
			\item The extension fields have the same parity as the corresponding fields of $\cW^{\psi}_{XY}(n,m)$ appearing in Table (\ref{tab:Walgebras}).
			Algebra $\cA^{\psi}_{XY}(n,m)$  is strongly generated by these fields, together with the generators of  $\cW\otimes V^a(\fr{a})$.
			\item The restriction of the Shapovalov form to the extension fields is non-degenerate.
		\end{enumerate}
		Then $\cA^{\psi}_{XY}(n,m)$ is isomorphic to $\cW^{\psi}_{XY}(n,m)$ as $1$-parameter vertex (super)algebras.
	\end{theorem}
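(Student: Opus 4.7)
The plan is to prove uniqueness by showing that, modulo the scaling freedom fixed by the normalization in Proposition \ref{extension features}(3), every OPE coefficient involving the extension fields $P^{\mu,j}$ is forced by the data in (1)--(4). Concretely, I would fix $\cW$, the level $a$ of $V^a(\fr{a})$, the parity and weight of $P^{\mu,j}$, and the $\fr{so}_2\oplus\fr{a}$-module structure, and then show that the mixed OPEs $W^{n,\nu}(z)P^{\mu,j}(w)$ and the quadratic OPEs $P^{\mu,i}(z)P^{\nu,j}(w)$ are uniquely determined. Since $\cW^{\psi}_{XY}(n,m)$ itself is a vertex algebra of this type, any such determination must agree with its OPE structure, giving the desired isomorphism.

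First I would handle the mixed OPEs. Writing the most general ansatz compatible with the $\fr{so}_2$- and $L$-gradings, the polar part of $W^{n,\nu}(z)P^{\mu,j}(w)$ must have Heisenberg charge $\mu+\nu$ and transform as $\rho_{\fr{a}}$ under $\fr{a}$, so each polar coefficient is a linear combination of PBW monomials in the generators times $P^{\mu+\nu,\bullet}$. The Jacobi identities $J(H,W^{n,\nu},P^{\mu,j})$, $J(L,W^{n,\nu},P^{\mu,j})$, and $J(J^\alpha,W^{n,\nu},P^{\mu,j})$ with $J^\alpha\in V^a(\fr{a})$ cut down the ansatz to a finite-dimensional parameter space in each weight. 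The crucial input is the raising property $W^4_{(1)}W^{n,\nu}=W^{n+2,\nu}$: the identity $J_{1,s}(W^4,W^{n,\nu},P^{\mu,j})$ recursively expresses $W^{n+2,\nu}\times P^{\mu,j}$ in terms of $W^{n,\nu}\times P^{\mu,j}$ and $W^4\times P^{\mu,j}$ together with data internal to $\cW$, exactly in parallel with Lemmas \ref{lemma:recursions0}--\ref{lemma:recursionsr}. This reduces the problem to finitely many base OPEs, namely those of $H,L,X^2,Y^2,W^3,W^4$ with $P^{\mu,j}$, which are pinned down (up to the scaling already fixed by (\ref{normalization})) by affine primarity, conformal primarity, and the low-weight Jacobi identities.

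Next I would treat the quadratic OPEs $P^{\mu,i}(z)P^{\nu,j}(w)$. The leading pole is fixed by (\ref{normalization}) and nondegeneracy, and the subleading polar terms must decompose into $V^{k}(\fr{so}_2)\otimes V^a(\fr{a})$-modules of charge $\mu+\nu\in\{-1,0,1\}$ sitting inside $\rho_{\fr{a}}\otimes\rho_{\fr{a}}$, which decomposes into finitely many pieces controlled by classical invariant theory for $\fr{a}$. Each structure constant is then determined by Jacobi identities of the form $J(H,P,P)$, $J(L,P,P)$, $J(J^\alpha,P,P)$, and $J(W^{r,\rho},P,P)$ for low $r$; the raising property again propagates the base data to all weights by $J(W^4,P^{\mu,i},P^{\nu,j})$. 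Combining these recursions with the already-determined mixed OPEs exhausts the undetermined coefficients.

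The main obstacle is the \emph{consistency} of this system: the recursions derived from different Jacobi identities must agree, and the resulting OPE algebra must satisfy all higher Jacobi identities, not just the finite subset used in the derivation. I would deal with this as in Section \ref{sect:main}: the existence of $\cW^{\psi}_{XY}(n,m)$ itself supplies a solution, so any inconsistency would produce an ideal in the 1-parameter family; simplicity of $\cA^{\psi}_{XY}(n,m)$ (assumed in the statement) plus the matching graded character would force such an ideal to be trivial for generic $\psi$, and then everywhere by flatness of the family over the relevant localization of $\mathbb{C}[\psi]$. A subtlety to watch is that the base OPEs $W^r\times P^{\mu,j}$ for $r\leq 4$ really do admit no further scaling freedom beyond that fixed by (\ref{normalization}) and the $\fr{so}_2\oplus\fr{a}$-equivariance; this is where the explicit raising recursion for $W^4$ established in Proposition \ref{prop:structure constants} is essential, since it ties together the normalizations of generators in all higher weights.
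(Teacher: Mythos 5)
Your overall strategy (pin down all OPE coefficients involving the $P^{\mu,j}$ by Jacobi identities and a raising recursion, then invoke existence of $\cW^{\psi}_{XY}(n,m)$ and simplicity) is close in spirit to the paper's argument, and your use of $W^4_{(1)}$ in place of the paper's $W^{3,0}_{(1)}$ recursion $J_{r,1}(W^{3,0},\tilde W^{n,\mu},P^{\nu,j})$ is a workable variant. However, there is a genuine gap at the very first step: you write that you would ``fix $\cW$'' as part of the data. Hypothesis (1) only says that $\cW$ is \emph{some} $1$-parameter quotient of $\Wso$; it does not identify which truncation curve in the $(c,k)$-plane it lives on, and the conclusion of the theorem requires that this quotient coincide with $\cC^{\psi}_{XY}(n,m)$. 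The heart of the paper's proof is precisely that the Jacobi identities $J(W^{2,2},W^{2,-2},P^{\pm\frac{1}{2},i})$ yield not only the constants $a$, $v$, $v_{3,2}$ in the ansatz \eqref{ansatz extension} but also the \emph{truncation relation} \eqref{trunk}, expressing $c$ as an explicit rational function of $k$ and the parameter $\lambda$ of \eqref{def:lambda}, which in turn is determined by the extension data ($m$, $k$, $a$, and the Casimir eigenvalue on $\rho_{\fr{a}}$). Without deriving this relation you cannot conclude that the $\cW$ inside $\cA^{\psi}_{XY}(n,m)$ matches the one inside $\cW^{\psi}_{XY}(n,m)$, so the claimed isomorphism does not follow from your argument as written.

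Two smaller points. First, your treatment of the $P\times P$ OPEs by decomposing $\rho_{\fr{a}}\otimes\rho_{\fr{a}}$ and imposing many Jacobi identities is heavier than needed: the paper's single identity $J_{1,r}(W^{3,0},P^{-\frac{1}{2},i},P^{\frac{1}{2},j})$ gives a downward recursion expressing $P^{-\frac{1}{2},i}_{(r-1)}P^{\frac{1}{2},j}$ in terms of $P^{-\frac{1}{2},i}_{(r)}P^{\frac{1}{2},j}$ and the already-determined mixed OPEs, with base case the nondegenerate leading pole \eqref{normalization}; you should make sure your version actually terminates with a determined base case rather than circulating among undetermined quadratic coefficients. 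Second, your worry about consistency of the overdetermined system is legitimate but is resolved exactly as you suggest: the argument is a pure uniqueness statement, and existence of $\cW^{\psi}_{XY}(n,m)$ together with simplicity of both algebras closes the loop, so no flatness argument is needed beyond what is already in Theorem \ref{one-parameter quotients theorem}.
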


\begin{proof}
    We would like to handle cases when the nilpotent is of type $B$ and $C$ in a uniform way. 
    In the above, $m$ is the rank of the Lie algebra in which the nilpotent element is principal, and it enters our procedure as a degree of the leading pole in (\ref{normalization}).
    For the purpose of this proof, we adopt the following convention. 
    Here, let $m$ be the conformal weight of the extension fields, which is either an integer or half-integer. 
	\begin{itemize}
        \item If $m=\frac{1}{2}$ then we are in the case of diagonal cosets discussed in Section \ref{sect:Diagonal}.
		\item If $m\geq 1$ is an integer, then nilpotent is principal in $\fr{so}_{2m-1}$.
		\item If $m\geq \frac{3}{2}$ is a half-integer, then nilpotent is principal in $\fr{sp}_{2m-2}$.
	\end{itemize}
	   In $V^{a}(\fr{a})\otimes \cW$ the total Virasoro field is $T= W^{2,0}+L^{\fr{so}_2}+L^{\fr{a}}$, so we have OPEs
		\begin{equation*}\label{LP}
			\begin{split}
				 W^{2,0}(z)P^{\mu,j}(w)\sim  \lambda  P^{\mu,j}(w)(z-w)^{-2}+{(\partial P^{\mu,j}-L_{(0)}^{\fr{so}_2}P^{\mu,j}- L_{(0)}^{\fr{a}}P^{\mu,j})(w)}(z-w)^{-1},
			\end{split}
		\end{equation*}
		where $\lambda$ is the constant 
		\begin{equation} \label{def:lambda}
		 \lambda=m- \frac{1}{2k}-\frac{\T{Cas}}{a+h^{\vee}_{\fr{a}}},\end{equation}
		and $\T{Cas}$ is the eigenvalue of the Casimir in $U(\fr{a})$ of the standard representation $\rho_{\fr{a}}$.
    Next, we proceed to set-up OPEs among the generators of $\cW$ and extension fields $P^{\mu,i}$.
	For our computation we need only a few structure constants, defined in the following OPEs, employing same notation as in, (\ref{OPEs refined})
    	\begin{equation}\label{ansatz extension}
		\begin{split}
			 W^{\mp2,2}\times P^{\pm\frac{1}{2},i}=  a P^{\mp\frac{1}{2},i}, \quad 
W^{3,0}\times P^{\pm\frac{1}{2},i}=  \pm v P^{\pm\frac{1}{2},i}(w)(z-w)^{-3}\pm v_{3,2}(:\!W^{2,\pm2}P^{\mp\frac{1}{2},i}\!:),
	\end{split}
\end{equation}
with remaining terms being fixed by the conformal and affine symmetries.
Now, imposing the Jacobi relations $J(W^{2,2},W^{2,-2},P^{\pm\frac{1}{2},i})$, yields us a system of quadratic equations. 
Solving it, we find
\[a=-\frac{3}{8} k^3 \lambda,\quad v= -\frac{3}{16} (k-2) (k-1) k^2 \lambda, \quad v_{3,2}=\frac{3 k (2 \lambda  k-k+1)}{2 k (k+4) \lambda +(k-4) (k-1)},\]
and crucially, the truncation relation
			\begin{equation}\label{trunk}
			\begin{split}
	c=\frac{4 k \lambda  \left((k-2) k \lambda -(k-1)^2\right)}{4 k \lambda +(k-2) (k-1)}.
			\end{split}
		\end{equation} where $\lambda$ is given by \eqref{def:lambda}.
Here, we have the following choice of scalings to make radicals clear.
\begin{equation}\label{eqn:scale reconstruction}
\omega_2=-\frac{3}{8} (k-2) (k-1) k (2 k \lambda -3 (k-1)),\quad \omega_3=\frac{1}{4} (2 k (k+4) \lambda +(k-4) (k-1)).
\end{equation}
Therefore, $\cW$ is determined by the extension data, which enters above through parameter $\lambda$, see (\ref{def:lambda}).

At this stage, the OPEs $W^{2,\pm2}(z)P^{\mu,i}(w)$ and $W^{3,0}(z)P^{\mu,i}(w)$ are expressed as rational functions of the $\fr{so}_2$-level $k$.
In fact, this property propagates to all OPEs $W^{n,\mu}(z)P^{\nu,i}(w)$ for $n\geq 4$, $\mu=0,\pm2$.
For example, imposing the Jacobi identity $J_{r,1}(W^{3,0},\tilde W^{n,\mu},P^{\nu,j})$ we obtain a relation
		\begin{equation*}\label{XPFull}
			\begin{split}
				\tilde W^{n+1,\mu}_{(r)}P^{\nu,i}=\frac{1}{rw^{W^3,\tilde W^{n,\mu}}_{\tilde W^{n+1,\mu}}-(r+1)w^{W^3,\tilde W^{n,\mu}}_{\partial \tilde W^{n+1,\mu}}} \left(W^{3,0}_{(r)}\tilde W^{n,\mu}_{(1)}P^{\nu,i}-\tilde W^{n,\mu}_{(1)}W^{3,0}_{(r)}P^{\nu,i}-R\right),
			\end{split}
		\end{equation*}
		where
		\begin{equation*}
			R=(W^{3,0;n,\mu}_0)_{(r+1)}P^{\nu,i}
			-\sum_{j=2}^r\binom{r}{j}(W^3_{(j)}\tilde W^{n,\mu})_{(r+1-j)}P^{\nu,i},
		\end{equation*}
inductively determining all OPEs $W^{n,\mu}(z)P^{\nu,i}(w)$.

It remains to see that OPEs $P^{\mu,i}(z)P^{\nu,j}(w)$ are fully determined from $W^{\mu,i}(z)P^{\nu,j}(w)$ and $W^{n,\mu}(z)W^{m,\nu}(w)$. To see this, consider the Jacobi identity $J_{1,r}(W^{3,0}, P^{-\frac{1}{2},i}, P^{\frac{1}{2},j})$, which reads as follows.
		\begin{equation}\label{extensionXP}
			W^{3,0}_{(1)}P^{-\frac{1}{2},i}_{(r)}P^{\frac{1}{2},j}   =
			(W^{3,0}_{(1)}P^{-\frac{1}{2},i})_{(r)}P^{\frac{1}{2},j} +(W^{3,0}_{(0)}P^{-\frac{1}{2},i})_{(r+1)}P^{\frac{1}{2},j}+P^{-\frac{1}{2},i}_{(r)}W^{3,0}_{(1)}P^{\frac{1}{2},j}.
		\end{equation}
		Note that the left side of relation (\ref{extensionXP}) depends only on $P^{-\frac{1}{2},i}_{(r)}P^{\frac{1}{2},j}$, while the right side gives rise to a contribution of the products $P^{-\frac{1}{2},i}_{(r-1)}P^{-\frac{1}{2},j}$;
		each such contribution arises from monomials in the products $W^{3,0}_{(1)}P^{-\frac{1}{2},i}, W^{3,0}_{(0)}P^{-\frac{1}{2},i}$ and $W^{3,0}_{(1)}P^{\frac{1}{2},i}$, corresponding to descendants in (\ref{ansatz extension}).
		Upon collecting every contribution of the desired product $P^{\frac{1}{2},i}_{(r-1)}P^{\frac{1}{2},j}$, we obtain a recursive formula expressing lower order poles in terms of the higher ones, with base case given by (\ref{normalization}).  This completes the proof. \end{proof}

	\begin{remark}
		The Jacobi identities $J_{1,0}(W^{3,0}, P^{\mu,i}, P^{\nu,j})$ gives rise to relations expressing strong generators $W^{n,\mu}$ for $n\geq m$, in terms of $\fr{a}$-invariants in the extension fields.
	\end{remark}

\section{$\cW^{\fr{so}_2}_{\infty}$ as an extension of two copies of $\cW^{\text{ev}}_{\infty}$} \label{sect:twocopies}
We recall the universal 2-parameter vertex algebra $\cW^{\rm{ev}}_{\infty}$ which was constructed and denoted by $\cW^{\text{ev}}(c,\lambda)$ in \cite{KL}. It is freely generated of type $\cW(2,4,6,\dots)$, and is weakly generated by a Virasoro field $L$ of central charge $c$, and a weight $4$ primary field $W^4$. The fields $W^{2i}$ for $i \geq 3$ are defined recursively by $W^{2i} = W^4_{(1)} W^{2i-2}$. The second parameter $\lambda$ arises via the following OPE relation

\begin{equation*}\label{eq:Wev}
	\begin{split}
		W^4_{\pm}(z)W^4_{\pm}(w)\sim&  \frac{c(5c+22)\omega_{\pm}}{840}(z-w)^{-8}+ \frac{ (5c+22)\omega_{\pm}}{105}L(w)(z-w)^{-6}+ \frac{(5c+22)\omega_{\pm}}{210}\partial L(w)(z-w)^{-5}\\
		&+ \Big(\frac{\lambda_{\pm}}{8} W^4_{\pm}+\frac{ \omega_{\pm}}{5}:\!LL\!:+\frac{(c-4)\omega_{\pm}}{140}\partial^2 L_{\pm}\Big)(w)(z-w)^{-4}\\
		&+ \Big(\frac{\lambda_{\pm}}{16} W^4_{\pm}+\frac{ \omega_{\pm}}{5}:\!\partial LL\!:+\frac{(c-4)\omega_{\pm}}{630}\partial^2 L_{\pm}\Big)(w)(z-w)^{-3}+W^6_{\pm}(w)(z-w)^{-2}\\
		&+\Big(\frac{1}{2}\partial W^{6}_{\pm} - \frac{\omega_{\pm}}{60}:\!\partial^3 L L\!:- \frac{\omega_{\pm}}{20}:\!\partial^2 L \partial L\!:- \frac{\lambda_{\pm}}{192}\partial^3 W^4_{\pm}- \frac{(c-4)\omega_{\pm}}{10080}\partial^5 L L\Big)(w)(z-w)^{-1}.
	\end{split}
\end{equation*}
Moreover, by \cite[Theorem 3.10]{KL}, if $\{L, W^{2i}|\ 2\leq i \leq 7\}$ are fields in some vertex algebra which satisfy the OPEs of $\cW^{\text{ev}}_{\infty}$, along some quotient $\mathbb{C}[c,\lambda]/I$ of $\mathbb{C}[c,\lambda]$, then $\{L, W^{2i}|\ 2\leq i \leq 7\}$ generate a homomorphic image of $\cW^{\text{ev}}_{\infty}$. Our goal in this section is to show that $\cW^{\fr{so}_2}_{\infty}$ is a conformal extension of two commuting copies of $\cW^{\text{ev}}_{\infty}$.

Recall the orthosymplectic $Y$-algebras of \cite{GR} were denoted by $\cC^{\psi}_{iZ}(a,b)$ in \cite{CL4}, where $i = 1,2$ and $Z = B,C,D,O$. They arise as $1$-parameter quotients of $\cW^{\rm{ev}}_{\infty}$ via the following procedure. There is an ideal $I_{iZ,a,b} \subseteq \mathbb{C}[c,\lambda]$ given in parametric form in terms of the parameter $\psi$ in \cite{CL4}. This generates a vertex algebra ideal $I_{iZ,a,b} \cdot  \cW^{\text{ev}}_{\infty}$, and
$\cC^{\psi}_{iZ}(a,b)$ is isomorphic to the simple graded quotient of $$\cW^{\text{ev}}_{\infty} / I_{iZ, a,b} \cdot \cW^{\text{ev}}_{\infty},$$ i.e., the quotient of $\cW^{\text{ev}}_{\infty}$ by its maximal ideal $\cI_{iZ,a,b}$ containing $I_{iZ,a,b}$. We recall the following examples from \cite{CL4}:
\begin{equation}
\begin{split}
\cC^{\psi}_{2D}(n,0) & \cong \text{Com}(V^{\ell}(\gs\go_{2n}), V^{\ell-1}(\gs\go_{2n}) \otimes \cF(2n))^{\mathbb{Z}_2},\qquad \qquad \ell = -2\psi-2n+3,
\\ \cC^{\psi}_{2B}(n,0) & \cong \text{Com}(V^{\ell}(\gs\go_{2n+1}), V^{\ell-1}(\gs\go_{2n+1}) \otimes \cF(2n+1))^{\mathbb{Z}_2},\qquad \ell = -2\psi-2n+2,
\\ \cC^{\psi}_{2C}(n,0) & \cong \text{Com}(V^{\ell}(\gs\gp_{2n}), V^{\ell+1/2}(\gs\gp_{2n}) \otimes \cS(n)),\qquad \qquad \ell = \psi-n-3/2,
\\ \cC^{\psi}_{2O}(n,0) & \cong \text{Com}(V^{\ell}(\go\gs\gp_{1|2n}), V^{\ell+1/2}(\go\gs\gp_{1|2n}) \otimes \cS(n) \otimes \cF(1))^{\mathbb{Z}_2},\qquad \ell = \psi-n-1,
\end{split}
\end{equation}

\begin{lemma} \label{twocopiesofW} For each $n \geq 1$, we have the following conformal embeddings.
\begin{equation} \label{twocopiesconstant}
\begin{split}
\cC^{\psi}_{2D}(n,0) \otimes \cC^{\psi-1}_{2D}(n,0) & \hookrightarrow \cC^{\ell}(2n),\qquad \ell = -2\psi-2n+3,
\\ \cC^{\psi}_{2B}(n,0) \otimes \cC^{\psi-1}_{2B}(n,0) & \hookrightarrow \cC^{\ell}(2n+1),\qquad \ell = -2\psi-2n+2,
\\ \cC^{\psi}_{2C}(n,0) \otimes \cC^{\psi-1}_{2C}(n,0) & \hookrightarrow \cC^{\ell}(-2n),\qquad \ell = \psi-n-3/2,
 \\ \cC^{\psi}_{2O}(n,0) \otimes \cC^{\psi-1}_{2O}(n,0) & \hookrightarrow \cC^{\ell}(-2n+1),\qquad \ell = \psi-n-1.
 \end{split}
\end{equation}
\end{lemma}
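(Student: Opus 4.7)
The plan is to treat all four cases uniformly, exploiting the observation that in each one the free field factor appearing in the definition of the GKO coset $\cC^{\ell}(2n)$, $\cC^{\ell}(2n+1)$, $\cC^{\ell}(-2n)$, or $\cC^{\ell}(-2n+1)$ factorizes as a tensor product of two copies of a smaller free field algebra of the same type, and each copy supports an action of the relevant affine vertex algebra at its \emph{unit} level. Composing these actions diagonally produces a chain of affine subalgebras at three consecutive levels, and the two orthosymplectic $Y$-algebras of the lemma will arise as the commutants of the two intermediate stages of this chain.

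For Case D I would decompose $\cF(4n) = \cF(2n)_1 \otimes \cF(2n)_2$ and use the standard embedding $V^1(\fr{so}_{2n}) \hookrightarrow \cF(2n)$ in each factor. Forming diagonal subalgebras with the outer $V^{\ell-2}(\fr{so}_{2n})$, one obtains the chain
\begin{equation*}
V^{\ell}(\fr{so}_{2n}) \hookrightarrow V^{\ell-1}(\fr{so}_{2n}) \otimes \cF(2n)_2 \hookrightarrow V^{\ell-2}(\fr{so}_{2n}) \otimes \cF(2n)_1 \otimes \cF(2n)_2,
\end{equation*}
and define
\begin{align*}
\cA_1 &= \text{Com}\bigl(V^{\ell-1}(\fr{so}_{2n}),\, V^{\ell-2}(\fr{so}_{2n}) \otimes \cF(2n)_1\bigr)^{\mathbb{Z}_2},\\
\cA_2 &= \text{Com}\bigl(V^{\ell}(\fr{so}_{2n}),\, V^{\ell-1}(\fr{so}_{2n}) \otimes \cF(2n)_2\bigr)^{\mathbb{Z}_2}.
\end{align*}
By the realization of the orthosymplectic $Y$-algebras recalled earlier, $\cA_2 \cong \cC^{\psi}_{2D}(n,0)$ when $\ell = -2\psi-2n+3$, and $\cA_1 \cong \cC^{\psi-1}_{2D}(n,0)$ after tracking the shift in the normalization of $\psi$ induced by the change of outer level from $\ell$ to $\ell-1$. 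Both $\cA_1$ and $\cA_2$ lie inside $\cC^{\ell}(2n)$: $\cA_2$ by construction, and $\cA_1$ because it commutes with the intermediate subalgebra $V^{\ell-1}(\fr{so}_{2n})$, which contains $V^{\ell}(\fr{so}_{2n})$, and it is supported on the first two tensor factors. Moreover $\cA_1$ and $\cA_2$ commute with each other, since $\cA_1$ commutes with the entire subalgebra $V^{\ell-1}(\fr{so}_{2n}) \otimes \cF(2n)_2 \supset \cA_2$. This yields a homomorphism $\cA_1 \otimes \cA_2 \to \cC^{\ell}(2n)$; the reconstruction theorem (Theorem \ref{thm:reconstruction}) together with the simplicity of the orthosymplectic $Y$-algebras then ensures this homomorphism is injective.

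The remaining task is to verify that the embedding is conformal, i.e., that the sum $L_{\cA_1} + L_{\cA_2}$ agrees with the Heisenberg-coset Virasoro vector $L - \frac{2}{k}:\!HH\!:$ of $\cC^{\ell}(2n)$. This will follow from a telescoping identity among the three Sugawara vectors at levels $\ell, \ell-1, \ell-2$ together with the free-field Virasoro vectors on $\cF(2n)_1$ and $\cF(2n)_2$. Cases B, C, and O will be handled analogously: for B one decomposes $\cF(4n+2) = \cF(2n+1)_1 \otimes \cF(2n+1)_2$ using $V^1(\fr{so}_{2n+1}) \hookrightarrow \cF(2n+1)$; for C one decomposes $\cS(2n) = \cS(n)_1 \otimes \cS(n)_2$ using $V^{-1/2}(\fr{sp}_{2n}) \hookrightarrow \cS(n)$; and for O one decomposes $\cS(2n) \otimes \cF(2) = (\cS(n) \otimes \cF(1))_1 \otimes (\cS(n) \otimes \cF(1))_2$ using $V^{-1/2}(\go\gs\gp_{1|2n}) \hookrightarrow \cS(n) \otimes \cF(1)$. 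The principal obstacle will be the careful bookkeeping of the affine level shifts and the $\mathbb{Z}_2$-orbifold actions, particularly for B and O where the extra fermion factor must be split symmetrically between the two tensor factors so that the resulting Virasoro, Heisenberg, and $\mathbb{Z}_2$ actions all decompose correctly.
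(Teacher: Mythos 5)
Your decomposition $\cF(4n) = \cF(2n)_1 \otimes \cF(2n)_2$ and the iterated diagonal chain through the intermediate level $\ell-1$ is exactly the paper's argument: the paper phrases it as saying that $V^{\ell-2}(\gs\go_{2n})\otimes\cF(2n)$ is a conformal extension of $V^{\ell-1}(\gs\go_{2n})\otimes \cC^{\psi-1}_{2D}(n,0)$ and that $V^{\ell-1}(\gs\go_{2n})\otimes\cF(2n)$ is a conformal extension of $V^{\ell}(\gs\go_{2n})\otimes\cC^{\psi}_{2D}(n,0)$, and then composes the two; your commutant formulation of why $\cA_1$ and $\cA_2$ commute and both lie in $\text{Com}(V^{\ell}(\gs\go_{2n}),-)$ is the same argument. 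The splittings you propose for cases B, C, and O are also the ones the paper uses. (Injectivity needs only the simplicity of $\cA_1\otimes\cA_2$; invoking Theorem \ref{thm:reconstruction} here is unnecessary.)

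There is, however, one genuine error in your final step: the conformality check targets the wrong vector. The telescoping sum $L_{\cA_1}+L_{\cA_2} = \big(L_{V^{\ell-2}} + L_{\cF(4n)}\big) - L_{V^{\ell}}$ is the \emph{full} conformal vector $L$ of the coset $\cC^{\ell}(2n)$ (central charge $c+1$ in the notation of \eqref{eq:vir}), not the Heisenberg-coset Virasoro $W^{2,0}=L-\frac{2}{k}:\!HH\!:$ of central charge $c$. The Heisenberg field $H$ belongs to $\cC^{\ell}(2n)$ but \emph{not} to $\cA_1\otimes\cA_2$; it is one of the extension fields, consistent with $c_++c_-=c+1$ in \eqref{vir:splitting}. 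Had you carried out the verification as stated, the identity would fail by exactly the Heisenberg Virasoro and you might wrongly conclude that the embedding is not conformal. With the target corrected to $L$, the telescoping identity holds trivially and your argument coincides with the paper's proof.
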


\begin{proof} We only prove the first statement since the proofs of the others are the same. Recall that $\cC^{\ell}(2n) = \text{Com}(V^{\ell}(\gs\go_{2n}), V^{\ell-2}(\gs\go_{2n}) \otimes \cF(4n))^{\mathbb{Z}_2}$, and write $$V^{\ell-2}(\gs\go_{2n}) \otimes \cF(4n) = (V^{\ell-2}(\gs\go_{2n}) \otimes \cF(2n)) \otimes \cF(2n).$$ Note that 
$(V^{\ell-2}(\gs\go_{2n}) \otimes \cF(2n))$ is a conformal extension of $$V^{\ell-1}(\gs\go_{2n}) \otimes \text{Com}(V^{\ell-1}(\gs\go_{2n}), V^{\ell-2}(\gs\go_{2n}) \otimes \cF(2n)) = V^{\ell-1}(\gs\go_{2n}) \otimes \cC^{\psi-1}_{2D}(n,0),\qquad \ell = -2\psi-2n+3.$$ So $V^{\ell-2}(\gs\go_{2n}) \otimes \cF(4n)$ is a conformal extension of 
\begin{equation}  V^{\ell-1}(\gs\go_{2n}) \otimes \cC^{\psi-1}_{2D}(n,0) \otimes \cF(2n) \cong  V^{\ell-1}(\gs\go_{2n})  \otimes \cF(2n) \otimes \cC^{\psi-1}_{2D}(n,0). \end{equation} Since $V^{\ell-1}(\gs\go_{2n})  \otimes \cF(2n)$ is a conformal extension of $V^{\ell}(\gs\go_{2n}) \otimes \cC^{\psi}_{2D}(n,0)$, it follows that $V^{\ell-2}(\gs\go_{2n}) \otimes \cF(4n)$ is a conformal extension of $V^{\ell}(\gs\go_{2n}) \otimes \cC^{\psi}_{2D}(n,0) \otimes \cC^{\psi-1}_{2D}(n,0)$, as claimed.
\end{proof}

We now consider the tensor product $\cW^{\rm{ev}}_{\infty} \otimes \cW^{\rm{ev}}_{\infty}$. Here the first copy of $\cW^{\rm{ev}}_{\infty}$ has parameters $c,\lambda$, and the second copy has parameters $c', \lambda'$, which we suppress from the notation. We use the notation $I_{iX, n,m}$ and $I'_{iX, n,m}$, for ideals in the first (respectively second) copy of $\cW^{\rm{ev}}_{\infty}$. We can rephrase the result of Lemma \ref{twocopiesofW} as follows: For each $n \in \mathbb{Z}$, there exists a homomorphism of vertex algebras
\begin{equation} \label{def:psin} \Psi_n: \cW^{\rm{ev}}_{\infty} \otimes \cW^{\rm{ev}}_{\infty} \rightarrow \cW^{\gs\go_2}_{\infty, I_{n}},\end{equation} 
which induces the conformal embeddings given by \eqref{twocopiesofW}.

Let us introduce a quadratic irrational expression
\begin{equation}\label{eq:extension}
    \omega=\sqrt{\frac{c-(k-1)}{c-(k-1)^3}} ,\quad c=\frac{( k-1) ( k \omega -\omega -1) (k \omega -\omega +1)}{(\omega -1) (\omega +1)}.
\end{equation}
Further, let us define a scaling parameter
\begin{equation*}
   \tilde\omega_3= -\frac{(k-2) (k-1) (k \omega +2 \omega -2) (k \omega +2 \omega +2)}{6 k^2 \omega ^2 \omega _2 \omega _3^2}.
\end{equation*}
This scaling will be useful in the explicit description of the two commuting copies of $\cW^{\text{ev}}_{\infty}$ in $\cW^{\gs\go_2}_{\infty}$ because all radicals arising during our computations will clear.

\begin{theorem} \label{completion:twocopies} After adjoining $\omega$ to the ring $\mathbb{C}[c,k]$ and then inverting finitely many polynomials which are given explicitly in Appendix \ref{app:wt4}, there exists a  vertex algebra homomorphism
\begin{equation} \label{2paramembedding} \Psi: \cW^{\text{ev}}_{\infty} \otimes \cW^{\text{ev}}_{\infty} \rightarrow \cW^{\fr{so}_2}_{\infty}\end{equation} such that for all $n \in \mathbb{N}$, composing this map with the quotient map $\cW^{\fr{so}_2}_{\infty} \rightarrow \cW^{\gs\go_{2}}_{\infty, I_n}$, we recover the conformal embeddings \eqref{twocopiesofW}. Here we have omitted from our notation the localization of the quadratic extension $\mathbb{C}[c,k][\omega]$. Moreover, the map $\Psi$ is unique up to the automorphism of our localized extension of $\cW^{\fr{so}_2}_{\infty}$ given by 
\begin{equation} \label{Galois} \omega \mapsto -\omega,\qquad L_{\pm} \mapsto L_{\mp},\qquad W^4_{\pm} \mapsto W^4_{\mp},\end{equation} which we regard as a Galois automorphism.
 \end{theorem}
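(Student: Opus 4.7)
The plan is to construct $\Psi$ explicitly by exhibiting two commuting Virasoro fields $L_\pm$ of weight $2$ and two primary weight-$4$ fields $W^4_\pm$ inside a quadratic extension and suitable finite localization of $\cW^{\fr{so}_2}_{\infty}$, such that each pair $(L_\pm, W^4_\pm)$ satisfies the defining OPEs of a $\cW^{\text{ev}}_\infty$-quotient and the two pairs have mutually trivial OPEs. Once this is verified, the existence of the full embedding follows from the recognition result \cite[Theorem~3.10]{KL}: provided the OPEs $W^{2i}_\pm(z) W^{2j}_\pm(w)$ for $2 \leq i, j \leq 7$ agree with those of $\cW^{\text{ev}}_\infty$ (with $W^{2i}_\pm$ defined recursively by $W^{2i}_\pm = (W^4_\pm)_{(1)} W^{2i-2}_\pm$), each pair generates a homomorphic image of $\cW^{\text{ev}}_\infty$, and mutual commutativity promotes this to the desired map out of the tensor product.

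I will first construct $L_\pm$. The weight-$2$, Heisenberg-charge-$0$ subspace of $\cW^{\fr{so}_2}_\infty$ is three-dimensional over $R$, spanned by $L$, $:\!HH\!:$ and $\partial H$, so the natural ansatz is
\begin{equation*}
L_\pm \;=\; \alpha_\pm L + \beta_\pm :\!HH\!: + \gamma_\pm\, \partial H.
\end{equation*}
Imposing that each $L_\pm$ satisfies a Virasoro OPE with undetermined central charge $c_\pm$, together with $L_+(z) L_-(w) \sim 0$ and the natural condition that $L_+ + L_-$ agrees with $L$ up to a total derivative, produces a polynomial system in $\alpha_\pm, \beta_\pm, \gamma_\pm, c_\pm$. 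Straightforward elimination should reduce this system to the quadratic relation $\omega^2 = (c-(k-1))/(c-(k-1)^3)$ of \eqref{eq:extension}; the two roots $\pm\omega$ correspond to swapping $L_+$ with $L_-$, which is the Galois symmetry $\omega \mapsto -\omega$ of the theorem statement. Inverting the leading coefficients appearing in the resulting rational expressions localizes $\mathbb{C}[c,k][\omega]$ along an explicit finite set of polynomials.

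Next I construct $W^4_\pm$. The weight-$4$, Heisenberg-charge-$0$ subspace of $\cW^{\fr{so}_2}_\infty$ is finitely generated over $R$; a basis consists of $W^{4,0}$, $:\!X^2 Y^2\!:$, and normally ordered monomials in $L$, $H$, $W^3$ and their derivatives. I take a general ansatz for $W^4_\pm$ in this subspace and impose: (i) primarity of weight $4$ for $L_\pm$; (ii) trivial OPE with $L_\mp$; (iii) the normalization dictated by the scaling $\tilde\omega_3$ introduced before the theorem. Conditions (i) and (ii) are linear and, generically, determine $W^4_\pm$ up to an overall scalar; (iii) then fixes this scalar. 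Computing $W^4_\pm(z) W^4_\pm(w)$ in this realization yields the $\cW^{\text{ev}}_\infty$-OPE with $\lambda_\pm$ determined as rational functions of $c, k, \omega$, whose denominators are exactly the polynomials listed in Appendix \ref{app:wt4}. The vanishing $W^4_+(z) W^4_-(w) \sim 0$ is constrained by weight and charge grading together with the Galois symmetry, and needs to be checked directly; the higher-weight verifications required by \cite[Theorem~3.10]{KL} are of the same flavor.

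Finally, compatibility with Lemma \ref{twocopiesofW} follows by uniqueness. For each $n \in \mathbb{Z}$ the composition of $\Psi$ with the quotient $\cW^{\fr{so}_2}_\infty \to \cW^{\fr{so}_2}_{\infty, I_n} \cong \cC^\ell(n)$ is a homomorphism from $\cW^{\text{ev}}_\infty \otimes \cW^{\text{ev}}_\infty$ whose two commuting images are $\cW^{\text{ev}}_\infty$-quotients whose central charges and $\lambda$-values, read off from the specialized formulas for $c_\pm$ and $\lambda_\pm$, agree with those of $\cC^\psi_{2Y}(n,0) \otimes \cC^{\psi-1}_{2Y}(n,0)$ for the appropriate $Y \in \{B,C,D,O\}$. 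By \cite[Theorem~3.10]{KL} and the simplicity of the orthosymplectic $Y$-algebras established in \cite{CL4}, this induced map must coincide with $\Psi_n$ from \eqref{def:psin}. Uniqueness of $\Psi$ modulo the Galois swap is built into the ansatz, since the quadratic equation for $\omega$ has exactly two roots. The principal technical obstacle is the weight-$4$ ansatz: the space is large, the constraints are intertwined, and ensuring simultaneous primarity, mutual commutation, and the correct $\cW^{\text{ev}}_\infty$-OPE on each side demands careful symbolic computation and precise control of the localization polynomials appearing in Appendix \ref{app:wt4}.
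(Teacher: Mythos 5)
Your overall strategy---find $L_\pm$ and $W^4_\pm$ explicitly, verify the $\cW^{\text{ev}}_{\infty}$ OPEs, and pin down compatibility with Lemma \ref{twocopiesofW} by uniqueness---is the right shape, and it matches the paper's proof in outline. But there is a concrete error in your ansatz that would cause the construction to fail. You restrict $L_\pm$ to the Heisenberg-charge-zero subspace, $L_\pm = \alpha_\pm L + \beta_\pm :\!HH\!: + \gamma_\pm \partial H$. The actual decomposition found in the paper is
$L_{\pm}=\tfrac{1}{2}L\pm\tfrac{1}{2}\omega \bigl(\tfrac{k-1}{k}:\!HH\!:-\tilde\omega_3W^{2,-2}-\tilde\omega_3 W^{2,2}- W^{2,0}\bigr)$,
which has a nonzero component along $X^2+Y^2 = W^{2,2}+W^{2,-2}$, i.e.\ it is \emph{not} an $H_{(0)}$-eigenvector. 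There is no reason it should be: $L_+$ alone does not commute with $H$ (only the sum $L$ does). Within your three-dimensional charge-zero span the only nontrivial splitting of $L$ into commuting Virasoro fields is the Heisenberg-Sugawara/coset splitting with central charges $1$ and $c$; your elimination would land there and never produce the quadratic relation $\omega^2=(c-(k-1))/(c-(k-1)^3)$ or the central charges $c_\pm=\tfrac{1}{2}c+\tfrac{1}{2}\pm(c-(k-1))\omega$. The same defect infects your weight-$4$ ansatz: the correct $W^4_\pm$ (Appendix \ref{app:wt4}) has nonzero coefficients on $W^{4,\pm2}$, $:\!W^{2,\pm2}W^{2,\pm2}\!:$, $:\!HHW^{2,\pm2}\!:$, etc., so the ansatz must range over all $28$ PBW monomials of weight $4$, not the charge-zero part.

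The second, smaller issue is how the tower of OPEs $W^{2i}_\pm(z)W^{2j}_\pm(w)$ gets verified. You propose direct symbolic computation up to the range needed for \cite[Theorem 3.10]{KL} and call the higher-weight checks ``of the same flavor''; that is a very heavy computation (total weight up to $14$) and is precisely what the paper avoids. The paper's mechanism is a specialization argument: Lemma \ref{twocopiesofW} already supplies two commuting copies of $\cW^{\text{ev}}_{\infty}$-quotients inside $\cC^{\ell}(2n)$ for a Zariski-dense set of points on the truncation curves, and since the graded characters of $\cC^{\ell}(2n)$ and $\cW^{\gs\go_2}_{\infty}$ agree up to arbitrarily high weight for suitable $n,\ell$, the OPE relations are forced to hold in the two-parameter algebra itself. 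If you repair the ansatz, you should replace your direct-verification step with this density argument (or be prepared to justify why the finite computation suffices).
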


\begin{proof} First, a direct computer computation shows that $\Wso$ contains two commuting Virasoro subalgebras generated by the fields $L_-$ and $L_+$ with central charges $c_-$ and $c_+$, respectively, so that 
\[L_{\pm}(z)L_{\mp}(w)\sim 0, \quad L_++L_-=L.\]
These are of the form
\begin{equation} \label{vir:splitting}
\begin{split}
    L_{\pm}=&\frac{1}{2}L\pm\frac{1}{2}\omega \left(\frac{k-1}{k}:\!HH\!:-\tilde\omega_3W^{2,-2}-\tilde\omega_3 W^{2,2}- W^{2,0}\right),\quad 	c_{\pm}=\frac{1}{2}c+\frac{1}{2}\pm (c-(k-1))\omega.
\end{split}
\end{equation}
Moreover, up to interchanging $L_-$ and $L_+$, this is the unique splitting of $L$ into two commuting nontrivial Virasoro fields. It is straightforward to check that the central charges of $L_+$ and $L_-$ agree with those of the commuting Virasoro fields in $\cC^{\ell}(n)$ given by \eqref{twocopiesofW}, for $n\in \mathbb{Z}$. Furthermore, there are two commuting weight four fields $W_{\pm}^4$ which satisfy 
\[L_{\pm}(z)W_{\pm}^4(w) \sim 4 W^4_{\pm}(w)(z-w)^{-2} + \partial W^4_{\pm}(w)(z-w)^{-1},\quad L_{\pm}(z)W_{\mp}^4(w)\sim 0.\]
These conditions determine $W_{\pm}^4$ up to scaling parameter $\omega_{\pm}$, and the explicit fields are given in Appendix \ref{app:wt4}. Next, for $r\geq 3$, we define the higher weight fields 
$$W^{2r}_{\pm}= (W_{\pm}^{4})_{(1)}W^{2r-2}_{\pm}.$$
Since $\{L_{\pm}, W^{2r}_{\pm}|\ r \geq 2\}$ both satisfy the OPE relations of $\cW^{\text{ev}}_{\infty}$ in the quotient $\cC^{\ell}(2n)$ for all $n \in \mathbb{N}$ and all $\ell$ such that $\ell-2$ is admissible, and the graded characters of $\cC^{\ell}(2n)$ and  $\cW^{\gs\go_2}_{\infty}$ agree up to arbitrarily high conformal weight for a suitable choice of $n$ and $\ell$, this forces these OPE relations to hold in the $2$-parameter vertex algebras $\cW^{\gs\go_2}_{\infty}$ as well. Finally, one can verify using the explicit forms of $L_{\pm}$ and $W^4_{\pm}$ that \eqref{Galois} is indeed an automorphism. \end{proof}

We need the explicit form of the fields $W^4_{\pm}$ in the case of $\cC^{\ell}(n) = \text{Com}(V^{\ell}(\gs\go_n), V^{\ell-2}(\gs\go_n) \otimes \cF(n))$ because we need to know that when $n$ is even and $\ell$ is rational, the formulas for $W^4_{\pm}$ are well-defined. In this case, $W^4_{\pm}$ can be expressed in the following form modulo terms that depend on the fields in weights $1,2,3$: 
$$W^4_{\pm} \equiv a_{\pm}(n,\ell) W^{4} \pm (X^4 + Y^4).$$ The coefficients $a_{\pm}(n,\ell)$ are given by the following explicit formulas:
\begin{equation}
    \begin{split}
        a_+(n,\ell) =&\frac{2 (\ell-3) (n-3) (n+2) (\ell+n-2) (\ell+2 n-3)Y(\ell,n)}{(\ell+n-3)X(\ell,n)},\\
        a_-(n,\ell) =&\frac{2 (\ell-3) (n-3) (n+2) (\ell+n-4) (\ell+2 n-3)Y(\ell,n)}{(\ell+n-3)X(\ell,n)},\\
   X(\ell,n)=\ &5 \ell^4 n^4+71 \ell^4 n^3-170 \ell^4 n^2-32 \ell^4
   n+384 \ell^4+20 \ell^3 n^5+224 \ell^3 n^4-1532 \ell^3 n^3+1912 \ell^3 n^2\\&+1920 \ell^3 n-4608 \ell^3+20 \ell^2 n^6+170 \ell^2
   n^5-3012 \ell^2 n^4+9236 \ell^2 n^3-4360 \ell^2 n^2-16896 \ell^2 n\\&+19968 \ell^2+12 \ell n^6-1652 \ell n^5+10304 \ell
   n^4-17456 \ell n^3-9792 \ell n^2+51456 \ell n-36864 \ell\\&-192 n^6+2464 n^5-8128 n^4+2688 n^3+29440 n^2-50176,\\
  Y(\ell,n)=\ &5 \ell^2 n^2+40 \ell^2 n-64 \ell^2+10 \ell n^3+50 \ell n^2-368 \ell
   n+384 \ell+24 n^3-272 n^2+704 n-512
   n+24576.
    \end{split}
\end{equation}
Note that all structure constants are rational functions whose denominators have no additional irreducible factors, so they are defined whenever $a_{\pm}(n,\ell)$ are defined.

\begin{lemma} \label{rationalellcaseofc2n} When $\ell$ is a rational number and $\ell \neq 3$, $2-n$, $4-n$, or $3-2n$,  $a_{\pm}(n,\ell)$ is well-defined and nonzero for any even positive integer $n$.
\end{lemma}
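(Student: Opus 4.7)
The plan is to inspect the numerator and denominator of $a_\pm(n,\ell)$ factor by factor. The numerators in both $a_+$ and $a_-$ are, up to the quadratic factor $Y(\ell,n)$, a product of the linear factors
\[
(\ell-3),\qquad (n-3),\qquad (n+2),\qquad (\ell+n-2)\text{ or }(\ell+n-4),\qquad (\ell+2n-3).
\]
For $n$ an even positive integer one has $n\neq 3$ and $n>-2$, so $(n-3)(n+2)\neq 0$. The remaining linear factors in $\ell$ vanish precisely at $\ell=3,\,2-n,\,4-n,\,3-2n$, all of which are excluded by the hypothesis. Thus the numerator vanishes only if $Y(\ell,n)=0$, and similarly the denominator vanishes only if $(\ell+n-3)\,X(\ell,n)=0$.

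The first step is to rule out rational zeros of $Y(\ell,n)$ for even positive $n$. Since $Y$ is quadratic in $\ell$, one writes $Y(\ell,n)=A(n)\ell^2+B(n)\ell+C(n)$ with $A,B,C\in\mathbb{Z}[n]$, and computes the discriminant $\Delta_Y(n)=B(n)^2-4A(n)C(n)$ as an explicit polynomial in $n$. The plan is to show, by direct computation, that $\Delta_Y(n)$ is negative for all sufficiently large even $n$ and that for the finite list of remaining small even $n$ it is not the square of a rational; hence $Y(\ell,n)$ has no rational roots in $\ell$ for $n$ even positive.

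The second step is the analogous statement for $X(\ell,n)$, together with the factor $(\ell+n-3)$ in the denominator. For the latter, one checks directly that the substitution $\ell=3-n$ produces a zero of the common numerator factor $Y(\ell,n)$ (or, equivalently, that $(\ell+n-3)$ divides the combined expression), so that the apparent pole at $\ell=3-n$ is removable and does not actually occur. For $X(\ell,n)$ itself, which is of degree $4$ in $\ell$ with leading coefficient $5n^4+\cdots$, the plan is to apply the rational root theorem in $\ell$ over $\mathbb{Q}(n)$: any rational root at a fixed even positive $n$ would have numerator dividing $X(0,n)$ and denominator dividing the leading coefficient, both polynomials in $n$, and a short case analysis (together with a discriminant estimate to bound the finitely many exceptional $n$) rules out such roots.

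The main obstacle is verifying the non-vanishing of the quartic $X(\ell,n)$ over the set of rational $\ell$ and even positive integer $n$; this is the only step where one must be careful beyond routine factoring. In practice it is handled by combining the rational root theorem (which restricts the finitely many candidate roots for each $n$) with the observation that $X(\ell,n)$ appears as a denominator of OPE structure constants of $\cW^{\fr{so}_2}_{\infty}$ after the localization described in Appendix~\ref{app:wt4}, so its zeros were already inverted in the definition of the localized ring. The remaining verifications for $Y$ and the $(\ell+n-3)$ factor are then short computational checks.
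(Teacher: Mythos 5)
Your overall shape is right---dispose of the linear factors via the hypotheses, then show $X(\ell,n)$ and $Y(\ell,n)$ have no rational roots in $\ell$ for even positive integers $n$---but the two steps where the real work happens both fail as stated. For $Y$: writing $Y=A(n)\ell^2+B(n)\ell+C(n)$, the discriminant factors as $g(n)h(n)$ with $g(n)=5n^2+40n-64$ and $h(n)=5n^4-14n^3+13n^2+552n-25152$; both have positive leading coefficients, so the discriminant is \emph{positive} for all even $n\geq 10$ (e.g.\ $g(10)=836$, $h(10)=17668$). Your plan to show $\Delta_Y(n)<0$ for large even $n$ is therefore false, and real roots do exist; what must be shown is that $g(n)h(n)$ is never a perfect square, which the paper does by reducing $h$ mod $5$ to force $n\equiv 6 \pmod{10}$, then mod $3$, then checking $g$ mod $3$ on the surviving residue classes. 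Your treatment of the $(\ell+n-3)$ factor is also wrong: $Y(3-n,n)\neq 0$ in general (e.g.\ $Y(1,2)=-36$), so that pole is not cancelled by the numerator. (The paper's own proof is silent on this factor, but your proposed fix is verifiably incorrect.)

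For the quartic $X$, the rational root theorem gives, for each fixed even $n$, a finite but $n$-dependent list of candidates with numerator dividing $X(0,n)$ and denominator dividing the degree-$4$ coefficient; ruling these out uniformly in $n$ is an infinite family of verifications, and your "short case analysis together with a discriminant estimate" does not explain how to do it. The paper instead exploits that $X$ is a biquadratic in $\ell-(3-n)$: its roots are $\ell=3-n\pm\sqrt{(p(n)\pm\sqrt{q(n)})/(5n^4+71n^3-170n^2-32n+384)}$, so rationality forces $q(n)$ to be a perfect square, and reducing $q(n)\equiv n^8(8-8n+9n^2)\pmod{16}$ shows this is impossible for even $n$ since $8-8n+9n^2\equiv 8$ or $12\pmod{16}$, neither a square residue. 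Finally, the closing appeal to the localization in Appendix~\ref{app:wt4} is circular: the lemma's entire content is that the specific rational points $(\ell,n)$ of interest avoid the inverted locus, which is exactly what remains to be proved.
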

\begin{proof} We need to show that $X(\ell,n) = 0$ and $Y(\ell,n)=0$ both have no solutions for $\ell \in \mathbb{Q}$ and $n \in \mathbb{Z}_{\geq 1}$. First, $X(\ell,n) = 0$ has the following solutions:
\begin{equation} \begin{split} \ell & = 3-n \pm \sqrt{ \frac{p(n) \pm \sqrt{q(n)}}{384 - 32 n - 
    170 n^2 + 71 n^3 + 5 n^4}},
    \\  p(n) & = 384 + 672 n - 682 n^2 + 263 n^3 - 147 n^4 + 38 n^5 + 5 n^6
\\  q(n) & =28704768 - 21659648 n - 10852352 n^2 + 13628416 n^3 - 2849536 n^4 - 925824 n^5 + 493456 n^6 
\\ &- 127360 n^7 + 4456 n^8 - 1112 n^9 + 969 n^{10}.\end{split} \end{equation}
   If $\ell$ is rational for some even positive integer $n$, a necessary condition is that $q(n)$ is a perfect square. Reducing $q(n)$ modulo $16$, we get
   $$q(n) \equiv n^8 (8 - 8 n + 9 n^2)\ \text{mod}\ 16,$$ so if $q(n)$ is a perfect square, then $\bar{q}(n) = 8 - 8 n + 9 n^2$ must be a perfect square mod $16$. However, the perfect squares mod $16$ are only $0,1,4,9$, and for even $n$ we have $\bar{q}(n)\equiv 8\  \text{mod}\ 16$ if $n \equiv 0\ \text{mod}\ 4$ and $\bar{q}(n)\equiv 12\  \text{mod}\ 16$ if $n \equiv 2\ \text{mod}\ 4$. Therefore there are no solutions to $X(\ell,n) = 0$ for $\ell$ rational and $n$ an even positive integer. 

Next, $Y(\ell,n) = 0$ has the following solutions:
\begin{equation} \begin{split} \ell & = \frac{-192 + 184 n - 25 n^2 - 5 n^3 \pm \sqrt{g(n) h(n)}}{-64 + 40 n + 5 n^2},
\\ g(n) & = -64 + 40 n + 5 n^2,
\\ h(n) & = -25152 + 552 n + 13 n^2 - 14 n^3 + 5 n^4.\end{split}\end{equation}
     If $\ell$ is rational for some even positive integer $n$,  $h(n)$ and $g(n)$ must both be perfect squares. Reducing $h(n)$ mod $5$ shows that $h(n) \equiv 4 \ \text{mod}\ 5$ when $n\equiv  6\ \text{mod}\ 10$, and otherwise $h(n) \equiv 2 \ \text{mod}\ 5$ or $h(n) \equiv 3 \ \text{mod}\ 5$. Therefore $h(n)$ can only be a perfect square if $n \equiv 6\ \text{mod}\ 10$. 
     
     Next, letting $\bar{h}(k) = h(6 + 10 k)$, we see that $\bar{h}(k) \equiv k \ \text{mod}\ 3$, so $h(n)$ can be perfect square only if $n$ has the form $6+10(3k)$ or $6+10(3k+1)$. Finally, $g(6 + 10 (3 k)) = 4 (89 + 750 k + 1125 k^2)$ and $g(6 + 10 (3 k+1)) =4 (464 + 1500 k + 1125 k^2)$. Neither can be a perfect square for $k \in \mathbb{Z}_{\geq 0}$ because $g(6 + 10 (3 k)) \equiv 2\ \text{mod}\ 3$ and  $g(6 + 10 (3 k+1)) \equiv 2\ \text{mod}\ 3$. Therefore both $g(n)$ and $h(n)$ cannot simultaneously be perfect squares for any even positive integer $n$.
     \end{proof}

For later use, we record the following:
   \begin{corollary} For all $n \in \mathbb{N}$ and $d\geq 3$, we have the following form of the fields $W^{2d}_{\pm} \in \cC^{\ell}(2n)$:
     \begin{equation} \label{evenspinhigherweightsfirst} W^{2d}_{\pm} = a_{\pm}(n,\ell)^{d-3} (a_{\pm}(n,\ell)^2 + \mu_d)H^{2d} \pm a_{\pm}(n,\ell)^{d-2} (X^{2d} + Y^{2d}),\end{equation} for a positive rational number $\mu_d$. In particular, when $\ell$ is a rational number and $\ell \neq 3$, $4-n$, or $3-2n$, we have 
 \begin{equation}  \label{evenspinhigherweights} a_{d,\pm} W^{2d}_{\pm} = b_{d,\pm} H^{2d} \pm  (X^{2d} + Y^{2d}),\qquad a_{d,\pm} = \frac{1}{a_{\pm}(n,\ell)^{d-2}},\qquad b_{d,\pm} = \frac{a_{\pm}(n,\ell)^2 + \mu_d}{a_{\pm}(n,\ell)}.\end{equation} Here $a_{d,\pm}$ and $b_{d,\pm}$ are nonzero rational numbers $a_d,b_d$.
\end{corollary}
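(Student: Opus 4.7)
The plan is to proceed by induction on $d \geq 3$, extracting for each $d$ the coefficients of the strong generators $H^{2d}$, $X^{2d}$, $Y^{2d}$ in $W^{2d}_{\pm}$ modulo the span of normally ordered products of strong generators of strictly lower conformal weight. Throughout I abbreviate $a = a_{\pm}(n,\ell)$. For the base case $d = 3$, I would compute $W^6_{\pm} = (W^4_{\pm})_{(1)} W^4_{\pm}$ directly using $W^4_{\pm} \equiv a\, H^4 \pm (X^4 + Y^4)$ together with the three relevant leading coefficients coming from Proposition~\ref{prop:structure constants}, namely $H^4_{(1)} H^4 \equiv H^6$, $H^4_{(1)} X^4 \equiv X^6$, and $X^4_{(1)} Y^4 \equiv \frac{16}{9} H^6$ (where $H^4 = \tilde W^{4,0}$ is the trivial-charge weight-four generator). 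This fixes $A_3$ and $B_3$ as explicit rational functions of $a$ of the form \eqref{evenspinhigherweightsfirst} with $\mu_3$ a positive rational.

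For the inductive step, I would substitute the inductive hypothesis for $W^{2d-2}_{\pm}$ into $W^{2d}_{\pm} = (W^4_{\pm})_{(1)} W^{2d-2}_{\pm}$. The Heisenberg grading cleanly separates the four types of leading contributions: $H^4_{(1)} H^{2d-2}$ produces $H^{2d}$; $H^4_{(1)} X^{2d-2}$ and $H^4_{(1)} Y^{2d-2}$ produce $X^{2d}$ and $Y^{2d}$; $(X^4+Y^4)_{(1)} H^{2d-2}$ also contributes to $X^{2d} + Y^{2d}$ via skew-symmetry \eqref{skew-symmetry} and the trivial–standard formula of Proposition~\ref{prop:structure constants}; and finally $X^4_{(1)} Y^{2d-2}$ together with $Y^4_{(1)} X^{2d-2}$ each produce $\frac{16}{9} H^{2d}$ by the standard–standard formula. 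The charge-$\pm 4$ products $X^4_{(1)} X^{2d-2}$ and $Y^4_{(1)} Y^{2d-2}$ give no leading strong generator, since $\cW^{\fr{so}_2}_{\infty}$ has no charge-$\pm 4$ strong generators and the corresponding primary composites lie strictly below weight $2d$ in the generating data. This yields a $2\times 2$ linear recursion in $a$ for $(A_d, B_d)$, which I would solve to recover the homogeneous form asserted in \eqref{evenspinhigherweightsfirst}, reading $\mu_d$ off as the value of a linear recurrence with positive initial data.

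The main technical obstacle will be controlling the lower-weight correction terms both in $W^4_{\pm}$ (given by \eqref{vir:splitting} and explicitly in Appendix~\ref{app:wt4}, built from $H$, $L$, $X^2$, $Y^2$, $W^3$ and their normally ordered products) and in the inductive representative of $W^{2d-2}_{\pm}$, verifying that no additional leading-term contributions to $H^{2d}$, $X^{2d}$, $Y^{2d}$ arise from them. My strategy is a nested induction using quasi-associativity \eqref{quasi-associativity} and quasi-derivation \eqref{quasi-derivation}: any normally ordered monomial in $\{H, L, X^2, Y^2, W^3\}$ whose individual factors have weight at most $3$, acting via the $_{(1)}$-product on a strong generator of weight $2d-2$, reduces to a sum of normally ordered products in which the top conformal weight factor is strictly less than $2d$, and therefore cannot contribute to any of the leading strong generators $H^{2d}, X^{2d}, Y^{2d}$. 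Finally, the ``In particular'' clause is immediate once \eqref{evenspinhigherweightsfirst} is established: Lemma~\ref{rationalellcaseofc2n} guarantees that $a_{\pm}(n,\ell)$ is well defined and nonzero under the stated hypotheses on $\ell$, so one may divide through by $a_{\pm}(n,\ell)^{d-2}$ and read off $a_{d,\pm}, b_{d,\pm}$ as the indicated nonzero rationals.
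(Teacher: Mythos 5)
Your overall strategy---induction on $d$ via the defining recursion $W^{2d}_{\pm}=(W^4_{\pm})_{(1)}W^{2d-2}_{\pm}$, extracting the coefficients of the strong generators $H^{2d},X^{2d},Y^{2d}$ modulo $S_{2d}$ using Proposition \ref{prop:structure constants}---is the same as the paper's (whose proof is a two-line version of exactly this). But there is a genuine gap at the step where you dispose of the correction terms, and it sits precisely where the asserted formula is decided. Your claim that a normally ordered monomial in $\{H,L,X^2,Y^2,W^3\}$ cannot contribute a leading strong generator under the $_{(1)}$-product is false: $W^4_{\pm}$ contains a nonzero multiple of $:\!HW^{3}\!:$ (see Appendix \ref{app:wt4}), and expanding $(:\!HW^3\!:)_{(1)}X^{2d-2}$ produces the term $W^3_{(0)}(H_{(0)}X^{2d-2})$, which is a nonzero multiple of $W^3_{(0)}X^{2d-2}$ and hence, by Proposition \ref{prop:structure constants}(2), a nonzero multiple of $X^{2d}$. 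So the weight-$\le 3$ corrections to $W^4_{\pm}$ (and likewise the corrections hidden in the inductive representative of $W^{2d-2}_{\pm}$) do feed into the leading coefficients, and your nested induction does not close.

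This is not a cosmetic omission: the $2\times 2$ recursion you set up from the leading terms alone is inconsistent with the statement you are trying to prove. Writing $W^{2d-2}_{\pm}\equiv A_{d-1}H^{2d-2}\pm B_{d-1}(X^{2d-2}+Y^{2d-2})$ and keeping exactly the contributions you list---$H^4_{(1)}H^{2d-2}=H^{2d}$, $H^4_{(1)}X^{2d-2}=X^{2d}$ (the raising property), $X^4_{(1)}H^{2d-2}\equiv X^{2d}$ (skew-symmetry plus Proposition \ref{prop:structure constants}(2)), and $X^4_{(1)}Y^{2d-2}\equiv\tfrac{16}{9}H^{2d}$---yields $B_d=aB_{d-1}+A_{d-1}$ and $A_d=aA_{d-1}+\tfrac{32}{9}B_{d-1}$, whereas \eqref{evenspinhigherweightsfirst} requires $B_d=aB_{d-1}$, i.e.\ $B_d=a^{d-2}$ with the $A_{d-1}$ term absent. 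The cross-contribution $\pm A_{d-1}(X^{2d}+Y^{2d})$ coming from $(X^4+Y^4)_{(1)}H^{2d-2}$ must therefore be cancelled by exactly the lower-weight correction terms that your argument rules out; without tracking them (or giving some structural reason for the cancellation, e.g.\ from the fact that $W^{2d}_{\pm}$ lie in two commuting copies of $\cW^{\text{ev}}_{\infty}$), you would either derive the wrong coefficients or be forced to stop. The paper's own proof only cites $H^4_{(1)}H^{2d-2}$ and $X^4_{(1)}Y^{2d-2}=\tfrac{16}{9}H^{2d}=Y^4_{(1)}X^{2d-2}$ and leaves this cancellation implicit; your proposal makes the missing step explicit but justifies it with an incorrect claim. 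The final "in particular" reduction via Lemma \ref{rationalellcaseofc2n} and positivity of $\mu_d$ is fine.
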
 

\begin{proof} Equation \eqref{evenspinhigherweights} is immediate from the fact that 
$$(W^4_{\pm})_{(1)} W^{2d-2}_{\pm} = W^{2d}_{\pm},\quad H^4_{(1)} H^{2d-2} = H^{2d},\quad X^{4}_{(1)} Y^{2d-2} = \frac{16}{9} H^{2d} =  Y^{4}_{(1)} X^{2d-2},$$ modulo normally ordered polynomials in generators of lower weight. If $\ell$ is rational as above, we have already seen that $a_{\pm}(n,\ell)$ is a nonzero rational number, so the coefficient $b$ above is nonzero. Similarly, $a_{\pm}(n,\ell)^2 + \mu_d$ cannot vanish because both $a_{\pm}(n,\ell)^2$ and $\mu_d$ are positive rational numbers.
\end{proof}

Let \begin{equation} \label{defpsixynm} \Psi_{XY,n,m}: \cW^{\text{ev}}_{\infty} \otimes \cW^{\text{ev}}_{\infty} \rightarrow \cC^{\psi}_{XY}(n,m)\end{equation} denote the composition of $\Psi$ with the quotient map $\cW^{\fr{so}_2}_{\infty} \rightarrow \cW^{\gs\go_2}_{\infty, I_{XY,n,m}}$.

\begin{theorem} \label{pairsofcurves} For each of the vertex $\cC^{\psi}_{XY}(n,m)$ there is a pair of orthosymplectic $Y$-algebras $\cC^{\psi'}_{i_1 Z_1}(a_1, b_1)$ and $\cC^{\psi''}_{i_2 Z_2}(a_2, b_2)$ where $i_1, i_2 \in \{1,2\}$ and $Z_1, Z_2 \in \{B,C,D,O\}$, such that $\Psi_{XY,n,m}$ descends to a map of $1$-parameter vertex algebras
\begin{equation} \label{psiXYnmInducedtilde} \cC^{\psi'}_{i_1 Z_1}(a_1, b_1) \otimes \cC^{\psi''}_{i_2 Z_2}(a_2, b_2)  \hookrightarrow \cC^{\psi}_{XY}(n,m).\end{equation}
They are listed as follows:

\begin{equation} \begin{split} 
\cC^{\psi}_{1B}(n+m,m) \otimes \cC^{\psi-1}_{1D}(n,m) & \hookrightarrow \cC^{\psi}_{BD}(n,m),
\\ \cC^{\psi}_{1D}(n+m+1, m) \otimes \cC^{\psi-1}_{1B}(n,m) & \hookrightarrow \cC^{\psi}_{BB}(n,m),
\\ \cC^{\psi}_{2C}(n+m,m) \otimes \cC^{\psi-1/2}_{2C}(n,m) & \hookrightarrow \cC^{\psi}_{CC}(n,m),
\\ \cC^{\psi}_{2O}(n+m,m) \otimes \cC^{\psi-1/2}_{2O}(n,m) & \hookrightarrow \cC^{\psi}_{CO}(n,m),
\end{split} \end{equation}

\begin{equation} \begin{split} 
\cC^{\psi}_{1B}(m-n,m) \otimes \cC^{\psi-1}_{1C}(n,m) & \hookrightarrow \cC^{\psi}_{BC}(n,m),\qquad m \geq n,
\\ \cC^{\psi}_{1O}(n-m,m) \otimes \cC^{\psi-1}_{1C}(n,m) & \hookrightarrow \cC^{\psi}_{BC}(n,m),\qquad m \leq n,
\end{split} \end{equation}

\begin{equation} \begin{split} 
\\ \cC^{\psi}_{1D}(m-n+1,m) \otimes \cC^{\psi-1}_{1O)}(n,m) & \hookrightarrow \cC^{\psi}_{BO}(n,m),\qquad m \geq n-1,
\\ \cC^{\psi}_{1C}(n-m-1,m) \otimes \cC^{\psi-1}_{1O)}(n,m) & \hookrightarrow \cC^{\psi}_{BO}(n,m),\qquad m \leq n-1,
\end{split} \end{equation}

\begin{equation} \begin{split} 
\\ \cC^{\psi}_{2O}(m-n,m) \otimes \cC^{\psi-1/2}_{2B}(n,m) & \hookrightarrow \cC^{\psi}_{CB}(n,m),\qquad m\geq n,
\\ \cC^{\psi}_{2B}(n-m,m) \otimes \cC^{\psi-1/2}_{2B}(n,m) & \hookrightarrow \cC^{\psi}_{CB}(n,m),\qquad m \leq n,
\end{split} \end{equation}

\begin{equation} \begin{split} 
\\ \cC^{\psi}_{2C}(m-n,m) \otimes \cC^{\psi-1/2}_{2D}(n,m) & \hookrightarrow \cC^{\psi}_{CD}(n,m),\qquad m\geq n,
\\ \cC^{\psi}_{2D}(n-m,m) \otimes \cC^{\psi-1/2}_{2D}(n,m) & \hookrightarrow \cC^{\psi}_{CD}(n,m),\qquad m\leq n.
\end{split} \end{equation}
\end{theorem}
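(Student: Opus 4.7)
The plan is to analyze the map $\Psi_{XY,n,m}$ from \eqref{defpsixynm} on each tensor factor separately and identify which $1$-parameter quotient of $\cW^{\text{ev}}_{\infty}$ the image factors through. The starting point is Theorem \ref{completion:twocopies}, which produces the two commuting copies of $\cW^{\text{ev}}_{\infty}$ inside (a localized quadratic extension of) $\cW^{\fr{so}_2}_{\infty}$, together with the explicit formulas \eqref{vir:splitting} for $L_{\pm}$ and $c_{\pm}$. Recall that the $1$-parameter quotients $\cC^{\psi}_{XY}(n,m)$ of $\cW^{\fr{so}_2}_{\infty}$ are cut out by the ideal $I_{XY,n,m}\subseteq\mathbb{C}[c,k]$ which expresses $c$ as an explicit rational function of $k$ (given by \eqref{eq:cBD}--\eqref{eq:cCO} after the relevant change of variables to $\psi$); composing $\Psi$ with the quotient along $I_{XY,n,m}$ produces $\Psi_{XY,n,m}$.

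The first step is, for each family $\cC^{\psi}_{XY}(n,m)$, to substitute the central charge formula for $c$ and the corresponding value of $k$ (read off from the Heisenberg level of the affine subVOA of $\cW^{\psi}_{XY}(n,m)$) into \eqref{vir:splitting}. This yields rational functions $c_{\pm}(\psi)$ which are candidate central charges of the two $Y$-algebra factors. Using the classification of central charges of orthosymplectic $Y$-algebras from \cite{CL4}, one then identifies, for each $(X,Y,n,m)$, the unique pair $(\cC^{\psi'}_{i_1Z_1}(a_1,b_1),\cC^{\psi''}_{i_2Z_2}(a_2,b_2))$ whose central charges match $c_+$ and $c_-$ under the linear reparametrizations $\psi'=\psi$ and $\psi''=\psi-1$ (resp. $\psi-1/2$ in the symplectic-type cases). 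This matching is predicted by the pattern of the base cases \eqref{twocopiesconstant}: the $B/D$ cases pair at a shift of $1$ and the $C/O$ cases at a shift of $1/2$, with the Lie-type choice on each side forced by the structure of $\cW^{\psi}_{XY}(n,m)$ as a conformal extension of its affine subalgebra tensored with the $Y$-algebra. The choice between two candidate $Y$-algebras in the mixed cases (e.g., $\cC^{\psi}_{BC}$ for $m\geq n$ versus $m\leq n$) arises because the relevant orthosymplectic parameter degenerates past an integer threshold, and the correct algebra is dictated by which has the strong-generation type compatible with being a quotient of $\cW^{\text{ev}}_{\infty}$ at that specialization.

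The second step is to promote matching of central charges to matching of OPE algebras. Because $\cW^{\text{ev}}_{\infty}$ is a classifying object for $2$-parameter vertex algebras of type $\cW(2,4,6,\dots)$ subject to the mild hypotheses of \cite{KL}, and each $\cC^{\psi'}_{i_1Z_1}(a_1,b_1)$ is the simple quotient of $\cW^{\text{ev}}_{\infty}$ along the unique ideal $I_{i_1Z_1,a_1,b_1}\subseteq\mathbb{C}[c,\lambda]$ specified in \cite{CL4}, it suffices to show that the image of one copy of $\cW^{\text{ev}}_{\infty}$ under $\Psi_{XY,n,m}$ satisfies both the central-charge relation $c_+=c_+(\psi)$ and the $\lambda$-truncation relation defining $I_{i_1Z_1,a_1,b_1}$, and likewise for the other copy. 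The central-charge relation is automatic from the first step. The $\lambda$-relation can be verified by a direct computation of the relevant structure constant in the OPE $W^4_{\pm}(z)W^4_{\pm}(w)$ inside $\cW^{\fr{so}_2}_{\infty}$ --- this structure constant is rational in $c$ and $k$ by the explicit form of $W^4_{\pm}$ given in Appendix \ref{app:wt4}, and after specializing to the truncation curve of $\cC^{\psi}_{XY}(n,m)$, it must be checked to coincide with the $\lambda$-value defining $I_{i_1Z_1,a_1,b_1}$ and $I_{i_2Z_2,a_2,b_2}$. This matching is a finite list of rational-function identities in $\psi$, one per family.

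The main obstacle is the last verification: one must carry out the specialization of the $W^4_{\pm}(z)W^4_{\pm}(w)$ OPE along each of the fourteen truncation curves and check that the resulting $\lambda_{\pm}(\psi)$ exactly reproduces the truncation polynomials of the predicted orthosymplectic $Y$-algebras from \cite{CL4}. Once this is done, Theorem \ref{one-parameter quotients theorem} for $\cW^{\text{ev}}_{\infty}$ (its analogue from \cite{KL}) guarantees that each factor of the tensor product $\cW^{\text{ev}}_{\infty}\otimes\cW^{\text{ev}}_{\infty}$ factors through the corresponding simple $1$-parameter quotient, and simplicity of $\cC^{\psi}_{XY}(n,m)$ (Theorem \ref{quotientproperty}) plus the non-degeneracy of the two central charges $c_{\pm}(\psi)$ as functions of $\psi$ ensures that the induced map on the tensor product is injective. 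Finally, the mixed cases where the predicted $Y$-algebra depends on the relative sizes of $n$ and $m$ will require checking that the apparent $2$-parameter degeneration of one orthosymplectic algebra into another (e.g., $\cC^{\psi}_{1B}(m-n,m)$ collapsing to $\cC^{\psi}_{1O}(n-m,m)$ when $m-n$ becomes non-positive) is consistent at the level of ideals in $\mathbb{C}[c,\lambda]$, which follows from the known duality relations among orthosymplectic $Y$-algebras in \cite{CL4}.
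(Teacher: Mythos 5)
Your first step --- identifying which ideal of $\mathbb{C}[c,\lambda]$ lies in the kernel of $\Psi_{XY,n,m}$ restricted to each tensor factor --- is a legitimate alternative to what the paper does. The paper avoids computing $(c_\pm,\lambda_\pm)$ along each truncation curve directly; instead it uses the intersection data from Appendix B of \cite{CL4} to check that at infinitely many points on the truncation curve of $\cC^{\psi}_{XY}(n,m)$ the two copies of $\cW^{\text{ev}}_{\infty}$ specialize onto the truncation curves of the claimed $Y$-algebras, which forces coincidence of the curves. Your route of matching $c_\pm$ from \eqref{vir:splitting} and then verifying the $\lambda$-relation from the $W^4_\pm(z)W^4_\pm(w)$ OPE would also work, but it is the computationally heavier path and you have only sketched the fourteen rational-function identities that would need to be checked; either way, what this step buys you is only that the kernel contains the ideals $I_{i_1Z_1,a_1,b_1}$ and $I_{i_2Z_2,a_2,b_2}$, i.e.\ an embedding of \emph{possibly non-simple} quotients $\overline{\cC^{\psi'}_{i_1Z_1}(a_1,b_1)}\otimes\overline{\cC^{\psi''}_{i_2Z_2}(a_2,b_2)}$.

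The genuine gap is in your final paragraph. You claim that the classifying theorem for $\cW^{\text{ev}}_{\infty}$ plus simplicity of $\cC^{\psi}_{XY}(n,m)$ forces each factor to land in the simple quotient. Neither ingredient gives this: a subalgebra of a simple vertex algebra need not be simple, and the classifying/uniqueness theorem of \cite{KL} takes simplicity of the target as a \emph{hypothesis}, which is precisely what is unproven here. Concretely, the kernel of $\Psi_{XY,n,m}$ on each factor is known to contain $I_{i_1Z_1,a_1,b_1}\cdot\cW^{\text{ev}}_{\infty}$ but not yet the maximal graded ideal $\cI_{i_1Z_1,a_1,b_1}$, so the image could a priori be a proper non-simple quotient sitting strictly between $\cW^{\text{ev}}_{\infty}/I_{i_1Z_1,a_1,b_1}\cdot\cW^{\text{ev}}_{\infty}$ and $\cC^{\psi'}_{i_1Z_1}(a_1,b_1)$. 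The paper closes this gap with a separate argument: it locates infinitely many intersection points of the truncation curve of $\cC^{\psi}_{XY}(n,m)$ with the curves for the GKO cosets $\cC^{\ell}(2r)$ at levels where $\ell-2$ is admissible for $\gs\go_{2r}$; at such points $\cC_{\ell}(2r)$ is a conformal extension of $\cW_{\ell_1}(\gs\go_{2r})^{\mathbb{Z}_2}\otimes\cW_{\ell_2}(\gs\go_{2r})^{\mathbb{Z}_2}$, which is simple, so the maximal ideals $\cI_{i_jZ_j,a_j,b_j}$ vanish in the image at each of these infinitely many points and hence vanish identically along the curve (with a further case analysis handling the possibility that $\cC_{\ell}(2r)$ fails weak generation at some of these points). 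Some argument of this kind is indispensable, and your proposal does not contain it.
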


\begin{proof} By using the data in Appendix B of \cite{CL4} on intersections between the truncation curves for $\cC^{\psi}_{iZ}(a,b)$ and $\cC^{\psi}_{2D}(n,0)$ (as well as the isomorphism  $\cC^{\psi}_{2D}(n,0)\cong \cC^{\psi'}_{1O}(0,n-1)$ where $\frac{1}{2\psi} + \frac{1}{\psi'} = 1$), we can check that for each of the algebras $\cW^{\psi}_{XY}(n,m)$, there are infinitely many values of $k \in \mathbb{N}$ such that the truncation curves for $\cC^{\psi}_{2D}(n,0)$ and $\cC^{\psi-1}_{2D}(n,0)$ are coincident with the truncation curves for the pair of algebras $\cC^{\psi'}_{i_1 Z_1}(a_1,b_1)$ and $\cC^{\psi''}_{i_2 Z_2}(a_2,b_2)$ on the list above. This proves that the kernel of the map $\Psi_{XY,n,m}$ contains the ideal $I_{i_1Z_1, a_1, b_1} \otimes I_{i_2Z_2, a_2, b_2}$ given above. This shows that we get a map of appropriate truncation curves, so at worst we get embeddings
$$ \overline{\cC^{\psi'}_{i_1 Z_1}(a_1, b_1)} \otimes \overline{\cC^{\psi''}_{i_2 Z_2}(a_2, b_2)}  \hookrightarrow \cC^{\psi}_{XY}(n,m),$$ where $\overline{\cC^{\psi'}_{i_1 Z_1}(a_1, b_1)}$ and $\overline{\cC^{\psi''}_{i_2 Z_2}(a_2, b_2)}$ are possibly non-simple quotients of $\cW^{\text{ev}}_{\infty}$ along the ideals $I_{i_1Z_1, a_1, b_1}$ and $ I_{i_2Z_2, a_2, b_2}$, respectively.

It remains to prove that the image of $\Psi_{XY,n,m}$ is in fact the simple quotient $\cC^{\psi'}_{i_1 Z_1}(a_1, b_1)\otimes \cC^{\psi''}_{i_2 Z_2}(a_2, b_2)$. The argument is the same in all cases, so we only give the proof for $\cC^{\psi}_{BD}(n,m)$. We have an intersection point on the truncation curves for $\cC^{\psi}_{BD}(n,m)$ and $\cC^{\ell}(2r)$ at the point
$$\psi = \frac{2 (2 m + n + r)}{1 + 2 m},\qquad \ell = -(2r-2) + \frac{4 (m - m n + r + m r)}{1 + 2 m}.$$ This corresponds to the point
\begin{equation} \label{pointinparam} k = 2r,\qquad c = -\frac{(-1 + 2 r) (-2 m - n + 2 m r) (-1 + n + 2 r + 2 m r)}{(-1 + n + r) (2 m + n + r)},\end{equation} in the parameter space for $\cW^{\fr{so}_2}_{\infty}$, which is on the truncation curve for $\cC^{\psi}_{BD}(n,m)$.

Note that $\ell-2$ is admissible for infinitely many values of $r$, and whenever $\ell-2$ is admissible, $\cC_{\ell}(2r)$ is a conformal extension of the product of simple vertex algebra
$\cW_{\ell_1}(\gs\go_{2r})^{\mathbb{Z}_2} \otimes \cW_{\ell_2}(\gs\go_{2r})^{\mathbb{Z}_2}$ where
$$ \ell_1 = -(2r-2) + \frac{1 - 2 m + 4 m n - 4 r - 4 m r}{4 (-m + m n - r - m r)},\quad \ell_2 = -(2r-2) + \frac{2 (1 + 2 m n - 2 r - 2 m r)}{1 - 2 m + 4 m n - 4 r - 4 m r}.$$
If $\cC_{\ell}(2r)$ has the weak generation property, since the image $\Psi_{BD,n,m}(\cW^{\text{ev}}_{\infty} \otimes \cW^{\text{ev}}_{\infty}) \cong \cW_{\ell_1}(\gs\go_{2r})^{\mathbb{Z}_2} \otimes \cW_{\ell_2}(\gs\go_{2r})^{\mathbb{Z}_2}$, it follows that the maximal ideals $\cI_{1B,n+m,m}$ in the first copy of $\cW^{\text{ev}}_{\infty}$ and  $\cI_{1D,n,m}$ in the second copy $\cW^{\text{ev}}_{\infty}$, vanish at the point \eqref{pointinparam}.

Suppose that $\cC_{\ell}(2r)$ does not have the weak generation property for some $\ell$ as above. This can only happen if the subalgebra $\tilde{\cC}_{\ell}(2r) \subseteq \cC_{\ell}(2r)$ generated by the fields in weight at most $2$, is proper, and therefore need not be simple. However, $\tilde{\cC}_{\ell}(2r)$ maps surjectively to the simple quotient $\cW^{\gs\go_2}_{\infty, \cI_{2r,c}}$ of $\cW^{\fr{so}_2}_{\infty}$ along the maximal ideal $\cI_{2r,c}$ corresponding to the point \eqref{pointinparam}. Since $\Psi_{BD,n,m}(\cW^{\text{ev}}_{\infty} \otimes \cW^{\text{ev}}_{\infty})$ is already simple in $\tilde{\cC}_{\ell}(2r)$, it must be simple in $\cW^{\gs\go_2}_{\infty, \cI_{2r,c}}$. In particular, the maximal ideals $\cI_{1B,n+m,m}$ in the first copy of $\cW^{\text{ev}}_{\infty}$ and $\cI_{1D,n,m}$ in the second copy $\cW^{\text{ev}}_{\infty}$, and also vanish in $\cW^{\gs\go_2}_{\infty, \cI_{2r,c}}$. Since these two maximal ideals vanish at infinitely many points along the truncation curve for $\cC^{\psi}_{BD}(n,m)$, it follows that $\overline{\cC^{\psi'}_{i_1 Z_1}(a_1, b_1)}$ and $\overline{\cC^{\psi''}_{i_2 Z_2}(a_2, b_2)}$ must be simple.
\end{proof}

Next, we return to the set-up of Theorem \ref{thm:reconstruction}, where we consider vertex algebras $\cA^{\psi}_{XY}(n,m)$ which are extensions of $V^a(\mathfrak{a}) \otimes \cW$, where $\cW$ is some $1$-parameter quotient of $\cW^{\fr{so}_2}_{\infty}$ and the extension is generated by fields $P^{\mu,i}$ of fixed weight and parity, and action of $\gs\go_2 \oplus \mathfrak{a}$. As above, $\cW$ is a conformal extension of two $1$-parameter quotients of $\cW^{\text{ev}}_{\infty}$, which we denote by $\cW_+$ and $\cW_-$, with generating sets $\{L_+, W^{2r}_+\}$ and $\{L_-, W^{2r}_-\}$, respectively.

    \begin{theorem} The extension fields $P^{\mu,i}$ commute with $\cW_+$, and the generators of $V^a(\gs) \otimes \cW_-$ together with $\{P^{\mu,i}\}$ close under OPE, and therefore generate a hook-type $\cW$-(super)algebra. 
    \end{theorem}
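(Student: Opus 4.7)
The plan is to verify the commutativity by direct OPE computation, then deduce closure from a centralizer argument and identify the result via the earlier structural results. First, I will compute the OPE $L_+(z)P^{\mu,i}(w)$ using the explicit splitting \eqref{vir:splitting} together with the first- and second-order poles $X^{2}_{(r)}P^{\mu,i}$, $Y^{2}_{(r)}P^{\mu,i}$, ${:}HH{:}_{(r)}P^{\mu,i}$, and $W^{2,0}_{(r)}P^{\mu,i}$ recorded in the proof of Theorem \ref{thm:reconstruction}. The off-diagonal (Heisenberg-charge-flipping) contributions from $X^{2}$ and $Y^{2}$ come in a matched pair, and they conspire to cancel precisely because $\omega$ is chosen by \eqref{eq:extension} so that $L_+$ and $L_-$ commute. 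After this cancellation, the residual diagonal coefficient is a rational function of $c,k,\lambda$ which vanishes identically modulo the truncation relation \eqref{trunk}. The Galois involution $\omega\mapsto -\omega$ swaps $L_+$ and $L_-$, so after fixing a sign we may label the commuting copy as $\cW_+$.

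Next, I will propagate commutation to all of $\cW_+$. Once $L_+$ commutes with $P^{\mu,i}$, the field $P^{\mu,i}$ is a vector of weight zero for $L_+$, and hence generates the trivial $\cW_+$-module; consequently $W^4_+(z)P^{\mu,i}(w)\sim 0$. Alternatively, this can be verified by a direct computation from the explicit form of $W^4_+$ given in Appendix \ref{app:wt4}. Since $\cW_+$ is a quotient of $\cW^{\mathrm{ev}}_\infty$ weakly generated by $L_+$ and $W^4_+$ via $W^{2r}_+=(W^4_+)_{(1)}W^{2r-2}_+$, the Jacobi identity $J_{0,0}(W^4_+,W^{2r-2}_+,P^{\mu,i})$ inductively propagates commutation to every strong generator $W^{2r}_+$, $r\geq 2$. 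This establishes the first assertion of the theorem.

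For closure, let $\mathcal{A}'\subseteq\cA^{\psi}_{XY}(n,m)$ be the subalgebra generated by $V^a(\fr{a})\otimes\cW_-$ and $\{P^{\mu,i}\}$. OPEs internal to $V^a(\fr{a})\otimes\cW_-$ close by assumption, and the $V^a(\fr{a})$-action preserves $\mathcal{A}'$ by construction. The remaining OPEs $\cW_-(z)P^{\mu,i}(w)$ and $P^{\mu,i}(z)P^{\nu,j}(w)$ have inputs that all commute with $\cW_+$, so their outputs commute with $\cW_+$. Since $\cW\cong \cW_+\otimes \cW_-$ is a tensor product of simple vertex algebras (Corollary \ref{cor:simplicity} and Theorem \ref{pairsofcurves}), the centralizer of $\cW_+$ inside $\cW$ is exactly $\cW_-$, and in the whole algebra $V^a(\fr{a})\otimes\cW$ together with the $P$-extension module, this centralizer is $V^a(\fr{a})\otimes\cW_-$ together with the $P$-module. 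Hence all OPE outputs land in $\mathcal{A}'$, and $\mathcal{A}'$ is a vertex subalgebra. Finally, by Theorem \ref{pairsofcurves}, $\cW_-$ is the affine coset of an orthosymplectic $Y$-algebra, which is itself a hook-type $\cW$-(super)algebra $\cW^{\psi''}(\fr{g}',f_{\mathrm{hook}})$ by its affine subalgebra. Adjoining extension fields of the correct weight $m$, parity, and $\fr{so}_2\oplus\fr{a}$-transformation law to $V^a(\fr{a})\otimes\cW_-$ recovers the full hook-type $\cW$-algebra by inverse affine coset construction, and uniqueness follows from a reconstruction argument parallel to Theorem \ref{thm:reconstruction}.

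The main obstacle is the verification in the first step: the identity $L_+(z)P^{\mu,i}(w)\sim 0$ is not visible from abstract representation theory and relies on a delicate cancellation forced by the interplay between the truncation relation \eqref{trunk} and the quadratic extension \eqref{eq:extension}. In practice one confirms it by symbolic computation; a cleaner conceptual argument would require identifying $P^{\mu,i}$ a priori as a primary vector for $\cW_-$ alone in the joint representation theory of the two commuting Virasoros, which is essentially what we are trying to prove. Once this anchor is in place, the propagation to $W^4_+$ and higher generators, the closure via the centralizer, and the identification with a hook-type $\cW$-(super)algebra are essentially formal.
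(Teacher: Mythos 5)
Your overall strategy coincides with the paper's: establish $L_+(z)P^{\mu,i}(w)\sim 0$ by exploiting the explicit splitting \eqref{vir:splitting} and the OPE data from the proof of Theorem \ref{thm:reconstruction}, upgrade this to $W^4_+(z)P^{\mu,i}(w)\sim 0$, propagate to all of $\cW_+$ by weak generation, and then identify the remaining fields as a hook-type $\cW$-(super)algebra via the reconstruction theorem of \cite{CL4}. The paper packages the first two steps as Jacobi identities $J(L_+,L_-,P)$ and $J_{4,0}(L_+,W^4_-,P)$ rather than as a raw cancellation computation, but the content is the same.

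Two of your intermediate justifications are not sound as stated, although both are repairable. First, the inference ``$P^{\mu,i}$ has $L_+$-weight zero, hence generates the trivial $\cW_+$-module, hence $W^4_+(z)P^{\mu,i}(w)\sim 0$'' does not follow: a vector that is vacuum-like for the Virasoro subalgebra $L_+$ can still satisfy $(W^4_+)_{(n)}P\neq 0$ for $0\leq n\leq 3$, since those vectors have nonnegative $L_+$-weight and weight considerations alone do not kill them (the vacuum itself is annihilated by $W^4_{(n)}$, $n\geq 0$, by creativity, not by its weight). Your fallback via the explicit form of $W^4_\pm$ in Appendix \ref{app:wt4}, or the paper's identity $J_{4,0}(L_+,W^4_-,P)$ combined with $L_+(z)P(w)\sim 0$, is what actually closes this step. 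Second, your centralizer argument for closure rests on the assertion that $\cW\cong\cW_+\otimes\cW_-$; this contradicts Theorem \ref{completion:twocopies} and the character formula in Corollary \ref{Wso freely generated}, which show that $\cW$ is a proper conformal \emph{extension} of $\cW_+\otimes\cW_-$. Consequently $\mathrm{Com}(\cW_+,\cA)$ being exactly the span of the $V^a(\fr{a})\otimes\cW_-$-module generated by the $P^{\mu,i}$ is not automatic and would itself require a decomposition argument. The paper sidesteps this entirely: once commutation with $\cW_+$ is known, the data $(V^a(\fr{a})\otimes\cW_-,\,P^{\mu,i})$ satisfies the hypotheses of the hook-type reconstruction theorem \cite[Theorem 6.3]{CL4}, which delivers closure of the OPEs and the identification of the subalgebra in one stroke. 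I recommend replacing the centralizer detour with that citation.
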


  \begin{proof}
  First, using the Jacobi identities $J(L_+,L_-,P)$, we obtain
    \begin{equation}\label{eq:1vir}
        L_+(z)P(w)\sim 0,\quad L_-(z)P(w)\sim \lambda P(w)(z-w)^{-2}+\partial P(w)(z-w)^{-1}.
    \end{equation}
    Next, we us the Jacobi identities $J(L_+,W_-^4,P)$; specifically, it is enough to consider $J_{4,0}(L_+,W_-^4,P)$. 
    Using (\ref{eq:1vir}) we conclude that 
     \begin{equation}\label{eq:wt4}
        W_+^4(z)P(w)\sim 0,\quad W_-^4(z)P(w)\sim \lambda P(w)(z-w)^{-4}+\dotsb.
    \end{equation}
    Since $\cW_+$ is weakly generated by $L_+$ and $W^{4}_+$, relations (\ref{eq:1vir}), and (\ref{eq:wt4}), imply that $\cW_+$ commutes with the extension fields $P$. 
    Now we are in position to apply the reconstruction \cite[Theorem 6.3]{CL4} for hook-type $\cW$-(super)algebras in orthosymplectic types, thus concluding that there exists a a hook-type subVOA.
    \end{proof}

\begin{corollary}
 \label{pairsofcurvesrefined} The conformal embeddings $\cC^{\psi'}_{i_1 Z_1}(a_1, b_1) \otimes \cC^{\psi''}_{i_2 Z_2}(a_2, b_2)  \hookrightarrow \cC^{\psi}_{XY}(n,m)$ given by Theorem \ref{pairsofcurves} extend to conformal embeddings
\begin{equation} \label{psiXYnmInducedrefined } \cC^{\psi'}_{i_1 Z_1}(a_1, b_1) \otimes \cW^{\psi''}_{i_2 Z_2}(a_2, b_2)  \hookrightarrow \cW^{\psi}_{XY}(n,m).\end{equation}
They are listed as follows:

\begin{equation} \begin{split} 
\cC^{\psi}_{1B}(n+m,m) \otimes \cW^{\psi-1}_{1D}(n,m) & \hookrightarrow \cW^{\psi}_{BD}(n,m),
\\ \cC^{\psi}_{1D}(n+m+1, m) \otimes \cW^{\psi-1}_{1B}(n,m) & \hookrightarrow \cW^{\psi}_{BB}(n,m),
\\ \cC^{\psi}_{2C}(n+m,m) \otimes \cW^{\psi-1/2}_{2C}(n,m) & \hookrightarrow \cW^{\psi}_{CC}(n,m),
\\ \cC^{\psi}_{2O}(n+m,m) \otimes \cW^{\psi-1/2}_{2O}(n,m) & \hookrightarrow \cW^{\psi}_{CO}(n,m),
\end{split} \end{equation}

\begin{equation} \begin{split} 
\cC^{\psi}_{1B}(m-n,m) \otimes \cW^{\psi-1}_{1C}(n,m) & \hookrightarrow \cW^{\psi}_{BC}(n,m),\qquad m \geq n,
\\ \cC^{\psi}_{1O}(n-m,m) \otimes \cW^{\psi-1}_{1C}(n,m) & \hookrightarrow \cW^{\psi}_{BC}(n,m),\qquad m \leq n,
\end{split} \end{equation}

\begin{equation} \begin{split} 
\\ \cC^{\psi}_{1D}(m-n+1,m) \otimes \cW^{\psi-1}_{1O)}(n,m) & \hookrightarrow \cW^{\psi}_{BO}(n,m),\qquad m \geq n-1,
\\ \cC^{\psi}_{1C}(n-m-1,m) \otimes \cW^{\psi-1}_{1O)}(n,m) & \hookrightarrow \cW^{\psi}_{BO}(n,m),\qquad m \leq n-1,
\end{split} \end{equation}

\begin{equation} \begin{split} 
\\ \cC^{\psi}_{2O}(m-n,m) \otimes \cW^{\psi-1/2}_{2B}(n,m) & \hookrightarrow \cW^{\psi}_{CB}(n,m),\qquad m\geq n,
\\ \cC^{\psi}_{2B}(n-m,m) \otimes \cW^{\psi-1/2}_{2B}(n,m) & \hookrightarrow \cW^{\psi}_{CB}(n,m),\qquad m \leq n,
\end{split} \end{equation}

\begin{equation} \begin{split} 
\\ \cC^{\psi}_{2C}(m-n,m) \otimes \cW^{\psi-1/2}_{2D}(n,m) & \hookrightarrow \cW^{\psi}_{CD}(n,m),\qquad m\geq n,
\\ \cC^{\psi}_{2D}(n-m,m) \otimes \cW^{\psi-1/2}_{2D}(n,m) & \hookrightarrow \cW^{\psi}_{CD}(n,m),\qquad m\leq n.
\end{split} \end{equation}

Similarly, the maps \eqref{twocopiesconstant} from Lemma \ref{twocopiesofW} extend to conformal embeddings
\begin{equation} \label{twocopieskconstant} 
\begin{split}
\cC^{\psi}_{2D}(n,0) \otimes \cW^{\psi-1}_{2D}(n,0) & \hookrightarrow V^{\ell-2}(\gs\go_{2n}) \otimes \cF(4n), \qquad \ell = -2\psi-2n+3,
\\ \cC^{\psi}_{2B}(n,0) \otimes \cW^{\psi-1}_{2B}(n,0) & \hookrightarrow V^{\ell-2}(\gs\go_{2n+1}) \otimes \cF(4n+1),\qquad \ell = -2\psi-2n+2,
\\ \cC^{\psi}_{2C}(n,0) \otimes \cW^{\psi-1}_{2C}(n,0) & \hookrightarrow V^{\ell+1}(\gs\gp_{2n}) \otimes \cS(n),\qquad \ell = \psi-n-3/2,
 \\ \cC^{\psi}_{2O}(n,0) \otimes \cW^{\psi-1}_{2O}(n,0) & \hookrightarrow V^{\ell+1}(\go\gs\gp_{1|2n}) \otimes \cS(n) \otimes \cF(2),\qquad \ell = \psi-n-1.
 \end{split}
\end{equation}
\end{corollary}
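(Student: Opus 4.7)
The plan is to promote each embedding from Theorem \ref{pairsofcurves} to one whose second tensor factor is the full hook-type $\cW$-(super)algebra rather than its affine coset, by applying the preceding theorem on commutation together with the reconstruction theorem \cite[Theorem 6.3]{CL4} for hook-type $\cW$-(super)algebras. Recall that $\cW^{\psi}_{XY}(n,m)$ is a simple extension of $V^{a}(\fr{a}) \otimes \cC^{\psi}_{XY}(n,m)$ generated by the extension fields $P^{\mu,i}$ of Proposition \ref{extension features}, and that $\cC^{\psi}_{XY}(n,m)$ is the $1$-parameter quotient $\cW^{\fr{so}_2}_{\infty, I_{XY,n,m}}$ of $\cW^{\fr{so}_2}_{\infty}$. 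By Theorem \ref{completion:twocopies}, $\cC^{\psi}_{XY}(n,m)$ contains two commuting copies $\cW_+$ and $\cW_-$ of simple $1$-parameter quotients of $\cW^{\text{ev}}_{\infty}$, which by Theorem \ref{pairsofcurves} are identified with $\cC^{\psi'}_{i_1 Z_1}(a_1,b_1)$ and $\cC^{\psi''}_{i_2 Z_2}(a_2,b_2)$ respectively, for the pair $(i_1 Z_1,i_2 Z_2)$ listed in the statement.

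First I would apply the theorem immediately preceding the corollary, which asserts that the extension fields $P^{\mu,i}$ satisfy $L_+(z) P^{\mu,i}(w) \sim 0$ and $W^4_+(z) P^{\mu,i}(w) \sim 0$, and consequently commute with all of $\cW_+$ (since $\cW_+$ is weakly generated by $L_+$ and $W^4_+$). Combined with the fact that the affine currents of $V^{a}(\fr{a})$ commute with the total coset $\cC^{\psi}_{XY}(n,m) \supset \cW_+$, this shows that $\cW_+$ commutes with the subVOA $\cU \subseteq \cW^{\psi}_{XY}(n,m)$ generated by $V^{a}(\fr{a})$, $\{P^{\mu,i}\}$, and $\cW_-$. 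Hence we have an embedding of a tensor product
\[
\cW_+ \otimes \cU \;\hookrightarrow\; \cW^{\psi}_{XY}(n,m),
\]
with $\cW_+ \cong \cC^{\psi'}_{i_1 Z_1}(a_1,b_1)$.

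Next I would identify $\cU$ with the hook-type $\cW$-(super)algebra $\cW^{\psi''}_{i_2 Z_2}(a_2,b_2)$ from the statement. The subalgebra $\cU$ is an extension of $V^{a}(\fr{a}) \otimes \cW_-$ by the fields $P^{\mu,i}$, which have the prescribed weight, parity, and $\gs\go_2\oplus\fr{a}$-action recorded in Proposition \ref{extension features}. By the reconstruction theorem \cite[Theorem 6.3]{CL4} for hook-type $\cW$-(super)algebras of orthosymplectic type, these data together with the $1$-parameter structure of $\cW_-$ determine $\cU$ uniquely as a $1$-parameter vertex (super)algebra. The parameters can be read off by matching the Virasoro splitting \eqref{vir:splitting} against the intersection points of truncation curves computed in Theorem \ref{pairsofcurves}: the level $\psi''$ in the second factor corresponds to $\psi - 1$ (in types $B$) or $\psi - 1/2$ (in types $C$), exactly as in the GKO cosets $\cC^{\ell}(n)$. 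This gives $\cU \cong \cW^{\psi''}_{i_2 Z_2}(a_2,b_2)$ and yields the desired embeddings. The embeddings of \eqref{twocopieskconstant} follow in the same way, using that $V^{\ell-2}(\fr{g}) \otimes \cF$ (or $\cS$) is the natural hook-type ambient algebra whose coset by the affine subVOA returns $\cC^{\ell}(n)$; here the first factor $\cC^{\psi}_{2Y}(n,0)$ commutes with the full ambient, while the second factor $\cW^{\psi-1}_{2Y}(n,0)$ is reconstructed from $V^{\ell'}(\fr{g})$ together with the extension fields coming from the free-field factor.

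The main obstacle will be the parameter identification in the second step: while the reconstruction theorem guarantees uniqueness of the hook-type extension of $V^{a}(\fr{a})\otimes\cW_-$ given the $\fr{a}$-action on the $P^{\mu,i}$, one must verify that the level of $\cW_-$ inside $\cW^{\psi}_{XY}(n,m)$ precisely matches the level appearing in the listed $\cW^{\psi''}_{i_2 Z_2}(a_2,b_2)$. This is a one-variable linear check along the truncation curve, but it must be done for each of the twelve cases; fortunately, all the required data are already encoded in the intersection points of truncation curves exhibited in the proof of Theorem \ref{pairsofcurves}, so no new computation is needed beyond collating these intersections with the central charges \eqref{eq:cBD}--\eqref{eq:cCO} and the Virasoro splitting \eqref{vir:splitting}.
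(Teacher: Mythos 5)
Your proposal is correct and follows essentially the same route as the paper: the commutation of the extension fields $P^{\mu,i}$ with $\cW_+$ and the closure of $V^a(\fr{a})\otimes\cW_-$ together with the $P^{\mu,i}$ into a hook-type subVOA are exactly the content of the theorem immediately preceding the corollary, and the identification of that subVOA with $\cW^{\psi''}_{i_2Z_2}(a_2,b_2)$ is obtained, as you say, from the reconstruction theorem of \cite{CL4} together with the identification of $\cW_-$ with $\cC^{\psi''}_{i_2Z_2}(a_2,b_2)$ already established in Theorem \ref{pairsofcurves}. The paper treats the corollary as an immediate consequence of these two inputs, so your additional remarks on the level matching are a harmless elaboration rather than a divergence.
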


\section{Rationality results} \label{sect:rationality}
In this section, we show that $\cW^{\mathfrak{so}_2}_{\infty}$ has many quotients which are strongly rational, and we use this to prove some new strong rationality results for $\cW$-superalgebras.

\begin{lemma} \label{rationalfamily} For $n \in \mathbb{Z}_{\geq 1}$, and $\ell-2$ an admissible level for $\gs\go_{2n}$, $\cC_{\ell}(2n) = \text{Com}(L_{\ell}(\gs\go_{2n}), L_{\ell - 2}(\gs\go_{2n}) \otimes \cF(4n))^{\mathbb{Z}_2}$ is strongly rational.
\end{lemma}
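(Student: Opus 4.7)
The plan is to realize $\cC_{\ell}(2n)$ as a $\mathbb{Z}_2$-orbifold of a strongly rational vertex algebra, which will itself arise as a finite conformal extension of a tensor product of two strongly rational principal $\cW$-algebras of type $D_n$. Abbreviate $L_k := L_k(\fr{so}_{2n})$, and use the factorization $\cF(4n) \cong L_1 \otimes L_1$ as simple affine vertex algebras at level $1$. First observe that admissibility of $\ell - 2$ for $\fr{so}_{2n}$ propagates upward: writing $\ell - 2 + (2n-2) = p/q$ with $\gcd(p,q) = 1$ and $p \geq 2n - 2$, the shift $k \mapsto k+1$ sends $p/q$ to $(p+q)/q$, which preserves coprimality and the lower bound, so $\ell - 1$ is also admissible for $\fr{so}_{2n}$.

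Next, apply the coset realization of \cite{ACL} in two steps. Since $\ell - 2$ is admissible, there is a finite conformal extension $L_{\ell-2} \otimes L_1 \supseteq L_{\ell-1} \otimes \cW^{[1]}$ with $\cW^{[1]} \cong \cW_{\ell_1'}(\fr{so}_{2n})$ strongly rational at an admissible level; similarly $L_{\ell-1} \otimes L_1 \supseteq L_{\ell} \otimes \cW^{[2]}$ with $\cW^{[2]} \cong \cW_{\ell_2'}(\fr{so}_{2n})$ strongly rational. Concatenating these two extensions, we obtain $L_{\ell-2} \otimes \cF(4n) \supseteq L_\ell \otimes \cW^{[1]} \otimes \cW^{[2]}$ as a finite conformal extension. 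Taking the commutant of $L_\ell$ yields an embedding $\cW^{[1]} \otimes \cW^{[2]} \hookrightarrow \cD_\ell(2n)$, where
\begin{equation*}
\cD_\ell(2n) := \T{Com}(L_\ell, L_{\ell-2} \otimes \cF(4n)),
\end{equation*}
realizing $\cD_\ell(2n)$ as a finite conformal extension of $\cW^{[1]} \otimes \cW^{[2]}$.

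Since $\cW^{[1]}$ and $\cW^{[2]}$ are strongly rational principal $\cW$-algebras at admissible levels by \cite{Ar1, Ar2}, so is their tensor product. The decomposition of $L_{\ell-2} \otimes \cF(4n)$ as an $(L_\ell \otimes \cW^{[1]} \otimes \cW^{[2]})$-module is a finite direct sum of simple modules (inherited from the rational branching underlying the two iterated ACL steps), so $\cD_\ell(2n)$ is a finite extension of $\cW^{[1]} \otimes \cW^{[2]}$ corresponding to a commutative haploid algebra object in the modular tensor category of $(\cW^{[1]} \otimes \cW^{[2]})$-modules. The general extension theorem for such algebra objects then yields that $\cD_\ell(2n)$ is strongly rational. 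Finally, $\cC_\ell(2n) = \cD_\ell(2n)^{\mathbb{Z}_2}$ is the fixed-point subalgebra of a finite cyclic automorphism group acting on a strongly rational VOA, so it is strongly rational by the Carnahan--Miyamoto orbifold rationality theorem.

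\textbf{Main obstacle.} The most delicate step is justifying that the extension $\cW^{[1]} \otimes \cW^{[2]} \hookrightarrow \cD_\ell(2n)$ has finite length and that the resulting extension algebra lies in the finite rigid tensor subcategory of $(\cW^{[1]} \otimes \cW^{[2]})$-modules, as required to invoke the algebra-object form of the extension theorem. This requires carefully tracking fusion rules and module branching through both iterated ACL applications and confirming that the resulting decomposition of $L_{\ell-2} \otimes \cF(4n)$ has only finitely many isotypic components. A parallel alternative route bypasses the modular category input entirely by verifying $C_2$-cofiniteness of $\cD_\ell(2n)$ directly from the free field realization, reducing it to the $C_2$-cofiniteness of $\cW^{[1]} \otimes \cW^{[2]}$ plus a bounded-conformal-weight extension, and then applying Arakawa's rationality criterion.
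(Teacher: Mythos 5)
Your proposal is correct and follows essentially the same route as the paper: both arguments iterate the coset realization of \cite{ACL} twice to exhibit $L_{\ell-2}(\gs\go_{2n})\otimes\cF(4n)$ as a conformal extension of $L_{\ell}(\gs\go_{2n})$ tensored with two strongly rational principal $\cW$-algebras of type $D_n$ at admissible levels, and then combine Arakawa's rationality, orbifold rationality, and rationality of conformal extensions. The only (cosmetic) difference is the order of operations: the paper passes to the $\mathbb{Z}_2$-orbifolds $\cW_k(\gs\go_{2n})^{\mathbb{Z}_2}\otimes\cW_m(\gs\go_{2n})^{\mathbb{Z}_2}$ first and realizes $\cC_{\ell}(2n)$ directly as a conformal extension of that strongly rational tensor product, whereas you first establish rationality of the unorbifolded commutant $\cD_{\ell}(2n)$ and orbifold at the very end.
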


\begin{proof} If $\ell-2$ an admissible level for $\gs\go_{2n}$, so is $\ell-1$, and we have embeddings of simple vertex algebras 
$$L_{\ell-1}(\gs\go_{2n}) \hookrightarrow L_{\ell-2}(\gs\go_{2n}) \otimes \cF(2n),\qquad L_{\ell}(\gs\go_{2n}) \hookrightarrow L_{\ell-1}(\gs\go_{2n}) \otimes \cF(2n),
$$ and isomorphisms \cite{ACL}
\begin{equation}
\begin{split} &
\text{Com}(L_{\ell-1}(\gs\go_{2n}), L_{\ell-2}(\gs\go_{2n}) \otimes \cF(2n))^{\mathbb{Z}_2} \cong \cW_{k}(\gs\go_{2n})^{\mathbb{Z}_2},\qquad   \frac{1}{\ell + 2n-3} + \frac{1}{k+2n-2} =1 ,%\cC_{2D,\psi-1}(n,0),
\\ & \text{Com}(L_{\ell}(\gs\go_{2n}), L_{\ell-1}(\gs\go_{2n}) \otimes \cF(2n))^{\mathbb{Z}_2} \cong \cW_{m}(\gs\go_{2n})^{\mathbb{Z}_2},\qquad  \frac{1}{\ell + 2n-2} + \frac{1}{m+2n-2} =1. %  \cC_{2D,\psi}(n,0).
\end{split}
\end{equation}
By the same argument as the proof of Lemma \ref{twocopiesofW}, $L_{\ell-2}(\gs\go_{2n}) \otimes \cF(4n)$ is a conformal extension of 
$L_{\ell}(\gs\go_{2n}) \otimes \cW_{m}(\gs\go_{2n})^{\mathbb{Z}_2}  \otimes  \cW_{k}(\gs\go_{2n})^{\mathbb{Z}_2}$, so $\cC_{\ell}(2n)$ is a conformal extension of $\cW_{m}(\gs\go_{2n})^{\mathbb{Z}_2}  \otimes  \cW_{k}(\gs\go_{2n})^{\mathbb{Z}_2}$. Since $\cW_{m}(\gs\go_{2n})$ and $\cW_{k}(\gs\go_{2n})^{\mathbb{Z}_2}$ are both strongly rational \cite{Ar1, Ar2}, so are their $\mathbb{Z}_2$-orbifolds \cite{Mi, McR2}.  Therefore $\cC_{\ell}(2n)$ is a conformal extension of a strongly rational vertex algebra, and hence is strongly rational \cite{CMSY}.
\end{proof}

Although $\cC^{\ell}(2n)$ is a $1$-parameter quotient of $\cW^{\fr{so}_2}_{\infty}$, at a particular value of $\ell$ it is possible for the image of $\cW^{\fr{so}_2}_{\infty}$ in $\cC_{\ell}(2n)$, which is the subalgebra $\tilde{\cC}_{\ell}(2n)$ generated by the field $\{H, X^2, Y^2, H^2\}$, to be a proper subalgebra. Moreover, $\tilde{\cC}_{\ell}(2n)$ need not be simple even though $\cC_{\ell}(2n)$ is simple. For $\ell -2$ an admissible level, let $M_{\ell} \subseteq \mathbb{C}[c,t]$ be the maximal ideal generated by $ c - \frac{(-2 + t) (-1 + 2 n) (-4 + t+ 4 n)}{(-4 + t + 2 n) (-2 + t + 2 n)}$ and $t - \ell$, so that the simple quotient $\cW^{\gs\go_2}_{\infty,M_{\ell}}$ of $\cW^{\fr{so}_2}_{\infty} / M_{\ell} \cdot \cW^{\fr{so}_2}_{\infty}$, coincides with the simple quotient of $\tilde{\cC}_{\ell}(2n)$. We need a criterion for when $\cW^{\gs\go_2}_{\infty,M_{\ell}}$ is strongly rational. 

Recall that for all $k$, $\cW^{k}(\gs\go_{2n})$ is freely generated of type $\cW(2,4,\dots, 2n-2, n)$, and has a $\mathbb{Z}_2$-action that fixes the generators in weights $2,4,\dots, 2n-2$ and acts by $-1$ on the field $P$ of weight $n$. Then $\cW^{k}(\gs\go_{2n})^{\mathbb{Z}_2}$ is strongly generated by the generators in weights $2,4,\dots, 2n-2$, together with the composite fields
$$\omega^{2n+2d} =\ : \partial^{2d} P P:,\qquad d \geq 0.$$ In fact, for generic $k$ is suffices to take $0\leq d \leq n$, but we shall not need this fact.

Recall that $\cW^{k}(\gs\go_{2n})^{\mathbb{Z}_2}$ is a $1$-parameter quotient of $\cW^{\text{ev}}_{\infty}$, so that for generic values of $k$, it is weakly generated by the fields in weights $2$ and $4$. We will take the weight $2$ field to be the Virasoro field $L$, the weight $4$ field to be primary and normalized as in \cite{KL}, and we take $\{W^{2i}|\ i= 3,\dots, n-1\}$ to be defined recursively by $W^{2i} = W^4_{(1)} W^{2i-2}$, as in \cite{KL}. In the proof of Theorem 3.3 of \cite{CKoL}, it was stated erroneously that $\cW^{k}(\gs\go_{2n})^{\mathbb{Z}_2}$ is weakly generated by $\{L, W^4\}$ for all noncritical levels $k$. We correct this statement as follows.

\begin{theorem} \label{weakgenWso2nfirst} $\cW^{k}(\gs\go_{2n})^{\mathbb{Z}_2}$ is weakly generated by $\{L, W^4\}$ for all noncritical $k$ except for the following:
\begin{enumerate}
\item $k = -(2n-2) + \frac{d}{d-1}$ and $k = -(2n-2) + \frac{d-1}{d}$ for integers $d \geq 2$,
\item The two roots of $2 k^2 (2 n^2- 2 n-3) + k (16 n^3- 40 n^2+29) + 
 2 (8 n^4- 32 n^3 + 30 n^2 + 11 n -20)$.
\end{enumerate}
Therefore at these values of $k$, the simple quotient $\cW_{k}(\gs\go_{2n})^{\mathbb{Z}_2}$ is weakly generated by $\{L, W^4\}$ as well.
\end{theorem}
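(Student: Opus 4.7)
The second assertion is automatic: weak generation is preserved under surjective vertex algebra homomorphisms, so as soon as $\cW^k(\gs\go_{2n})^{\mathbb{Z}_2}$ is weakly generated by $\{L,W^4\}$, so is its simple quotient $\cW_k(\gs\go_{2n})^{\mathbb{Z}_2}$. It therefore suffices to treat the universal algebra.

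The plan is to exploit the fact, established in \cite{CKoL}, that $\cW^k(\gs\go_{2n})^{\mathbb{Z}_2}$ is a $1$-parameter quotient of $\cW^{\text{ev}}_{\infty}$, so that the fields produced by the recursion $W^{2i} := W^4_{(1)} W^{2i-2}$ (with $W^2 = L$) span the subalgebra weakly generated by $\{L,W^4\}$. Fix a strong generating set for $\cW^k(\gs\go_{2n})^{\mathbb{Z}_2}$ consisting of the strong generators $W(w^{2i})$ of $\cW^k(\gs\go_{2n})$ in weights $2,4,\dots,2n-2$, together with the quadratic invariants $\omega^{2n+2d}={:}\partial^{2d}PP{:}$ for $d \geq 0$. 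Each recursively defined $W^{2i}$ admits a unique expansion in this basis of the form
\begin{equation*}
W^{2i} = \lambda_{2i}(k)\, W(w^{2i}) + (\text{lower}),\qquad 2\leq i\leq n-1,
\end{equation*}
\begin{equation*}
W^{2n+2d} = \mu_d(k)\,\omega^{2n+2d} + (\text{lower}),\qquad d\geq 0,
\end{equation*}
where ``lower'' means a normally ordered differential polynomial in strong generators of conformal weight strictly less than that of the left-hand side. Weak generation by $\{L,W^4\}$ is equivalent to the non-vanishing of all the rational functions $\lambda_{2i}(k)$ and $\mu_d(k)$.

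The next step is to compute the scalars $\lambda_{2i}(k)$ and $\mu_d(k)$ iteratively. Since $\cW^k(\gs\go_{2n})$ is freely generated with the same generators in weights $2,4,\dots,2n-2$, the coefficients $\lambda_{2i}(k)$ can be read off from the comparison between the recursion in $\cW^{\text{ev}}_{\infty}$ and the chosen generators of $\cW^k(\gs\go_{2n})$, using the explicit structure constants of $\cW^{\text{ev}}_{\infty}$ from \cite{KL}. The coefficients $\mu_d(k)$ are obtained similarly, but now one must track how the fields $W^{2i}$ in $\cW^{\text{ev}}_{\infty}$ project onto the ``new'' quadratic generators $\omega^{2n+2d}$; this can be done by reducing $\mu_{d}(k)$ inductively to $\mu_0(k)$ via Jacobi identities of the form $J_{r,s}(W^4,W^{2n+2d-2},\cdot)$ in the spirit of the recursions in Lemmas \ref{lemma:recursions0}--\ref{rproduct}, and then evaluating $\mu_0(k)$ directly. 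For generic $k$ all these coefficients are nonzero, giving a new proof of the generic weak generation established in \cite{CKoL}.

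The substantive content of the theorem is the explicit identification of the vanishing locus. The expected picture is that the two infinite families $k=-(2n-2)+d/(d-1)$ and $k=-(2n-2)+(d-1)/d$, $d\geq 2$, arise as the zero locus of a single rational function of $k$ and $d$ coming from the recursive production of the ``top'' universal generators $W^{2i}$ with $i\leq n-1$, while the two-point locus defined by the quadratic $2k^2(2n^2-2n-3)+k(16n^3-40n^2+29)+2(8n^4-32n^3+30n^2+11n-20)$ arises as the zero locus of $\mu_0(k)$ — the obstruction to recovering the quadratic invariant $\omega^{2n}={:}PP{:}$ from $W^4_{(1)}W^{2n-2}$. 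The main obstacle is to carry out these computations far enough to identify exactly these factors, which requires working with the $\cW^{\text{ev}}_{\infty}$ OPE data at least up to weight $2n$; by the vanishing of an odd number of other factors ruled out by dimension counting (cf.\ \cite{KL}), the full vanishing locus is as claimed. Once each $\lambda_{2i}(k)$ and $\mu_d(k)$ is confirmed nonzero outside the listed set, the subalgebra generated by $\{L,W^4\}$ contains every strong generator of $\cW^k(\gs\go_{2n})^{\mathbb{Z}_2}$, completing the proof.
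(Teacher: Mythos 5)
Your reduction of the theorem to the non-vanishing of the scalars $\lambda_{2i}(k)$ ($2\leq i\leq n-1$) and $\mu_d(k)$ ($d\geq 0$) matches the paper's setup, and the observation that the statement for the simple quotient is automatic is fine. But the core of your argument mislocates where the exceptional loci come from, and it leaves the one genuinely delicate coefficient unhandled.

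First, the coefficient $\mu_0(k)$ of $\omega^{2n}={:}PP{:}$ in $W^{2n}$ is \emph{not} the source of the quadratic $g(n,k)=2k^2(2n^2-2n-3)+\cdots$, and ``evaluating $\mu_0(k)$ directly'' is not a viable plan: it would require the full OPE data of $\cW^{\text{ev}}_{\infty}$ in $\cW^k(\gs\go_{2n})^{\mathbb{Z}_2}$ up to weight $2n$ for arbitrary $n$, which is exactly the computation one cannot do in closed form. The paper's key idea, which your proposal is missing, is that the refined Poisson bracket $\{\bar a,\bar b\}=\overline{a_{(1)}b}$ on Zhu's commutative algebra $R_{\cW^k(\gs\go_{2n})}$ is independent of the level (by the argument of \cite[Prop.\ A.3]{ALY}); since $\omega^{2n}$ is nontrivial in $R_{\cW^k(\gs\go_{2n})}$, this forces $\mu_0$ to be a $k$-independent constant, and generic non-vanishing then gives non-vanishing for \emph{all} noncritical $k$. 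Without some such argument, your proof only establishes the generic statement, which was already known, and your appeal to ``dimension counting'' does not close this gap.

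Second, your expected picture of the vanishing locus is inverted relative to what actually happens. The fields $W^{2i}$ for $i\leq n-1$ are generated by $\{L,W^4\}$ at every noncritical level and contribute no exceptional values; both the two infinite families in (1) and the quadratic in (2) arise from the single explicit recursion
\begin{equation*}
W^4_{(1)}\,\omega^{2n+2d-2} \;=\; \frac{16(d+n)(k+2n-2)(1-6d+2dk+4dn)(2-6d-k+2dk-2n+4dn)}{21\,d\,(2d-1)(n-1)(2-k-8n+2kn+4n^2)\,f(n,k)\,g(n,k)}\,\omega^{2n+2d}+\cdots,
\end{equation*}
computed from the OPE $W^4(z)P(w)$ of \cite{CL4}: the numerator factors produce the families $k=-(2n-2)+\tfrac{d}{d-1}$ and $k=-(2n-2)+\tfrac{d-1}{d}$, and $g(n,k)$ sits in the denominator. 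So the quadratic is attached to the step $\omega^{2n+2d-2}\to\omega^{2n+2d}$ for $d\geq 2$, not to the step $W^{2n-2}\to\omega^{2n}$ as you propose. To repair your argument you would need (a) the level-independence of the refined Poisson structure (or an equivalent device) to control $\mu_0$, and (b) the explicit formula above, rather than a reduction of $\mu_d$ to $\mu_0$, to identify the exceptional set.
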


\begin{proof}
The Poisson bracket on Zhu's commutative algebra $R_{\cW^{k}(\gs\go_{2n})}$ given by $\{\bar{a},\bar{b}\} = \overline{a_{(0)} b}$ is easily seen to be trivial. The refined Poisson bracket given by  $\{\bar{a},\bar{b}\} = \overline{a_{(1)} b}$ is nontrivial, and by the same argument as Proposition A.3 of \cite{ALY}, it is independent of the level $k$. Moreover, all the fields $W^{2i}$ for $2 < i \leq n-1$ can be generated by $L, W^4$ for noncritical values of $k$. For $r \geq n$, we may write $$W^{2r} = \lambda_{2r} \omega^{2r} + \cdots,$$ where the remaining terms depend only on $L, W^4,\dots, W^{2s}$ for $s < r$, and $\lambda_{2r}$ is a rational function of $k$. Since $\omega^{2n} = \ :PP:$ which is nontrivial in $R_{\cW^{k}(\gs\go_{2n})}$, and the above Poisson structure on $R_{\cW^{k}(\gs\go_{2n})}$ is independent of $k$, $\lambda_{2n}$ must be independent of $k$. Also, since $\cW^{k}(\gs\go_{2n})^{\mathbb{Z}_2}$ is generated by $L, W^4$ for generic values of $k$, $\lambda_{2n}$ must be nonzero. For $r > n$ it is no longer the case that $\lambda_{2r}$ is independent of $k$. However, using the OPE $W^4(z) P(w)$ which is completely determined in \cite{CL4}, for $d \geq 2$ we can compute

\begin{equation} \begin{split} & W^4_{(1)} \omega^{2d-2+2n}  = \frac{16 (d + n) (k + 2 n-2) (1 - 6 d + 2 d k + 4 d n) (2 - 6 d - k + 
     2 d k - 2 n + 4 d n)}{21 d (2 d-1) (n-1) (2 - k - 8 n + 
     2 k n + 4 n^2)f(n,k) g(n,k)} \omega^{2d+2n} + \dots,
     \\ & f(n,k) =  1 - 6 n + 2 k n + 4 n^2,
     \\ & g(n,k) =  2 k^2 (2 n^2- 2 n-3) + k (16 n^3- 40 n^2+29) + 
 2 (8 n^4- 32 n^3 + 30 n^2 + 11 n -20).
     \end{split} \end{equation}
In particular, this shows that $\lambda_{2d+2n} \neq 0$ except for the levels when the above rational functions vanish or are undefined. This completes the proof.
\end{proof}

\begin{corollary} \label{weakgenWso2n} Suppose that $\ell-2$ is an admissible level for $\gs\go_{2n}$ and $\ell = -2\psi -2n+3$. If $(2n-2) + \ell \notin \mathbb{Z}_{\geq 1}$, $\cC_{\psi,2D}(n,0) = \text{Com}(L_{\ell}(\gs\go_{2n}), L_{\ell-1}(\gs\go_{2n}) \otimes \cF(2n))^{\mathbb{Z}_2}$ is weakly generated by the fields $L, W^4$. \end{corollary}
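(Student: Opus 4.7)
The plan is to identify $\cC_{\psi,2D}(n,0)$ with a $\mathbb{Z}_{2}$-orbifold of a simple principal $\cW$-algebra of $\gs\go_{2n}$ via the ACL coset realization, and then apply Theorem \ref{weakgenWso2nfirst} after verifying that the hypothesis on $\ell$ rules out every forbidden level in the list.

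First I would invoke the isomorphism recorded in the proof of Lemma \ref{rationalfamily}:
\[
\cC_{\psi,2D}(n,0) \;=\; \mathrm{Com}\!\left(L_{\ell}(\gs\go_{2n}),\, L_{\ell-1}(\gs\go_{2n}) \otimes \cF(2n)\right)^{\mathbb{Z}_{2}} \;\cong\; \cW_{m}(\gs\go_{2n})^{\mathbb{Z}_{2}},
\]
where $m$ is the Feigin-Frenkel dual level satisfying $m + 2n - 2 = \frac{\ell + 2n - 2}{\ell + 2n - 3}$. Weak generation of the universal $\cW^{m}(\gs\go_{2n})^{\mathbb{Z}_{2}}$ by $\{L, W^{4}\}$ descends to its simple quotient, so it suffices to show that $m$ avoids the forbidden list in Theorem \ref{weakgenWso2nfirst}.

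A direct calculation translates the two families in part (1) of that theorem into conditions on $\ell$. The forbidden level $m + 2n - 2 = d/(d-1)$ for $d \in \mathbb{Z}_{\geq 2}$ is equivalent to $\ell + 2n - 2 = d$, which lies in $\mathbb{Z}_{\geq 2} \subset \mathbb{Z}_{\geq 1}$ and so is excluded by the hypothesis $(2n-2) + \ell \notin \mathbb{Z}_{\geq 1}$. The forbidden level $m + 2n - 2 = (d-1)/d$ is equivalent to $\ell + 2n - 2 = 1 - d \in \mathbb{Z}_{\leq -1}$, which is excluded instead by the admissibility of $\ell - 2$: for the simply-laced algebra $\gs\go_{2n}$, admissibility forces $\ell + 2n - 4 = p/q$ with $p, q$ positive coprime integers, hence $\ell + 2n - 2 > 2$, contradicting $\ell + 2n - 2 \leq -1$.

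The main obstacle is part (2) of Theorem \ref{weakgenWso2nfirst}, which excludes the two roots of an explicit quadratic $g(n,k)$. Since admissibility forces $\ell$ and thus $m$ to be rational, irrational roots of $g(n,m)$ never interfere, and this handles all $n$ for which the discriminant of $g$ is not a perfect square. In the sporadic cases where $g(n,m)$ has rational roots -- for instance at $n = 4$, where the roots $m \in \{-9/2, -16/3\}$ correspond to $m + 2n - 2 \in \{3/2, 2/3\}$, coinciding with the $d = 3$ entries of part (1) -- these coincide with levels already excluded above. Closing this final case cleanly calls for a uniform Diophantine argument on the discriminant of $g(n,k)$, in the spirit of Lemma \ref{rationalellcaseofc2n}, confirming that any rational root of $g(n,m)$ must lie in one of the families from part (1).
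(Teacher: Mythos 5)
Your strategy is the same as the paper's: identify $\cC_{\psi,2D}(n,0)$ with $\cW_{m}(\gs\go_{2n})^{\mathbb{Z}_2}$ at the level $m$ determined by $\frac{1}{m+2n-2}+\frac{1}{\ell+2n-2}=1$ and then feed $m$ into Theorem \ref{weakgenWso2nfirst}; your translation of the two families in part (1) of that theorem (the hypothesis $(2n-2)+\ell\notin\mathbb{Z}_{\geq 1}$ kills $m+2n-2=\frac{d}{d-1}$, and admissibility forces $\ell+2n-2>2$, killing $m+2n-2=\frac{d-1}{d}$) is correct and is in fact more explicit than the paper's two-sentence argument, which handles the transfer to the honest coset via \cite[Thm.~8.1]{CL2} and the specialization formalism. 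The genuine gap is exactly where you flag it: you do not rule out that $m$ is a rational root of the quadratic $g(n,k)$ in part (2) of Theorem \ref{weakgenWso2nfirst}. Checking $n=4$ and asserting that "a uniform Diophantine argument" should exist is not a proof, and without it the corollary is not established for the (infinitely many) $n$ with $n\equiv 4 \pmod 7$ where the discriminant could a priori be a perfect square (e.g.\ $n=25$, where the roots are again rational).

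The gap is real but closable by a short computation that you should carry out rather than defer. Writing $x_{\pm}=k_{\pm}+2n-2$ for the two roots of $g(n,k)$ and using the coefficients of $g$, one finds
\begin{equation*}
x_{+}x_{-}=1,\qquad x_{+}+x_{-}=2+\frac{7}{4n^{2}-4n-6}.
\end{equation*}
If the roots are rational, write $x_{+}=p/q$ in lowest terms, so $x_{-}=q/p$ and $x_{+}+x_{-}-2=\frac{(p-q)^{2}}{pq}$ with $\gcd((p-q)^{2},pq)=1$; equating this with $\frac{7}{4n^{2}-4n-6}$ forces $(p-q)^{2}$ to divide $7$, hence $(p-q)^{2}=1$ and $x_{+}=\frac{d}{d-1}$, $x_{-}=\frac{d-1}{d}$ with $d=\max(p,q)$. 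Thus every rational root of $g(n,\cdot)$ already lies in family (1) of Theorem \ref{weakgenWso2nfirst} and is excluded by the hypotheses exactly as in your part-(1) analysis, which completes your argument. (This also explains your $n=4$ observation and shows the coincidence is not sporadic.) With this supplied, your proof is a valid and somewhat more self-contained version of the paper's.
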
 

\begin{proof} This is apparent because the isomorphism 
$$\cW^{k}(\gs\go_{2n})^{\mathbb{Z}_2} \cong \cC^{\psi}_{2D}(n,0),\qquad \ell = -2\psi -2n+3, \qquad \frac{1}{k+2n-2} + \frac{1}{\ell+2n-2} =1,$$ holds generically, and the corresponding isomorphism of simple quotients holds whenever $\cW_{k}(\gs\go_{2n})^{\mathbb{Z}_2}$ and $\cC_{\psi,2D}(n,0)$ are both quotients of the specialization of the $1$-parameter algebras. This is always the case for $\cW_{k}(\gs\go_{2n})^{\mathbb{Z}_2}$ when $k$ is noncritical, and by \cite[Thm. 8.1]{CL2}, it holds for $\cC_{\psi,2D}(n,0)$ whenever $\ell +2n-2$ is not a negative rational number.
\end{proof}

\begin{corollary} \label{cor:rationalityatell} For all admissible levels $\ell-2$ for $\gs\go_{2n}$ such that $(2n-2) + \ell \notin \mathbb{Z}_{\geq 1}$, $\cW^{\gs\go_2}_{\infty, M_{\ell}}$, which is isomorphic to the simple quotient of $\tilde{\cC}_{\ell}(2n)$, is strongly rational.
\end{corollary}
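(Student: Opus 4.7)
The approach is to realize the simple quotient of $\tilde{\cC}_\ell(2n)$, which by the statement equals $\cW^{\gs\go_2}_{\infty, M_\ell}$, as a finite simple conformal extension of the strongly rational vertex algebra $\cC_{\psi,2D}(n,0) \otimes \cC_{\psi-1,2D}(n,0)$ with $\ell = -2\psi - 2n + 3$, and then to invoke \cite{CMSY} to conclude strong rationality.

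The first step is to use the proof of Lemma \ref{rationalfamily} together with the coset realization of \cite{ACL} to exhibit $\cC_\ell(2n)$ itself as a finite simple conformal extension of the tensor product $\cC_{\psi,2D}(n,0) \otimes \cC_{\psi-1,2D}(n,0)$, both of whose factors are strongly rational $\mathbb{Z}_2$-orbifolds of principal $\cW$-algebras at nondegenerate admissible levels. In particular, the module category of this tensor product is semisimple, and $\cC_\ell(2n)$ decomposes into finitely many irreducibles over it.

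Next I would establish the inclusion $\cC_{\psi,2D}(n,0) \otimes \cC_{\psi-1,2D}(n,0) \subseteq \tilde{\cC}_\ell(2n)$. By Theorem \ref{completion:twocopies}, the formulas \eqref{vir:splitting}, and Appendix \ref{app:wt4}, the commuting Virasoro fields $L_\pm$ are quadratic expressions in the weight-$\leq 2$ generators of $\cW^{\gs\go_2}_\infty$, and $W^4_\pm$ are linear combinations of its weight-$4$ generators; Lemma \ref{rationalellcaseofc2n} ensures that the coefficients $a_\pm(n,\ell)$ specialize to well-defined, nonzero rational numbers at rational $\ell$, so all four fields land in $\tilde{\cC}_\ell(2n)$. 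Under the hypothesis $(2n-2)+\ell \notin \mathbb{Z}_{\geq 1}$ together with the admissibility of $\ell-2$, Corollary \ref{weakgenWso2n} (applied to each copy at the appropriate shifted parameter) guarantees that the subVOA of $\cC_\ell(2n)$ generated by $\{L_+, W^4_+\}$ is exactly the simple algebra $\cC_{\psi,2D}(n,0)$, and likewise for the minus copy.

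To finish, since $\cC_\ell(2n)$ decomposes into finitely many irreducibles over $\cC_{\psi,2D}(n,0) \otimes \cC_{\psi-1,2D}(n,0)$, the intermediate subVOA $\tilde{\cC}_\ell(2n)$ is itself a finite direct sum of irreducibles by semisimplicity, and its maximal graded ideal is a direct sum of irreducibles not containing the vacuum summand; hence the simple quotient is again a finite simple conformal extension of the strongly rational tensor product, and so strongly rational by \cite{CMSY}. The main obstacle I anticipate is verifying that the single hypothesis $(2n-2)+\ell \notin \mathbb{Z}_{\geq 1}$ really implies weak generation for both tensor factors—the shift $\psi \mapsto \psi-1$ translates the excluded set of Corollary \ref{weakgenWso2n}, so the two exclusion conditions must be shown to follow simultaneously from this hypothesis together with admissibility of $\ell-2$.
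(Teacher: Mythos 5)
Your proposal is correct and follows essentially the same route as the paper's proof: both place $L_\pm, W^4_\pm$ inside $\tilde{\cC}_{\ell}(2n)$ via Theorem \ref{completion:twocopies} and Lemma \ref{rationalellcaseofc2n}, invoke Corollary \ref{weakgenWso2n} to get weak generation of the two tensor factors, and conclude that the simple quotient is a conformal extension of the strongly rational product \eqref{rationalsub}. The shift issue you flag at the end is harmless: the second factor amounts to replacing $\ell$ by $\ell-1$, and $(2n-2)+\ell \notin \mathbb{Z}_{\geq 1}$ already forces $(2n-2)+(\ell-1) \notin \mathbb{Z}_{\geq 1}$, so both exclusion conditions hold simultaneously.
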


\begin{proof} Recall that $\cC_{\ell}(2n)$ is a conformal extension of  \begin{equation} \label{rationalsub} \text{Com}(L_{\ell}(\gs\go_{2n}), L_{\ell-1}(\gs\go_{2n}) \otimes \cF(2n))^{\mathbb{Z}_2} \otimes \text{Com}(L_{\ell-1}(\gs\go_{2n}), L_{\ell-2}(\gs\go_{2n}) \otimes \cF(2n))^{\mathbb{Z}_2},\end{equation} We will use the notation $\{L_+, W^{2d}_+| \ d\geq 2\}$ and $\{L_-, W^{2d}_-|\ d\geq 2\}$ to denote the images of the generators of $\cW^{\text{ev}}_{\infty}$ in $\text{Com}(L_{\ell}(\gs\go_{2n}), L_{\ell-1}(\gs\go_{2n}) \otimes \cF(2n))^{\mathbb{Z}_2}$ and $\text{Com}(L_{\ell-1}(\gs\go_{2n}), L_{\ell-2}(\gs\go_{2n}) \otimes \cF(2n))^{\mathbb{Z}_2}$, respectively.

Suppose that $\ell-2$ is admissible and $(2n-2) + \ell \notin \mathbb{Z}_{\geq 1}$. By Theorem \ref{completion:twocopies} and Lemma \ref{rationalellcaseofc2n}, the generators $L_{\pm}, W^4_{\pm}$ for these subalgebras lie in the subalgebra generated by $\{H, X^2, Y^2, H^2\}$, and by Corollary \ref{weakgenWso2n}, $\{L_{+}, W^4_{+}\}$ and $\{L_{-}, W^4_{-}\}$ weakly generate the subalgebras $\text{Com}(L_{\ell}(\gs\go_{2n}), L_{\ell-1}(\gs\go_{2n}) \otimes \cF(2n))^{\mathbb{Z}_2}$ and $\text{Com}(L_{\ell-1}(\gs\go_{2n}), L_{\ell-2}(\gs\go_{2n}) \otimes \cF(2n))^{\mathbb{Z}_2}$ of $\cC_{\ell}(2n)$, respectively. It follows that $\tilde{\cC}_{\ell}(2n)$ is a conformal extension of \eqref{rationalsub}, which is simple and strongly rational. Even though $\tilde{\cC}_{\ell}(2n)$ need not be simple, its simple quotient is still a conformal extension of \eqref{rationalsub}, and hence is strongly rational.
\end{proof}

\subsection{Some new rational $\cW$-superalgebras} We consider $\cC_{\psi,CO}(0,m) = \cW_{\psi-h^{\vee}}(\go\gs\gp_{1|4m}, f_{2m,2m})^{\mathbb{Z}_2}$, for $h^{\vee} = 2m+\frac{1}{2}$.

\begin{theorem} \label{Rationality:COcase} Suppose that $m,2n-1$ are coprime, $m \geq n-1$, and $\psi = \frac{4m+2n-1}{4m}$. Then $\cC_{\psi,CO}(0,m)$ is strongly rational.
\end{theorem}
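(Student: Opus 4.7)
The strategy is to realize $\cC_{\psi_0, CO}(0,m)$ at an intersection point of truncation curves inside $\cW^{\gs\go_2}_{\infty}$ with a strongly rational GKO coset $\cC^{\ell}(2n)$, and then transfer rationality through the tautological identification \eqref{tautological}.

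\textbf{Step 1 (locating the intersection).} The affine Heisenberg level of $\cC^{\psi}_{CO}(0,m)$ is $k = 4m\psi - 4m + 1$, so at $\psi_0 = (4m+2n-1)/(4m)$ one has $k = 2n$, which is precisely the constant Heisenberg level at which $\cC^{\ell}(2n)$ lives. The two truncation curves therefore both meet the vertical line $k = 2n$ in the $(c,k)$-parameter space of $\cW^{\gs\go_2}_{\infty}$, and intersect at those $\ell$ for which $c_n(\ell)$ in \eqref{eq:cn} equals $c_{CO}(0,m)|_{k=2n}$ computed from \eqref{eq:cCO}. This is a quadratic equation in $\ell$; I would pick the root $\ell = \ell(m,n)$ which is a shifted admissible level for $\widehat{\gs\go}_{2n}$ with denominator $m$. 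The hypotheses $\gcd(m, 2n-1) = 1$ and $m \geq 2n-1$ are precisely what is needed so that $\ell - 2$ is admissible and so that $(2n-2)+\ell \notin \mathbb{Z}_{\geq 1}$, enabling the application of Corollary \ref{cor:rationalityatell}.

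\textbf{Step 2 (rationality on the GKO side).} At the chosen $\ell$, Lemma \ref{rationalfamily} gives that $\cC_{\ell}(2n)$ is strongly rational. Corollary \ref{cor:rationalityatell} then yields that the simple quotient $\cW^{\gs\go_2}_{\infty, M_{\ell}}$ at the maximal ideal $M_{\ell} \subset \mathbb{C}[c,k]$ determined by the point $(c_n(\ell), 2n)$ is strongly rational, and coincides with the simple quotient of $\tilde{\cC}_{\ell}(2n)$.

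\textbf{Step 3 (weak generation on the $CO$ side).} This is the main technical obstacle. Because $\cW^{\gs\go_2}_{\infty}$ is only weakly generated by fields of weight at most $2$, the specialization $\cC^{\psi_0}_{CO}(0,m) := \cC^{\psi}_{CO}(0,m)/(\psi-\psi_0)\cC^{\psi}_{CO}(0,m)$ need not agree with the honest $\mathbb{Z}_2$-orbifold $\cW_{\psi_0 - h^{\vee}}(\go\gs\gp_{1|4m}, f_{2m,2m})^{\mathbb{Z}_2}$. I would mimic the argument of Theorem \ref{weakgeneration:Walgebras}: using Proposition \ref{prop:structure constants} and the nonvanishing of the Lie-superalgebra structure constants $c_{3,2j}$, $d_{3,2j}$, $e_{2,2j}$ for $\go\gs\gp_{1|4m}^{f}$, run the inductive procedure that expresses each rescaling coefficient $\lambda_{w^i}$, $\lambda_{x^{2j}}$, $\lambda_{y^{2j}}$ as a polynomial in $\psi$, then show that the resulting explicit finite exceptional set of values of $\psi$ where any $\lambda$ vanishes does not contain $\psi_0$ under our numerical hypotheses. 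This step is the analogue for $CO$ of the explicit weak-generation result for $CC$ and $BD$ in Theorem \ref{weakgeneration:Walgebras}, and the main novelty is that the odd generators of $\go\gs\gp_{1|4m}^{f}$ have to be tracked carefully.

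\textbf{Step 4 (transfer via the intersection).} With weak generation in place at $\psi_0$, the simple quotient of $\cC^{\psi_0}_{CO}(0,m)$ is a simple graded quotient of $\cW^{\gs\go_2}_{\infty}/M_{\ell}\cdot \cW^{\gs\go_2}_{\infty}$, hence equals $\cW^{\gs\go_2}_{\infty, M_{\ell}}$ by \eqref{tautological}. By Step 2 this latter algebra is strongly rational, so $\cC_{\psi_0, CO}(0,m) = \cW_{\psi_0 - h^{\vee}}(\go\gs\gp_{1|4m}, f_{2m,2m})^{\mathbb{Z}_2}$ is strongly rational. Standard results on orbifolds of strongly rational vertex superalgebras under finite cyclic groups (Miyamoto, McRae) then upgrade this to strong rationality of the full $\cW$-superalgebra $\cW_{\psi_0 - h^{\vee}}(\go\gs\gp_{1|4m}, f_{2m,2m})$, as claimed in the theorem.
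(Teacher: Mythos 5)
Your proposal follows the paper's proof in all essentials: locate the intersection of the truncation curve of $\cC^{\psi}_{CO}(0,m)$ with that of $\cC^{\ell}(2n)$ at Heisenberg level $k=2n$, invoke Lemma \ref{rationalfamily} and Corollary \ref{cor:rationalityatell} for strong rationality of $\cW^{\gs\go_2}_{\infty,M_{\ell}}$, reduce everything to weak generation of $\cC_{\psi_0,CO}(0,m)$ by low-weight fields, and conclude via the identification \eqref{tautological}. The one place where your outline under-specifies the actual work is Step 3: for the strong generators of the orbifold of weight greater than $2m$, which are quadratic in the odd fields $P^{\pm}$, the rescaling coefficients are genuinely level-dependent, so the Theorem \ref{weakgeneration:Walgebras} argument (``$\lambda$ is a polynomial in $\psi$ with constant leading behaviour, hence a nonzero constant'') does not apply; the paper instead computes the explicit rational functions appearing in $W^3_{(1)}w^{2n+2m+1}$, $W^3_{(1)}h^{2n+2m+2}$, $X^2_{(0)}w^{2n+2m+1}$, $X^2_{(0)}y^{2n+2m+2}$ and checks their nonvanishing at $k=2n$. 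Your plan correctly identifies that this is the obstacle and what must be verified, but the verification is the content of the proof, not a routine adaptation. Two smaller points: admissibility of $\ell-2$ follows from $\gcd(m,2n-1)=1$ alone (since $p=2n-1\geq h^{\vee}=2n-2$ automatically), so the inequality on $m$ is not ``precisely what is needed'' there; and your closing appeal to Miyamoto--McRae goes the wrong way --- those results pass from a rational algebra to its orbifold, whereas passing from the rational orbifold $\cC_{\psi_0,CO}(0,m)$ up to the full $\cW$-superalgebra uses rationality of simple current extensions (as the paper does via \cite{CMSY}); this last step is in any case not needed for the theorem as stated.
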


\begin{proof}
There is an intersection point on the truncation curves for $\cC^{\psi}_{CO}(0,m)$ and $\cC^{\ell}(2n)$ when
$$\ell = -2n +2+\frac{4m+2n-1}{2m},\qquad \psi = \frac{4m+2n-1}{4m}.$$ 
Although $\cC^{\ell}(2n)$ need not coincide with the simple quotient $\cW^{\gs\go_2}_{\infty, M_{\ell}}$ at this point, Corollary \ref{cor:rationalityatell} shows that $\cW^{\gs\go_2}_{\infty, M_{\ell}}$ is strongly rational. Therefore it suffices to prove that $\cC_{\psi,CO}(0,m)$ is weakly generated by the fields in weights $1$ and $2$, since this would imply that $\cC_{\psi,CO}(0,m) \cong \cW^{\gs\go_2}_{\infty, M_{\ell}}$.

Recall that $\cW^{r}(\fr{osp}_{1|4m},f_{1|2m,2m})$ is freely generated of type $\cW\big(1, 2^3, 3, \dots, 2m-1,(2m)^3; \big(\frac{2m+1}{2}\big)^2\big)$, where the two odd fields of weight $\frac{2m+1}{2}$ are denoted by $P^{\pm}$. The orbifold $\cW^{r}(\fr{osp}_{1|4m},f_{1|2m,2m})^{\mathbb{Z}_2}$ is a $1$-parameter quotient of $\cW^{\fr{so}_2}_{\infty}$. By the same argument as the proof of Theorem \ref{weakgeneration:Walgebras}, as long as $r$ is noncritical and not in the (finite) set of points where some denominator of a structure constant of $\cW^{\gs\go_2}_{\infty}$ vanishes, the fields in weight at most $3$ weakly generate all generators of $\cW^{r}(\fr{osp}_{1|4m},f_{1|2m,2m})$ except for $P^{\pm}$, which have eigenvalue $-1$ with respect to the $\mathbb{Z}_2$-action. The orbifold $\cW^{r}(\go\gs\gp_{1|4m}, f_{2m,2m})^{\mathbb{Z}_2}$ is strongly generated by the fields in weights $1,2,\dots, 2m$ together with the following composite fields for $n\geq 0$:

\begin{equation} \label{COorbifoldgen}
\begin{split} & w^{2n+2m+1} =\ :\!\partial^{2n}P^{+}P^{-}\!: \ -\ :\!P^{+}\partial^{2n} P^{-}\!:, \qquad h^{2n+2m+2} =\  :\!\partial^{2n+1}P^{+}P^{-}\!:\ +\ :\!P^{+}\partial^{2n+1} P^{-}\!:,
\\ & x^{2n+2m+2} =\  :\!\partial^{2n+1}P^{+}P^{+}\!:, \qquad y^{2n+2m+2} = \ :\!\partial^{2n+1}P^{-}P^{-}\!:.\end{split}
\end{equation}
Let $W^{2n+2m+1}, X^{2n+2m+2}, Y^{2n+2m+2}, H^{2n+2m+2}$ be the images of the corresponding generators of $\cW^{\fr{so}_2}_{\infty}$ in $\cW^{r}(\fr{osp}_{1|4m},f_{1|2m,2m})^{\mathbb{Z}_2}$. 
We have
\begin{equation} \begin{split} & W^{2n+2m+1} = \lambda_{2n+2m+1} w^{2n+2m+1} + \cdots, \qquad H^{2n+2m+2} = \lambda_{2n+2m+2} h^{2n+2m+2} + \cdots,
\\ & X^{2n+2m+2} = \lambda_{2n+2m+2} x^{2n+2m+2} + \cdots,\qquad Y^{2n+2m+2} = \lambda_{2n+2m+2} y^{2n+2m+2} + \cdots,
\end{split}
\end{equation}
for some structure constants $\lambda_{2n+2m+1}, \lambda_{2n+2m+2}$, where the remaining terms depend only on the generators in weights $1,2,\dots, 2m$ and their derivatives.

Since the field $w^{2m+1}$ is nontrivial in Zhu's commutative algebra of $\cW^{r}(\fr{osp}_{1|4m},f_{1|2m,2m})$, and the Poisson structure is independent of $r$, the same argument in the proof of Theorem \ref{weakgeneration:Walgebras} that shows that $\lambda_{2m+1}$ is a nonzero constant for all $i < m$, we also get that $\lambda_{2m+1}$ is a nonzero constant. Therefore $w^{2m+1}$ is generated by the fields in weight at most $3$.

It is not true $\lambda_{2n+2m+3}, \lambda_{2n+2m+2}$ are independent of $\ell$ for $n \geq 0$. However, using the OPEs $W^3(z) P^{\pm}(w)$ and $X^2(z) P^{\pm}(w)$, we compute
\begin{equation}
\begin{split}
    W^3_{(1)}w^{2n+2m+1} =&\frac{9 (k-1) k^3 (m+n+1) (2 k n+k-2 n+2 m-1)}{16 (2 n+1) (k m+k+4 m-1)}h^{2n+2m+2}+\dotsb\\
    W^3_{(1)}h^{2n+2m+2} =& \frac{9 k^3 (k n+k-n-m-1) (2 k n+2 k m+3 k-2 n+10 m-3)}{32 (n+1) (k m+k+4 m-1)}w^{2n+2m+3}+\dotsb\\
     X^2_{(0)}w^{2n+2m+1} =&\frac{3 k^2 (2 k n+k-2 n+2 m-1)}{4 (2 n+1)}x^{2n+2m+2}+\dotsb,\\
     X^2_{(0)}y^{2n+2m+2}=&\frac{3 k^2 (k n+k-n-m-1)}{8 (n+1)}w^{2n+2m+3}+\dotsb.
\end{split}
\end{equation}
Here $k$ is the level of the Heisenberg field $H$. Again, by the same argument in the proof of Theorem \ref{weakgeneration:Walgebras}, $\lambda_{2n+2m+3}, \lambda_{2n+2m+2}$ for $n\geq 0$ are nonzero whenever the numerator and denominator in the above expressions do not vanish. This never happens for positive integer values of $k = 2n$, so all the fields \eqref{COorbifoldgen} in  $\cW^{r}(\fr{osp}_{1|4m},f_{1|2m,2m})^{\mathbb{Z}_2}$ are generated by the fields in weight at most $3$ for $r = \frac{k+4m-1}{4m}-h^{\vee}$ as above. Therefore the simple quotient $\cW_{r}(\fr{osp}_{1|4m},f_{1|2m,2m})^{\mathbb{Z}_2}$ has the weak generation property, and hence is isomorphic to $\cC_{\ell}(2m)$. Since $\cW_{r}(\fr{osp}_{1|4m},f_{1|2m,2m})$ is a simple current extension of $\cC_{\ell}(2m)$, it is also strongly rational.
\end{proof}

\subsection{Weak generation of $\cC_{\ell}(2n)$} We now address the more difficult question of when $\cC_{\ell}(2n)$ has the weak generation property and hence is a quotient of $\cW^{\fr{so}_2}_{\infty}$. We begin by proving a weak generation statement for the algebra
$$\cD_\ell(2n) = \text{Com}\left(L_{\ell-2}(\gs\go_{2n}), L_{\ell}(\gs\go_{2n}) \otimes \cF(4n)\right),$$ which is a simple current extension of $\cC_{\ell}(2n)$. By the same argument as the proof of Theorem \ref{twocopiesofW}, $\cD_\ell(2n)$ is a conformal extension of the tensor product
$$ \text{Com}(L_{\ell}(\gs\go_{2n}), L_{\ell-1}(\gs\go_{2n}) \otimes \cF(2n)) \otimes \text{Com}(L_{\ell-1}(\gs\go_{2n}), L_{\ell-2}(\gs\go_{2n}) \otimes \cF(2n)),$$ which is isomorphic to $\cW_{k}(\gs\go_{2n}) \otimes \cW_{m}(\gs\go_{2n})$ where $k,m$ satisfy 
\begin{equation} \label{condition:km} \frac{1}{\ell-1 + h^\vee} + \frac{1}{k+h^\vee} =1,\qquad \frac{1}{\ell + h^\vee} + \frac{1}{m+h^\vee} =1.\end{equation}

\begin{theorem} \label{weakgenerationD} 
Let $\ell-2$ be an admissible level for $\gs\go_{2n}$. Then $\cD_\ell(2n)$ is weakly generated by the weak generators of $\cW_{k}(\gs\go_{2n}) \otimes \cW_{k'}(\gs\go_{2n})$,
together with the weight one field $H$.
\end{theorem}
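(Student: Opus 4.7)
The strategy is to analyze $\cD_\ell(2n)$ as a conformal extension of the subalgebra $\cA := \cW_k(\gs\go_{2n}) \otimes \cW_m(\gs\go_{2n}) \otimes \cH$, where $\cH$ denotes the Heisenberg subVOA generated by $H$. This extension structure follows from the chain of embeddings used in the proof of Lemma \ref{twocopiesofW}, now applied without passing to $\mathbb{Z}_2$-invariants: the factorization $\cF(4n) \cong \cF(2n) \otimes \cF(2n)$ exhibits $L_{\ell-2}(\gs\go_{2n}) \otimes \cF(4n)$ as a conformal extension of $L_\ell(\gs\go_{2n}) \otimes \cW_k(\gs\go_{2n}) \otimes \cW_m(\gs\go_{2n})$, with the centralizer of the diagonal $L_\ell(\gs\go_{2n})$ accommodating an additional rank one Heisenberg subVOA generated by $H$. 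Let $\cE \subseteq \cD_\ell(2n)$ denote the vertex subalgebra generated by $H$ together with a fixed set of weak generators for $\cW_k \otimes \cW_m$. By weak generation of each factor together with the fact that $\cH$ is generated by $H$, we have $\cA \subseteq \cE$, and the task is to prove $\cE = \cD_\ell(2n)$.

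Because $\ell-2$ is admissible, both $\cW_k(\gs\go_{2n})$ and $\cW_m(\gs\go_{2n})$ are strongly rational, and standard results on simple current conformal extensions yield a decomposition
\begin{equation*}
\cD_\ell(2n) \;\cong\; \bigoplus_{\lambda \in \Lambda} M_\lambda \otimes N_\lambda \otimes \pi_\lambda
\end{equation*}
as $\cA$-modules, where $\Lambda$ is a finite abelian group recording the extension data, each $M_\lambda$ (resp.\ $N_\lambda$) is a simple current module for $\cW_k$ (resp.\ $\cW_m$), and $\pi_\lambda$ is a rank one Fock module for $\cH$ whose Heisenberg weight is determined by $\lambda$ via the lattice structure inherited from $\cF(4n)$. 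Each summand is cyclic over $\cA$, generated by its (unique up to scalar) lowest conformal weight vector $v_\lambda$; the summand with $\lambda = 0$ is $\cA$ itself. It therefore suffices to exhibit each $v_\lambda$ as an element of $\cE$.

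The crucial observation is that the simple current modules $M_\lambda$ for $\cW_k(\gs\go_{2n})$ are generated in fusion by the $\mathbb{Z}_2$-anti-invariant Pfaffian-type weight $n$ field $P$, and similarly $N_\lambda$ by the corresponding field $P' \in \cW_m$; these are precisely the weak generators that distinguish $\cW_k$ from $\cW_k^{\mathbb{Z}_2}$ and hence lie in $\cE$ by hypothesis. Candidate top vectors $\tilde v_\lambda$ with the correct conformal weight and Heisenberg charge can then be produced as normally ordered products of $P$, $P'$ with composites built from $H$ matching the Fock weight of $\pi_\lambda$. Nonvanishing of the projection $\tilde v_\lambda \mapsto v_\lambda$ can be verified inside the free field realization $\cD_\ell(2n) \subseteq L_{\ell-2}(\gs\go_{2n}) \otimes \cF(4n)$ by a single leading-order OPE computation.

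The main obstacle will be carrying out this last verification: producing explicit nonzero top vectors $v_\lambda$ in each summand from the weak generators $H, P, P'$ and confirming they span $\bigoplus_\lambda M_\lambda \otimes N_\lambda \otimes \pi_\lambda$ modulo $\cA$-descendants. The cleanest route I see is to choose generators of $\Lambda$, construct $v_\lambda$ only for those generators (reducing to one or two free field OPE calculations involving $P$, $P'$ and a distinguished lattice vertex operator attached to $H$), and then obtain the remaining $v_\lambda$ by iterated OPEs of these basic generators. As an alternative, more abstract finish, one may compare graded characters: once $\cE$ contains at least one nonzero element in each $\cA$-isotypic component labeled by a generating character of $\Lambda$, the character of $\cE$ is forced to equal that of $\cD_\ell(2n)$, giving the equality of vertex algebras.
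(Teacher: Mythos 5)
Your overall strategy---decompose $\cD_\ell(2n)$ over a rational subalgebra, identify the modules that occur, and show the module containing $H$ generates everything in fusion---is in the right spirit, but the specific structure you posit is wrong in ways that break the argument.

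First, there is no Heisenberg factor. A central charge count shows that $c(\cD_\ell(2n)) = c(\cW_k(\gs\go_{2n})) + c(\cW_m(\gs\go_{2n}))$ exactly (each coset $\text{Com}(L_{\ell-i}, L_{\ell-i-1}\otimes\cF(2n))$ accounts for one copy of $\cF(2n)$ and one step in the level), so the conformal vector of $\cD_\ell(2n)$ is already exhausted by $\cW_k \otimes \cW_m$ and there is no room for a commuting rank-one Heisenberg subVOA. The field $H$ does satisfy $H(z)H(w)\sim n(z-w)^{-2}$, but it does \emph{not} commute with $\cW_k\otimes\cW_m$: it is the top-level vector (of conformal weight one) of a nontrivial irreducible $\cW_k\otimes\cW_m$-module, namely $\mathbf L_{m}(\chi_{\omega_1}) \otimes \mathbf L_{k}(\chi_{- (k + h^\vee)\omega_1})$ in the notation of \cite{ACL}. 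Your ansatz $\cA = \cW_k\otimes\cW_m\otimes\cH$ with Fock modules $\pi_\lambda$ therefore does not describe $\cD_\ell(2n)$, and the subsequent construction of top vectors as normally ordered products with lattice vertex operators attached to $H$ has no target to hit.

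Second, the decomposition is not governed by a finite abelian group of simple currents. Applying \cite[Main Theorem 3]{ACL} twice to $L_{\ell-2}(\gs\go_{2n})\otimes\cF(2n)\otimes\cF(2n)$ gives
$\cD_\ell(2n) \cong \bigoplus_{\lambda \in R^{\ell-1}_+} \mathbf L_{m}(\chi_{\lambda}) \otimes \mathbf L_{k}(\chi_{- (k + h^\vee)\lambda})$,
indexed by the full set of admissible integral weights at level $\ell-1$; the modules $\mathbf L_{k}(\chi_{-(k+h^\vee)\lambda})$ are generic irreducibles of the principal $\cW$-algebra, not simple currents. Relatedly, the Pfaffian-type field $P$ is a strong generator of $\cW_k(\gs\go_{2n})$ itself (anti-invariant under the outer $\mathbb{Z}_2$), i.e.\ it lives in the vacuum module; it does not generate the nontrivial modules in the decomposition and plays no role here. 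The missing ingredient is the criterion of \cite[Lemma 2.4]{CGL} (reformulating \cite{CKM2}): for an extension $A = \bigoplus_X X\otimes\tau(X)^*$ of $V\otimes W$ with $W$ strongly rational, $A$ is weakly generated by $V\otimes W$ together with a field in $U$ if and only if $U$ generates the Grothendieck ring of the relevant category. One transports generation from the affine side---where the fusion ring at admissible level is obviously generated by $L_{\ell-1}(\omega_1)$ since the extension $L_{\ell-2}\otimes\cF(2n)$ is strongly generated in weight one---through the braid-reversed equivalence to the $\cW$-algebra side, and then a conformal weight computation ($h_{\omega_1}$ for the two factors summing to one) identifies the needed extra field with $H$.
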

\begin{proof}
Recall the decomposition of $\cF(2n)$ as a module for $L_1(\gs\go_{2n})$, that is, $\cF(2n) \cong L_1(\gs\go_{2n}) \oplus  L_1(\omega_1)$, 
where $\omega_1$ is the first fundamental weight of $\gs\go_{2n}$. Also, note that the root lattice $Q$ is a sublattice of index two inside the lattice $\mathbb Z^n$, and that $\mathbb Z^n = Q \cup (Q+ \omega_1)$.
We will use \cite[Main Theorem 3]{ACL}, so we first introduce the notation. 
\begin{itemize}
    \item $L_k(\lambda)$ denotes the simple module of highest-weight $\lambda$ of $L_k(\gs\go_{2n})$.
    \item $P^k_+$ denotes the set of dominant weights $\lambda \in P_+$ of $\gs\go_{2n}$
    such that $L_k(\lambda)$ is a module of $L_k(\gs\go_{2n})$.
    \item $R^k_+ = P^+_k \cap \mathbb Z$.
    \item $\mathbf L_{k}(\chi_{\mu})$ denotes the simple module of the simple principal $\cW$-algebra of $\gs\go_{2n}$ at level $k$ of highest-weight $\chi_\mu$. Here $\chi_\mu$ is the central character associated to the highest-weight $\mu$.
    \item $h^\vee = 2n-2$ is the dual Coxeter number of $\gs\go_{2n}$.
\end{itemize}
Then \cite[Main Theorem 3]{ACL} for $\gg= \gs\go_{2n}$ and $\ell-2$ an admissible level reads
\begin{equation}\label{eq:levrel}
L_{\ell-2}(\mu) \otimes \cF(2n) \cong \bigoplus_{\lambda \in R^{\ell-1}_+}
L_{\ell-1}(\lambda) \otimes \mathbf L_{k}(\chi_{\mu - (k + h^\vee)\lambda}),\qquad   \frac{1}{\ell-1 + h^\vee} + \frac{1}{k+h^\vee} =1.
\end{equation}
Lemma 2.4 of \cite{CGL}, which is a reformulation of a main result of \cite{CKM2}, states the following:
Let $V, W$ be vertex operator algebras and $\cC$ a subcategory of a semisimple and rigid vertex tensor category of $V$-modules and
\[
A = \bigoplus_{X \in \text{Irr}(\cC)} X \otimes \tau(X)^*
\]
a vertex operator superalgebra that is a conformal extension of $V \otimes W$. Here $\tau$ is a map from $\text{Irr}(\cC)$ to modules of $W$.
Assume that $W$ is strongly rational.
Then $\cC$ is in fact closed under tensor product. 
Let $U \in \text{Irr}(\cC)$, Then $A$ is weakly generated by $V \otimes W$ together any field in $U$ if and only if the Grothendieck ring of $\cC$ is generated by $U$.
Moreover the main result of \cite{CKM2} in this setting is that there is a braid-reversed equivalence between $\cC$ and the category of $W$-modules
whose irreducibles appear in the decomposition of $A$. On objects this equivalence is given by the map $\tau$.

In our case $A = L_{\ell-2}(\gs\go_{2n}) \otimes \cF(2n)$, $V = L_{\ell-1}(\gs\go_{2n})$, $W = \cW_k(\gs\go_{2n})$. 
That the category of ordinary modules of $L_{\ell-1}(\gs\go_{2n})$ at admissible level $\ell-1$ is a vertex tensor category is \cite{CHY}, that it is semisimple is \cite{Ar1} and that it is rigid is \cite{C}. The strong rationality of $\cW_k(\gs\go_{2n})$ is \cite{Ar1, Ar2}.
 $L_{\ell-2}(\gs\go_{2n}) \otimes \cF(2n)$ is even strongly generated by the weight one fields of $L_{\ell-1}(\gs\go_{2n})$ together with the weight one fields of  $\cF(2n)$. Thus in particular the fusion ring of 
$L_{\ell-1}(\gs\go_{2n})$ whose simple objects are the $L_{\ell-1}(\lambda)$ with $\lambda \in R^{\ell-1}_+$ is generated by $L_{\ell-1}(\omega_1)$ and hence the fusion ring of $\cW_k(\gs\go_{2n})$ whose irreducible objects are the $\mathbf L_{k}(\chi_{- (k + h^\vee)\lambda})$ with $\lambda \in R^{\ell-1}_+$ is generated by $\mathbf L_{k}(\chi_{- (k + h^\vee)\omega_1})$.

We now apply \cite[Main Theorem 3]{ACL} twice:
\begin{equation}
    \begin{split}
        L_{\ell-2}(\gs\go_{2n}) \otimes \cF(4n) &\cong  L_{\ell-2}(\gs\go_{2n}) \otimes \cF(2n)\otimes \cF(2n) \\
&\cong         
     \bigoplus_{\lambda \in R^{\ell-1}_+}
L_{\ell-1}(\lambda) \otimes \cF(2n)\otimes \mathbf L_{k}(\chi_{\mu - (k + h^\vee)\lambda}) \\
&\cong \bigoplus_{\lambda \in R^{\ell-1}_+} \bigoplus_{\nu \in R^{\ell}_+}
L_{\ell}(\nu) \otimes \mathbf L_{m}(\chi_{\lambda - (m + h^\vee)\nu}) \otimes \mathbf L_{k}(\chi_{- (k + h^\vee)\lambda})
    \end{split}
\end{equation}
with $\frac{1}{m+h^\vee} + \frac{1}{\ell+h^\vee} =1$. We verify that this together with the relation between levels given by \eqref{eq:levrel}, implies that 
\[
\frac{1}{k+h^\vee} + (m+h^\vee) =2.
\]
Thus 
\[
\cD_\ell(2n) \cong \bigoplus_{\lambda \in R^{\ell-1}_+} 
 \mathbf L_{m}(\chi_{\lambda}) \otimes \mathbf L_{k}(\chi_{- (k + h^\vee)\lambda}).
\]
We can apply Lemma 2.4 of \cite{CGL} again, that is since 
the fusion ring of $\cW_k(\gs\go_{2n}, f_{\text{prin}})$ whose irreducible objects are the $\mathbf L_{k}(\chi_{- (k + h^\vee)\lambda})$ with $\lambda \in R^{\ell-1}_+$ is generated by $\mathbf L_{k}(\chi_{- (k + h^\vee)\omega_1})$ it follows that $\cD_\ell(2n)$ is weakly generated by $\cW_m(\gs\go_{2n}, f_{\text{prin}}) \otimes\cW_k(\gs\go_{2n})$ together with any element of $\mathbf L_{m}(\chi_{\omega_1}) \otimes \mathbf L_{k}(\chi_{- (k + h^\vee)\omega_1})$. 
We can choose this vector to be the top level vector. 
The conformal weight of the top level of $\mathbf L_{m}(\chi_{\lambda})$ is \cite[(30)]{ACL}
\[
h_\lambda = \frac{(\lambda+\rho)^2 - \rho^2}{2(m+h^\vee)} - \lambda \rho^\vee
\]
with $\rho, \rho^\vee$ Weyl vector and dual Weyl vector. For $\gg = \gs\go_{2n}$ these coincide and computing $\omega_1^2=1, \rho \omega_1 = n-1, 2\rho^2 = {n(n-1)}$ it follows that the top level of $\mathbf L_{m}(\chi_{\omega_1}) \otimes \mathbf L_{k}(\chi_{- (k + h^\vee)\omega_1})$
has indeed conformal weight one. 
\end{proof}

For the subalgebras $\cW_m(\gs\go_{2n})$ and $\cW_k(\gs\go_{2n})$, we denote the weight $n$ generator by $P^+$ and $P^-$, respectively. We use the same notation for the corresponding fields in $\cC^{\ell}(2n)$. Up to scaling, they are easily seen to have the form 
$$P^+ = \ :\phi^{1,1} \phi^{1,2} \cdots \phi^{1,2n}: + \cdots,\qquad P^- =  \ :\phi^{2,1} \phi^{2,2} \cdots \phi^{2,2n}: + \cdots,$$ where the remaining terms vanish in the limit $\ell \rightarrow \infty$. We have the following consequence of Theorem \ref{weakgenerationD}.

\begin{corollary} \label{first:orbifoldgen}  Suppose that $\ell-2$ is admissible for $\gs\go_{2n}$. Then  $\cC_{\ell}(2n)$ is generated as a module over $\cW_k(\gs\go_{2n})^{\mathbb{Z}_2} \otimes \cW_m(\gs\go_{2n})^{\mathbb{Z}_2}$ by the weight $1$ field $H$ together with the weight $2n$ field $:P_+ P_-:$.
\end{corollary}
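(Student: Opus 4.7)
The plan is to combine Theorem~\ref{weakgenerationD} with the simple current extension structure relating the principal $\cW$-algebras to their outer-automorphism orbifolds. Write $A = \cW_k(\gs\go_{2n})^{\mathbb{Z}_2} \otimes \cW_m(\gs\go_{2n})^{\mathbb{Z}_2}$. The first step is to record that $\cW_k(\gs\go_{2n})$ and $\cW_m(\gs\go_{2n})$ are each a $\mathbb{Z}_2$-simple current extension of their outer-automorphism fixed points, with extension fields the weight-$n$ primaries $P_-$ and $P_+$ respectively. Hence $\cW_k(\gs\go_{2n}) \otimes \cW_m(\gs\go_{2n})$ is weakly generated over $A$ by $\{P_+, P_-\}$.

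Next, I will identify the $\mathbb{Z}_2$-symmetry used to define $\cC_\ell(2n)$: it is inherited from the outer automorphism group $O_{2n}/SO_{2n}$ acting on the defining data, and on $\cW_k(\gs\go_{2n}) \otimes \cW_m(\gs\go_{2n})$ it restricts to the diagonal outer automorphism, sending $P_{\pm} \mapsto -P_{\pm}$ simultaneously. A direct inspection of the construction $H = \sqrt{-1}\sum_{i=1}^{2n} :\!\phi^{1,i}\phi^{2,i}\!:$ using the standard representative of this $\mathbb{Z}_2$ (negating $\phi^{1,1}$ and $\phi^{2,1}$ together, with all other fermions fixed) shows that $H$ is $\mathbb{Z}_2$-invariant. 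Thus the diagonal $\mathbb{Z}_2$-invariant subalgebra $(\cW_k(\gs\go_{2n}) \otimes \cW_m(\gs\go_{2n}))^{\mathbb{Z}_2}$ is generated as a VOA over $A$ by the single field $:\!P_+ P_-\!:$, because the other bilinears $:\!P_+ P_+\!:$ and $:\!P_- P_-\!:$ (and their mode-composites with elements of $A$) already lie in $\cW_m(\gs\go_{2n})^{\mathbb{Z}_2}$ and $\cW_k(\gs\go_{2n})^{\mathbb{Z}_2}$ respectively.

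Finally, apply Theorem~\ref{weakgenerationD}: $\cD_\ell(2n)$ is a VOA extension of $\cW_k(\gs\go_{2n}) \otimes \cW_m(\gs\go_{2n})$ weakly generated by the $\mathbb{Z}_2$-invariant field $H$. Since $H$ is invariant, the fusion arguments of that theorem (via Lemma~2.4 of \cite{CGL}) descend to the orbifold, yielding that $\cC_\ell(2n) = \cD_\ell(2n)^{\mathbb{Z}_2}$ is weakly generated over $(\cW_k(\gs\go_{2n}) \otimes \cW_m(\gs\go_{2n}))^{\mathbb{Z}_2}$ by $H$. Combining this with the previous step gives that $\cC_\ell(2n)$ is generated over $A$ by $H$ and $:\!P_+ P_-\!:$. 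The main obstacle is tracking the interaction between the $P_\pm$ simple current extensions and the diagonal orbifold, i.e.\ verifying that no additional $\mathbb{Z}_2$-invariant extension fields beyond $:\!P_+ P_-\!:$ are needed; this is ensured precisely because the orbifold $\mathbb{Z}_2$ couples the two simple currents diagonally, so their even-degree invariants are generated by the single bilinear $:\!P_+ P_-\!:$.
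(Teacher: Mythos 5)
Your proposal is correct and follows essentially the same route as the paper: both use Theorem \ref{weakgenerationD} to reduce to the extension $\cD_\ell(2n)$ of $\cW_k(\gs\go_{2n})\otimes\cW_m(\gs\go_{2n})$, decompose by the $\pm 1$-eigenspaces of the outer automorphisms (with the $-1$-eigenspaces being the modules generated by $P_\mp$ over the orbifolds), observe that the orbifold $\mathbb{Z}_2$ acts diagonally on $P_\pm$ and fixes $H$, and reduce the odd-odd sector to the single bilinear $:\!P_+P_-\!:$. The only detail the paper makes explicit that you gloss over is that the mixed bilinears $:\!(\partial^i P_+)\,\partial^j P_-\!:$ all lie in the module generated by $:\!P_+P_-\!:$ via $((L_+)_{(0)})^i((L_-)_{(0)})^j$, which is routine.
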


\begin{proof} Since $\cC_{\ell}(2n) = \cD_{\ell}(2n)^{\mathbb{Z}_2}$, it is the direct sum of $4$ components:
\begin{enumerate}
\item $\cW_k(\gs\go_{2n})^{\mathbb{Z}_2} \otimes \cW_m(\gs\go_{2n})^{\mathbb{Z}_2}$,
\item The $\cW_k(\gs\go_{2n})^{\mathbb{Z}_2} \otimes \cW_m(\gs\go_{2n})^{\mathbb{Z}_2}$-module generated by $H$,
\item The span of elements of the form $A_{(r)} B_{(s)} 1$ where $A$ and $B$ are in the $-1$-eigenspaces of $\cW_m(\gs\go_{2n})$ and $\cW_k(\gs\go_{2n})$, respectively,
\item The span of elements of the form $A_{(r)} B_{(s)} H$ where $A$ and $B$ are in the $-1$-eigenspaces of $\cW_m(\gs\go_{2n})$ and $\cW_k(\gs\go_{2n})$, respectively.
\end{enumerate}
It is apparent that $A$ and $B$ can be written linear combinations of elements $:A_i \partial^i P_+:$ and $:B_i \partial^iP_-:$, respectively, where $A_i \in \cW_m(\gs\go_{2n})^{\mathbb{Z}_2}$ and $B_i \in \cW_k(\gs\go_{2n})^{\mathbb{Z}_2}$. Any element of the form $$(:A_i \partial^i P_+:)_{(r)} (:B_i \partial^i P_-):_{(s)} 1,$$ lies in the $\cW_k(\gs\go_{2n})^{\mathbb{Z}_2} \otimes \cW_m(\gs\go_{2n})^{\mathbb{Z}_2}$-module generated by elements of the form $$:(\partial^i P_+) \partial^j P_-:\ = ((L_+)_{(0)})^i ((L_-)_{(0)})^j (:P_+ P_-:),$$ so it lies in the $\cW_k(\gs\go_{2n})^{\mathbb{Z}_2} \otimes \cW_m(\gs\go_{2n})^{\mathbb{Z}_2}$-module generated by $:P_+ P_-:$. The same argument shows that $A_{(r)} B_{(s)} H$ lies in the $\cW_k(\gs\go_{2n})^{\mathbb{Z}_2} \otimes \cW_m(\gs\go_{2n})^{\mathbb{Z}_2}$-module generated by $(:P_+ P_-:)_{(s)} H$ for $s \leq 1$, which completes the proof. \end{proof}

Recall that for generic $\ell$, $\cC^{\ell}(2n)$ is simple and strongly generated by $\{H, X^{2i}, Y^{2i}, H^{2i}, W^{2i+1}|\ i \geq 1\}$. Also, for generic $\ell$ the first normally ordered relation among these generators and their derivatives appears at weight $4n+2$, so there are only finitely many levels $\ell$ such that there is a normally ordered relation among these generators and their derivatives in weight $d \leq 2n$. These levels are just the roots of determinant of the matrix associated to the bilinear form $B(u,v) = u_{(4n-1)} v$ on the weight $2n$ space $\cC^{\ell}(2n)[2n]$. For $d \in \mathbb{Z}_{\geq 1}$, let $S_d \subseteq \cC^{\ell}(2n)$ be span of all normally ordered monomials in $\{H, X^{2i},Y^{2i},H^{2i},W^{2i+1}|\ 1\leq i < d\}$ and their derivatives. 
This space is preserved by $H_{(0)}$, so the quotient $\cC^{\ell}(2n)[d] / S_d$ also has a well-defined action of $H_{(0)}$. In weight $2n$, $\text{dim}\ \cC^{\ell}(2n)[2n] / S_{2n} = 3$, and the action of $H_{(0)}$ has eigenvalues  $0,\pm 2$.

By \eqref{evenspinhigherweights}, we have a relation 
$$a_{n,+}W^{2n}_+ + a_{n,-} W^{2n}_- \equiv (b_{n,+} + b_{n,-}) H^{2n} \ \text{mod}\ S_{2n},$$ where $a_{n,\pm}$ and $b_{n,\pm}$ are nonzero constants. Since $H_{(0)} H^{2n} = 0$, $a_{n,+}H_{(0)} W^{2n}_+ \equiv -a_{n,-} H_{(0)} W^{2n}_- \ \text{mod}\ S_{2n}$. We define
\begin{equation} U^{2n} = H_{(0)} W^{2n}_+ . \end{equation} 
\begin{lemma} For generic values of $\ell$, we have a relation
\begin{equation} \label{u2n:first} U^{2n} \equiv h(\ell) :P_+ P_-:  \ \text{mod}\ S_{2n},\end{equation} for some nonzero rational function $\mu(\ell)$.
\end{lemma}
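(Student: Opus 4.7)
My plan is to compute both sides of the congruence modulo $S_{2n}$ inside the three-dimensional quotient $\cC^\ell(2n)[2n]/S_{2n}$. For generic $\ell$ this quotient is spanned by the classes of the three ``new'' weight-$2n$ generators $H^{2n}$, $X^{2n}$, $Y^{2n}$ of Heisenberg charges $0$, $+2$, $-2$ respectively; the goal is to show that both $U^{2n}$ and $:P_+P_-:$ descend to nonzero vectors along the single line spanned by $X^{2n}-Y^{2n}$, from which $h(\ell)$ falls out as the ratio of the two nonzero coefficients.

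The left-hand side is straightforward. From the explicit formula \eqref{evenspinhigherweights} I read off
\[ W^{2n}_+ \equiv \alpha(\ell) H^{2n} + \beta(\ell)(X^{2n}+Y^{2n}) \mod S_{2n} \]
for nonzero rational $\alpha(\ell), \beta(\ell)$. The zero-mode $H_{(0)}$ is a derivation preserving $S_{2n}$ (it acts on every generator of $S_{2n}$ as multiplication by the Heisenberg charge) and on the three basis vectors acts as $0, +2, -2$, so
\[ U^{2n} = H_{(0)} W^{2n}_+ \equiv 2\beta(\ell)(X^{2n}-Y^{2n}) \mod S_{2n}. \]

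For the right-hand side I plan to use the module-theoretic input supplied by Lemma \ref{twocopiesofW} and Corollary \ref{first:orbifoldgen}: the algebra $\cC_\ell(2n)$ is a conformal extension of the subalgebra $\cA := \cW_m(\gs\go_{2n})^{\mathbb{Z}_2}\otimes\cW_k(\gs\go_{2n})^{\mathbb{Z}_2}$, and $:P_+P_-:$ sits in the simple-current summand $\cW_m(\gs\go_{2n})^{\mathbb{Z}_2,-}\otimes\cW_k(\gs\go_{2n})^{\mathbb{Z}_2,-}$, disjoint from the vacuum summand $\cA$ in which both $W^{2n}_+$ and $W^{2n}_-$ live. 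Since $W^{2n}_\pm$ span exactly the two-dimensional subspace spanned by $H^{2n}$ and $X^{2n}+Y^{2n}$ in $\cC^\ell(2n)[2n]/S_{2n}$, the image of $:P_+P_-:$ must lie in the complementary line spanned by $X^{2n}-Y^{2n}$. Its coefficient there is nonzero, for if it were zero then the $\cA$-module generated by $H$ and $:P_+P_-:$ would miss the $X^{2n}-Y^{2n}$ direction at weight $2n$, contradicting Corollary \ref{first:orbifoldgen}. Hence
\[ :P_+P_-: \equiv \gamma(\ell)(X^{2n}-Y^{2n}) \mod S_{2n} \]
with $\gamma(\ell)$ nonzero, and setting $h(\ell) = 2\beta(\ell)/\gamma(\ell)$ yields the claim.

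The main obstacle I anticipate is making the module-theoretic step fully rigorous \emph{modulo $S_{2n}$}: strictly speaking, $S_{2n}$ is defined in terms of generators of $\cW^{\fr{so}_2}_\infty$ (notably the Heisenberg field $H$, which is not in $\cA$), and a priori need not respect the $\cA$-submodule decomposition of $\cC_\ell(2n)$. To handle this cleanly I plan to reduce to the large-$\ell$ free field limit, where $\cC^\ell(2n)$ degenerates to $\cF(4n)^{\T{O}_{2n}}$ and the Pfaffian product $:P_+P_-:$ together with the generators $H^{2n}, X^{2n}, Y^{2n}$ admit explicit fermion-bilinear descriptions, reducing the assertion to a finite classical-invariant-theory calculation; genericity in $\ell$ then follows by specialization, using that the image of $:P_+P_-:$ in the three-dimensional quotient varies rationally in $\ell$ and is nonzero in the limit. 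As a cross-check, the involution of $\cC^\ell(2n)$ induced by the swap $\cF(2n)^{(1)}\leftrightarrow\cF(2n)^{(2)}$ of the two fermion copies fixes $:P_+P_-:$ while (with the appropriate sign conventions inherited from the free field limit) sending $X^{2n}\mapsto -Y^{2n}$ and fixing $H^{2n}$ modulo $S_{2n}$, forcing the $(X^{2n}+Y^{2n})$-component of $:P_+P_-:$ to vanish and the $(X^{2n}-Y^{2n})$-component to be the one that survives, consistent with the module-theoretic conclusion.
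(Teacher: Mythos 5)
Your left-hand-side computation is fine and matches what the paper does, and your fallback plan (degenerate to $\cF(4n)^{\T{O}_{2n}}$ and identify both $U^{2n}$ and $:\!P_+P_-\!:$ with nonzero multiples of the quadratic invariant $\sum_i :\!(\partial^{2n-1}\phi^{1,i})\phi^{2,i}\!:$ modulo $S_{2n}$) is exactly the paper's proof. But the module-theoretic argument you present as the main route has a genuine gap at the one step that carries the content of the lemma, namely the nonvanishing of the coefficient of $:\!P_+P_-\!:$.

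Your nonvanishing argument is: if the image of $:\!P_+P_-\!:$ in $\cC^{\ell}(2n)[2n]/S_{2n}$ had no $(X^{2n}-Y^{2n})$-component, then the $\cA$-module generated by $H$ and $:\!P_+P_-\!:$ would miss that direction, contradicting Corollary \ref{first:orbifoldgen}. This fails because the element $U^{2n}=H_{(0)}W^{2n}_+$ itself lies in the $\cA$-module generated by $H$: by skew-symmetry, $H_{(0)}W^{2n}_+$ is a combination of the $(W^{2n}_+)_{(j)}H$ and their derivatives, with $W^{2n}_+\in\cA$ and the module closed under $\partial=L_{(0)}$. Since your own computation shows $U^{2n}\equiv 2\beta(\ell)(X^{2n}-Y^{2n})\not\equiv 0$, the $(X^{2n}-Y^{2n})$-direction is already reached by the summand generated by $H$ alone, so Corollary \ref{first:orbifoldgen} places no constraint whatsoever on the image of $:\!P_+P_-\!:$; it is entirely consistent with that corollary that $:\!P_+P_-\!:\equiv 0 \bmod S_{2n}$. (The same example shows why step (c) is unjustified: the quotient by $S_{2n}$ does not separate the $\cA$-module summands, since $S_{2n}$ is built from the $\cW^{\gs\go_2}_{\infty}$-generators, which are not homogeneous for that decomposition.) The symmetry cross-check does not rescue this: with your sign conventions the fermion swap fixes both $H^{2n}$ and $X^{2n}-Y^{2n}$ and negates only $X^{2n}+Y^{2n}$, so invariance of $:\!P_+P_-\!:$ only confines its image to the two-dimensional eigenspace $\langle H^{2n}, X^{2n}-Y^{2n}\rangle$ and neither pins down the direction nor the nonvanishing. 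What remains after removing these steps is precisely the deferred free-field computation, i.e.\ the paper's argument, so you should carry that computation out rather than treat it as a technical afterthought.
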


\begin{proof} We will use the same notation for the fields in $\cC^{\ell}(2n)$ and the corresponding fields in the large level limit $\cF(4n)^{\text{O}_{2n}}$, and we also use the notation $S_d$ as above for the corresponding subspace of $\cF(4n)^{\text{O}_{2n}}[d]$. Observe that in $\cF(4n)^{\text{O}_{2n}}$, we have
$$W^{2n}_+ \equiv \lambda \sum_{i=1}^{2n} :(\partial^{2n-1}\phi^{1,i})\phi^{i,i}: \ \text{mod}\ S_{2n},$$ for some nonzero constant $\lambda$. This follows from Theorem \ref{weakgenWso2nfirst}, because $\cF(2n)^{\text{O}_{2n}} \cong \cW_{-h^{\vee} + 1}(\gs\go_{2n})$, and both $W^{2n}_+$ and $\sum_{i=1}^{2n}:(\partial^{2n-1}\phi^{1,i})\phi^{1,i}:$ do not lie in $S_{2n}$. Applying $H_{(0)}$ yields
$$U^{2n} \equiv \lambda \sum_{i=1}^{2n} :(\partial^{2n-1}\phi^{1,i}) \phi^{2,i}: \ \text{mod}\ S_{2n},$$ in $\cF(4n)^{\text{O}_{2n}}$. Finally, we claim that in $\cF(4n)^{\text{O}_{2n}}$, we have 
$$ :P_+ P_-: \ =  \mu \sum_{i=1}^{2n}:(\partial^{2n-1}\phi^{1,i} )\phi^{2,i}: \ \text{mod}\ S_{2n},$$ for some nonzero constant $\mu$. This can be seen by adapting the methods of \cite[Section 11]{Lin3}, and the details are omitted. In particular, it follows that in $\cF(4n)^{\text{O}_{2n}}$ we have the relation 
$$U^{2n} \equiv \frac{\lambda}{\mu} :P_+ P_-:  \ \text{mod}\ S_{2n}.$$ This implies that the relation \eqref{u2n:first} exists in $\cC^{\ell}(2n)$. \end{proof}

\begin{theorem} Suppose that $\ell-2$ is an admissible level for $\gs\go_{2n}$, and is also chosen so that 
\begin{enumerate}
\item $(2n-2) + \ell \notin \mathbb{Z}_{\geq 1}$,
\item The pairing on the weight $2n$ space of $\cC^{\ell}(2n)$ is nondegenerate, so there are no normally ordered relations in weight $d \leq 2n$. 
\end{enumerate} Then $\cC_{\ell}(2n)$ is weakly generated by $\{H, X^2, Y^2, H^2\}$, and hence is a quotient of $\cW^{\fr{so}_2}_{\infty}$.

\end{theorem}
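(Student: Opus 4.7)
Let $\tilde{\cC}_\ell(2n) \subseteq \cC_\ell(2n)$ denote the subalgebra generated by the weight-$\leq 2$ fields $\{H, X^2, Y^2, H^2\}$; our goal is to show $\tilde{\cC}_\ell(2n) = \cC_\ell(2n)$. The key structural input is Corollary \ref{first:orbifoldgen}, which asserts that $\cC_\ell(2n)$ is generated, as a module over $\cW_k(\fr{so}_{2n})^{\mathbb{Z}_2} \otimes \cW_m(\fr{so}_{2n})^{\mathbb{Z}_2}$ (with $k,m$ given by \eqref{condition:km}), by $H$ together with $:\!P_+P_-\!:$. It therefore suffices to show that (i) both commuting $\cW$-algebras lie in $\tilde{\cC}_\ell(2n)$, and (ii) $:\!P_+P_-\!: \in \tilde{\cC}_\ell(2n)$.

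For (i), Theorem \ref{completion:twocopies} expresses two commuting conformal vectors $L_\pm$ and their weight-$4$ primary partners $W^4_\pm$ as explicit rational expressions in $\{H, X^2, Y^2, H^2\}$. By Lemma \ref{rationalellcaseofc2n}, the relevant denominators are nonzero at our rational $\ell$, so $L_\pm, W^4_\pm \in \tilde{\cC}_\ell(2n)$. Admissibility of $\ell - 2$ forces admissibility of $\ell - 1$, and the coset realization identifies the two copies of $\Wev$ generated by $\{L_+, W^4_+\}$ and $\{L_-, W^4_-\}$ with $\cW_m(\fr{so}_{2n})^{\mathbb{Z}_2}$ and $\cW_k(\fr{so}_{2n})^{\mathbb{Z}_2}$, respectively. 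Under condition (1) (and its shifted analogue for the $k$-copy, which is automatic away from a further finite exceptional set inherited from Theorem \ref{weakgenWso2nfirst}), Corollary \ref{weakgenWso2n} ensures that each of these $\cW$-algebras is weakly generated by its weight-$2$ and weight-$4$ fields. Hence (i) follows.

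For (ii), the central tool is the identity \eqref{u2n:first}, which states that $U^{2n} := H_{(0)} W^{2n}_+$ satisfies $U^{2n} \equiv h(\ell)\,(:\!P_+P_-\!:) \pmod{S_{2n}}$ for a nonzero rational function $h(\ell)$. Having already placed $L_+, W^4_+, H$ in $\tilde{\cC}_\ell(2n)$, the recursion $W^{2d}_+ = (W^4_+)_{(1)} W^{2d-2}_+$ shows that $W^{2d}_+ \in \tilde{\cC}_\ell(2n)$ for every $d \geq 2$; in particular $W^{2n}_+ \in \tilde{\cC}_\ell(2n)$, whence $U^{2n} \in \tilde{\cC}_\ell(2n)$. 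Condition (2) (no normally ordered relations in weight $\leq 2n$) guarantees that every strong generator of $\cC_\ell(2n)$ of weight $< 2n$ is the image of the corresponding strong generator of $\Wso$, and hence already sits inside $\tilde{\cC}_\ell(2n)$; this gives $S_{2n} \subseteq \tilde{\cC}_\ell(2n)$. Under the additional (cofinite in $\ell$) assumption $h(\ell) \neq 0$, we may solve the congruence for $U^{2n}$ to obtain $:\!P_+P_-\!: \in \tilde{\cC}_\ell(2n)$, completing the argument.

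The main obstacle is precisely the nonvanishing of $h(\ell)$ at the admissible levels of interest. The function $h$ was extracted from a free-field-limit computation and has only finitely many zeros as a rational function of $\ell$, but verifying that none of these coincide with the admissible levels relevant for the downstream rationality applications (e.g.\ Theorem \ref{Rationality:COcase}) is nontrivial; one expects to handle this either by explicit computation or by an indirect limiting/extremal-level argument. A secondary, more manageable difficulty is the careful bookkeeping of the finite lists of exceptional levels coming from Lemma \ref{rationalellcaseofc2n}, Corollary \ref{weakgenWso2n}, and Theorem \ref{weakgenWso2nfirst}, so as to confirm that conditions (1) and (2) in the theorem (together with the nonvanishing of $h$) really exhaust the bad locus.
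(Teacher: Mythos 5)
Your part (i) and the reduction via Corollary \ref{first:orbifoldgen} track the paper's argument, and your identification of $S_{2n}\subseteq\tilde{\cC}_\ell(2n)$ from hypothesis (2) is correct. But the step you flag as "the main obstacle" --- the nonvanishing of $h(\ell)$ at the specific admissible level --- is not a loose end to be patched by explicit computation or a limiting argument; it is the crux of the proof, and the paper resolves it without ever verifying $h(\ell)\neq 0$. The device is to regard \eqref{u2n:first} as an identity over $\mathbb{C}(\ell)$, clear denominators, and divide out common factors to obtain a relation $a(\ell)\,U^{2n}=b(\ell)\,(:\!P_+P_-\!:)+Q$ in which $a$, $b$, and all coefficients of the normally ordered polynomial $Q$ are polynomials in $\ell$ with no common root. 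One then runs a four-way case analysis on the vanishing of $a(\ell)$ and $b(\ell)$ at the given level. If both vanish, $Q=0$ is a nontrivial normally ordered relation in weight $2n$, contradicting hypothesis (2). If $a(\ell)=0\neq b(\ell)$, then $:\!P_+P_-\!:\in S_{2n}\subseteq\tilde{\cC}_\ell(2n)$ and you are done immediately. If $a(\ell)\neq 0=b(\ell)$, then $U^{2n}\equiv 0 \bmod S_{2n}$; combining with $a_{n,\pm}W^{2n}_\pm\equiv b_{n,\pm}H^{2n}\pm(X^{2n}+Y^{2n}) \bmod S_{2n}$ and applying $H_{(0)}$ forces $X^{2n}\equiv Y^{2n} \bmod S_{2n}$, hence (by Heisenberg-charge separation) $X^{2n}\equiv Y^{2n}\equiv 0 \bmod S_{2n}$, again a weight-$2n$ relation contradicting hypothesis (2). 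Only in the remaining case do you solve for $:\!P_+P_-\!:$ as you propose. So hypothesis (2) is doing double duty: it not only gives $S_{2n}\subseteq\tilde{\cC}_\ell(2n)$ but also excludes the degenerate specializations of the generic identity. Without this case analysis your argument only covers a generic (cofinite) set of levels and does not prove the stated theorem.

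One smaller caution: a rational function $h(\ell)$ that is nonzero generically can also fail by acquiring a pole at the special level, not just a zero; the denominator-clearing step handles both failure modes uniformly, which is another reason the paper works with the polynomial identity $a(\ell)\,U^{2n}=b(\ell)\,(:\!P_+P_-\!:)+Q$ rather than with $h(\ell)$ directly.
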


\begin{proof}
First, we claim that for each $d < 2n$, the span of normally ordered monomials in $\{H, X^{2i},Y^{2i},H^{2i},W^{2i+1}\}$ and their derivatives of total weight $d$, coincides with the weight $d$ subspace $\cC_{\ell}(2n)[d]$. Otherwise, there would be a relation among these monomials, contradicting our hypotheses. So all elements of $\cC_{\ell}(2n)[d]$ for $d < 2n$, can be generated from $\{H, X^2, Y^2, H^2\}$.

Next, regarding \eqref{u2n:first} as a relation over the field of rational functions of $\ell$, we can clear denominators and also divide by any common linear factors appearing in the structure constants, obtaining a relation 
\begin{equation} \label{u2n:second} a(\ell) U^{2n} = b(\ell) :P_+ P_-: + Q,\end{equation}
 where $a(\ell), b(\ell)$ are polynomials in $\ell$, and $Q$ is a normally ordered polynomial in $\{H, X^{2i},Y^{2i},H^{2i},W^{2i+1}|\ 1\leq i < n\}$ and their derivatives, where all coefficients are polynomials in $\ell$. Moreover, these coefficients, together with $a(\ell), b(\ell)$, have no common roots. Specializing to a fixed level $\ell$,  there are four possibilities:
\begin{enumerate}
\item $a(\ell) = 0 = b(\ell)$,
\item $a(\ell) = 0$ and $b(\ell) \neq 0$, 
 \item $a(\ell)  \neq 0$ and $b(\ell) = 0$, 
  \item $a(\ell)  \neq 0$ and $b(\ell) \neq 0$.
  \end{enumerate}
  
 In Case (1), we would have $Q = 0$, which yields a nontrivial relation in weight $2n$ among the generators $\{H, X^{2i},Y^{2i},H^{2i},W^{2i+1}|\ 1\leq i < n\}$, which contradicts our assumptions. If $a(\ell) = 0$ and $b(\ell) \neq 0$, then we have a relation $:P_+ P_-: \ = -\frac{1}{b(\ell)} Q$. Since $Q \in S_{2n}$, this implies that $:P_+ P_-:$ is generated by $\{H, X^2, Y^2, H^2\}$, so by Corollary \ref{first:orbifoldgen} our conclusion holds. If $a(\ell) \neq 0$ and $b(\ell) = 0$, then we have a relation $U^{2n} = \frac{1}{a(\ell)} Q$. Recall from \eqref{evenspinhigherweights} that 
 $$a_{n,\pm} W^{2n}_{\pm} = b_{n,\pm} H^{2n} \pm  (X^{2n} + Y^{2n})\ \text{mod}\ S_{2n},$$ where $a_{n,\pm}, b_{n,\pm} \neq 0$. Applying $H_{(0)}$, we obtain 
$$a_{n,+}U^{2n} \equiv 2b_{n,+} (X^{2n} - Y^{2n}) \ \text{mod}\ S_{2n}.$$
Since $U^{2n} = \frac{1}{a(\ell)} Q \equiv 0\ \text{mod}\ S_{2n}$, we have $X^{2n} \equiv Y^{2n}\ \text{mod}\ S_{2n}$. Since $H_{(0)}$ preserves $S_{2n}$, and $X^{2n}, Y^{2n}$ have $H_{(0)}$-eigenvalues $2,-2$, this can only happen if $X^{2n} \equiv 0\ \text{mod}\ S_{2n}$ and $Y^{2n}\equiv 0\ \text{mod}\ S_{2n}$. But then we have $a_{n,\pm} W^{2n}_{\pm} \equiv b_{n,\pm} H^{2n}\ \text{mod}\ S_{2n}$, so $\frac{a_{n,+}}{b_{n,+}}W^{2n}_{+} \equiv \frac{a_{n,-}}{b_{n,-}}W^{2n}_{-}\ \text{mod}\ S_{2n}$, which is impossible.  

%Solving these simultaneously yields
%\begin{equation}
%\begin{split} X^{2n}&  \equiv  \frac{1}{4b_n} (W^{2n}_+ - W^{2n}_- + %U^{2n}) \equiv  \frac{1}{4b_n} (W^{2n}_+ - W^{2n}_-  -\frac{1}%{b(\ell)} Q) \ \text{mod}\ S_{2n},
%\\  Y^{2n} & \equiv \frac{1}{4b_n} (W^{2n}_+ - W^{2n}_- - U^{2n}) %\equiv  \frac{1}{4b_n} (W^{2n}_+ - W^{2n}_- +\frac{1}{b(\ell)} Q) \ \text{mod}\ S_{2n}.\end{split} \end{equation}
%This implies that $ X^{2n} \equiv  Y^{2n} \equiv  \frac{1}{4b_n} (W^{2n}_+ - W^{2n}_- ) \ \text{mod}\ S_{2n}$. But since $H_{(0)}$ preserves $S_{2n}$, $W^{2n}_+ - W^{2n}_- $ is nonzero in $\cC_{\ell}(2n) / S_{2n}$, and $X^{2n}, Y^{2n}$ have $H_{(0)}$-eigenvalues $2,-2$ respectively, this is impossible. 

Therefore we are left with Case (4), which says that we have a relation
$$  :P_+ P_-: \ = \frac{a(\ell)}{b(\ell)} U^{2n}  -  \frac{1}{b(\ell)}Q.$$ Since $W^{2n}_{\pm}$ are generated by $\{H, X^2, Y^2, H^2\}$ and $U^{2n}= H_{(0)} W^{2n}_+$, $:P_+ P_-:$ lies in the subalgebra generated by $\{H, X^2, Y^2, H^2\}$, and we are done by Corollary \ref{first:orbifoldgen}.
\end{proof}

\subsection{More rational quotients of $\cW^{\fr{so}_2}_{\infty}$ and some further conjectures} 

In addition to $\cW^{\gs\go_2}_{\infty, M_{\ell}}$, which is the simple quotient of $\tilde{\cC}_{\ell}(2n)$ when $\ell -2$ is admissible for $\gs\go_{2n}$, $\cW^{\fr{so}_2}_{\infty}$ is expected to have another large family of quotients which are strongly rational. Recall from \cite{CL4} that 
\begin{enumerate} 
\item $\cW^{k}(\go\gs\gp_{1|2n})^{\mathbb{Z}_2} \cong \cC^{\psi}_{2B}(0,n)$ for $\psi = k = n+\frac{1}{2}$,
\item $\text{Com}(V^{\ell}(\gs\go_{2n+1}), V^{\ell-1}(\gs\go_{2n+1}) \otimes \cF(2n+1))^{\mathbb{Z}_2} \cong  \cC^{\phi}_{2B}(n,0)$ for $\ell = -2 \phi - 2n + 2$.
\item We have the isomorphism \begin{equation} \label{isoGKO:typeB} \cC^{\psi}_{2B}(n,0) \cong \cC_{2B}^{\phi}(0,n),\qquad \frac{1}{\psi} + \frac{1}{\phi} = 2,\end{equation} of $1$-parameter vertex algebras, which can be extended to the coset realization
$$\cW^{k}(\go\gs\gp_{1|2n}) \cong \text{Com}(V^{\ell}(\gs\go_{2n+1}), V^{\ell-1}(\gs\go_{2n+1}) \otimes \cF(2n+1)),$$ where $k$ and $\ell$ are related as above.
\end{enumerate}
Moreover, when $\ell-1$ is admissible for $\gs\go_{2n+1}$, we have the embedding of simple vertex algebras $L_{\ell}(\gs\go_{2n+1}) \hookrightarrow L_{\ell-1}(\gs\go_{2n+1}) \otimes \cF(2n+1))$, and we have the isomorphism of simple vertex algebras 
$$\cW_{k}(\go\gs\gp_{1|2n}) \cong \text{Com}(L_{\ell}(\gs\go_{2n+1}), L_{\ell-1}(\gs\go_{2n+1}) \otimes \cF(2n+1)).$$
In addition, \cite[Conjecture 7.1]{CL4} says that $\cC^{\psi}_{2B}(n,0)$ should be strongly rational whenever $\ell-1$ is admissible for $\gs\go_{2n+1}$, which would imply the strong rationality of $\cW_{k}(\go\gs\gp_{1|2n})$. By the same argument as the proof of Lemma \ref{rationalfamily}, this would imply

\begin{conjecture} \label{rationalfamilysecond} For $\ell-2$ an admissible level for $\gs\go_{2n+1}$, 
$$\cC_{\ell}(2n+1) = \text{Com}(L_{\ell}(\gs\go_{2n+1}),  L_{\ell-2}(\gs\go_{2n+1}) \otimes \cF(2(2n+1)))^{\mathbb{Z}_2},$$ which is an extension of
$$\text{Com}(L_{\ell}(\gs\go_{2n+1}), L_{\ell-1}(\gs\go_{2n+1}) \otimes \cF(2n+1))^{\mathbb{Z}_2} \otimes \text{Com}(L_{\ell-1}(\gs\go_{2n+1}), L_{\ell-2}(\gs\go_{2n+1}) \otimes \cF(2n+1))^{\mathbb{Z}_2},$$
is strongly rational.
\end{conjecture}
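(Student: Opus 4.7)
The plan is to mimic the proof of Lemma \ref{rationalfamily} in the odd rank setting and then invoke \cite[Conjecture 7.1]{CL4}. Since $\ell-2$ is admissible for $\gs\go_{2n+1}$, so is $\ell-1$, and the coset realization recalled just before Conjecture \ref{rationalfamilysecond} gives isomorphisms of simple vertex algebras
\[
\T{Com}(L_{\ell-1}(\gs\go_{2n+1}), L_{\ell-2}(\gs\go_{2n+1}) \otimes \cF(2n+1)) \cong \cW_{k_1}(\go\gs\gp_{1|2n}),
\]
\[
\T{Com}(L_{\ell}(\gs\go_{2n+1}), L_{\ell-1}(\gs\go_{2n+1}) \otimes \cF(2n+1)) \cong \cW_{k_2}(\go\gs\gp_{1|2n}),
\]
for shifted levels $k_1,k_2$ determined by $\ell$ via the relation in \eqref{isoGKO:typeB}.

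Writing $\cF(2(2n+1)) \cong \cF(2n+1) \otimes \cF(2n+1)$ and iterating these two conformal extensions exactly as in the proof of Lemma \ref{twocopiesofW}, I would show that $L_{\ell-2}(\gs\go_{2n+1}) \otimes \cF(2(2n+1))$ is a conformal extension of $L_{\ell}(\gs\go_{2n+1}) \otimes \cW_{k_1}(\go\gs\gp_{1|2n}) \otimes \cW_{k_2}(\go\gs\gp_{1|2n})$. Taking the commutant of $L_\ell(\gs\go_{2n+1})$ and then the $\mathbb{Z}_2$-orbifold, $\cC_\ell(2n+1)$ is realized as a conformal extension of the $\mathbb{Z}_2$-orbifold of $\cW_{k_1}(\go\gs\gp_{1|2n}) \otimes \cW_{k_2}(\go\gs\gp_{1|2n})$; the latter contains $\cW_{k_1}(\go\gs\gp_{1|2n})^{\mathbb{Z}_2} \otimes \cW_{k_2}(\go\gs\gp_{1|2n})^{\mathbb{Z}_2}$ as a subVOA, with the remaining summand being the tensor product of the two anti-invariant modules under the respective $\mathbb{Z}_2$ actions on each $\cW$-factor.

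Granting \cite[Conjecture 7.1]{CL4}, each $\cW_{k_i}(\go\gs\gp_{1|2n})$ is strongly rational, hence so is each $\mathbb{Z}_2$-orbifold by \cite{Mi,McR2}. The tensor product is then strongly rational, the full $\mathbb{Z}_2$-orbifold of the tensor product inherits strong rationality, and finally $\cC_\ell(2n+1)$, being a conformal extension of a strongly rational vertex algebra, is strongly rational by \cite{CMSY}. The main obstacle is entirely \cite[Conjecture 7.1]{CL4} itself: Arakawa's rationality proof for principal $\cW$-algebras of simple Lie algebras relies on $C_2$-cofiniteness of the simple admissible affine vertex algebra, and no analogue is currently available for $\go\gs\gp_{1|2n}$. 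A natural route toward the conjecture is to combine the coset realization above with an ACL-style argument for admissible $\gs\go_{2n+1}$, which would require extending the relevant Kazhdan--Lusztig tensor structure and rigidity results of \cite{CHY,C} from the simply-laced case to non-simply-laced type $B$; this is the step I expect to be hardest, since at this point the vertex-algebraic scaffolding of the proof is standard, whereas the semisimplicity and rigidity infrastructure in type $B$ at admissible levels is not yet on the shelf.
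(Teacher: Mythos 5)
Your proposal matches the paper's intended derivation exactly: the paper states this conjecture as a consequence of \cite[Conjecture 7.1]{CL4} ``by the same argument as the proof of Lemma \ref{rationalfamily},'' and your write-up correctly fills in that argument --- splitting $\cF(2(2n+1))$ as $\cF(2n+1)\otimes\cF(2n+1)$, iterating the coset realization $\cW_{k}(\go\gs\gp_{1|2n})\cong \T{Com}(L_{\ell}(\gs\go_{2n+1}), L_{\ell-1}(\gs\go_{2n+1})\otimes\cF(2n+1))$, and concluding via orbifold rationality \cite{Mi,McR2} and rationality of conformal extensions \cite{CMSY}. You also correctly identify that the entire statement remains conditional on the (still open) rationality of $\cW_k(\go\gs\gp_{1|2n})$, which is precisely why the paper records it as a conjecture rather than a lemma.
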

As in the case of $\cC_{\ell}(2n)$, it is possible for the image of $\cW^{\fr{so}_2}_{\infty}$ in $\cC_{\ell}(2n+1)$, which is the subalgebra $\tilde{\cC}_{\ell}(2n+1)$ generated by $\{H, X^2, Y^2, H^2\}$, to be a proper subalgebra and to also be non-simple. For $\ell -2$ admissible for $\gs\go_{2n+1}$, let $N_{\ell} \subseteq \mathbb{C}[c,t]$ be the maximal ideal generated by 
$ c - \frac{2 (-2 + t) n (-2 + t + 4 n)}{(-3 + t + 2 n) (-1 + t+ 2 n)}$ and $t - \ell$, so that the simple quotient $\cW^{\gs\go_2}_{\infty,N_{\ell}}$ of $\cW^{\fr{so}_2}_{\infty} / N_{\ell} \cdot \cW^{\fr{so}_2}_{\infty}$ coincides with the simple quotient of $\tilde{\cC}_{\ell}(2n+1)$. As above, we need a criterion for when $\cW^{\gs\go_2}_{\infty,N_{\ell}}$ is strongly rational. 

Recall that $\cW^{k}(\go\gs\gp_{1|2n})$ is freely generated of type $\cW(2,4,\dots, 2n; n+\frac{1}{2})$, and has an action of $\mathbb{Z}_2$ that fixes the generators in weights $2,4,\dots, 2n$ and acts by $-1$ on the odd field $P$ of weight $n+\frac{1}{2}$. The orbifold $\cW^{k}(\go\gs\gp_{1|2n})^{\mathbb{Z}_2}$ is strongly generated by the generators in weights $2,4,\dots, 2n$, together with the composite fields
$$\omega^{2n+2d+2} =\ : \partial^{2d+1} P P:,\qquad d \geq 0.$$

Recall that $\cW^{k}(\go\gs\gp_{1|2n})^{\mathbb{Z}_2}$ is a $1$-parameter quotient of $\cW^{\text{ev}}_{\infty}$, so that for generic values of $k$, it is weakly generated by the fields in weights $2$ and $4$. As before, we take the weight $2$ field to be the Virasoro field $L$, the weight $4$ field to be primary and normalized as in \cite{KL}, and $\{W^{2i}|\ i= 3,\dots, n-1\}$ to be defined recursively by $W^{2i} = W^4_{(1)} W^{2i-2}$. We need to determine the values of $k$ for which  $\cW^{k}(\go\gs\gp_{1|2n})^{\mathbb{Z}_2}$ is {\it not} weakly generated by $L, W^4$. First, we have $W^4_{(1)} W^{2n} = p(k) \omega^{2n+2}$ for some nontrivial polynomial $p(k)$, which we are unable to compute in general.

\begin{theorem}  \label{weakgenWosp1|2nfirst} $\cW^{k}(\go\gs\gp_{1|2n})^{\mathbb{Z}_2}$ is weakly generated by $L, W^4$ for all noncritical $k$ except for the following:
\begin{enumerate}
\item The roots of $p(k)$,
\item $k = -(n+\frac{1}{2}) + \frac{d-1}{2d-1}$ and $k = -(n+\frac{1}{2}) + \frac{2d-1}{4(d-1)}$ for integers $d \geq 2$,
\item The two roots of $4 k^2 (4 n^2 -7) +2 k (16 n^3- 28 n -7 )+ 16 n^4  - 28 n^2 - 14 n -7$.
\end{enumerate}
Therefore at these values of $k$, the simple quotient $\cW_{k}(\go\gs\gp_{1|2n})^{\mathbb{Z}_2}$ is weakly generated by $L, W^4$ as well.
\end{theorem}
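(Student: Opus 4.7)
The plan is to mirror the argument of Theorem~\ref{weakgenWso2nfirst}. Since $\cW^k(\go\gs\gp_{1|2n})^{\mathbb{Z}_2}$ is a $1$-parameter quotient of $\cW^{\text{ev}}_\infty$, its strong generating set consists of $\{W^{2i}\mid 1\le i\le n\}$ together with the composite fields $\omega^{2n+2d+2} = \ :\!\partial^{2d+1}P\,P\!:$ for $d\ge 0$. For noncritical $k$, each $W^{2i}$ with $3\le i\le n$ is produced from $L=W^2$ and $W^4$ by the recursion $W^{2i}=W^4_{(1)}W^{2i-2}$ coming from the $\cW^{\text{ev}}_\infty$ structure, so the whole problem reduces to showing that each $\omega^{2n+2d+2}$ lies in the subalgebra generated by $L$ and $W^4$.

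I would then induct on $d$. The base step is to expand $W^4_{(1)}W^{2n}$ in the strong generating set of $\cW^k(\go\gs\gp_{1|2n})^{\mathbb{Z}_2}$, obtaining
\begin{equation*}
W^4_{(1)}W^{2n} \;=\; p(k)\,\omega^{2n+2} + Q,
\end{equation*}
where $Q$ is a normally ordered polynomial in $L,W^4,\dots,W^{2n}$ and their derivatives, and $p(k)$ is the polynomial whose roots appear in item (1). For the inductive step, a similar computation yields
\begin{equation*}
W^4_{(1)}\omega^{2n+2d} \;=\; q_d(k)\,\omega^{2n+2d+2} + (\text{previously generated terms}),
\end{equation*}
with $q_d(k)$ a rational function of $k$. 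The weak generation property then fails at noncritical $k$ precisely when $p(k)=0$ or some $q_d(k)=0$.

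The central technical step is to compute $q_d(k)$ explicitly. Because $\omega^{2n+2d+2}$ is built from the single odd field $P$ and the OPE $W^4(z)P(w)$ is pinned down by the fact that $\cW^k(\go\gs\gp_{1|2n})$ is a $1$-parameter family, I would apply the quasi-derivation identity \eqref{quasi-derivation} to expand $W^4_{(1)}\!\!:\!\partial^{2d+1}P\,P\!:$ as a linear combination of terms $:\!\partial^iP\,\partial^jP\!:$ and reorganize modulo derivatives and normally ordered polynomials in the even-weight generators. By analogy with the formula in the proof of Theorem~\ref{weakgenWso2nfirst}, the expected shape is
\begin{equation*}
q_d(k) \;=\; \frac{C_d\cdot(k+h^{\vee})\cdot\bigl((2d-1)k-(d-1)(2n+1)\bigr)\bigl(4(d-1)k-(2d-1)(2n+1)\bigr)}{(d\text{-dependent polynomial})\cdot g(n,k)},
\end{equation*}
with $g(n,k)=4k^2(4n^2-7)+2k(16n^3-28n-7)+(16n^4-28n^2-14n-7)$ the universal quadratic denominator whose roots furnish item (3), and the two linear numerator factors giving the two families of item (2).

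The main obstacle will be carrying out this OPE computation of $q_d(k)$ as a two-variable rational function in $d$ and $k$. Unlike the $\gs\go_{2n}$ case, here $P$ is odd and of half-integer conformal weight, so signs and symmetrization terms must be tracked carefully, and one must verify that $g(n,k)$ is the unique quadratic common to every $q_d(k)$. This can be done either by computing $q_d(k)$ for enough small $d$ to pin the denominator down, or more directly by identifying $g(n,k)$ with the determinant of the bilinear pairing $(u,v)\mapsto u_{(4n+2d-1)}v$ on the appropriate component of the Zhu algebra of $\cW^k(\go\gs\gp_{1|2n})^{\mathbb{Z}_2}$, in analogy with the ALY refined Poisson bracket argument used in the $\gs\go_{2n}$ case. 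Once $p(k)$ and $q_d(k)$ are explicit, the statement for the simple quotient $\cW_k(\go\gs\gp_{1|2n})^{\mathbb{Z}_2}$ is automatic, since any weak-generation relation among $L,W^4$ in the universal algebra descends to the simple quotient.
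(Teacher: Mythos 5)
Your proposal follows essentially the same route as the paper's proof: reduce to the composite fields $\omega^{2n+2d+2}$, use the base case $W^4_{(1)}W^{2n}=p(k)\,\omega^{2n+2}+\cdots$ and the recursion $W^4_{(1)}\omega^{2n+2d-2}=q_d(k)\,\omega^{2n+2d}+\cdots$ computed from the OPE $W^4(z)P(w)$ of \cite{CL4}, with the exceptional levels read off from the zeros of $p(k)$, of the two linear numerator factors of $q_d(k)$ (item (2)), and of the quadratic $g(n,k)$ in its denominator (item (3)). Your guessed closed form for the numerator factors is off by signs (the correct factors are $(-1-4k+4dk-4n+4dn)$ and $(1-2k+4dk-2n+4dn)$), but since you explicitly defer to the actual OPE computation for these, this is exactly the computation the paper performs and records.
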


\begin{proof}
The Poisson bracket on Zhu's commutative algebra $R_{\cW^{k}(\go\gs\gp_{1|2n})}$ given by $\{\bar{a},\bar{b}\} = \overline{a_{(0)} b}$ is trivial, and the refined Poisson bracket given by  $\{\bar{a},\bar{b}\} = \overline{a_{(1)} b}$ is nontrivial and s independent of $k$. Moreover, all the fields $W^{2i}$ for $2 < i \leq n$ can be generated by $L, W^4$ for noncritical values of $k$. 

As long as $k$ is not a root of $p(k)$, $\omega^{2n+2}$ is also generated by $L, W^4$. Similarly, for $r > n+1$, we have a relation
 $$W^{2r} = \lambda_{2r} \omega^{2r} + \cdots,$$ where the remaining terms depend only on $L, W^4,\dots, W^{2s}$ for $s < r$, and $\lambda_{2r}$ is a rational function of $k$ which is nontrivial since $\cW^{k}(\go\gs\gp_{1|2n})^{\mathbb{Z}_2}$ is generated by $L, W^4$ for generic values of $k$. Using the OPE $W^4(z) P(w)$ which is completely determined in \cite{CL4}, for $d \geq 2$ we can compute

\begin{equation} \begin{split} & W^4_{(1)} \omega^{2d-2+2n}  = \frac{32 (d + n) (1 + 2 k + 2 n) (-1 - 4 k + 4 d k - 4 n + 4 d n) (1 - 
   2 k + 4 d k - 2 n + 4 d n)}{21 (-1 + d) (-1 + 2 d) (-1 + 2 n) (-1 + 4 k n + 4 n^2)  f(n,k) g(n,k)} \omega^{2d+2n} + \dots,
     \\ & f(n,k) =  1 + 2 k + 2 n + 4 k n + 4 n^2,
     \\ & g(n,k) =  4 k^2 (4 n^2 -7) +2 k (16 n^3- 28 n -7 )+ 16 n^4  - 28 n^2 - 14 n -7.
     \end{split} \end{equation}
In particular, this shows that $\lambda_{2d+2n} \neq 0$ except for the levels when the above rational functions vanish or are undefined.
\end{proof}

\begin{corollary} \label{weakgenWosp1|2n} Suppose that $\ell-2$ is an admissible level for $\gs\go_{2n+1}$ such that
\begin{enumerate}
\item $ 2n-1 + \ell \neq 2 d -1$ for $d \in \mathbb{Z}_{\geq 2}$
\item $p(k) \neq 0$ where $k,\ell$ are related as above.
\end{enumerate}
 Then $\cC_{\psi,2B}(n,0) = \text{Com}(L_{\ell}(\gs\go_{2n+1}), L_{\ell-1}(\gs\go_{2n+1}) \otimes \cF(2n+1))^{\mathbb{Z}_2}$ is weakly generated by $\{L, W^4\}$. \end{corollary}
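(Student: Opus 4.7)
The strategy parallels exactly the proof of Corollary \ref{weakgenWso2n}. The starting point is the chain of isomorphisms of $1$-parameter vertex algebras
\[\cC^{\psi}_{2B}(n,0) \;\cong\; \cC^{\phi}_{2B}(0,n) \;\cong\; \cW^{k}(\go\gs\gp_{1|2n})^{\mathbb{Z}_2},\]
the first by \eqref{isoGKO:typeB} with $\tfrac{1}{\psi}+\tfrac{1}{\phi}=2$, the second from the coset realization $\cW^{k}(\go\gs\gp_{1|2n})\cong\text{Com}(V^{\ell}(\gs\go_{2n+1}),V^{\ell-1}(\gs\go_{2n+1})\otimes\cF(2n+1))$ discussed before Conjecture \ref{rationalfamilysecond}, which at the orbifold level gives the second isomorphism with $\phi=k+n+\tfrac12$. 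Combining these with $\ell=-2\psi-2n+2$ produces an explicit rational relation between $\ell$ and $k$.

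The first step is to pass from the $1$-parameter algebras to their specializations at the level $\ell$ (respectively $k$) in question. By \cite[Theorem 8.1]{CL2}, the simple quotient $\cC_{\psi,2B}(n,0)$ is obtained by specializing the formal parameter $\psi$ of the $1$-parameter vertex algebra, provided the shifted level is not a negative rational. The condition (1) of the corollary, $2n-1+\ell\neq 2d-1$ for $d\in\mathbb{Z}_{\geq 2}$, precisely rules out those admissible values of $\ell$ at which this specialization could fail to coincide with the honest coset (this is the same mechanism that produces the condition $(2n-2)+\ell\notin\mathbb{Z}_{\geq 1}$ in Corollary \ref{weakgenWso2n}). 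Via the chain of isomorphisms, the same level corresponds to a noncritical value of $k$ for $\cW^{k}(\go\gs\gp_{1|2n})^{\mathbb{Z}_2}$, and by the same reasoning the two specializations can be identified.

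Next, I would apply Theorem \ref{weakgenWosp1|2nfirst}: the algebra $\cW^{k}(\go\gs\gp_{1|2n})^{\mathbb{Z}_2}$ is weakly generated by $\{L,W^4\}$ for all noncritical $k$ outside three exceptional classes, namely the roots of $p(k)$, the linear-in-$k$ family $k=-(n+\tfrac12)+\tfrac{d-1}{2d-1}$ or $k=-(n+\tfrac12)+\tfrac{2d-1}{4(d-1)}$ for $d\in\mathbb{Z}_{\geq 2}$, and the two roots of a quadratic in $k$. Condition (2) of the corollary, $p(k)\neq 0$, handles the first class. Using the explicit $k$--$\ell$ dictionary obtained above, the linear-in-$d$ exceptional levels translate, after clearing denominators, to the equations $2n-1+\ell=2d-1$, so these are exactly the values excluded by condition (1). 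Because the specialization map sends $\{L,W^4\}\subset\cW^{k}(\go\gs\gp_{1|2n})^{\mathbb{Z}_2}$ to (nonzero multiples of) the corresponding weight $2$ and weight $4$ fields of $\cC_{\psi,2B}(n,0)$, weak generation descends to the simple quotient, giving the result.

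The main obstacle is the bookkeeping between the exceptional sets in Theorem \ref{weakgenWosp1|2nfirst} and the hypotheses of the corollary. Concretely, one must (a) carry out the rational substitution $k \leftrightarrow \ell$ and verify that the linear exceptional family of $k$ matches the linear exceptional family $\ell=2d$ in condition (1), and (b) argue that the two quadratic-root exceptional values from Theorem \ref{weakgenWosp1|2nfirst}(3) do not correspond to admissible levels $\ell-2$ for $\gs\go_{2n+1}$, or else absorb them into the hypothesis. The latter is a finite check (at most two bad pairs of $(\ell,k)$ per $n$) and is not expected to interfere with the conclusion for generic $n$; the nontriviality of $p(k)$, which is required for condition (2) to be a meaningful hypothesis, follows from the fact that $W^{2n}$ is generated by $\{L,W^4\}$ at generic levels.
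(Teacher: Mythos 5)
Your proposal follows essentially the same route as the paper: the paper's own (very terse) proof simply invokes the generic isomorphism \eqref{isoGKO:typeB}, notes via \cite[Thm. 8.1]{CL2} that both sides coincide with the specializations of the $1$-parameter algebras at the levels in question, and then relies on Theorem \ref{weakgenWosp1|2nfirst} exactly as you do; your explicit verification that the linear exceptional family $k=-(n+\tfrac12)+\tfrac{d-1}{2d-1}$ translates to $\ell+2n-1=2d-1$ (with the other branch excluded by admissibility) is correct and fills in bookkeeping the paper omits. The one loose end you flag --- that the two quadratic roots in Theorem \ref{weakgenWosp1|2nfirst}(3) are not visibly covered by hypotheses (1)--(2) --- is a gap present in the paper's own statement as well (and in the parallel Corollary \ref{weakgenWso2n}), presumably resolved by those roots being irrational or non-admissible, so it does not count against your argument relative to the paper's.
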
 

\begin{proof} This is apparent because the isomorphism \eqref{isoGKO:typeB} holds generically, and the corresponding isomorphism of simple quotients holds whenever $\cW_{k}(\go\gs\gp_{1|2n})^{\mathbb{Z}_2}$ and $\cC_{\psi,2B}(n,0)$ are both quotients of the specialization of the $1$-parameter algebras. This is always the case for $\cW_{k}(\go\gs\gp_{1|2n})^{\mathbb{Z}_2}$ when $k$ is noncritical, and by \cite[Thm. 8.1]{CL2}, it holds for $\cC_{\psi,2B}(n,0)$ whenever $\ell +2n-1$ is not a negative rational number.
\end{proof}

By the same argument as the proof of Corollary \ref{cor:rationalityatell}, Corollary \ref{weakgenWosp1|2n} and Conjecture \ref{rationalfamilysecond} implies the following
\begin{conjecture} \label{cor:rationalityatellsecond} For all admissible levels $\ell-2$ for $\gs\go_{2n+1}$ such that $-(2n-2) + \ell \notin \mathbb{Z}_{\geq 1}$, $\cW^{\gs\go_2}_{\infty, N_{\ell}}$, which is isomorphic simple quotient of $\tilde{\cC}_{\ell}(2n)$, is strongly rational.
\end{conjecture}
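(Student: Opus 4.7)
The plan is to mimic the proof of Corollary \ref{cor:rationalityatell} line by line, with the even-rank inputs replaced by their odd-rank counterparts and with Conjecture \ref{rationalfamilysecond} playing the role of Lemma \ref{rationalfamily}. The overall strategy is to realize $\tilde{\cC}_\ell(2n+1)$ as a conformal extension of a tensor product of two strongly rational vertex algebras, and then invoke the extension theorem of \cite{CMSY}.

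First, observe that by the same argument as in Lemma \ref{twocopiesofW} (cf.\ the second line of \eqref{twocopiesconstant}) and the adjoint of the coset realization of $\cW^{k}(\go\gs\gp_{1|2n})$ as $\text{Com}(L_{\ell}(\gs\go_{2n+1}), L_{\ell-1}(\gs\go_{2n+1}) \otimes \cF(2n+1))$, the full algebra $\cC_\ell(2n+1)$ is a conformal extension of
\begin{equation*}
\text{Com}(L_{\ell}(\gs\go_{2n+1}), L_{\ell-1}(\gs\go_{2n+1}) \otimes \cF(2n+1))^{\mathbb{Z}_2} \otimes \text{Com}(L_{\ell-1}(\gs\go_{2n+1}), L_{\ell-2}(\gs\go_{2n+1}) \otimes \cF(2n+1))^{\mathbb{Z}_2},
\end{equation*}
each factor of which is isomorphic to a simple quotient $\cC_{\psi, 2B}(n,0)$ and $\cC_{\psi-1, 2B}(n,0)$ for the appropriate $\psi$. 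Under Conjecture \ref{rationalfamilysecond}, both factors are strongly rational, so their tensor product and its $\mathbb{Z}_2$-orbifold are strongly rational as well.

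Next, following Theorem \ref{completion:twocopies}, the Virasoro field $L$ of $\cW^{\gs\go_2}_{\infty,N_\ell}$ splits uniquely (up to the Galois involution \eqref{Galois}) into two commuting Virasoro fields $L_{\pm}$, and one defines $W^4_{\pm}$ as the corresponding weight $4$ primary fields commuting with $L_{\mp}$. To apply these formulas at a rational parameter $\ell$, one needs an odd-rank analog of Lemma \ref{rationalellcaseofc2n}: the coefficients $a_{\pm}(n,\ell)$ (and the analogous rational functions appearing in the higher-weight generators $W^{2d}_{\pm}$, via the recursion $W^{2d}_{\pm} = (W^4_{\pm})_{(1)}W^{2d-2}_{\pm}$) must be well-defined and nonzero for every positive integer $n$ and rational $\ell$ in our admissible range. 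I expect this to reduce to a mod-arithmetic analysis analogous to the proof of Lemma \ref{rationalellcaseofc2n}: reducing a pair of polynomial expressions $X_{\text{odd}}(\ell,n), Y_{\text{odd}}(\ell,n)$ modulo small primes (such as $16$, $5$, and $3$) and checking that the resulting discriminants can never be simultaneously quadratic residues for odd positive $n$. This is the step most likely to need the most care, since the excluded loci might differ from the even case and an infinite family of exceptional levels must be ruled out.

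Once the weight four splitting is validated, Corollary \ref{weakgenWosp1|2n} shows that for admissible $\ell-2$ with $\ell \notin \{3-2n,\dots\}$ and avoiding the finitely many poles of $p(k)$, the subalgebras $\cC_{\psi,2B}(n,0)$ and $\cC_{\psi-1,2B}(n,0)$ are weakly generated by $\{L_{\pm}, W^4_{\pm}\}$ respectively. Since these fields lie in the image of $\cW^{\gs\go_2}_{\infty}$ inside $\cC_\ell(2n+1)$ (i.e., in $\tilde{\cC}_\ell(2n+1)$), it follows that $\tilde{\cC}_\ell(2n+1)$ contains the tensor product of the two $\cC_{\psi,2B}$-subalgebras, and is therefore a conformal extension of this strongly rational vertex algebra. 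Passing to the simple quotient $\cW^{\gs\go_2}_{\infty,N_\ell}$ preserves this conformal extension property, so by \cite{CMSY} the simple quotient is strongly rational, completing the proof. The main obstacle beyond the unavoidable use of the two conjectural inputs is, as noted, the number-theoretic verification that the Galois-splitting coefficients survive all rational admissible $\ell$, possibly requiring an extra finite list of excluded levels to be added to the statement.
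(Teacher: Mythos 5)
Your proposal follows exactly the route the paper intends: the statement is left as a conjecture precisely because it is derived, ``by the same argument as the proof of Corollary \ref{cor:rationalityatell},'' from Corollary \ref{weakgenWosp1|2n} together with the conjectural input Conjecture \ref{rationalfamilysecond} (which rests on the unproven rationality of $\cC^{\psi}_{2B}(n,0)$ from \cite[Conjecture 7.1]{CL4}). Your observation that one also needs an odd-rank analogue of Lemma \ref{rationalellcaseofc2n} is well taken and is not addressed in the paper: the mod-$16$ argument there explicitly uses that the rank parameter is even, so the verification that the splitting coefficients $a_{\pm}$ are defined and nonzero at the relevant rational levels would have to be redone for $\gs\go_{2n+1}$, possibly adding a finite list of excluded levels exactly as you suggest.
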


Finally, we observe that the truncation curves for $\cC^{\psi}_{CO}(0,m)$ and $\cC^{\ell}(2n+1)$ intersect at
$$\ell = -(2n-1)+\frac{2m+n}{m},\qquad \psi = \frac{2m+n}{2m}.$$ By the same argument as the proof of Theorem \ref{Rationality:COcase}, Conjecture \ref{cor:rationalityatellsecond} implies the following.

\begin{conjecture} Suppose that $m,n$ are coprime. 
For $m \geq 2n-1$ and $\psi = \frac{2m+n}{2m}$, $\cC_{\psi,CO}(0,m)$ is strongly rational. This would imply the strong rationality of $\cW_{\frac{n+2m}{2m}- (2m+\frac{1}{2})}(\go\gs\gp_{1|4m}, f_{2m,2m})$.
\end{conjecture}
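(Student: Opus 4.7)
The overall strategy is to imitate the proof of Theorem \ref{Rationality:COcase}, replacing the role of $\cC_{\ell}(2n)$ by $\cC_{\ell}(2n+1)$. Starting from the intersection point on the truncation curves given in the statement, namely $\ell=-(2n-1)+\frac{2m+n}{m}$ and $\psi=\frac{2m+n}{2m}$, I would first verify that at this point the shifted level $\ell-2$ is admissible for $\gs\go_{2n+1}$ (this is a direct arithmetic check using the coprimality of $m$ and $n$ together with $m\geq 2n-1$) and that $-(2n-2)+\ell\notin\mathbb{Z}_{\geq 1}$. Granted Conjecture \ref{cor:rationalityatellsecond}, the simple quotient $\cW^{\gs\go_{2}}_{\infty,N_{\ell}}$ is then strongly rational, so the plan reduces to identifying $\cC_{\psi,CO}(0,m)$ with $\cW^{\gs\go_{2}}_{\infty,N_{\ell}}$ at this point on its truncation curve.

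The identification requires showing that $\cC_{\psi,CO}(0,m)$ has the weak generation property, i.e., that it is generated by the fields in weights $1$ and $2$. Here I would reuse the line of argument given in the proof of Theorem \ref{Rationality:COcase}: recall that $\cW^{r}(\go\gs\gp_{1|4m},f_{2m,2m})^{\mathbb{Z}_{2}}$ is strongly generated by the principal-type $\cW(1,2^{3},3,\dots,(2m)^{3})$ generators together with the composite fields $w^{2n+2m+1}$, $h^{2n+2m+2}$, $x^{2n+2m+2}$, $y^{2n+2m+2}$ built from the odd fields $P^{\pm}$ in \eqref{COorbifoldgen}. Using the explicit OPEs $W^{3}(z)P^{\pm}(w)$ and $X^{2}(z)P^{\pm}(w)$ displayed in the proof of Theorem \ref{Rationality:COcase}, I would compute the corresponding rational proportionality constants $\lambda_{2n+2m+1}$ and $\lambda_{2n+2m+2}$ as functions of $k$, and verify that they are nonzero whenever $k=2n+1$ (i.e.\ precisely the Heisenberg level we land on here). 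This would give weak generation by $\{H,L,X^{2},Y^{2}\}$ and hence the identification $\cC_{\psi,CO}(0,m)\cong\cW^{\gs\go_{2}}_{\infty,N_{\ell}}$. Finally, since $\cW_{\frac{2m+n}{2m}-h^{\vee}}(\go\gs\gp_{1|4m},f_{2m,2m})$ is a simple current extension of $\cC_{\psi,CO}(0,m)$, its strong rationality follows from \cite{CMSY}.

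The principal obstacle is of course that Conjecture \ref{cor:rationalityatellsecond} is itself unproven, and rests on the (still conjectural) strong rationality of $\cC_{\ell}(2n+1)$ for $\ell-2$ admissible for $\gs\go_{2n+1}$. The latter was reduced in the paper to \cite[Conjecture 7.1]{CL4}: namely, that $\cW_{k}(\go\gs\gp_{1|2n})$ is strongly rational when $\ell-1$ is admissible for $\gs\go_{2n+1}$, so that by the coset realization \eqref{isoGKO:typeB} one obtains a strongly rational tensor-factor decomposition of $\cC_{\ell}(2n+1)$ analogous to \eqref{rationalsub}. To overcome this obstacle one would need either a direct proof along the lines of \cite[Lemma 8.1]{CL4} for the orthosymplectic setting, or a categorical argument analogous to Theorem \ref{weakgenerationD} applied to $\gs\go_{2n+1}$ instead of $\gs\go_{2n}$ (the proof of Theorem \ref{weakgenerationD} used rigidity and semisimplicity of the category of ordinary modules of $L_{\ell-1}(\gs\go_{2n+1})$, which are available at admissible level by \cite{CHY,Ar1,C}, so this generalization should go through up to bookkeeping with half-integral conformal weights).

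A secondary technical point is the explicit computation of the polynomial $p(k)$ appearing in Corollary \ref{weakgenWosp1|2n}, which controls whether $\cW^{k}(\go\gs\gp_{1|2n})^{\mathbb{Z}_{2}}$ is weakly generated by $\{L,W^{4}\}$. For the argument to close we must show that $p(k)\neq 0$ at the specific value $k=2n+1$; at present this polynomial is known only indirectly. If $p(k)$ were to vanish at this level, one would need the same kind of Poisson-bracket argument used in Theorem \ref{weakgenWsp2nfirst} to produce an alternative set of weak generators. Assuming both of these technical issues are handled in the way indicated, the conjecture follows from exactly the same chain of intersection-point reasoning used to prove Theorem \ref{Rationality:COcase}, with the relevant half-integer shifts and coprimality conditions ensuring that the denominators arising along the way are nonzero.
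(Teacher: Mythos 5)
Your proposal takes essentially the same route as the paper: the statement is itself presented there as a conjecture, justified only by the observation that the truncation curves for $\cC^{\psi}_{CO}(0,m)$ and $\cC^{\ell}(2n+1)$ intersect at the stated point, so that Conjecture \ref{cor:rationalityatellsecond} implies the claim ``by the same argument as the proof of Theorem \ref{Rationality:COcase}.'' You have correctly identified the full chain of unproven inputs — Conjecture \ref{cor:rationalityatellsecond}, hence Conjecture \ref{rationalfamilysecond} and ultimately \cite[Conjecture 7.1]{CL4}, together with the nonvanishing of $p(k)$ and of the weak-generation structure constants at the relevant Heisenberg level — which is exactly where the paper leaves matters.
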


\appendix

\section{Generators of commuting even-spin subVOAs of weights four}\label{app:wt4}
Weight four generators admit the following form,
\begin{equation*}
    \begin{split}
        W^4_{\pm}=(k-3) (k-2) (k-1) (k+2)\omega_{\pm}\sum_{\Omega\in PBW[4]} \xi_{\Omega}\Omega,
    \end{split}
\end{equation*}
where the $\T{PBW}[4]$ are the 28 monomials spanning the subspace $\Wso[4]$.
The structure constants $\{\xi_{\Omega}\}_{\Omega\in \T{PBW}[4]}$ are as follows,
\begin{equation*}
    \begin{split}
        \xi_{W^{4,0}}=&1,\\
        \xi_{W^{4,-2}}=&\frac{(k-2) (k-1) \omega  \omega _3 N_{W^{4,-2}}}{2 (\omega -1) (k \omega -1) (k \omega +1)D_2 }, \quad \xi_{W^{4,2}}=\xi_{W^{4,-2}},\\
        \xi_{:HW^{3,0}:}=&\frac{(k-2) (k-1) (k \omega -3 \omega +1) (k \omega +\omega +3) N_{:HW^{3,0}:}}{(\omega -1) \omega_4 (k \omega -1) (k \omega +1)D_2 }\\
        \xi_{\partial W^{3,0}}=&\frac{(k-8) (k-2) (k-1) (k \omega -3 \omega +1) (k \omega +\omega +3) (k \omega+2 \omega +2) N_{:W^{2,2}W^{2,-2}:}}{4(\omega -1) (k \omega -1) (k \omega +1) \omega_4 D_1 D_2},\\
        \xi_{:W^{2,2}W^{2,-2}:}=&\frac{3 (k-8) k^2 \omega ^2 (k \omega -3 \omega +1) (k \omega +\omega +3) \omega _2 \omega_3^2 N_{:W^{2,-2}W^{2,2}:}}{ (\omega -1)(k \omega -1) (k \omega +1) (k \omega +2 \omega -2)\omega _4 D_1 D_2}\\
        \xi_{:W^{2,0}W^{2,0}:}=&\frac{3 (k-8) k^2 (\omega -1) (\omega +1)^2(k \omega -3 \omega +1) (k \omega +\omega +3)  \omega _2 N_{:W^{2,0}W^{2,0}:}}{4  (k \omega -1)(k \omega +1) (k \omega +2 \omega -2) \omega _4 D_1 D_2}\\
        \xi_{:W^{2,0}W^{2,-2}:}=&\frac{3 (k-8) k^2 \omega  (\omega +1) (k \omega -3 \omega +1) (k \omega +\omega +3) (11 k \omega -16 \omega -6) \omega _2 \omega _3 N_{W^{4,-2}}}{ (k \omega -1) (k \omega +1) (k\omega +2 \omega -2)\omega _4 D_1D_2^2 },\quad \xi_{:W^{2,0}W^{2,2}:}=\xi_{:W^{2,0}W^{2,-2}:},\\
        \xi_{:W^{2,-2}W^{2,-2}:}=&-\frac{3 k^2 \omega ^2 (k \omega +\omega +3) (5 k \omega -12 \omega -7)\omega _2 \omega _3^2  N_{W^{4,-2}}}{2(\omega -1)  (k \omega -1)(k \omega +1) (k \omega +2 \omega -2)\omega _4 D_1 D_2 },\quad \xi_{:W^{2,2}W^{2,2}:}=\xi_{:W^{2,-2}W^{2,-2}:},\\
        \xi_{:HHW^{2,0}:}=&-\frac{3 (k-8) k (\omega +1) (k \omega -3 \omega +1) (k \omega +\omega +3) \omega _2N_{W^{2,-2}W^{2,2}}}{(k \omega -1) (k \omega +1)(k \omega +2 \omega -2)\omega _4 D_1 D_2 },\\
        \xi_{:H\partial W^{2,0}:}=&-\frac{3 (k-8) k (\omega +1) (k \omega -3 \omega +1) (k \omega +\omega+3)\omega _2  N_{:W^{2,2}W^{2,-2}:}}{2 (k \omega -1) (k \omega+1) (k \omega +2 \omega -2)\omega _4 D_1 D_2 },\quad \xi_{:\partial HW^{2,0}:}=\xi_{: H\partial W^{2,0}:},\\
        \xi_{\partial^2W^{2,0}}=&\frac{9 (k-8) k^2 (\omega +1) (k \omega -3 \omega +1) (k \omega +\omega +3) \omega _2 N_{\partial^2W^{2,0}}}{1(k \omega -1) (k \omega +1) (k\omega +2 \omega -2) 6\omega _4 D_1 D_2 },\\
        \xi_{:HHW^{2,-2}:}=&\frac{6 k \omega (k \omega -3 \omega +1) (k \omega +\omega +3)\omega _2 \omega _3  N_{W(4,-1)} N_{:HHW^{2,0}:}}{ (\omega -1)  (k\omega -1) (k \omega +1) (k \omega +2 \omega -2)\omega _4D_1 D_2^2},\quad  \xi_{:HHW^{2,2}:}= \xi_{:HHW^{2,-2}:},\\
        \xi_{:H\partial W^{2,-2}:}=&-\frac{3 (k-8) k^2 \omega  (k \omega -3 \omega +1) (k \omega +\omega +3)  \omega _2 \omega _3 N_{W^{4,-2}} N_{:H\partial W^{2,-2}:}}{2(\omega -1)(k \omega -1) (k \omega +1) (k \omega +2 \omega -2) \omega _4 D_1 D_2^2 },\quad \xi_{:H\partial W^{2,2}:}=-\xi_{:H\partial W^{2,-2}:},\\
        \xi_{:\partial HW^{2,-2}:}=&\frac{3 k \omega (k \omega -3 \omega +1) (k \omega +\omega +3) \omega _2 \omega _3  N_{W^{4,-2}} N_{\partial H W^{2,-2}}}{(\omega -1)(k\omega -1) (k \omega +1) (k \omega +2 \omega-2) \omega _4 D_1 D_2^2 },\quad \xi_{:\partial HW^{2,2}:}=-\xi_{:\partial HW^{2,-2}:},\\
        \xi_{:\partial^2W^{2,-2}:}=&-\frac{3 (k-8) k^2 \omega (k \omega -3 \omega +1) (k \omega +\omega +3)  \omega _2 \omega _3 N_{W^{4,-2}} N_{\partial^2W^{2,-2}}}{4 (\omega -1) (k \omega -1) (k \omega +1) (k \omega +2 \omega-2) \omega_4D_1 D_2^2 },\quad  \xi_{:\partial^2W^{2,2}:}= \xi_{:\partial^2W^{2,-2}:},\\
        \xi_{:HHHH:}=&-\frac{(k \omega -3 \omega +1) (k \omega -\omega +1) (k \omega +\omega +3)\omega _2 N_{:HHHH:}}{ (\omega -1)  (k \omega -1) (k \omega+1) (k \omega +2 \omega -2)\omega _4D_1 D_2},\\
        \xi_{:\partial HHH:}=&\frac{3 (k-8) (k \omega -3 \omega +1) (k \omega -\omega -1) (k \omega-\omega +1) (k \omega +\omega +3) \omega _2 N_{:W^{2,2}W^{2,-2}:}}{(\omega -1)(k \omega-1) (k \omega +1) (k \omega +2 \omega -2) \omega _4 D_1 D_2 },\\
        \xi_{:\partial^2HH:}=&\frac{k(k \omega -3 \omega +1) (k \omega -\omega +1) (k \omega +\omega +3)  \omega _2 N_{:\partial^2HH:}}{4 (\omega -1) (k \omega -1) (k \omega +1) (k\omega +2 \omega -2)\omega _4 D_1 D_2 },\\
        \xi_{:\partial H\partial H:}=&-\frac{3 k (k \omega -3 \omega +1) (k \omega -\omega +1) (k \omega +\omega +3) \omega _2 N_{:\partial H\partial H:}}{8 (\omega-1) (k \omega -1) (k \omega +1) (k \omega +2 \omega -2)\omega _4 D_1 D_2},\\
        \xi_{\partial^3 H}=&\frac{(k-8) k^2 (k \omega -3 \omega +1) (k \omega -\omega -1) (k \omega-\omega +1) (k \omega +\omega+3)\omega _2 N_{:W^{2,2}W^{2,-2}:}}{32 (\omega -1) (k \omega -1) (k \omega +1) (k \omega +2 \omega -2) \omega _4D_1 D_2}.
    \end{split}
\end{equation*}
Denominators and numerators are as follows,
\begin{equation}
    \begin{split}
        D_1=&5 k^3 \omega ^2-15 k^2 \omega ^2+10 k \omega ^2-5 k \omega -5 k-44 \omega -44,\\
        D_2=&5 k^4 \omega ^2-14 k^3 \omega ^2+13 k^2 \omega ^2-5 k^2+40 k \omega ^2-40 k-64 \omega ^2+64,
    \end{split}
\end{equation}
\begin{equation*}
    \begin{split}
   N_{:W^{2,2}W^{2,-2}:}=& 25 k^7 \omega ^4-120 k^6 \omega ^4-60 k^6 \omega ^3+120 k^5 \omega ^4+265 k^5 \omega ^3+10 k^5 \omega ^2+340 k^4 \omega ^4-405 k^4 \omega ^3\\&-135 k^4\omega ^2+60 k^4 \omega -703 k^3 \omega ^4-39 k^3 \omega ^3+238 k^3 \omega ^2+119 k^3 \omega -35 k^3-186 k^2 \omega ^4\\&+379 k^2 \omega ^3+575 k^2\omega ^2-319 k^2 \omega -129 k^2+736 k \omega ^4+102 k \omega ^3-870 k \omega ^2-102 k \omega \\&+134 k+192 \omega ^4-384 \omega ^2+192,\\
   N_{:\partial^2 H H:}=&5 k^{10} \omega ^5+30 k^9 \omega ^5-5 k^9 \omega ^4-795 k^8 \omega ^5-170 k^8 \omega ^4-10 k^8 \omega ^3+3456 k^7 \omega ^5+2355 k^7 \omega ^4\\&+114 k^7\omega ^3+10 k^7 \omega ^2-4099 k^6 \omega ^5-8936 k^6 \omega ^4-1306 k^6 \omega ^3+166 k^6 \omega ^2+5 k^6 \omega -7792 k^5 \omega ^5\\&+12447 k^5\omega ^4+5816 k^5 \omega ^3-782 k^5 \omega ^2-144 k^5 \omega -5 k^5+22817 k^4 \omega ^5+3032 k^4 \omega ^4-6526 k^4 \omega ^3\\&-3636 k^4 \omega ^2+589
   k^4 \omega +4 k^4-5582 k^3 \omega ^5-18389 k^3 \omega ^4-19932 k^3 \omega ^3+8210 k^3 \omega ^2\\
   &+5674 k^3 \omega -61 k^3-24552 k^2 \omega ^5-2150 k^2\omega ^4+41360 k^2 \omega ^3+10380 k^2 \omega ^2-9128 k^2 \omega \\&-2470 k^2+11904 k \omega ^5+14016 k \omega ^4-6912 k \omega ^3-15360 k \omega^2-4992 k \omega +1344 k\\&+4608 \omega ^5+3456 \omega ^4-9216 \omega ^3-6912 \omega ^2+4608 \omega +3456,\\
   N_{:\partial H \partial H:}=&5 k^{10} \omega ^5-45 k^9 \omega ^5-5 k^9 \omega ^4+240 k^8 \omega ^5+85 k^8 \omega ^4-10 k^8 \omega ^3-744 k^7 \omega ^5-1020 k^7 \omega ^4-96 k^7\omega ^3\\&+10 k^7 \omega ^2+1001 k^6 \omega ^5+4114 k^6 \omega ^4+1604 k^6 \omega ^3+16 k^6 \omega ^2+5 k^6 \omega +617 k^5 \omega ^5-6591 k^5 \omega^4\\&-6358 k^5 \omega ^3-164 k^5 \omega ^2+141 k^5 \omega -5 k^5-3766 k^4 \omega ^5+293 k^4 \omega ^4+11018 k^4 \omega ^3+3048 k^4 \omega ^2\\&-1052 k^4
   \omega -101 k^4+4060 k^3 \omega ^5+8788 k^3 \omega ^4-5436 k^3 \omega ^3-6940 k^3 \omega ^2-1184 k^3 \omega +392 k^3\\&-792 k^2 \omega ^5-1820 k^2
   \omega ^4-3568 k^2 \omega ^3-1128 k^2 \omega ^2+4360 k^2 \omega +1028 k^2-576 k \omega ^5-5520 k \omega ^4\\&+1152 k \omega ^3+6816 k \omega ^2-576 k
   \omega -1296 k-1152 \omega ^4+2304 \omega ^2-1152,\\
   N_{\partial^2 W^{2,0}}=&5 k^7 \omega ^4-24 k^6 \omega ^4-12 k^6 \omega ^3+24 k^5 \omega ^4+53 k^5 \omega ^3+2 k^5 \omega ^2+44 k^4 \omega ^4-81 k^4 \omega ^3-3 k^4 \omega ^2+12k^4 \omega\\&-89 k^3 \omega ^4+45 k^3 \omega ^3-4 k^3 \omega ^2-29 k^3 \omega -7 k^3+28 k^2 \omega ^4-47 k^2 \omega ^3+21 k^2 \omega ^2+59 k^2 \omega+3 k^2\\&+12 k \omega ^4+62 k \omega ^3+2 k \omega ^2-62 k \omega -14 k+24 \omega ^3-24 \omega ^2-24 \omega +24,\\
   N_{:HHHH:}=&25 k^9 \omega ^5-90 k^8 \omega ^5-40 k^8 \omega ^4+165 k^7 \omega ^5+170 k^7 \omega ^4-50 k^7 \omega ^3-780 k^6 \omega ^5-1410 k^6 \omega ^4\\&-390 k^6\omega ^3+80 k^6 \omega ^2+2737 k^5 \omega ^5+6812 k^5 \omega ^4+4318 k^5 \omega ^3+428 k^5 \omega ^2+25 k^5 \omega -2384 k^4 \omega ^5\\&-14652 k^4
   \omega ^4-15176 k^4 \omega ^3-2548 k^4 \omega ^2+480 k^4 \omega -40 k^4-6719 k^3 \omega ^5+7822 k^3 \omega ^4\\&+27706 k^3 \omega ^3+11016 k^3 \omega
   ^2-2827 k^3 \omega -598 k^3+13910 k^2 \omega ^5+15950 k^2 \omega ^4-18996 k^2 \omega ^3\\&-25220 k^2 \omega ^2-1954 k^2 \omega +2230 k^2-4560 k \omega^5-15220 k \omega ^4-5408 k \omega ^3\\&+13992 k \omega^2+9968 k \omega +1228 k-2304 \omega ^5-5088 \omega ^4+4608 \omega ^3+10176 \omega ^2-2304,\\
   N_{:HW^{3,0}:}=&5 k^6 \omega ^3-5 k^5 \omega ^2-28 k^4 \omega ^3-27 k^4 \omega ^2-5 k^4 \omega +60 k^3 \omega ^3+63 k^3 \omega ^2+12 k^3 \omega +5 k^3+49 k^2 \omega
   ^3\\&-51 k^2 \omega ^2-93 k^2 \omega +15 k^2-130 k \omega ^3-46 k \omega ^2+130 k \omega +46 k-48 \omega ^3+48 \omega ^2+48 \omega -48\omega -5088,\\
   N_{W^{4,-2}}=&5 k^8 \omega ^4+5 k^7 \omega ^4-100 k^6 \omega ^4-10 k^6 \omega ^2+136 k^5 \omega ^4-76 k^5 \omega ^2+381 k^4 \omega ^4+294 k^4 \omega ^2+5 k^4\\&-825 k^3\omega ^4-526 k^3 \omega ^2+71 k^3-554 k^2 \omega ^4+1364 k^2 \omega ^2-170 k^2+1376 k \omega ^4-1344 k \omega ^2\\&-32 k+384 \omega ^4-768 \omega^2+384,\\
   N_{:HHW^{2,0}:}=& 5 k^7 \omega ^4-45 k^6 \omega ^4+140 k^5 \omega ^4+120 k^5 \omega ^3-10 k^5 \omega ^2-104 k^4 \omega ^4-530 k^4 \omega ^3-96 k^4 \omega ^2-299 k^3\omega ^4\\&+810 k^3 \omega ^3+564 k^3 \omega ^2-120 k^3 \omega +5 k^3+581 k^2 \omega ^4+78 k^2 \omega ^3-1002 k^2 \omega ^2-238 k^2 \omega +141 k^2\\&-182
   k \omega ^4-758 k \omega ^3+214 k \omega ^2+638 k \omega +88 k-96 \omega ^4-204 \omega ^3+396 \omega ^2+204 \omega -300,\\
   N_{:W^{2,0}W^{2,0}:}=&60 k^4 \omega ^2-129 k^3 \omega ^2-132 k^3 \omega -163 k^2 \omega ^2+307 k^2 \omega +72 k^2+338 k \omega ^2-104 k \omega  -102 k\\&+96 \omega ^2-60 \omega
   -36,\\
   N_{:\partial H W^{2,-2}:}=&5 k^6 \omega ^3-40 k^5 \omega ^3-5 k^5 \omega ^2+115 k^4 \omega ^3+20 k^4 \omega ^2-5 k^4 \omega -118 k^3 \omega ^3-37 k^3 \omega ^2-14 k^3 \omega +5
   k^3\\&-192 k^2 \omega ^3-10 k^2 \omega ^2+256 k^2 \omega +34 k^2+672 k \omega ^3+112 k \omega ^2-672 k \omega -112 k-512 \omega ^3\\&-192 \omega ^2+512
   \omega +192,\\
   N_{\partial^2 W^{2,-2}}=&k^4 \omega ^3-4 k^3 \omega ^3-k^3 \omega ^2+5 k^2 \omega ^3+2 k^2 \omega ^2-k^2 \omega +20 k \omega ^3-k \omega ^2-20 k \omega +k-32 \omega ^3-16 \omega
   ^2+32 \omega +16.
    \end{split}
\end{equation*}


\begin{thebibliography}{100}

\bibitem{AKMPP17} D. Adamovic, V. G. Kac, P. M\"oseneder Frajria, and P.  Papi, O. Perse, \textit{Conformal embeddings of affine vertex algebras in minimal  W -algebras II: decompositions},  Jpn. J. Math. 12 (2017), no. 2, 261-315.

\bibitem{AKMPP18} D. Adamovic, V. G. Kac, P. M\"oseneder Frajria, and P.  Papi, O. Perse, \textit{Conformal embeddings of affine vertex algebras in minimal  W -algebras I: structural results}, J. Algebra 500 (2018), 117-152.


\bibitem{AMP23} D. Adamovic, P. M\"oseneder Frajria, and P. Papi, \textit{New approaches for studying conformal embeddings and collapsing levels for $\cW$-algebras}, 
Int. Math. Res. Not. IMRN 2023, no. 22, 19431-19475.


\bibitem{AACLMPP25} D. Adamovic, T. Arakawa, T. Creutzig, A. Linshaw, A. Moreau, P. M\"oseneder Frajria, and P.  Papi, \textit{$\cW$-algebras as conformal extensions of affine VOAs}, arXiv:2508.18889 


\bibitem{AL} M. Al-Ali and A. Linshaw, \textit{Orbifolds of Gaiotto-Rap\v{c}\'ak Y-algebras}, J. Algebra, 625, 1-27, 2023.

\bibitem{AGT} L. Alday, D. Gaiotto, and Y. Tachikawa, \textit{Liouville correlation functions from four-dimensional gauge theories}, Lett. Math. Phys., 91(2):167-197, 2010.
		
\bibitem{AFO} M. Aganagic, E. Frenkel, and A. Okounkov,  \textit{Quantum $q$-Langlands correspondence}, Trans. Moscow Math. Soc. 79 (2018).

\bibitem{Ar1} T. Arakawa, \textit{Associated varieties of modules over Kac-Moody algebras and $C_2$-cofiniteness of $\cW$-algebras}, Int. Math. Res. Not. (2015)  Vol.\ 2015 11605-11666.
        
 \bibitem{Ar2} T. Arakawa, \textit{Rationality of $\cW$-algebras: principal nilpotent cases}, Ann. Math. 182 (2015), no. 2, 565-694.


\bibitem{ACKL} T. Arakawa, T. Creutzig, K. Kawasetsu, and A. Linshaw, \textit{Orbifolds and cosets of minimal $\mathcal{W}$-algebras}, Comm. Math. Phys. 355, 339-372,  2017.
		

\bibitem{ACL} T. Arakawa, T. Creutzig and A. Linshaw, \textit{$\cW$-algebras as coset vertex algebras}, Invent. Math. 218 (2019), 145-195.
        


 \bibitem{AEM24} T. Arakawa, J. van Ekeren, and A. Moreau,  \textit{Singularities of nilpotent Slodowy slices and collapsing levels of $\cW$-algebras}, Forum Math. Sigma 12 (2024), Paper No. e95, 92 pp.

\bibitem{AF} T. Arakawa and E. Frenkel, \textit{Quantum Langlands duality of representations of $\cW$-algebras}, Compos. Math. 155, no. 12 (2019), 2235-2262. 	


    \bibitem{ALY} T. Arakawa, C. H. Lam, and H. Yamada, \textit{Parafermion vertex operator algebras and $\cW$-algebras}, Trans. Amer. Math. Soc. 371 (2019), no. 6, 4277-4301.
        
    \bibitem{BM} B. Bakalov and T. Milanov, \textit{$\cW$-constraints for the total descendant potential of a simple singularity}, Compos. Math. 149 (2013), no. 5, 840-888.
        
    \bibitem{B89} A. A. Belavin, \textit{KdV-Type Equations and $W$-Algebras}, Integrable Systems in Quantum Field Theory. Adv. Stud. Pure Math. 19, Academic Press, San Diego, 1989, Pages 117-125.
		
		
    \bibitem{BPZ} A. A. Belavin, A. M. Polyakov, A. B. Zamolodchikov, \textit{Infinite conformal symmetry in two-dimensional quantum field theory}, Nuclear Physics B, 241(2) (1984), 333-380.

        \bibitem{BFN} A. Braverman, M. Finkelberg, and H. Nakajima, \textit{Instanton moduli spaces and $\mathcal{W}$-algebras}, 
		Ast\'erisque, (385):vii+128, 2016.

 \bibitem{Brown} J. Brown, \textit{Twisted Yangians and finite $W$-algebras}, Transform. Groups 14 (2009), no. 1, 87-114.
 
        	\bibitem{BK} J. Brundan and A. Kleshchev, \textit{Shifted Yangians and finite $W$-algebras}, Adv. Math. 200 (1) (2006), 136-195.
            
        \bibitem{C} T.~Creutzig, \textit{Fusion categories for affine vertex algebras at admissible levels},
        Selecta Math.(2019) no.25, 1-21
	
        \bibitem{CGKV} C. Candu, M. Gaberdiel, M. Kelm, C. Vollenweider, \textit{Even spin minimal model holography}, JHEP 1301 (2013) 185.
		
		
		
		\bibitem{CMc} Collingwood, D. H.,  McGovern, W. M., \textit{Nilpotent orbits in semisimple Lie algebra}, Van Nostrand Reinhold Mathematics Series, 1992.
		
		
		\bibitem{CFKLN} T. Creutzig, J. Fasquel, V. Kovalchuk, A. Linshaw, and S. Nakatsuka, \textit{Minimal $\mathcal{W}$-algebras of $so_N$ at level minus one}, arXiv:2506.15605.
		
		\bibitem{CFLN} T. Creutzig, J. Fasquel, A. Linshaw, and S. Nakatsuka, \textit{On the structure of $\cW$-algebras in type $A$}, Japanese J. Math.  20, (2025), 1-111.
		
		
		
		
		\bibitem{CG} T. Creutzig and D. Gaiotto, \textit{Vertex Algebras for $S$-duality}, Comm. Math. Phys.379(2020), no.3, 785-845.
		
		\bibitem{CGL} T. Creutzig, N. Genra, and A. Linshaw, \textit{Ordinary modules for vertex algebras of $\go\gs\gp_{1|2n}$}, J. Reine Angew. Math. 817 (2024) 1-31.
		
		
		
		\bibitem{CH} T. Creutzig and Y. Hikida, \textit{Rectangular $\cW$-algebras, extended higher spin gravity and dual coset CFTs}, JHEP 147 (2019).

\bibitem{CHU}
T.~Creutzig, Y.~Hikida and T.~Uetoko, \textit{Rectangular $\cW$-algebras of types $so(M)$ and $sp(2M)$ and dual coset CFTs},
JHEP 10 (2019), 023.
        
\bibitem{CHY}
T.~Creutzig, Y.~Z.~Huang and J.~Yang, \textit{Braided tensor categories of admissible modules for affine Lie algebras}, 
Commun. Math. Phys. 362 (2018) no.3, 827-854.

\bibitem{CKM2}
T.~Creutzig, S.~Kanade and R.~McRae,
\textit{Gluing vertex algebras}, Adv. Math. 396 (2022), 108174.

\bibitem{CMSY} T.~Creutzig, R.~McRae, K.~Shimizu and H.~Yadav,
\textit{Commutative algebras in Grothendieck-Verdier categories, rigidity, and vertex operator algebras}, [arXiv:2409.14618 [math.QA]].
        
		\bibitem{CKoL} T. Creutzig, V. Kovalchuk and A. Linshaw, \textit{Generalized parafermions of orthogonal type}, J. Algebra 593 (2022), 178-192.
		
		
		\bibitem{CKoL2} T. Creutzig, V. Kovalchuk and A. Linshaw, \textit{Building blocks for $\mathcal{W}$-algebras of classical types}, arXiv:2409.03465.
		
		
		\bibitem{CL2} T. Creutzig and A. Linshaw, \textit{Cosets of affine vertex algebras inside larger structures}, J. Algebra, 517, 396-438, 2019.
		
		
		
		
		\bibitem{CL3} T. Creutzig and A. Linshaw, \textit{Trialities of ${\mathcal W}$-algebras}, Camb. J. Math. vol. 10, no. 1 (2022), 69-194.  
		
		
		
		\bibitem{CL4} T. Creutzig and A. Linshaw, \textit{Trialities of orthosymplectic ${\mathcal W}$-algebras}, Adv. Math. 409 (2022), 108678, 79pp.
		
		
		\bibitem{CLN} T. Creutzig, A. Linshaw, and S. Naktsuka, in preparation.
		
		
		
		\bibitem{DSK} A. De Sole and V. Kac, \textit{Freely generated vertex algebras and non-linear Le conformal algebras}, Comm. Math. Phys. 254 (2005), no. 3, 659-694.
		
		
		\bibitem{DSKV} A. de Sole, V. Kac, D. Valeri, \textit{A Lax type operator for quantum finite $W$-algebras}, 
        Selecta Math. (N.S.) 24 (2018), no. 5, 4617-4657.
		
		%\bibitem{DLM}  C. Dong, H. Li, and G. Mason, \textit{Compact automorphism groups of vertex operator algebras}, Internat. Math. Res. Notices 1996, no. 18, 913-921.
		
		
		
		\bibitem{DS} V. Drinfeld and V. Sokolov, \textit{Lie algebras and equations of Korteweg-de Vries type}, Current problems in mathematics, Vol. 24, 81-180, Itogi Nauki i Tekhniki, Akad. Nauk SSSR, Vsesoyuz. Inst. Nauchn. i Tekhn. Inform., Moscow, 1984.
		
		
		

        
		\bibitem{FFFN} J. Fasquel, Z. Fehily, E. Fursman, and S. Nakatsuka, \textit{Connecting affine $\cW$-algebras: A case study on $\gs\gl_4$}, arXiv:2408.13785.
        
		\bibitem{FKN} J. Fasquel, V. Kovalchuk, and S. Nakatsuka, \textit{On Virasoro-type reductions and inverse Hamiltonian reductions for $\cW$-algebras and $\cW_{\infty}$-algebras}, arXiv:2411.10694.

		
		
		\bibitem{FF} B. Feigin and E. Frenkel, \textit{Duality in $\cW$-algebras}, Int. Math. Res. Notices 1991 (1991), no. 6, 75-82.
		
		
		
		\bibitem{FF1} B. Feigin and E. Frenkel, \textit{Quantization of the Drinfeld-Sokolov reduction}, Phys. Lett. B 246 (1990), 75--81.
		
		
		
		\bibitem{FF2} B. Feigin and E. Frenkel, \textit{Integrals of motion and quantum groups}, In: \lq\lq Integrable systems and quantum groups" (Montecatini Terme, 1993), 349--418, Lecture Notes in Math., 1620, Springer, Berlin, 1996.
		
		\bibitem{Fre07} E. Frenkel, \textit{Langlands correspondence for loop groups}, volume 103 of Cambridge Studies in Advanced Mathematics, Cambridge University Press, Cambridge, 2007.

        \bibitem{FG} E. Frenkel and D. Gaiotto, \textit{Quantum Langlands Dualities of Boundary Conditions, D-modules, and conformal blocks}, Commun. Number Theory Phys. 14 (2020), no. 2, 199-313.
		
		
		\bibitem{GG} M. Gaberdiel and R. Gopakumar, \textit{An $\text{AdS}_3$ Dual for Minimal Model CFTs}, Phys. Rev. D 83 (2011) 066007.
		
		
		\bibitem{GR} D. Gaiotto and M. Rap\v{c}\'ak, \textit{Vertex Algebras at the Corner}, J. High Energy Phys. 2019, no. 1, 160, front matter+85 pp.
		
		\bibitem{GRZ} D. Gaiotto, M. Rap\v{c}\'ak, and T. Zhou, \textit{Deformed Double Current Algebras, Matrix Extended $\cW_{\infty}$ Algebras, Coproducts, and Intertwiners from the M2-M5 Intersection}, arXiv:2309.16929.
		
		
		\bibitem{Gai16} D. Gaitsgory, \textit{Quantum Langlands Correspondence}, \texttt{arXiv:1601.05279}.
		
		
		
		
		\bibitem{GD} I. Gelfand and L. Dickey, \textit{A family of Hamiltonian structures connected with integrable nonlinear differential equations}, Akad. Nauk SSSR Inst. Prikl. Mat. Preprint No. 136 (1978).
		
		
		\bibitem{GJ} N. Genra and T. Juillard, \textit{Reduction by stages for finite $W$-algebras}, Math. Z. 308 (2024), no. 1, Paper No. 15, 36 pp.
		
		
		\bibitem{GKO} P. Goddard, A. Kent, D. Olive, \textit{Virasoro algebras and coset space models}, Phys. Lett B 152 (1985) 88-93.
		
		\bibitem{HU25} S. Harako and M. Ueda, \textit{New presentation of the twisted Yangian of type $D$}, arXiv:2507.00350.
        
	%	\bibitem{H} Humphreys, J. E.,\textit{ Introduction to Lie algebras and representation theory}, Vol. 9. Springer Science, Business Media, 2012.
		
		
		\bibitem{KL} S. Kanade and A. Linshaw, \textit{Universal two-parameter even spin ${\mathcal W}_{\infty}$-algebra},  Adv. Math. 355 (2019), 106774. 
		
		
		
		\bibitem{KRW} V. Kac, S. Roan, and M. Wakimoto, \textit{Quantum reduction for affine superalgebras}, Comm. Math. Phys. 241 (2003), no. 2-3, 307--342.
		
		
		
		
		\bibitem{KW89} V. Kac and M. Wakimoto, 
		\textit{Classification of modular invariant representations of affine algebras}, In \textit{Infinite-dimensional Lie algebras and groups (Luminy-Marseille, 1988)}, volume~7 of Adv. Ser. Math. Phys. pages 138--177. World Sci. Publ., Teaneck, NJ, 1989.
		
		
		\bibitem{KW} V. Kac and M. Wakimoto. \textit{Quantum reduction and representation theory of superconformal algebras}, Advances in Mathematics 185, no. 2 (2004): 400-458.
		
		
		\bibitem{KW4} V. Kac and M. Wakimoto, \textit{On rationality of $\cW$-algebras}, Transf. Groups 13 (2008), no. 3-4, 671-713. 
		
		
		\bibitem{KU}  R. Kodera and M. Ueda, \textit{Coproduct for affine Yangians and parabolic induction for rectangular $\cW$-algebras}, Lett. Math. Phys. 112 (2022), no. 1, Paper No. 3, 37 pp.
		
		\bibitem{L} H. Li,  \textit{Vertex algebras and vertex Poisson algebras}, Communications in Contemporary Mathematics, 6(01), 61-110, 2004.
		
		
		
        
		\bibitem{Lin3} A. Linshaw, \textit{The structure of the Kac-Wang-Yan algebra}, Comm. Math. Phys. 345, No. 2, (2016), 545-585.
        
		\bibitem{Lin} A. Linshaw, \textit{Universal two-parameter $\cW_{\infty}$-algebra and vertex algebras of type $\cW(2,3,\dots, N)$}, Compos. Math.157, no. 1 (2021), 12-82.
		
		
		\bibitem{LPTTW} K. Lu, Y. N. Peng, L. Tappeiner, L. Topley, W. Wang, \textit{Shifted twisted Yangians and finite $W$-algebras of classical type}, arXiv:2505.03316.

	%	\bibitem{MO}  D. Maulik and A. Okounkov, \textit{Quantum groups and quantum cohomology}, Ast\'erisque, no. 408 (2019), ix+209 pp.
		
		\bibitem{McR} R. McRae, \textit{On rationality for $C_2$-cofinite vertex operator algebras}, arXiv:2108.01898.
		
		\bibitem{McR2} R. McRae, \textit{Twisted modules and $G$-equivariantization in logarithmic conformal field theory}, Commun. Math. Phys. 383 (2021) no.3, 1939-2019.

\bibitem{Mi} M. Miyamoto, \textit{$C_2$-cofiniteness of cyclic-orbifold models}, Comm. Math. Phys. {\bf 335} (2015), no.~3, 1279--1286.
        
				
		
	%	\bibitem{ProI} T. Proch\'azka, \textit{Exploring $\cW_{\infty}$ in the quadratic basis}, J. High Energy Phys. 2015, no. 9, 116, front matter+62 pp. 
		
		
	%	\bibitem{ProII} T. Proch\'azka, \textit{$\cW$-symmetry, topological vertex, and affine Yangian}, J. High Energy Phys. 2016, no. 10, 077, front matter+72 pp. 
		
		
		
		
		\bibitem{PR} T. Proch\'azka and Rap\v{c}\'ak, \textit{Webs of $\cW$-algebras}, J. High Energy Phys. 2018, no. 11, 109, front matter+87 pp.
		
		\bibitem{RS} E. Ragoucy and P. Sorba, \textit{Yangian realisations from finite $\cW$-algebras}, Comm. Math. Phys. 203 (1999), no. 3, 551-572.
		
		\bibitem{SV} O. Schiffmann and E. Vasserot, \textit{Cherednik algebras, $\cW$-algebras and the equivariant cohomology of the moduli space of instantons on $\mathbb{A}^2$}, Publ. Math. Inst. Hautes Etudes Sci. 118 (2013), 213--342.
		
		
		
		\bibitem{SI} A. Sergeev, \textit{An analog of the classical invariant theory for Lie superalgebras}, Michigan Math. J. 49 (2001), no. 1, 113-146.
		
		
		
		\bibitem{SII} A. Sergeev, \textit{An analog of the classical invariant theory for Lie superalgebras}, Michigan Math. J. 49 (2001), no. 1, 147-168.
		
		
		\bibitem{U21} M. Ueda, \textit{Twisted Affine Yangian and Rectangular $W$-algebra of type $D$}, arXiv:2211.14968.

      \bibitem{UtypeD25} M. Ueda, \textit{Suggestion of a definition of the twisted affine Yangian of type $D$}, arXiv:2512.06340.
		
	   \bibitem{T} K. Thielemans, \textit{A Mathematica package for computeing operator product expansions}, Int. Jour. Mod. Phys. C2 (1991) p.787.
		
				
		\bibitem{W} H. Weyl, \textit{The Classical Groups: Their Invariants and Representations}, Princeton University Press, 1946.
		
\end{thebibliography}
\end{document}